\newcommand{\restr}{%
  \,\raisebox{-.127ex}{\reflectbox{\rotatebox[origin=br]{-90}{$\lnot$}}}\,%
}
\theoremstyle{theorem}
\newtheorem{definition}{Definition}[section]
\theoremstyle{theorem}
\newtheorem{remark}[definition]{Remark}
\theoremstyle{theorem}
\newtheorem{theorem}[definition]{Theorem}
\theoremstyle{theorem}
\newtheorem{lemma}[definition]{Lemma}
\theoremstyle{remark}
\theoremstyle{theorem}
\newtheorem{corollary}[definition]{Corollary}
\theoremstyle{theorem}
\newtheorem{proposition}[definition]{Proposition}
\theoremstyle{theorem}
\numberwithin{equation}{section}
\def\Xint#1{\mathchoice
    {\XXint\displaystyle\textstyle{#1}}%
    {\XXint\textstyle\scriptstyle{#1}}%
    {\XXint\scriptstyle\scriptscriptstyle{#1}}%
    {\XXint\scriptscriptstyle\scriptscriptstyle{#1}}%
    \!\int}
\def\XXint#1#2#3{{\setbox0=\hbox{$#1{#2#3}{\int}$}
      \vcenter{\hbox{$#2#3$}}\kern-.5\wd0}}
\def\mint{\Xint-}
\definecolor{ao}{rgb}{0.0, 0.5, 0.0}
\DeclareMathOperator*{\einf}{ess\inf}
\DeclareMathOperator*{\esup}{ess\sup}
\newcommand{\R}{\mathbb{R}}
\newcommand{\ep}{\epsilon}
\newcommand{\UUU}{\color{black}}
\newcommand{\EEE}{\color{black}}
\title[non-local non-homogeneous phase transitions]{non-local non-homogeneous phase transitions: regularity of optimal profiles\\ and sharp-interface limit.}
\date{\today}
\author[E. Davoli]{Elisa Davoli}
\author[E. Tasso]{Emanuele Tasso}
\begin{document}
	
	\begin{abstract}
We provide a novel sharp-interface analysis via Gamma-convergence for a non-local and non-homogeneous diffuse-interface model for phase transitions, featuring an interplay between a non-local interaction kernel and a spatially dependent double-well potential. This interaction requires the development of new strategies both for the Gamma-liminf inequality and for the construction of recovery sequences. A key element of our approach is an asymptotic calibration, used to establish the Gamma-liminf lower bound. The study of the optimality of the lower bound hinges upon a novel analysis of the regularity dependence of one-dimensional optimal profiles on a family of parameters. In particular, we show how such regularity is influenced by the singularity of the interaction kernel at the origin, providing a precise and previously unexplored link between the two. Our results rely solely on the assumption of H\"older continuity for the moving wells, and also account for the compactness of sequences with equibounded energies.

		\medskip
  
		\noindent
		{\it 2020 Mathematics Subject Classification: 49J45, 47G20, 26B30.}

		\smallskip
		\noindent
		{\it Keywords and phrases: sharp interface limit, non-local, non-homogeneous, optimal profile problem, integro-differential equation, H\"older-selection, conjugate functional, locality defect}

	\end{abstract}
	
	\maketitle
	
	{\parskip=0em 
    \hypersetup{linkcolor=blue}
    \tableofcontents}

\section{Introduction}

The Modica-Mortola energy functional, also referred to as the Ginzburg-Landau free energy in physics, has been widely used to rigorously provide variational approximations of sharp-interface models of phase transitions by means of diffuse-interface descriptions. Originating from the theories of van der Waals and Cahn-Hilliard (cf. \cite{vander} and \cite{cahn}), this functional forms the basis of many studies on liquid-liquid phase separation. While it has been extensively analyzed, its classical form is limited to isothermal settings and cannot adequately model phase transitions driven by thermal activation. 

Non-isothermal models address this limitation by incorporating temperature-dependent effects into the energy functional (cf. \cite{pen,alt}). Such extensions are crucial to capture the interplay between thermal activation and phase separation. A first step to eventually incorporate temperature dependence in the modeling is to assume a prescribed temperature distribution throughout the specimen, possibly affecting both the value of the potential energy and the position of its wells. This naturally leads to the study of non-homogeneous phase transitions. Non-local formulations further broaden the scope by accounting for long-range interactions, which are relevant in various physical systems. Examples include models arising in equilibrium statistical mechanics, such as the continuum limit of Ising spin systems on lattices (cf. \cite{abcp} and references therein).

 In this paper, we analyze sharp-interface limits for non-local and non-homogeneous phase transitions. Specifically, for a bounded domain $\Omega \subset \mathbb{R}^m$, for every $\ep >0$ we consider the functional
\begin{equation}
    \label{e:functionalintro}
    F_\ep(u;\Omega):= \frac{1}{4\ep} \int_{\Omega \times \Omega} \frac{1}{\ep^m}J\bigg(\frac{x'-x}{\ep}\bigg)|u(x')-u(x)|^2 \, dx'dx + \frac{1}{\ep} \int_{\Omega}W(x,u(x))\, dx,
\end{equation}
where $u \colon \Omega \to \mathbb{R}$ represents the concentration parameter, $J$ is a ferromagnetic Kac potential (cf. \cite{kac}) vanishing at infinity, $W$ is a space-dependent double-well potential, and 
\[
{\bf z}(x):= \{t \in \mathbb{R} \ | \ W(x,t)=0\} =\{z_1(x),z_2(x)\},
\]
where $z_i \colon \mathbb{R}^m \to \mathbb{R}$ are prescribed H\"older-continuous functions for $i=1,2$.

We characterize here the asymptotic behavior via Gamma-convergence of the functionals \eqref{e:functionalintro} as $\epsilon\to 0$. While local models of similar energy functionals have been analyzed in previous works, including scalar and vectorial settings, the combined effects of non-locality and space-dependence of double-well potential remain largely unexplored. The main contribution of this paper is to address this gap by rigorously analyzing the interaction between these two features. 

\subsection{Previous contributions and relevant works.} Consider a fluid confined within a container $\Omega \subset \mathbb{R}^m$ and assume that the configuration of the system is described by a function $u \colon \Omega \to \mathbb{R}$. Given a double well potential $W$, the associated free energy functional, which we refer to as, \emph{local} Modica-Mortola functional, reads as
\begin{equation}
\label{e:Eepintro}
   E_\ep(u;\Omega)=\ep\int_{\Omega}|\nabla u(x)|^2 \, dx + \frac{1}{\ep} \int_\Omega W(u(x))\, dx.
\end{equation}
As $\ep \to 0$, this functional $\Gamma$-converges to a sharp-interface energy measuring the surface area of the interface between phases. This result, conjectured by Gurtin (cf. \cite{gur1}) and rigorously established by Carr, Gurtin, and Slemrod for $m=1$ (cf. \cite{gur2}), and by Modica (cf. \cite{mod2}) and Sternberg (cf. \cite{ster1}) for $m \geq 2$, has motivated a vast literature on phase transition models. An important generalization involves vector-valued phase variables $u \colon \Omega \to \mathbb{R}^k$, which describe systems with multiple stable phases. The case of a phase variable $u \colon \Omega \to \mathbb{R}^k$ with a potential supporting $n = 2$ stable phases was addressed by Sternberg (cf. \cite{ster2}) for $k = 2$ and by Fonseca and Tartar (cf. \cite{fontar}) for $k \geq 2$, while Baldo (cf. \cite{bal}) studied the general scenario of $n \geq 2$ stable phases. The case where the zero level set of $W$ has a more complicated topology has been mainly studied by three authors. Sternberg (cf. \cite{ster3}) considered the specific situation where the potential vanishes on two disjoint $C^{1,2}$ curves in $\mathbb{R}$. Ambrosio (cf. \cite{amb}) investigated the case where the set ${\bf z}$ of zeros of $W$ is a generic compact set in $\mathbb{R}^k$. Bouchitt\'e (cf. \cite{buc}) treated the scalar case, namely, $k=1$, where the zero set of the potential is represented by two moving wells $z_1,z_2 \colon \Omega \to \mathbb{R}$ which are Lipschitz continuous. More recently, Cristoferi and Gravina were able to rigorously derive the asymptotic analysis by Gamma-convergence of the local and vectorial Modica-Mortola functionals with space dependent potential $W(x,t)$, where the zero set ${\bf z}(x)= \{z_1(x),\dotsc,z_n(x)\}$ is described by locally Lipschitz functions $z_i \colon \Omega \to \mathbb{R}^k$ for $i=1,\dotsc,n$ (cf. \cite{crigra}). 

The analysis of local Modica-Mortola functionals, whether for fixed or moving wells, relies on the concept of geodesic distance $\text{d}_W \colon \Omega \times \mathbb{R}^k \times \mathbb{R}^k \to [0,\infty]$ associated with the potential $W$ (cf. \cite[Definition 1.2]{crigra}). Loosely speaking, this distance measures the optimal way to connect two wells via a curve, with respect to the metric induced by the conformal factor $\sqrt{W}$. 
The Gamma-limit $E_0(u;\Omega)$, computed with respect to the $L^1$-topology, takes the following general form:
\begin{equation}
\label{e:locallimit}
    E_0(u;\Omega):=
    \begin{cases}
       \int_{J_u} \text{d}_W(x,u^+(x),u^-(x)) \, d\mathcal{H}^{n-1}(x), & \text{if } u \in BV(\Omega;{\bf z}(x)), \\
       +\infty, & \text{otherwise in } L^1(\Omega;\mathbb{R}^k),
    \end{cases}
\end{equation}
where $J_u$ denotes the jump set of $u$, $u^\pm(x)$ denote the traces on the two sides at $x \in J_u$, $\mathcal{H}^{n-1}$ is the $(n-1)$-dimensional Hausdorff measure, and $BV(\Omega;{\bf z}(x))$ represents the space of functions of bounded variation in $\Omega$ such that $u(x) \in {\bf z}(x)$ almost everywhere in $\Omega$.

In this paper, however, our primary focus will be on a class of \emph{non-local} Modica-Mortola-type functionals, as specified in \eqref{e:functionalintro}. In the case of homogeneous potential, such a class of functionals were rigorously analyzed by Alberti and Bellettini in \cite{alb-bel2} (see also the recent generalization \cite{caldwell}). Since the techniques developed in \cite{alb-bel2} are central to our work, and they differ significantly from the methods employed in the local cases mentioned earlier, we provide a more detailed discussion of this latter framework. In \cite{alb-bel2}, for every positive parameter $\epsilon >0$, the authors consider free energy functionals of the form  
\begin{equation}
\label{e:albbelintro}
    F_\ep(u;\Omega) :=\frac{1}{4\ep} \int_\Omega \int_\Omega \frac{1}{\ep^m}J\bigg(\frac{x' - x}{\ep}\bigg)|u(x') - u(x)|^2 \, dx' \, dx + \frac{1}{\ep}\int_\Omega W(u(x)) \, dx, 
\end{equation}
where $u \colon \Omega \to \mathbb{R}$, $J$ is a positive and symmetric interaction kernel that vanishes at infinity, and $W$ is a double-well potential that attains zero only at $\pm 1$. Setting $\ep =1$, for an energy-minimizing configuration $u$ of $F$, the second term in $F$ drives $u$ toward the pure states $\pm 1$, leading to phase separation, while the first term represents the interaction energy, penalizing spatial inhomogeneities in $u$ and enforcing surface tension. So, under the mass constraint $\int_\Omega u =c$, for very large value of $W$ compared to $J$, the second term of $F$ prevails, and minimizers tends to take values close to $\pm 1$ while the transition between the two phases $\pm 1$ occurs on a thin layers. In \cite{alb-bel2} the authors investigate this situation by passing to the thermodynamic limit, namely, they study the asymptotic behaviour as $\ep \to 0^+$. It is worth noting that the parameter $1/4\ep$ in front of the first term of $F_\ep$ precisely reflects the scaling of the parameter in front of the first term in \eqref{e:Eepintro}, as becomes evident when choosing $J(h) = h^{-2}$. The main result in \cite{alb-bel2} is that, as $\ep \to 0$, the functionals $F_\ep$ Gamma-converge in $L^1$ to a limiting energy $F_0$, which is finite only when $u = \pm 1$ almost everywhere. In that case, the limiting energy is provided by the area of the interface $J_u$ weighted by an anisotropic surface tension $\sigma$. Specifically, assuming $J$ to be a \emph{positive} and \emph{even} kernel satisfying
\begin{equation}
\label{e:regimedecay}
    \int_{\mathbb{R}^m} J(h)|h| \, dh < \infty,
\end{equation}
and under very general assumptions on the potential $W$ (cf. \cite[Subsection 1.2]{alb-bel2}) it holds
\begin{equation}
\label{e:gammalimab}
    F_0(u;\Omega):=
    \begin{cases}
        \int_{J_u} \sigma(\nu_u) \, d\mathcal{H}^{n-1}, &\text{ if $u \in BV(\Omega;\{-1,1\})$}\\
        +\infty &\text{ otherwise in $u \in L^1(\Omega)$},
    \end{cases}
\end{equation}
where $\sigma \colon \mathbb{S}^{n-1} \to (0,\infty)$ with $\sigma(\xi)=\sigma(-\xi)$ while $\nu_u(x)$ denotes any unit normal at $x \in J_u$. The definition of surface tension is established through a cell formula via an infimum problem, which is closely related to the so-called optimal profile problem (see \eqref{e:optprointro} below). 

While the existence of solutions to the optimal profile problem is proven in a different work \cite{alb-bel1}, the authors do not explicitly use this result in the derivation of the Gamma-limit in \eqref{e:gammalimab}. Instead, their approach to the $\Gamma$-liminf inequality relies on a blow-up method, inspired by Fonseca and Müller’s work \cite{fonmul}. Through a density argument for finite perimeter sets, they reduce the construction of a recovery sequence to the case of polyhedral sets, for which they use only the notion of quasi-minimizers for the optimal profile problem. A key quantity introduced in their paper, and which plays a central role throughout, is the so-called locality defect $\Lambda_\epsilon(u, A, A')$. Roughly speaking, this quantity measures how far the set function $A \mapsto F_\epsilon(u; A)$ deviates from being an additive set function. The decay property of the locality defect as $\epsilon \to 0$ is a crucial tool for ensuring the locality of the limiting functional $F_0$. The precise definition of this quantity and the related properties can be found in Subsection \ref{s:locdef}. 

Another notable contribution concerning nonlocal phase transitions is the work of Savin and Valdinoci \cite{sav}, where they study a similar problem with specific kernels associated with the Riesz potential, allowing for more severe singularities at the origin. Specifically, they consider $J(h) = |h|^{-n-2s}$, with $s$ varying in $(0,1)$. The asymptotic behavior of the functionals significantly depends on whether $s \geq 1/2$ or $s < 1/2$. For $s \geq 1/2$, the limit functional is local and isotropic taking the form of $F_0$, as described in \cite[Theorem 1.4]{sav}. Conversely, for $s < 1/2$, $F_0$ remains non-local, giving rise to the notion of non-local perimeter. It is important to note that, for $s < 1/2$ the decay property of the defect functional is lost, and \emph{this loss is ultimately responsible for the non-locality of the limit functional}. We will work here under a suitable integrability condition on the kernel $J$ (see \eqref{e:growinfjintro}) excluding the case of fractional kernels with $s<1/2$. The study of non-homogeneous phase transitions in this latter regime thus remains an open question.

\subsection{Main results of the paper} To describe our results more effectively, we introduce the following space. Given real numbers $a$ and $b$, and for every $x \in \Omega$, we define the space $X_a^b$ as 
\[
X_a^b := \{\gamma \colon \mathbb{R} \to \mathbb{R} \ | \  \lim_{t \to -\infty} \gamma(t)=a, \ \lim_{t \to \infty} \gamma(t)=b \}.
\]  

The main contribution of this paper is to prove that, under suitable regularity assumptions on the kernel $J$ (see Subsection \ref{subs:adminterk}) and the potential $W$ (see Subsection \ref{subs:dw}), the family of functionals in \eqref{e:functionalintro} Gamma-converges in the $L^1$-topology to a local functional of the form:
\[
F_0(u;\Omega):=
\begin{cases}
\int_{J_u} \sigma(x,u^+(x),u^-(x),\nu_u(x)), &\text{if } u \in CP(\Omega;{\bf z}(x)), \\
+\infty, &\text{otherwise in } L^1(\Omega),
\end{cases}
\]
where the anisotropic surface tension $\sigma \colon \Omega \times \mathbb{R} \times \mathbb{R} \times \mathbb{S}^{n-1} \to (0,\infty)$ is given by the \emph{optimal profile problem}:
\begin{equation}
\label{e:optprointro}
\sigma(x,a,b,\xi):= \inf_{\gamma \in X_{a}^{b}} \frac{1}{4} \int_{\mathbb{R}}\int_{\mathbb{R}} J^\xi(t-t')|\gamma(t)-\gamma(t')|^2 \, dt dt' + \int_{\mathbb{R}} W(x,\gamma(t)) \, dt.
\end{equation}
In the expression above, $J^\xi \colon \mathbb{R} \to (0,\infty)$ is, loosely speaking, described by the integration of $J$ on all possible affine $(n-1)$-planes orthogonal to $\xi$ (see \eqref{e:kerneljxi1}). The space $CP(\Omega;{\bf z}(x))$ consists of all measurable functions $u \colon \Omega \to \mathbb{R}$ such that $u(x) \in {\bf z}(x)$ almost everywhere in $\Omega$, and the transition between the wells $z_1$ and $z_2$ occurs on the reduced boundary of some finite perimeter set in $\Omega$. Notice that, since in the present paper we assume only H\"older regularity on the moving wells, one does not have, in general, that $CP(\Omega;{\bf z}(x))$ is a subspace of $BV(\Omega)$.
The optimal profile problem in \eqref{e:optprointro} plays a role similar to the geodesic minimization relative to the distance $d_W$ discussed in the local case. Our asymptotic analysis is also complemented by a compactness study of minimizing sequences (see Proposition \ref{p:compactness}).

We first outline the hypotheses concerning the potential underlying the aforementioned Gamma-convergence result. For the discussion of the hypotheses concerning the kernel, we direct the reader to the next two subsections of this introduction, which focus on the Gamma-liminf inequality and the construction of the recovery sequence, respectively. First, following the approach in \cite{crigra}, we impose the natural condition that the wells are well separated (cf. \ref{H2}). Second, as the main novelty in the potential component compared to \cite{buc,crigra}, where only the Lipschitz case was covered, we assume that the moving wells $z_1, z_2 \colon \mathbb{R}^m \to \mathbb{R}$ as well as the partial derivative $\partial_t W$ are $\alpha$-Hölder continuous for some parameter $\alpha \in \left(\frac{1}{2}, 1\right]$. Around the wells, we broadly assume the existence of an increasing function $f \colon [0,\infty) \to [0,\infty)$ with $f(t)=0$ only at $t=0$, such that
\[
\delta \, f\big(\text{dist}(t, {\bf z}(x))\big) \leq W(x,t) \leq \frac{1}{\delta} \, f\big(\text{dist}(t, {\bf z}(x))\big), \ \ \text{ for some $\delta \in (0,1)$},
\]
and that $\partial_t^2 W(x,t)$ is strictly positive, uniformly with respect to both $x$ and $t$ (cf. \ref{H1}, \ref{H3}, and \ref{H4}). We observe that, under these hypotheses, the behavior of the function $f$ near the origin is necessarily quadratic. By potentially decoupling the regularity of the wells from the regularity of $\partial_t W$, one could allow for the possibility of considering functions $f$ whose rate of convergence to zero is slower than quadratic. However, we did not explore this analysis in the present paper. Finally, we assume that $W$ grows at least linearly in the $t$-variable at infinity, again uniformly with respect to both $x$ and $t$ (cf. \ref{H5}).

We stress that the $\alpha$-H\"older regularity for $\alpha \in (\frac{1}{2},1]$, besides being more general, is also a natural assumption while working in the regime that ensures the good decay property of the locality defects (specifically, under condition \eqref{e:regimedecay}). Indeed, any function $z \colon \Omega \to \mathbb{R}$ that is $\alpha$-Hölder continuous is an \emph{asymptotic ground state} for the energy. This means that, if $z$ also represents the zero set of the potential part of the energy, then $F_\ep(z; \Omega) \to 0$ as $\ep \to 0$, provided $\alpha > \frac{1}{2}$. Summarizing, the $\alpha$-Hölder regularity condition not only generalizes the classical Lipschitz case, but also provides a more natural framework for the sharp-interface analysis.

\subsection{The Gamma-liminf analysis} We are able to derive the desired Gamma-liminf inequality under very general integrability assumption on the interaction kernel (see Theorem \ref{t:gammaliminf2}). Specifically, we only assume
\begin{equation}
\label{e:growinfjintro}
    \int_{\mathbb{R}^m \setminus B_\rho(0)} J(h)|h| \, dh < \infty, \ \ \text{for some $\rho \in (0,\infty)$}.
\end{equation}
In particular, we do not assume any rate of integrability around the origin.

Our strategy does not rely on an abstract blow-up argument \`a la Fonseca-M\"uller, as adopted in \cite{alb-bel2}, but instead hinges upon the construction of a family of functionals, provided by the averages of one-dimensional energies, which we refer to as \emph{asymptotic calibration} (see Section \ref{s:dirblowup}). A fundamental quantity in the construction of such a calibration is the notion of the \emph{conjugate functional} introduced in \cite{alb-bel1}. Its role is twofold: given an integrable interaction kernel $J$ and an increasing function $u$, it enables (via the operator $H$ as introduced in Definition \ref{d:hdef}) the construction of a proper potential $W_u$ such that $u$ is the optimal profile associated with the corresponding energy (the so called \emph{inverse problem}). Additionally, it allows us to describe increasing minimizers of the optimal profile problem (see Subsection \ref{subs:conj1d}) as minimizers of a convex functional.

 The reason why we cannot rely on the strategy presented in \cite{alb-bel2} is due to two distinct factors. First, due to the non-homogeneity of $W$, rescaling the energy $F_\ep$ in both variables $x$ and $x'$ does not yield an energy that allows us to appropriately define an $\ep$-independent cell formula. Second, nothing is currently known about non-homogeneous optimal profile problems (see also the next subsection).
Consequently, an implicit approach describing the surface tension $\sigma$ based on an $\ep$-dependent cell formula will not allow us to directly deduce equality \eqref{e:optprointro} from the results on optimal profiles in \cite{alb-bel1}, as it can be done in the homogeneous case. Instead, additional work would still be required.

To briefly describe our technique, suppose we aim to prove the Gamma-liminf inequality for an admissible function $u$ that jumps along an affine $(n-1)$-plane $V$ orthogonal to some $\xi \in \mathbb{S}^{n-1}$. For a fixed $\rho > 0$, we properly cover $V$ with finitely many $n$-dimensional cubes $(Q(x_\ell))_\ell$, each having one face orthogonal to $\xi$. Assuming $u_\ep \to u$ in $L^1(\Omega)$, we rescale each restricted energy $F_\ep(u_\ep; Q(x_\ell))$ by $\ep$ \emph{only in the direction of} $\xi$. Using the aforementioned asymptotic calibrations, we estimate the cost, in terms of the cube size $Q(x_\ell)$, for bounding $\liminf_\ep F_\ep(u_\ep; Q(x_\ell))$ from below by $\sigma(x_\ell, z_1(x_\ell), z_2(x_\ell), \xi)$. Broadly speaking, the key \emph{double liminf inequality} in Proposition \ref{c:est_gamminf} affirms that the error depends on the modulus of continuity of $z_1$ and $z_2$ at $x_\ell$, the cube's center. The assumed H\"older continuity of the moving wells ensures that this error vanishes as the size of the covering cube approaches zero. This whole analysis is performed under the additional assumption of a globally integrable kernel. The setting of kernels $J$ only fulfillinf \eqref{e:growinfjintro} is recovered a posteriori via a truncation argument (see Theorems \ref{t:gammaliminf1} and \ref{t:gammaliminf2}).

We eventually note that the derivation of the Gamma-liminf inequality under condition \eqref{e:growinfjintro} is quite sharp. As already observed, in general, removing this condition leads to a different limiting non-local functional, based on the concept of non-local perimeter. This is precisely what happens in \cite{sav} with the choice $J(h) = |h|^{-n-2s}$ for $s \in (0, 1/2]$.
 This last observation underscores the importance of condition \eqref{e:growinfjintro} for the validity of the decay estimates of the defect functional (cf. Section \ref{s:locdef}), which is ultimately responsible for the locality of the limit functional.  
   
\subsection{The construction of recovery sequences}  
In the construction of a recovery sequence, the most delicate issue lies in the procedure of gluing together optimal profiles in such a way that the resulting functions retain the required regularity. Since a similar argument to the one in \cite{alb-bel2} allows us to reduce the construction of a recovery sequence to the case of finite perimeter sets with a polyhedral boundary, the gluing procedure needs to be carefully verified only when the jump of $u$ occurs on an affine $(n-1)$-plane $V$ orthogonal to some $\xi \in \mathbb{S}^{n-1}$. This requires a completely new approach compared to those used in the aforementioned paper. The primary reason for this is that, initially, a recovery sequence for $u$ would require the use of optimal profiles related to the following problem:
\begin{equation}
\label{e:nonhomoptpro}
\inf_{u \in X_{a}^{b}(\xi)} \frac{1}{4} \int_{C \times \mathbb{R}} \int_{\mathbb{R}^m} J(h) |u(x+h) - u(x)|^2 \, dx \, dh + \int_{C \times \mathbb{R}} W(x, u(x)) \, dx,
\end{equation}
where $C$ is some $(n-1)$-dimensional cube in the orthogonal space to $\xi$, and $u \colon \mathbb{R}^n \to \mathbb{R}$ belongs to $X_{a}^{b}(\xi)$ if $\lim_{t \to -\infty} u(y+t\xi) = a$ and $\lim_{t \to \infty} u(y+t\xi) = b$ for almost every $y \in C$. In fact, nothing is currently known (even regarding existence) for minimizers of general non-homogeneous optimal profile problems of the form given in \eqref{e:nonhomoptpro}. Indeed, in \cite{alb-bel1}, only the case of a homogeneous potential $W$ was addressed, showing the existence of one-dimensional solutions, i.e., solutions depending only on the direction $\xi$. From a more PDE-oriented perspective, it is known that minimizers of \eqref{e:nonhomoptpro} satisfy a non-local version of the Allen-Cahn equation involving a suitable integro-differential operator $L_J$, depending on $J$, which can be viewed as a generalization of the classical fractional Laplacian. While the study of solutions to the Allen-Cahn equation has been a major area of research in recent decades, both in the local and non-local cases, all the relevant literature analyzes the homogeneous case (cf. \cite{silv1, silv2, caff2, caff1, dong, cozzi, kass, serra}). Therefore, we cannot rely on this strategy and must develop a different one.

In contrast to the Gamma-liminf analysis, our procedure for proving the optimality of the identified lower bound relies on a quantification of the singularity of the kernel $J$ at the origin, while preserving a behavior at infinity compatible with \eqref{e:growinfjintro}. Since our argument relies on gluing one-dimensional optimal profiles, we focus on the corresponding Euler-Lagrange equations, which are given by a family (depending on $ x \in \Omega $ and $ \xi \in \mathbb{S}^{n-1} $) of one-dimensional generalized Allen-Cahn equations of the form:
\begin{equation}
\label{e:onedimoptintro123}
-L_{J^\xi} \gamma + \partial_t W(x,\gamma) = 0.
\end{equation}
Here, $\gamma \colon \mathbb{R} \to \mathbb{R}$ belongs to $X_{a}^{b}$ with $a =z_1(x)$ and $b=z_2(x)$, whereas $L_{J^\xi}$ is a suitably defined integro-differential operator associated with the one-dimensional kernel $J^\xi$ (see \eqref{e:regopt3} and \eqref{e:kerneljxi1}, respectively). Relying on Proposition \ref{t:eximin}, we know that increasing solutions of \eqref{e:onedimoptintro123} always exist. As a further step in our strategy, we need to ensure that these increasing optimal profiles are also continuous. This leads us to consider two distinct scenarios: the case of integrable and non-integrable kernels. For the case of an integrable kernel, assuming $\|J\|_{L^1} = 1$, we can ensure continuity in the class of increasing optimal profiles under the invertibility of the function $Q_x \colon [z_1(x), z_2(x)] \to \mathbb{R}$ defined as
\[
Q_x(t) := \partial_t W(x,t) + t, \ \ \text{for every $x \in \Omega$}.
\]
Loosely speaking, we require the potential $W(x, \cdot)$ to exhibit limited concavity when restricted to the region between the wells. This condition ensures that the optimal profiles maintain the desired regularity. At this point, we restrict ourselves to noting that this hypothesis aligns with prior literature, where analogous assumptions have been employed to guarantee the regularity of optimal profiles (cf. \cite[Theorem 3.1]{bate}) and that the invertibility condition can be directly used in \eqref{e:onedimoptintro123} to infer the desired continuity (see Proposition \ref{p:regoptintk}).

In the case of non-integrable kernels, a more refined approach is necessary. Referring to Definition \ref{def:classJ} for the precise definition of the relevant class, we briefly note that any admissible kernel possesses a singularity at the origin comparable to $|h|^{-m-\eta}$ for some $\eta \in (0,1)$. In particular, these singularities still satisfy \eqref{e:regimedecay} in a neighborhood of the origin. By applying, for instance, the result in \cite{cozzi}, we can infer local H\"older regularity and, in particular, continuity for any increasing optimal profiles (see Proposition \ref{p:holderreg1}). In general, the class of non-integrable kernels considered in this paper is slightly smaller than the class analyzed in \cite{alb-bel1,alb-bel2}. A prototypical example of a singularity at the origin that does not fall within our analysis is $|h|^{-m}$. The reason for this lies in the fact that such kernels are neither integrable nor, loosely speaking, possess enough coercivity at the origin to enforce the H\"older continuity of optimal profiles. Nevertheless, since once continuity is established, our argument works under the sole one-side constraint $J(h) \lesssim |h|^{-m-\eta}$ around the origin, one possible way to include singularities of the type $|h|^{-m}$ in our analysis would be to develop additional tools to ensure continuity directly for milder singularities at the origin. This would likely require introducing a finer regularization argument tailored to handle such critical cases, which we leave as a potential direction for future research.

It is also worth noting that, in both the integrable and non-integrable cases, our regularity assumption on $t \mapsto W(x,t)$ around the wells plays a crucial role in ensuring the continuity of optimal profiles. As shown in \cite{alb-bel1}, for $\|J\|_{L^1}=1$, the function $\text{sign}(t)$ \emph{is an optimal profile for any potential $W(t) \geq 1-t^2$ regardless of the choice of $J$}.

The main reason why we need to ensure the continuity of optimal profiles is to ensure the validity of a \emph{strong comparison principle} for solutions of \eqref{e:onedimoptintro123} (cf. \cite{cozzi}). Specifically, let $\gamma_x \colon \mathbb{R} \to [z_1(x), z_2(x)]$ denote an increasing minimizer of \eqref{e:optprointro} with $a = z_1(x)$ and $b = z_2(x)$, corresponding to a point $x \in \Omega$. These minimizers, invariant under translations, are particular solutions in $BV(\mathbb{R}) \cap L^\infty(\mathbb{R})$ of the Euler-Lagrange equation \eqref{e:onedimoptintro123}. 

In this context, we establish a novel H\"older selection result ensuring that optimal profiles can be chosen to vary $\beta$-H\"older continuously with respect to the variable $x$, where the exponent $\beta$ depends solely on the spatial $\alpha$-H\"older regularity of the partial derivative of $W$ with respect to $t$. This regularity is, in turn, governed by the behavior of the moving wells. Theorem \ref{p:lipselx} establishes the precise relationship $\beta = \alpha^2$, which plays a crucial role in constructing recovery sequences by gluing together optimal profiles while accounting for the singularity of the kernel at the origin.

As already mentioned, while the regularity in time of solutions to integro-differential equations of the form \eqref{e:onedimoptintro123} has been extensively studied in the literature, to the best of our knowledge, no results addressing parameter-dependent regularity are present in the literature. Theorem \ref{p:lipselx} is established by combining two key ingredients. 

The first is indeed a strong comparison principle, employed in Proposition \ref{p:lipdep}, to derive an intermediate result establishing that such a selection can be made for any pair of points $x$ and $x'$ in $\Omega$, with a H\"older constant independent of the specific choice of points. 

The second is a profound result by Fefferman and Shvartsman in \cite{fef}. Briefly, given a positive integer $k$, let $F$ be a set-valued map from a metric space $X$ into the family of all \emph{compact convex subsets} of a Banach space $Y$ with dimension at most $k$. In broad terms, these authors establish a finiteness principle for the existence of a Lipschitz selection of $F$, providing the sharp value of the finiteness constant (see Theorem \ref{t:finitelipsel}). 

In our case, the main challenge in applying this selection result is based on the fact that equation \eqref{e:onedimoptintro123} does not impose a convex constraint. As detailed in the proof of Proposition \ref{p:lipsel}, we overcome this obstacle by passing to the family of inverse maps, constituting the domain of the conjugate functional, which is convex and serves as the appropriate tool to bridge the result established in Proposition \ref{p:lipdep} with that of Fefferman and Shvartsman. 

Recalling that $J(h) \lesssim |h|^{-m-\eta}$ in all cases, the condition $\eta < \alpha^2$ required in Theorem \ref{t:gammanint} ensures a crucial property of the constructed recovery sequence $(u_\ep)$. Specifically, in the non-local term 
\[
\frac{1}{4\ep} \int_{\Omega} \int_{\Omega} J_\ep(x-x') |u_\ep(x) - u_\ep(x')|^2 \, dx \, dx',
\]
only interactions of the form $|u_\ep(y + t\xi) - u_\ep(y + t'\xi)|^2$, namely, those aligned with the direction $\xi$, contribute to the limiting energy as $\ep \to 0$. Conversely, interactions orthogonal to $\xi$, namely $|u_\ep(y + t\xi) - u_\ep(y' + t\xi)|^2$, vanish as $\ep \to 0$ and do not influence the limiting functional. 

This analysis demonstrates that, as $\ep \to 0$, the energy increasingly resembles the homogeneous case, where the recovery sequence depends exclusively on the $\xi$-direction. In that scenario, interactions orthogonal to $\xi$ are null for every $\ep > 0$. 

 \subsection{Outline of the paper}

In Section \ref{s:sec2}, we formalize the problem and present the main results, introducing the notation, the double-well potentials, the sequence of functionals, the surface tension, and the class of admissible interaction kernels. Section \ref{sec:1d} focuses on the one-dimensional problem, addressing the existence and characterization of optimal profiles using a conjugate-functional approach. It also explores the regularity properties of optimal profiles for both integrable and non-integrable kernels and their continuous dependence on parameters, culminating in a H\"older selection principle for optimal profiles. In Section \ref{s:locdef}, we introduce the locality defect and $\epsilon$-traces. Section \ref{s:sec5} establishes compactness and introduces slicing techniques essential for the analysis. The construction of recovery sequences and the $\Gamma$-limsup inequality are addressed in Section \ref{s:sec6}, while Section \ref{s:gammalimif} is dedicated to the $\Gamma$-liminf inequality. Finally, Section \ref{s:dirblowup} examines the directional blow-up of the energy at a point and, in particular, the double liminf inequality.

\section{ Setting of the problem and main results}
\label{s:sec2}
 In this section we specify our general assumptions, collect a few preliminary observations, and provide a precise statement of our main results.

\subsection{Notation}
In what follows, $\Omega$ is a bounded domain in $\mathbb R^m$, $m\in \mathbb N$, namely an open, connected set. Balls of radius $r$ and center $x$ in $\mathbb R^m$ will be denoted by $B_r(x)$. We will use the notation $\mathbbm{1}_A$ to indicate the characteristic function of a set $A\subset \mathbb R^m$, namely $\mathbbm{1}_A(x)=1$ if $x\in A$ and $\mathbbm{1}_A(x)=0$ otherwise in $\mathbb R^m$. Given $\alpha \in (0,1]$, the $\alpha$-H\"older semi-norm of a function $f$ on $\Omega$ will be denoted by $\lfloor f \rfloor_{C^{0,\alpha}}(\Omega)$,  where we recall that 
\begin{equation}
    \lfloor f \rfloor_{ C^{0,\alpha}}(\Omega) := \sup_{h \neq h'\in\Omega} \frac{|f(h)-f(h')|}{|h-h'|^\alpha}.
\end{equation}
We will use classical notation for Sobolev and Lebesgue spaces, as well as for functions of bounded variation $(BV)$. The space of bounded Radon measures on $\R$ will be denoted by $\mathcal{M}_b(\R)$.

\subsection{Double-well potentials} 
\label{subs:dw}

  The presence of different material phases in $\Omega$ is encoded by a function $W \colon \mathbb{R}^m\EEE \times \mathbb{R}\to [0,+\infty)$, with $W:(x,t)\mapsto W(x,t)$.  Throughout the paper we will always assume that $W$ satisfies the following conditions: 
 \begin{enumerate}[label=(H.\arabic*)]
 \setcounter{enumi}{0}
 \item \label{H1}
 $W$ is continuous, it is twice continuously-differentiable  with respect to \EEE $t$, and the partial derivative $\partial_t W$ is 
  locally $\alpha$-H\"older-continuous for some $\alpha \in \left(\frac12,1\right]$. 

    $W(x,t) =0$ if and only if $t \in \mathbf{z}(x):= \{z_1(x), z_2(x) \}$ for every $(x,t) \in \mathbb{R}^m\EEE \times \mathbb{R}$, where the wells $z_i \colon \mathbb{R}^m\EEE \to \mathbb{R}$, $i=1,2$, are  $\alpha$-H\"older continuous and $\alpha$ is the same exponent as above.
    \end{enumerate}
 
 The growth of $W$ from above and below will be encoded by an \UUU increasing function $f \colon [0,\infty) \to [0,\infty)$ which is quadratic in a neighbhorhood of the origin \EEE and with $f(t)=0$ if and only if $t=0$. To be precise, we will assume that for every compact set $K \subset \mathbb{R}^m\EEE$ there exists $\delta_K \in (0,1]$ satisfying
\begin{enumerate}[label=(H.\arabic*)]
\setcounter{enumi}{1}
    \item \label{H2}
    $
    \min_{x \in K} |z_1(x)-z_2(x)| > 8\delta_K
    $
    and, in particular, $z_1(x)<z_2(x), \ \ \text{ for every $x \in K$;}$
    \item \label{H3} 
     $
     \delta_K f(\text{dist}(t;\mathbf{z}(x))) \leq W(x,t) \leq \frac{1}{\delta_K}f(\text{dist}(t;\mathbf{z}(x))), \ \ \text{ for every}(x,t) \in K \times \mathbb{R},
     $
    \item \label{H4}$\partial^2_t W(x,t) \geq \delta_K$ for every $x \in K$ and $t \in [z_1(x)-\delta_K,z_1(x)+\delta_K] \cup [z_2(x)-\delta_K,z_2(x)+\delta_K]$,
    \item \label{H5}  $W(x,t) \geq \delta_K|t|$, for every $x \in K$ and every $|t| >\frac{1}{\delta_K}$.
\end{enumerate}
\EEE 

\begin{remark}[On \ref{H1}--\ref{H5}]
    \label{r:convimpl}
    Assumptions \ref{H1} and \ref{H4} imply that $W(x,\cdot)$ is increasing on $[z_1(x),z_1(x)+\delta_K]$ and decreasing on $[z_2(x)-\delta_K,z_2(x)]$.
    Condition \ref{H5} guarantees that for every compact set $K\subset \mathbb{R}^m\EEE$ \EEE there exists $M_K>0$ such that $W(x,\pm M_K) \leq W(x,t) $ for every $|t| \geq M_K$ and $x \in K$. \UUU The assumption on the $\alpha$-H\"olderianity of $\partial_t W$ is coherent with the assumption on the $\alpha$-H\"olderianity of the moving wells $\{z_1(x),z_2(x)\}$ in \ref{H1}. Indeed, under a stronger assumption than \ref{H3}, where 
    \[
    W(x,t) \simeq f(\emph{dist}(t;{\bf z}(x)))= f(\emph{min}\{|t-z_1(x)|,|t-z_2(x)|\}),
    \]
    it is straightforward to verify that, since $\frac{d}{ds}f(s)= O(|s|)$ as $s \to 0$, then 
    \[
     \partial_t W(t,x) =f_1(t,x) (t-z_1(x)) + f_2(t,x)(t-z_2(x)),
    \]
    for a pair of locally Lipschitz-continuous functions $f_1$ and $f_2$.\EEE
     We eventually point out that $W$ and $z_1,z_2$ are defined in the whole space in order to avoid unnecessary technicalities for points approaching the boundary of the domain of definition.
\end{remark}

\subsection{The sequence of functionals and the surface tension}

Let $J \colon \mathbb{R}^m\EEE \to (0,\infty)$ be measurable, and let $W$ satisfy \ref{H1}--\ref{H5}. For every $\ep >0$, we define
\begin{equation}
    \label{e:kernelres}
    J_\ep(h) := \ep^{-n}J(h/\ep), \ \ \text{ for all $h \in \mathbb{R}^m\EEE$,}
\end{equation}
and we consider the following energy functional
\begin{equation}
    \label{e:functional}
    F_\ep(u;\Omega):= \frac{1}{4\ep} \int_{\Omega \times \Omega} J_{\ep}(x'-x)|u(x')-u(x)|^2 \, dx'dx + \frac{1}{\ep} \int_{\Omega}W(x,u(x))\, dx,
\end{equation}
defined for every function $u \in L^1(\Omega)$.

 Our main result concerns the identification of the $\Gamma$-limit of the sequence $(F_{\ep})$ in the strong $L^1$-topology and under suitable regularity assumptions on the kernel $J$. We first introduce the space of all possible limiting configurations.

\begin{definition}[The space $CP(\Omega;\mathbf{z}(x))$]
    We say that a measurable function $u \colon \Omega \to \mathbb{R}$ belongs to $CP(\Omega;\mathbf{z}(x))$ if there exists a finite perimeter set $E$ of $\Omega$ such that $u=z_1$ a.e. on $E$ and $u=z_2$ a.e. on $\Omega \setminus E$.
\end{definition}

If $u \in CP(\Omega;\textbf{z}(x))$, due to the continuity of the maps $z_i \colon \Omega \to \mathbb{R}$ and due to the structural properties of finite perimeter sets, for $\mathcal{H}^{n-1}$-a.e. $x \in \Omega$ we have $x \in J_u$ if and only if $x \in \partial^* E$, $\nu_u(x)= \pm \nu_{E}(x)$, and $[u](x) = z_1(x) - z_2(x)$. Nevertheless, in general it is not true that $CP(\Omega;\mathbf{z}(x)) \subset BV(\Omega)$, due to the fact that the maps $z_1,z_2$ are, a priori, not assumed to have bounded variation.

The limiting functional is given in terms of a  surface tension. In order to write its definition we first need to introduce auxiliary one-dimensional energy functionals.
For every $\xi \in  \mathbb{S}^{m-1}, \EEE$ define $J^\xi \colon \mathbb{R}\EEE \to [0,\infty]$ as
\begin{equation}
    \label{e:kerneljxi1}
    J^{\xi}(t) := \int_{\xi^\bot} J(y+t\xi) \, dy\quad\text{for all }t\in\mathbb{R}.
\end{equation}
We consider for every $(x,\xi) \in \overline{\Omega} \times  \mathbb{S}^{m-1} \EEE$ the functional
\begin{equation}
    \label{e:functionalxi148}
    F^\xi_{x}(\gamma) := \frac{1}{4} \int_{\mathbb{R} \times \mathbb{R}} J^\xi(t'-t) |\gamma(t')-\gamma(t)|^2 \, dt'dt
+\int_{\mathbb{R}} W(x,\gamma(t)) \, dt,
\end{equation}
for every measurable function $\gamma \colon \mathbb{R} \to \mathbb{R}$. 
We further introduce a notation for the space of admissible profiles attaining two given asymptotic values. For $a,b\in\R$, we set
\begin{equation}
\label{eq:Xab}
X_a^b:=\{\gamma\in L^1(\R):\,\lim_{t\to -\infty} \gamma(t)=a\text{ and }\lim_{t\to +\infty} \gamma(t)=b\}.
\end{equation}

\UUU
\begin{remark}
    To be precise, since we only assumed $\gamma \in L^1(\mathbb{R})$, the condition $\lim_{t \to +\infty} \gamma(t)=b$ should be intended as follows: for every $\epsilon >0$, there exists $M >0$ such that $\mathcal{L}^1(\{t \in [M,\infty) \ | \ |\gamma(t) -b| > \epsilon\}) =0$. Analogously for the condition $\lim_{t \to -\infty} \gamma(t)=a$.
\end{remark}
\EEE

Our limiting surface tension is defined according to the following infimum problem. 

\begin{definition}[The surface tension $\sigma$]
\label{def:surf}
    We define the surface tension $\sigma \colon \mathbb{R}^m\EEE \times  \mathbb{S}^{m-1} \EEE \to (0,\infty)$ as
\begin{equation}
\label{e:surftension}
    \sigma(x,\xi):= \inf_{\gamma \in X_{z_1(x)}^{z_2(x)}} F^\xi_{x}(\gamma).
\end{equation}
\end{definition}

Our $\Gamma$-limit will be given by the following functional.
\begin{equation}
\label{e:limit}
    F_0(u):=
    \begin{cases}
        \int_{J_u} \sigma(x,\nu_u(x)) \, d\mathcal{H}^{n-1}(x), &\text{ if } u \in CP(\Omega;\mathbf{z}(x)) \\
        +\infty, &\text{ if } u \in L^1(\Omega) \setminus CP(\Omega;\mathbf{z}(x)).
    \end{cases}
\end{equation}

\subsection{Admissible interaction kernels} 
\label{subs:adminterk}
 We collect here our assumptions for the interaction kernels $J$. We will always work with even kernels, namely such that $J(-h)=J(h)$ for every $h\in \R^m$. In our main result we will distinguish between two cases: we will either assume $J$ satisfy
 \begin{equation}
    \label{K1} 
    \int_{\R^m} J(h)(1+|h|)\,dh<+\infty,
    \end{equation}
 or we will require $J$ to belong to the class of not-$L^1$-integrable admissible kernels defined below.

\begin{definition}[The class $\mathcal{L}_m(\eta,\lambda,\rho)$]
\label{def:classJ}
 Let $J \colon \mathbb{R}^m \to (0,\infty)$ be even. Let $\eta,\lambda,\rho \in (0,1)$. We say that $J$ belongs to the class $\mathcal{L}_m(\eta,\lambda,\rho)$ if there exists $\tilde{J} \in L^1(\mathbb{R}^m)$ such that
\begin{align}
    \label{e:classlm1}
    &\frac{\lambda}{|h|^{m+\eta}} \leq  J(h) \leq \frac{1}{\lambda |h|^{m+\eta}}+\tilde{J}(h), \ \ \text{ for every $h \in B_\rho(0) \setminus \{0\}$}\\
    \label{e:classm2}
    & \int_{\mathbb{R}^m \setminus B_{\rho}(0)} J(h)|h| \, dh < \infty.
\end{align}
    
\end{definition}

 We observe that if $J\in \mathcal{L}_m(\eta,\lambda,\rho)$, then $J\notin L^1(\mathbb R^m)$. On the other hand, kernels $J\in \mathcal{L}_m(\eta,\lambda,\rho)$ satisfy the following integrability condition: 

\begin{enumerate}[label=(K.\arabic*)]
\vspace{1mm}
  \item
    \label{K2}$\int_{\mathbb{R}^m} J(h)|h| \, dh < \infty$.
\end{enumerate}
\vspace{1mm}
\UUU As explained in the introduction, condition \ref{K2} is fundamental for deriving the continuity property of the $\epsilon$-traces, as described in Section \ref{s:locdef}. In particular, this condition ensures that our limiting energy can be characterized as a local functional.\EEE

\subsection{Main results}
\UUU In this subsection, we rigorously state the main results of the paper and briefly outline the intermediate steps developed throughout the paper, which are instrumental in their proofs. \EEE

The first part of our analysis concerns the 1D setting. In particular, for $x\in \R^m$, we study minimizers in $X_{z_1(x)}^{z_2(x)}$ (recall \eqref{eq:Xab}) of the problem
\begin{equation}
\label{e:1d-intro}
       F_x(\gamma):=\frac{1}{4}\int_{\mathbb{R}\times \mathbb{R}} J(t'-t)|\gamma(t')-\gamma(t)|^2 \, dt'dt + \int_{\mathbb{R}} W(x,\gamma(t)) \, dt.
         \end{equation}
Our first main contribution reads as follows.

\begin{theorem}
\label{thm:sel-main}
    Let $K \subset \Omega\subset  \mathbb{R}^m$ be compact, let $J \colon  \mathbb{R} \to (0,+\infty)$ be even, and let $W \colon \Omega \times \mathbb{R} \to [0,\infty)$ satisfy \ref{H1}--\ref{H4}.  Let us further denote for every $x \in K$ 
   \[
   O_x := \{ \gamma \in X_{z_1(x)}^{z_2(x)} \ | \ \emph{$\gamma$ is increasing and continuous, and minimizes \eqref{e:1d-intro}} \}.
   \]
  Assume further that one of the following conditions is satisfied
    \begin{enumerate}[(i)]
       \item  $Q_x(t):= \partial_t W(x,t)+t$ has a continuous inverse for every $x \in K$ and $t \in [z_1(x),z_2(x)]$, and
       \begin{equation*}
          \int_{\mathbb{R} \setminus (-1,1)} J(h)|h| \, dh < \infty  \ \ \text{ with } \ \  \|J\|_{L^1(\mathbb{R})}=1
       \end{equation*}
       \item $J \in \mathcal{L}_1(\eta,\lambda,\rho)$ for some $\eta,\lambda,\rho \in (0,1)$.
   \end{enumerate}
   Then, the class $O_x$ is nonempty for every $x\in K$. Additionally, there exists a weight function $\overline{\sigma}_J \colon \mathbb{R} \to (0,+\infty)$ (depending also on $K$)
   such that, for every $\alpha \in (1/2,1]$  we find a family $(\gamma_x)_{x \in K}$ satisfying $\gamma_x \in O_x$ and a
   constant $L= L(K,J,\lfloor z_1 \rfloor_{C^{0,\alpha}}(\Omega),\lfloor z_2 \rfloor_{C^{0,\alpha}}(\Omega)) > 0$ satisfying 
   \begin{equation*}
            \|\gamma_x -\gamma_{x'}\|_{L^1(\mathbb{R};\overline{\sigma}_J)} \leq L\, \lfloor\partial_t W \rfloor_{C^{0,\alpha}(K \times [-M,M])} |x-x'|^{\alpha^2}, \ \ \text{ for $x,x' \in K$},
         \end{equation*}
          where $M:= 2\max_{x \in K} |z_1(x) \vee z_2(x)|$.
    \end{theorem}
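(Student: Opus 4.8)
The plan is to build the selection in three stages: (1) establish that $O_x$ is nonempty and that its elements admit a convex re-parametrization via the conjugate functional; (2) prove a pointwise-pair H\"older estimate between optimal profiles at two arbitrary points $x,x'\in K$, with constant independent of the pair; and (3) upgrade this to a genuine selection using the Fefferman--Shvartsman finiteness principle for Lipschitz selections of compact convex set-valued maps. For step (1), the existence of increasing minimizers of \eqref{e:1d-intro} follows from the one-dimensional theory developed earlier (Proposition~\ref{t:eximin}); continuity of these increasing minimizers is exactly where the dichotomy in the hypotheses enters. Under~(i), the invertibility of $Q_x(t)=\partial_tW(x,t)+t$ together with $\|J\|_{L^1}=1$ lets one read off continuity directly from the Euler--Lagrange equation \eqref{e:onedimoptintro123} (Proposition~\ref{p:regoptintk}); under~(ii), $J\in\mathcal L_1(\eta,\lambda,\rho)$ provides enough coercivity at the origin for the local H\"older-regularity result (e.g.\ \cite{cozzi}, via Proposition~\ref{p:holderreg1}) to apply. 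Either way $O_x\neq\emptyset$, and via the operator $H$ of Definition~\ref{d:hdef} one passes to the family of inverse functions $\gamma_x^{-1}$, which lie in the convex domain of the conjugate functional; the weight $\overline\sigma_J$ is the one naturally attached to that conjugate-functional framework, so that the $L^1(\mathbb R;\overline\sigma_J)$-distance between profiles corresponds to a controllable distance between their inverses.

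For step (2), fix $x,x'\in K$ and compare the (continuous, increasing) minimizers $\gamma_x$ and $\gamma_{x'}$. The key is a strong comparison principle for solutions of the integro-differential equation \eqref{e:onedimoptintro123} (available precisely because we have arranged continuity), applied to $\gamma_x$ and to suitable vertical translates of $\gamma_{x'}$. Since the two equations differ only through $\partial_tW(x,\cdot)$ versus $\partial_tW(x',\cdot)$, and $|\partial_tW(x,t)-\partial_tW(x',t)|\le \lfloor\partial_tW\rfloor_{C^{0,\alpha}(K\times[-M,M])}|x-x'|^\alpha$ on the relevant range, one obtains that $\gamma_x$ is squeezed between two translates of $\gamma_{x'}$ by an amount controlled by $|x-x'|^\alpha$; integrating against $\overline\sigma_J$ and using the behavior of $\partial_t^2W$ near the wells (H4) to convert the vertical shift into a horizontal one, the exponent degrades from $\alpha$ to $\alpha^2$. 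This is the content of Proposition~\ref{p:lipdep}, and it yields: for all $x,x'\in K$ there exists $\gamma\in O_x$ and $\gamma'\in O_{x'}$ with $\|\gamma-\gamma'\|_{L^1(\mathbb R;\overline\sigma_J)}\le L\,\lfloor\partial_tW\rfloor_{C^{0,\alpha}}|x-x'|^{\alpha^2}$ — i.e.\ a selection \emph{pairwise}, but not yet a coherent global one.

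For step (3), reformulate: on the metric space $(K,\,|\cdot|^{\alpha^2})$ consider the set-valued map $x\mapsto \Gamma(x):=\{\,\gamma_x^{-1}:\gamma_x\in O_x\,\}$, realized inside the Banach space appearing in the conjugate-functional description (or an $\ell^1$-type space with weight $\overline\sigma_J$). The crucial structural fact, proved via the conjugate functional, is that each $\Gamma(x)$ is a \emph{compact convex} set (minimizers of a convex functional), and — after a reduction to a finite-dimensional setting or a direct application of the infinite-dimensional statement — one is in the setting of Theorem~\ref{t:finitelipsel}. Step (2) gives exactly the hypothesis of that finiteness principle (every pair of points admits a near-selection with the stated modulus), so Fefferman--Shvartsman produces a bona fide selection $x\mapsto \gamma_x^{-1}$ that is $\alpha^2$-H\"older for the $\overline\sigma_J$-weighted distance on inverses; transporting back through $H$ gives the family $(\gamma_x)_{x\in K}$ with $\gamma_x\in O_x$ and the asserted estimate, the constant $L$ depending only on $K$, $J$, and the H\"older seminorms of $z_1,z_2$ (which control $M$ and the constants in the comparison argument). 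I expect the main obstacle to be step (3)'s convexity requirement: equation \eqref{e:onedimoptintro123} is not a convex constraint on $\gamma$ itself, so the whole argument must be routed through the inverse maps / conjugate functional, and one must check carefully that compactness, convexity, and the pairwise estimate all survive this change of variables — in particular that the weight $\overline\sigma_J$ is chosen consistently so that Proposition~\ref{p:lipdep} and Theorem~\ref{t:finitelipsel} speak about the same metric.
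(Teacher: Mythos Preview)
Your overall three-stage architecture matches the paper's approach, and steps (1) and (2) are essentially right. The gap is in step (3), and it sits exactly where you flag uncertainty.

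First, the set-valued map. You propose $\Gamma(x) = \{\gamma^{-1} : \gamma \in O_x\}$, the set of inverses of \emph{all} minimizers, and argue it is compact convex because the conjugate functional is convex. But Theorem~\ref{t:finitelipsel} requires each value to lie in $\mathcal{K}_m(Y)$: a compact convex subset of some $m$-dimensional affine subspace, with $m$ \emph{uniform} in $x$. There is no reason the full minimizer set should have this structure. The paper instead fixes a single centered family $(\gamma_x)_{x\in K}$ (Lemma~\ref{r:cenfam}) and takes
\[
C_x := \{\gamma_x^{-1} + k \ :\ k \in [-2R', 2R']\},
\]
a bounded segment of \emph{translates} of one inverse. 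This is one-dimensional by construction, so $m=1$. The reason this is the right object is that Proposition~\ref{p:lipdep} produces exactly a comparison up to translation: for each pair $x,x'$ there is a shift $k$ with $\|\gamma_x - \gamma_{x'}(\cdot - k)\|_{L^1(\sigma_J)}$ controlled by $|x-x'|^{\alpha^2}$. Translation-invariance of $F_x$ is what makes this one-dimensional ambiguity the \emph{only} freedom you need to resolve; the conjugate functional enters not to supply convexity of the minimizer set, but to pass the translation estimate from profiles to their inverses (inequality \eqref{e:equinormY}).

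Second, the finiteness constant. With $m=1$ the hypothesis of Theorem~\ref{t:finitelipsel} must be verified on subsets of size $2^{m+1}=4$, not just pairs. Your claim that ``step (2) gives exactly the hypothesis'' is therefore incomplete: the pairwise shifts $k=k(x,x')$ need not be mutually compatible across three or four points. The paper closes this with a combinatorial argument (Step~3 of the proof of Proposition~\ref{p:lipsel}): any four points in $K$ can be linked into a tree (a path or a star) so that every pair is joined by a chain of at most three edges, each no longer than the pair itself; concatenating the pairwise bounds along these edges yields a Lipschitz selection on the four-point set, at the cost of a factor $3$ in the constant. This step is elementary but not automatic, and it is precisely what upgrades Proposition~\ref{p:lipdep} to the input Fefferman--Shvartsman actually requires.
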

The proof of Theorem \ref{thm:sel-main} is the focus of Section \ref{sec:1d} and relies on several intermediate results, which are proven in dedicated subsections. The existence of increasing minimizers for \eqref{e:1d-intro} is shown in Proposition \ref{t:eximin} in Subsections \ref{subs:existence1d}--\ref{subs:conj1d} by slightly generalizing previous results obtained in \cite{alb-bel1}. The regularity of such minimizers is the subject of Propositions \ref{p:regoptintk} and \ref{p:holderreg1}, in Subsections \ref{subs:cont1d-integrable} and
\ref{subs:cont1d-not-integrable}, respectively, obtained by extending some arguments in \cite{cozzi}. In particular, by combining Proposition \ref{t:eximin} with Propositions \ref{p:regoptintk} and \ref{p:holderreg1} we obtain the first part of the theorem, namely that the set $O_x$ is nonempty for every $x\in K$. The second part of the theorem follows by Proposition \ref{p:lipselx}, which in turn is the main result of Subsection \ref{subs:sel-1d}. To prove this latter characterization, we essentially rely on an abstract argument  established in \cite{fef}, as well as on an intermediate continuous dependence analysis established in Proposition \ref{p:contdep} and Subsection \ref{subs:cont-dep1d}.

Our second main result concerns the asymptotic behavior of the functionals $(F_\ep)_\ep$.
We split it into two theorems corresponding to the case of non-integrable and $L^1$-kernels, respectively. \EEE

\begin{theorem}[$\Gamma$-limit with non-integrable interaction kernels]
\label{t:gammanint}
    Let $W \colon \mathbb{R}^m\EEE \times \mathbb{R} \to [0,\infty)$ satisfy \ref{H1}--\ref{H5}, and let $\alpha \in \left(\frac12,1\right]$ be given by \ref{H1}. Assume in addition that $J\in\mathcal{L}_m(\eta,\lambda,\rho)$ for  $\eta \in (0,\alpha^2)$, $\lambda \in (0,1]$, and $\rho >0$. Then $(F_\ep)$ $\Gamma$-converges to $F_0$ in the $L^1$-topology.  
\end{theorem}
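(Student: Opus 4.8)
The plan is to prove the $\Gamma$-convergence statement $F_\ep \xrightarrow{\Gamma} F_0$ in $L^1(\Omega)$ by splitting it, as usual, into the $\Gamma$-$\liminf$ inequality, the compactness statement, and the $\Gamma$-$\limsup$ inequality, and then invoking the intermediate results the paper has already set up. Since $J\in\mathcal{L}_m(\eta,\lambda,\rho)$ satisfies \ref{K2} by the remark following Definition \ref{def:classJ}, in particular it satisfies the weaker condition \eqref{e:growinfjintro}. Hence, for the lower bound, I would directly apply the general $\Gamma$-$\liminf$ result (Theorem \ref{t:gammaliminf2}), which only requires \eqref{e:growinfjintro}: for any $u\in L^1(\Omega)$ and any $u_\ep\to u$ in $L^1$, one has $\liminf_\ep F_\ep(u_\ep;\Omega)\ge F_0(u)$. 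This already uses the asymptotic calibration machinery of Section \ref{s:dirblowup} and the double-$\liminf$ inequality (Proposition \ref{c:est_gamminf}), so at this point no new argument is needed; I would just check that the hypotheses on $W$ (here \ref{H1}--\ref{H5}) match those under which Theorem \ref{t:gammaliminf2} is proven.

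For compactness, I would invoke Proposition \ref{p:compactness}: if $(u_\ep)$ has equibounded energy $\sup_\ep F_\ep(u_\ep;\Omega)<\infty$, then $(u_\ep)$ is precompact in $L^1(\Omega)$ and every cluster point lies in $CP(\Omega;\mathbf{z}(x))$. This again only needs \ref{K2}, hence holds for $J\in\mathcal{L}_m(\eta,\lambda,\rho)$. Together with the $\Gamma$-$\liminf$ inequality this shows that the $\Gamma$-limit, if it exists, is finite only on $CP(\Omega;\mathbf z(x))$, consistently with the form of $F_0$.

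The substantive part is the $\Gamma$-$\limsup$ inequality: for every $u\in CP(\Omega;\mathbf z(x))$ I must construct a recovery sequence $u_\ep\to u$ in $L^1$ with $\limsup_\ep F_\ep(u_\ep;\Omega)\le F_0(u)$. Here is exactly where the stronger hypothesis $\eta<\alpha^2$ on the singularity of $J$, and the H\"older-continuity $\alpha\in(1/2,1]$ of $\partial_tW$ and of the wells, enter. The strategy, following the outline in the introduction and mirroring the reduction in \cite{alb-bel2}: first reduce, by the density of polyhedral sets in finite perimeter sets together with a diagonal argument and the decay of the locality defect $\Lambda_\ep$ (Section \ref{s:locdef}), to the case where the jump set of $u$ is a single affine hyperplane $V=\{y\in\Omega:\ (y-x_0)\cdot\xi=0\}$ with $u=z_1$ on one side and $z_2$ on the other. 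Then build $u_\ep$ by gluing one-dimensional optimal profiles: for $y$ near $V$, set $u_\ep(y)\approx \gamma_{\pi(y)}\big((y-x_0)\cdot\xi/\ep\big)$, where $\pi(y)$ is the projection of $y$ onto $V$ and $(\gamma_x)_{x\in K}$ is the H\"older-selected family of continuous increasing optimal profiles for $F^\xi_x$ provided by Theorem \ref{thm:sel-main} (case (ii), which applies since $J\in\mathcal{L}_m(\eta,\lambda,\rho)$ implies $J^\xi\in\mathcal{L}_1(\eta,\lambda',\rho')$); away from $V$, set $u_\ep=u$. The $L^1$-convergence $u_\ep\to u$ is immediate since the transition layer has width $O(\ep)$. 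The energy estimate is the crux. In the nonlocal term one splits interactions into those aligned with $\xi$ and those with a component orthogonal to $\xi$. The aligned interactions reproduce, after the change of variables $t=(y-x_0)\cdot\xi/\ep$ and an integration in the orthogonal directions that turns $J$ into $J^\xi$, exactly $\int_V F^\xi_{\pi(y)}(\gamma_{\pi(y)})\,d\mathcal H^{m-1}(y)=\int_{J_u}\sigma(x,\nu_u(x))\,d\mathcal H^{m-1}$, up to an error controlled by the modulus of continuity of $x\mapsto\gamma_x$ on $V$; this is precisely the place where the bound $\|\gamma_x-\gamma_{x'}\|_{L^1(\mathbb R;\overline\sigma_J)}\lesssim |x-x'|^{\alpha^2}$ from Theorem \ref{thm:sel-main} is used to make the error vanish as the mesh of the polyhedral covering tends to zero. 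The orthogonal interactions must be shown to be negligible as $\ep\to0$: using $J(h)\lesssim |h|^{-m-\eta}$ and the H\"older-in-$x$ bound on the profiles, a term like $|u_\ep(y+t\xi)-u_\ep(y'+t\xi)|^2$ is of order $|y-y'|^{2\alpha^2}$, and integrating against $\ep^{-m}J(\cdot/\ep)$ over a region of diameter $O(\ep)$ produces a factor $\ep^{2\alpha^2-\eta}\to 0$ precisely because $\eta<\alpha^2$ (in fact $\eta<2\alpha^2$ would suffice, but $\eta<\alpha^2$ is what the selection exponent forces). I expect this orthogonal-interaction estimate, and the bookkeeping needed to glue the profiles across the faces of the polyhedron without creating spurious energy at the edges, to be the main obstacle; the rest is a combination of the already-established Theorems \ref{thm:sel-main} and \ref{t:gammaliminf2}, Proposition \ref{p:compactness}, and the locality-defect decay. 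Once the $\Gamma$-$\limsup$ on hyperplane-jump configurations is in hand, a standard diagonalization over the polyhedral approximations, combined with lower semicontinuity of $F_0$ and the locality-defect decay, upgrades it to arbitrary $u\in CP(\Omega;\mathbf z(x))$, completing the proof.
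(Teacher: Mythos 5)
Your overall architecture coincides with the paper's: the $\Gamma$-liminf is exactly Theorem \ref{t:gammaliminf2} (which needs only \ref{K3}), compactness is Proposition \ref{p:compactness}, and the $\Gamma$-limsup is obtained by reducing to polyhedral functions via the locality-defect decay and then gluing the H\"older-selected one-dimensional profiles of Theorem \ref{thm:sel-main}(ii), splitting the nonlocal term into interactions aligned with $\xi$ and interactions orthogonal to $\xi$. This is precisely the route taken in Proposition \ref{t:recpol} and Theorem \ref{t:recgen}.

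Two points in your limsup argument are, however, wrong as stated. First, the orthogonal-interaction estimate: you claim $|u_\ep(y+t\xi)-u_\ep(y'+t\xi)|^2 = O(|y-y'|^{2\alpha^2})$ and that $\eta<2\alpha^2$ would suffice. Theorem \ref{thm:sel-main} does \emph{not} give pointwise $\alpha^2$-H\"older dependence of $\gamma_x$ on $x$; it gives a weighted $L^1$ modulus $\|\gamma_x-\gamma_{x'}\|_{L^1(\mathbb{R};\overline{\sigma}_J)}\lesssim|x-x'|^{\alpha^2}$. To pass to squares one uses $|a-b|^2\le 2M|a-b|$ (cf.\ Remark \ref{r:L^2locestimate}), so the quantity that actually appears after integrating in $t$ is $\int|\gamma_{\ep y}-\gamma_{\ep y'}|^2\,dt\lesssim \ep^{\alpha^2}|y-y'|^{\alpha^2}$, \emph{not} the square of this. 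The term to control is then $\ep^{m-1}\int J^{\xi^\bot}(y-y')\,\ep^{\alpha^2}|y-y'|^{\alpha^2}$, whose finiteness near the origin (where $J^{\xi^\bot}(w)\sim|w|^{-(m-1)-\eta}$ by Lemma \ref{lemma:stability}) requires exactly $\eta<\alpha^2$; the relaxed condition $\eta<2\alpha^2$ does not suffice with the tools available. Second, in the final diagonalization over polyhedral approximations $v_k\to u$ you invoke ``lower semicontinuity of $F_0$''; what is actually needed is $\limsup_k F_0(v_k)\le F_0(u)$, i.e.\ upper semicontinuity along the approximating sequence, which the paper obtains from the upper semicontinuity of $\sigma$ (Proposition \ref{p:surftension}) combined with Reshetnyak's theorem. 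Neither slip changes the final outcome, but both justifications need to be corrected for the argument to close.
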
 
\begin{theorem}[$\Gamma$-limit with integrable interaction kernels]
\label{t:gammaint}
    Let $W \colon \mathbb{R}^m\EEE \times \mathbb{R} \to [0,\infty)$ satisfy \ref{H1}--\ref{H5}. Assume in addition that $Q_x(t):= \partial_t W(x,t) + t$ has a continuous inverse for every $x \in \Omega$ and $t \in [z_1(x),z_2(x)]$, and that  $J$ satisfies \eqref{K1},
    with $\|J\|_{L^1(\mathbb{R}^m\EEE)}=1$. Then $(F_\ep)$ $\Gamma$-converges to $F_0$ in the $L^1$-topology.  
\end{theorem}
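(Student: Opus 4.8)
The plan is to prove Theorem \ref{t:gammaint} ($\Gamma$-convergence for integrable kernels) by the standard two-inequality scheme, leveraging the machinery assembled earlier in the paper. First, for the $\Gamma$-$\liminf$ inequality: since \eqref{K1} implies in particular the weaker integrability-away-from-origin condition \eqref{e:growinfjintro}, one may invoke the $\Gamma$-$\liminf$ result (Theorem \ref{t:gammaliminf2}) directly. That theorem provides $\liminf_\ep F_\ep(u_\ep) \ge F_0(u)$ whenever $u_\ep \to u$ in $L^1(\Omega)$, including the compactness statement (Proposition \ref{p:compactness}) which guarantees that any sequence with $\sup_\ep F_\ep(u_\ep) < +\infty$ is precompact in $L^1$ with limits in $CP(\Omega;\mathbf{z}(x))$. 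So the lower bound requires essentially no new work beyond checking that \eqref{K1} $\Rightarrow$ \eqref{e:growinfjintro}, which is immediate.

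The substantive part is the $\Gamma$-$\limsup$ inequality, i.e.\ the construction of recovery sequences. Given $u \in CP(\Omega;\mathbf{z}(x))$, associated with a finite-perimeter set $E$, the plan is: (1) reduce to the case where $\partial^* E$ is a polyhedral (locally affine) boundary, via the density argument for finite-perimeter sets used in \cite{alb-bel2}, together with lower semicontinuity of $F_0$ and continuity of $\sigma$ (so the energies of polyhedral approximants converge to $F_0(u)$); (2) near a fixed affine piece $V = y + \xi^\perp$ of $\partial^* E$, build $u_\ep$ by gluing together rescaled one-dimensional optimal profiles $\gamma_x$ for the cell problem \eqref{e:optprointro} at nearby base points $x \in V$. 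Here the key input is Theorem \ref{thm:sel-main}(i): under the hypothesis that $Q_x(t) = \partial_t W(x,t)+t$ has a continuous inverse and $\|J\|_{L^1}=1$, the class $O_x$ of increasing continuous optimal profiles is nonempty, and — crucially — there is a selection $x \mapsto \gamma_x$ that is $\alpha^2$-Hölder continuous in $x$ (in the weighted $L^1(\mathbb{R};\overline\sigma_J)$ norm). One then defines, roughly, $u_\ep(z) := \gamma_{\pi(z)}\!\big((z-\pi(z))\cdot\xi/\ep\big)$ with a suitable cutoff and a partition-of-unity-type gluing across the covering cubes, where $\pi$ is the projection onto $V$; the Hölder selection ensures the gluing errors in both the nonlocal term and the potential term are controlled by the modulus of continuity of $x\mapsto\gamma_x$ and of the wells $z_1,z_2$, hence vanish as the cube size (and then $\ep$) tends to zero. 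The "double $\liminf$"/directional-blow-up estimates of Section \ref{s:dirblowup} and Proposition \ref{c:est_gamminf}, used there for the lower bound, here run in reverse to show $\limsup_\ep F_\ep(u_\ep; Q(x_\ell)) \le \sigma(x_\ell,\xi)\,\mathcal{H}^{m-1}(V\cap Q(x_\ell)) + o(1)$.

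The main obstacle is precisely the gluing step: unlike the homogeneous case in \cite{alb-bel2}, where a single one-dimensional profile works globally and orthogonal interactions vanish identically for every $\ep$, here the profile $\gamma_x$ genuinely varies with the base point $x$, so the candidate $u_\ep$ is not a function of the $\xi$-direction alone. Consequently the nonlocal energy picks up contributions from interactions $|u_\ep(y+t\xi) - u_\ep(y'+t\xi)|^2$ that are transverse to $\xi$; these must be shown to be negligible. For integrable $J$ this is more forgiving than in the non-integrable case (Theorem \ref{t:gammanint}, where the $\eta < \alpha^2$ condition is exactly what kills the transverse interactions) — with $J \in L^1$ one can estimate the transverse contribution by $\|J_\ep\|_{L^1}$ times an oscillation of $u_\ep$ in the directions of $V$, and the latter is controlled through the $\alpha^2$-Hölder selection and the fact that the transverse length scale of variation of $\pi$ is order one while the profile is squeezed to scale $\ep$. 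One must also handle the continuity/regularity of $\gamma_x$ itself, which is what the invertibility of $Q_x$ buys (via Proposition \ref{p:regoptintk}), to justify pointwise gluing and the comparison-principle arguments underlying the selection. Finally, a diagonal argument over a sequence of ever-finer polyhedral approximants and ever-finer covering cubes yields the genuine recovery sequence, completing the proof; the case $u \notin CP(\Omega;\mathbf{z}(x))$ is trivial since then $F_0(u) = +\infty$.
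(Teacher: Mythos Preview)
Your outline is correct and follows essentially the same route as the paper: the $\Gamma$-$\liminf$ is immediate from Theorem~\ref{t:gammaliminf1} (or~\ref{t:gammaliminf2}), and the $\Gamma$-$\limsup$ proceeds by reducing to polyhedral $u$ via density and Reshetnyak (using the \emph{upper} semicontinuity of $\sigma$ from Proposition~\ref{p:surftension}), then gluing the H\"older-selected one-dimensional profiles $\gamma_y$ from Theorem~\ref{thm:sel-main}(i) along each face, exactly as in Proposition~\ref{t:recpol} and Theorem~\ref{t:recgen}.

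One inaccuracy to flag: the double-$\liminf$ machinery of Section~\ref{s:dirblowup} and Proposition~\ref{c:est_gamminf} is \emph{not} ``run in reverse'' for the upper bound. The recovery-sequence construction in Proposition~\ref{t:recpol} is independent of that apparatus; the estimate $\limsup_\ep F_\ep(u_\ep;A)\le \int_{A\cap S_u}\sigma\,d\mathcal H^{n-1}$ is obtained directly by splitting the nonlocal term into a $\xi$-aligned part (controlled by $F_y^\xi(\gamma_y)$) and a transverse part (which vanishes via a decay estimate of the type in Lemma~\ref{l:decaykernel}, trivially valid under \eqref{K1}), while the gluing across faces uses the $\epsilon$-trace/locality-defect technology of Section~\ref{s:locdef} rather than a partition of unity. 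Your heuristic for the transverse term in the integrable case is sound, but the actual mechanism in the paper is the H\"older estimate \eqref{e:truncatedlipsel} combined with $\int J(h)|h|^{1+\beta}\,dh<\infty$ for small $\beta>0$.
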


\UUU We conclude this section by providing a more detailed explanation of the proofs of the $\Gamma$-liminf and $\Gamma$-limsup inequalities, as well as indicate where the main results are established within the paper. 

The derivation of the $\Gamma$-liminf inequality is common to both theorems and, importantly, does not depend on the integrability assumptions of the kernel near the origin. Instead, it relies solely on the integrability assumption of $J$ at infinity, as provided by condition \ref{K3}. The $\Gamma$-liminf inequality is established in Section \ref{s:gammalimif}, with its proof centered on a core technical result called the \emph{double liminf inequality}, stated in Proposition \ref{c:est_gamminf}. The proof of the double liminf inequality occupies the entire Section \ref{s:dirblowup}. Here, we construct a family of asymptotic calibrations for the energies (cf. Subsections \ref{sub:8.1} and \ref{sub:8.2}), created by averaging the family of non-local one-dimensional functionals introduced in Definition \ref{d:onedimcalibration}. These calibrations ultimately satisfy the lower bound established in Proposition \ref{p:calibration}. In the final part of the section, specifically Subsection \ref{sub:doubleliminf}, these calibrations are used to explicitly perform a (directional) blow-up of the energy. This step combines two key results, Propositions \ref{p:oscbound1} and \ref{p:oscbound2}, to develop the technical tools culminating in the proof of Lemma \ref{l:est_gammainf} and, consequently, the aforementioned double liminf inequality.

The proof of the $\Gamma$-limsup upper bound relies on the fact that, using a density argument for finite perimeter sets and the upper semicontinuity of the surface tension (cf. Definition \ref{def:surf} and Proposition \ref{p:surftension}), it is possible to reduce the construction of a recovery sequence to the case of a finite perimeter set that is polyhedral (cf. Proposition \ref{t:recpol} and Theorem \ref{t:recgen}). A key challenge addressed in the proof of Proposition \ref{t:recpol} is gluing together optimal profiles in a sufficiently regular manner. This is where the proofs of Theorems \ref{t:gammanint} and \ref{t:gammaint} primarily differ. Under integrability assumptions on the interaction kernel $J$, the regularity required in the gluing procedure can be ensured under the simple invertibility assumption of the map $Q_x(t) = \partial_t W(x,t) + t$, which also guarantees the continuity of increasing optimal profiles (cf. Proposition \ref{p:regoptintk}). In contrast, the case of a non-integrable kernel is more delicate. Here, we establish in Theorem \ref{p:lipselx} an $\alpha^2$-H\"older selection of optimal profiles, where $\alpha$ is the H\"older regularity of the moving wells. The relationship between the parameter $\eta$, which prescribes the singularity of the kernel $J$ at the origin, and the parameter $\alpha$ is given by $\eta < \alpha^2$. Theorem \ref{p:lipselx} hinges on a comparison principle for optimal profiles (cf. Proposition \ref{p:lipdep}), combined with Theorem \ref{t:finitelipsel} borrowed from \cite{fef}. Finally, the key tool linking these last two results is the notion of the conjugate functional (cf. Subsection \ref{subs:conj1d}), which, roughly speaking, transforms the non-convex constraint represented by the Euler-Lagrange equation for optimal profiles \eqref{e:regopt2} into a convex one. This is a necessary step for the applicability of Theorem \ref{t:finitelipsel}.
\EEE

 \section{The 1D-Problem}
 \label{sec:1d}
This section is devoted to the proof of Theorem \ref{thm:sel-main}, and is organized as follows. Subsections \ref{subs:existence1d}--\ref{subs:cont-dep1d} deal with the 1D autonomous problem, namely consider the case in which $m=1$ and the double-well potential is independent of $x$. 
In particular, Subsections \ref{subs:existence1d} and \ref{subs:conj1d} recall results obtained in \cite{alb-bel1} and extend the existence theory to slightly more general integrability assumptions on the interaction kernels. In Subsection \ref{subs:existence1d} we show existence of optimal profiles for the autonomous problem, in Subsection \ref{subs:conj1d} their characterization via a conjugate functional. In Subsections \ref{subs:cont1d-integrable} and \ref{subs:cont1d-not-integrable}, we prove a first regularity result for optimal profiles in the autonomous case. The argument there are a generalization of the ideas in \cite{cozzi}.
Subsection \ref{subs:cont-dep1d} contains our first completely original result, namely a continuous dependence analysis for 1D optimal profiles of a family of autonomous non-local energies.  Eventually, in Subsection \ref{subs:sel-1d} we move to the non-autonomous case and prove Theorem \ref{thm:sel-main}.

\subsection{Existence}
\label{subs:existence1d}
This subsection is devoted to prove and recall some facts regarding the optimal profile problem in the autonomous case. We follow the approach in \cite{alb-bel1} which relies on the notion \emph{conjugate functional}.

Consider a kernel $J \colon \mathbb{R} \to [0,\infty)$ which is an even function and satisfies
\begin{enumerate}[label=(K.\arabic*)]
\setcounter{enumi}{1}
\vspace{1mm}
    \item\label{K3}
   $\int_{\mathbb{R} \setminus B_\rho(0)} J(h)|h| \, dh < \infty$ \ \ \text{ for some $\rho >0$},
\end{enumerate}
as well as a homogeneous double-well potential $W \colon \mathbb{R}\to [0,\infty)$ such that $W$ is continuous, \EEE
\begin{align}
\label{e:assu1}
W(t)=0 \ \ &\text{if and only if} \ \ t \in \{a,b\} \ \ \text{for some } a<b\\
\label{e:assu2}
& \ \ \ \ \ \ \lim_{t \to \pm \infty} W(t)= +\infty.
\end{align}
By letting the one-dimensional functional $F$ defined as
\begin{equation}
\label{e:onedim100}
F(\gamma) := \frac{1}{4} \int_{\mathbb{R} \times \mathbb{R}} J(t'-t) (\gamma(t')-\gamma(t))^2 \, dt'dt + \int_{\mathbb{R}} W(\gamma(t)) \, dt, \ \ \gamma \in L^1(\mathbb{R}),
\end{equation}
 our main existence result in this setting  reads as follows.

\begin{proposition}[Existence of minimizers for double well potentials]
\label{t:eximin}
     Let $J$ be even and satisfying \ref{K3}, and let $W$ be continuous and such that \eqref{e:assu1}--\eqref{e:assu2} hold true. 
    Assume also \EEE that 
    \begin{equation}
    \label{e:onedimmin}
\inf \{F(\gamma) \ | \ \gamma \in X_a^b  \} < \infty.
    \end{equation}
    Then the minimum problem \eqref{e:onedimmin} has a solution.  To be precise, there exists \EEE a minimizer $\gamma \in X_a^b$ which is increasing and satisfies $\gamma(t) \geq 0$ for $t >0$, $\gamma(t) \leq 0$ for $t < 0$.    
\end{proposition}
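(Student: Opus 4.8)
The plan is to use the direct method of the calculus of variations, but carefully: the natural topology on $X_a^b$ is weak, and neither the nonlocal term nor the potential term is obviously lower semicontinuous without extra care, while the asymptotic boundary conditions are not stable under weak convergence. I would proceed in the following steps.

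\emph{Step 1: a minimizing sequence with good monotonicity and symmetry.} Start from any minimizing sequence $(\gamma_k)\subset X_a^b$ for \eqref{e:onedimmin}. First I would show it can be taken with values confined to $[a,b]$: truncating $\gamma_k$ above by $b$ and below by $a$ does not increase either term. For the potential term this uses that $W\geq 0$ and $W=0$ on $\{a,b\}$ together with \eqref{e:assu2} to guarantee $W$ is eventually large, so the truncation decreases (or leaves unchanged) $\int W(\gamma_k)$; for the Dirichlet-type term, truncation is a $1$-Lipschitz (in fact $2$-Lipschitz on pairs) operation that does not increase $|\gamma(t')-\gamma(t)|$ for any pair. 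Next, the key rearrangement: replace $\gamma_k$ by its monotone increasing rearrangement. The potential term $\int_\R W(\gamma_k(t))\,dt$ is invariant under measure-preserving rearrangement of the level sets, and the nonlocal term $\tfrac14\iint J(t'-t)|\gamma(t')-\gamma(t)|^2$ does not increase under monotone rearrangement — this is a Riesz-type rearrangement inequality for the even, nonnegative kernel $J$ (one can invoke the Riesz rearrangement inequality after writing $|\gamma(t')-\gamma(t)|^2$ in terms of superlevel sets, or cite the corresponding statement in \cite{alb-bel1}). After this, one arranges the translation so that the ``center'' of the profile is at the origin, giving $\gamma_k(t)\geq (a+b)/2$ for $t>0$ and $\le (a+b)/2$ for $t<0$; a shift of the target values (WLOG $a=-b'$, or simply keeping track of the midpoint) yields the sign condition claimed. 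So I now have an increasing minimizing sequence $(\gamma_k)$, bounded in $L^\infty$ by $\max(|a|,|b|)$, with the stated sign normalization.

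\emph{Step 2: compactness.} Since each $\gamma_k$ is increasing and uniformly bounded, Helly's selection theorem gives a subsequence converging pointwise (everywhere) to an increasing, bounded function $\gamma\colon\R\to[a,b]$. Pointwise-plus-domination gives $\gamma_k\to\gamma$ in $L^1_{\mathrm{loc}}$ and a.e.

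\emph{Step 3: lower semicontinuity and the boundary conditions — the main obstacle.} This is the heart of the argument. Lower semicontinuity of the nonlocal term under a.e.\ convergence follows from Fatou's lemma applied on $\R\times\R$ (the integrand is nonnegative and converges a.e.). The potential term is also lower semicontinuous by Fatou, using $W\geq 0$ and continuity of $W$. The genuine difficulty is that a priori $\gamma\notin X_a^b$: the monotone limit $\gamma$ has limits $\alpha_-:=\lim_{t\to-\infty}\gamma(t)$ and $\beta_+:=\lim_{t\to+\infty}\gamma(t)$, but a priori only $a\le\alpha_-\le\beta_+\le b$, and one could lose mass at infinity so that $\alpha_->a$ or $\beta_+<b$. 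To rule this out I would use the finiteness of the energy: if, say, $\beta_+<b$, then for all large $t$ the value $\gamma(t)$ stays in a region where $\mathrm{dist}(\gamma(t),\{a,b\})$ is bounded below, hence $W(\gamma(t))\ge c>0$ on a half-line of infinite measure (using that $W$ vanishes only at $a,b$ and the value $\beta_+$ is trapped away from both, or equals one of them, in which case a more careful splitting is needed). This forces $\int_\R W(\gamma(t))\,dt=+\infty$, contradicting $F(\gamma)\le\liminf F(\gamma_k)<\infty$. The borderline case $\beta_+\in\{a,b\}$ but $\beta_+\ne b$ — i.e.\ the profile ``overshoots'' and settles at $a$ on the right — is excluded because $\gamma$ is increasing with $\gamma(t)\ge(a+b)/2$ for $t>0$, so $\beta_+\ge(a+b)/2>a$; symmetrically on the left. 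Hence $\alpha_-=a$ and $\beta_+=b$, so $\gamma\in X_a^b$ (modulo checking $\gamma\in L^1$ in the paper's sense — here the limits exist in the essential sense required by the Remark, which is immediate from monotonicity).

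\emph{Step 4: conclusion.} Combining Steps 2–3: $F(\gamma)\le\liminf_k F(\gamma_k)=\inf_{X_a^b}F$, and $\gamma\in X_a^b$, so $\gamma$ is a minimizer; by construction it is increasing with the stated sign properties. I expect Step 3 — controlling behavior at infinity to recover the boundary conditions — to be the main obstacle, and the Riesz-rearrangement step (Step 1) to be the main technical ingredient one must either prove carefully or cite from \cite{alb-bel1}; everything else is routine. One should also double-check the truncation in Step 1 interacts well with assumption \ref{K3} (only integrability of $J|h|$ away from the origin is available), but since truncation only decreases pairwise differences, no integrability near the origin is needed there.
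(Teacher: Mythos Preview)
Your approach is correct but differs from the paper's. You run the direct method on $F$ itself: truncate a minimizing sequence to $[a,b]$, pass to monotone rearrangements via a Riesz-type inequality (citing \cite{alb-bel1}), extract a limit by Helly, and recover the asymptotic boundary conditions from finiteness of the potential term. The paper instead truncates the \emph{kernel}: setting $J^N:=J\wedge N$, each truncated kernel satisfies the stronger integrability \eqref{e:minalbbel}, so \cite[Theorem~2.4]{alb-bel1} gives an increasing minimizer $\gamma^N\in X_a^b$ of $F^N$ directly. Since $F^N\le F^{N+1}\le F$, one has $\sup_N F^N(\gamma^N)\le\inf_{X_a^b}F<\infty$; the $\gamma^N$ have uniformly bounded variation $|b-a|$, so a subsequence converges a.e.\ to some $\gamma^0$, and Fatou plus the potential bound force $\gamma^0\in X_a^b$ with $F(\gamma^0)\le F(\gamma)$ for all competitors. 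The paper's route is shorter because it treats \cite{alb-bel1} as a black box and only needs a compactness/Fatou argument at the level of truncated problems, whereas your route is more self-contained but effectively reproves the existence theorem of \cite{alb-bel1} while observing that the rearrangement and Helly steps require no integrability of $J$ near the origin. Both approaches handle the recovery of boundary conditions in the same way (your Step~3 matches the paper's use of $W>0$ on $(a,b)$ to force $\gamma^0\in X_a^b$). One small point: the sign normalization in the statement should read $\gamma(t)\ge(a+b)/2$ for $t>0$ rather than $\gamma(t)\ge 0$, as you correctly wrote; the paper's own proof uses $(a+b)/2$.
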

We point out that an analogous existence result has been established in \cite[Theorem 2.4]{alb-bel1} under the following  stronger \EEE integrability condition on $J$
    \begin{equation}
    \label{e:minalbbel}
    \int_{\mathbb{R}} J(h)(1+|h|) \, dh < \infty.
    \end{equation}
 The extension to the class of kernels fulfilling only \ref{K3} follows by a truncation argument which, for convenience of the reader, we detail below.  
\begin{proof}[Proof of Proposition \ref{t:eximin}]
     For kernels satisfying \ref{K3} but, a priori, not being in $L^1(\R)$, \EEE we consider for every  $N\in \mathbb{N}$ \EEE the  truncated \EEE kernel $J^N \colon \mathbb{R} \to [0,\infty)$ defined as as $J \wedge N$, and we denote by $F^N$ the associated energy functional. Since $J^N$ satisfies \eqref{e:minalbbel}, we already know that there exists an increasing function $\gamma^N $ in  $X_{a}^{b}$ \EEE such that $\gamma^N(t) \geq (a+b)/2$ for almost every $t \geq 0$ while $\gamma^N(t) < (a+b)/2$ for almost every $t < 0$ and
    \[
    \begin{split}
   F^N(\gamma^N)= \inf_{\gamma \in X_{a}^{b}} \frac{1}{4}\int_{\mathbb{R}\times \mathbb{R}} J^{N}(t-t') |\gamma(t)-\gamma(t')|^2 \, dtdt' + \int_\mathbb{R}  W\EEE(\gamma(t)) \, dt.
    \end{split}
    \]
  Since $\gamma^N$ is an increasing function in $X_a^b$, then $\gamma^N \in BV(\mathbb{R})$ and  $|D\gamma^N|(\mathbb{R})= |a-b|$ for every $N$. In particular, we deduce that there exists  a map $\gamma^0\in BV(\mathbb{R})$ and \EEE a subsequence $(N_m)$ such that 
    \[
    \gamma^{N_m}(t) \to \gamma^0(t), \ \ \text{ for a.e. $t$} \ \ \text{ and } \ \ D\gamma^{N_m} \rightharpoonup D\gamma^0, \ \ \text{ weakly* in  }\mathcal{M}_b(\R)\EEE.
    \]
    Since $F^N \leq F^{N+1}$ for every $N$,  from \EEE \eqref{e:onedimmin}  we \EEE infer $$\sup_{N} F^N(\gamma^N) \leq \inf_{\gamma \in X_a^b} F(\gamma) < \infty.$$ In particular, since  $ W>0$ \EEE for  $t \in (a,b)$, an application of Beppo-Levi's theorem   yields \EEE that also $\gamma^0$ is increasing and belongs to $X_{a}^{b}$. By further applying Fatou's lemma we  deduce that, up to a further non-relabelled subsequence, there holds \EEE
    \[
    \begin{split}
     \sup_{N } F^N(\gamma^N)
   &\geq \liminf_{m \to \infty}\frac{1}{4}\int_{\mathbb{R}\times \mathbb{R}} J^{N_m}(t-t') |\gamma^{N_m}(t)-\gamma^{N_m}(t')|^2 \, dtdt' + \int_\mathbb{R}  W\EEE(\gamma^{N_m}(t)) \, dt\\
    &\ \ \ \ \ \ \ \ \ \ \ \geq \frac{1}{4}\int_{\mathbb{R}\times \mathbb{R}} J(t-t') |\gamma^0(t)-\gamma^0(t')|^2 \, dtdt' + \int_\mathbb{R}  W\EEE(\gamma^0(t)) \, dt.
    \end{split}
    \]
    On the other hand, by considering $\gamma \in X_{a}^{b}$, since $\gamma$ is admissible in the minimization problem at level $N$  for every $N\in \mathbb{N}$, \EEE we also have
    \begin{align*}
    F(\gamma)
   &= \sup_{N }\frac{1}{4}\int_{\mathbb{R}\times \mathbb{R}} J^{N}(t-t') |\gamma(t)-\gamma(t')|^2 \, dtdt' + \int_\mathbb{R}  W(\gamma(t))\EEE \, dt\geq \sup_{N }F^N(\gamma^{N}).
    \end{align*}
    Hence we have proved that 
    \begin{equation}
    \label{e:supeq1}
    F(\gamma^0) \leq F(\gamma), \ \ \text{ for every $\gamma \in X_a^b$},
    \end{equation}
which yields the desired conclusion.
\end{proof}

\begin{remark}
    \label{r:eximin}
    If we  replace \ref{K3} with 
    $$\int_\R J(h)|h|dh<+\infty,$$ \EEE then \eqref{e:onedimmin} is satisfied (consider for example $\gamma= a \mathbbm{1}_{(-\infty,0)} + b \mathbbm{1}_{[0,\infty)}$). Therefore, under this additional integrability assumption on $J$ (which is anyhow weaker than \eqref{e:minalbbel}), the existence of a minimizer is always guaranteed by Proposition \ref{t:eximin}. 
\end{remark}

We conclude this subsection by writing down the Euler-Lagrange equation for minimizers of \eqref{e:minalbbel}.  \UUU Under the integrability assumption \ref{K2} on the kernel, if we further assume that $J$ is an even function and
that \EEE
\begin{equation}
\label{e:regopt1.1w}
\text{$W \colon \mathbb{R} \to [0,\infty)$ is cont. diff. satisfying \eqref{e:assu1}--\eqref{e:assu2} with $a=-1$, $b=1$},
\end{equation}
due to the simmetry of the kernel, it is possible to verify that the Euler-Lagrange equation for minimizers $\gamma \in X_{-1}^1 \cap BV(\mathbb{R})$ of \eqref{e:onedim100} writes as
\begin{equation}
    \label{e:regopt2}
    L_J\gamma = \dot{W}(\gamma), \ \ \text{ in the weak sense on $\mathbb{R}$},
\end{equation}
where $L_J$ is the non-local integral operator acting as \UUU (cf. \cite{cozzi,caff2}) \EEE
\begin{equation}
    \label{e:regopt3}
    L_J\gamma(t_0) := \frac{1}{2}\int_{\mathbb{R}} (\gamma(t_0+t)+\gamma(t_0-t)-2\gamma(t_0))J(t) \, dt.
\end{equation}

Observe that
\[
L_J \gamma \in L^1(\R) \quad \text{ and } \quad \int_{\R}L_J \gamma \, dt \leq  2|D\gamma|(\mathbb{R}) \int_{\R} J(t)|t| \, dt.
\]
In particular, under the integrability condition \ref{K2} on $J$, for every $\gamma \in BV(\R)$ the expression in \eqref{e:regopt3} is well defined for a.e. $t_0 \in \mathbb{R}$.

\subsection{Characterization via the conjugate functional}
\label{subs:conj1d}
In this subsection we assume \eqref{e:assu1}--\eqref{e:assu2} and that $J \colon \mathbb{R} \to [0,\infty)$ is an even function satisfying \eqref{e:minalbbel}. We summarize here the characterization of increasing optimal profiles established in \cite{alb-bel1}. The reader who is already familiar with the theory of non-local phase transitions can skip this subsections and resume directly from Subsection \ref{subs:cont1d-integrable}.

We begin our review of the results in \cite{alb-bel1} with a definition.
\begin{definition}
Given a function $\gamma \colon \mathbb{R} \to [a,b]$ we define $\gamma^{-1}\colon (a,b) \to \mathbb{R}$ as 
\[
\gamma^{-1}(t) := \inf \{s \in \mathbb{R} \ | \ \gamma(s) \geq t \}
\]
\end{definition}
Notice that $\gamma^{-1}(\gamma(x))= \inf \{t' \in \mathbb{R} \ | \ \gamma(t')=\gamma(t) \}$ for every increasing function, while $\gamma^{-1}(\gamma(t))=t$ for every strictly increasing function.

As in \cite{alb-bel1} we consider now the conjugate functional $F^\circ$ defined on every increasing function $v \colon (a,b) \to \mathbb{R}$ as
\[
F^\circ(v) := \int_{a <t<t'<b} K(v(t)-v(t')) \, dtdt' + \int_{a <t<b} W(t) \, d\dot{v}(t),
\]
where $\dot{v}$ is the distributional derivative of $v$ and $K \colon \mathbb{R} \to \mathbb{R}$ is given by
\[
f(t):= \max \{-t,0 \}, \qquad K(t):= J * f(t).
\]
By definition, $F^\circ$ is convex on $Y$. Additionally, the following identity holds true.

\begin{proposition}[Theorem 2.11 in \cite{alb-bel1})]
    \label{t:Fo=F}
    Under assumptions \eqref{e:assu1}--\eqref{e:assu2}, for every increasing function $u \in X_a^b$ there holds $F^\circ(u^{-1})=F(u)$.
\end{proposition}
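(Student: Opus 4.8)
\textbf{Proof plan for Proposition \ref{t:Fo=F}.}

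The plan is to establish the identity $F^\circ(u^{-1}) = F(u)$ by separately matching the two summands defining each functional: the ``interaction'' part and the ``potential'' part. Throughout I would work with an increasing $u \in X_a^b$, so that $u^{-1}\colon (a,b)\to\mathbb{R}$ is well-defined and itself increasing, with $\dot u^{-1}$ a (possibly infinite) nonnegative measure on $(a,b)$; the key structural fact is the layer-cake / change-of-variables correspondence between $u$ and $u^{-1}$, namely $\mathcal{L}^1(\{t : u(t) < s\}) = u^{-1}(s)$ for a.e.\ $s$, equivalently $|\{t : s < u(t) < s'\}| = u^{-1}(s') - u^{-1}(s)$ whenever $a<s<s'<b$.

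For the potential term, I would rewrite $\int_\mathbb{R} W(u(t))\,dt$ using the layer-cake formula. Since $W \ge 0$ and $W(u(t)) = 0$ outside the region where $u(t)\in(a,b)$ (recall $W$ vanishes exactly at $a,b$ and $u$ is monotone with the stated limits), one writes $W(u(t)) = \int_a^b \mathbbm{1}_{\{s < u(t)\}}\,\dot W(s)\,ds$ after noting $W(s) = \int_a^s \dot W$ on $(a,b)$ with $W(a)=0$ — or more robustly, directly via Fubini using the pushforward of Lebesgue measure under $u$. Integrating in $t$ and applying Fubini gives $\int_\mathbb{R} W(u(t))\,dt = \int_a^b W(s)\,d\dot u^{-1}(s)$, which is exactly the potential term of $F^\circ(u^{-1})$. (The computation $\int_\mathbb{R} W(u(t))\,dt = \int_{(a,b)} |\{u \le s\}|\,\dot W(s)\,ds$ is the cleanest route; one has to be slightly careful that $W$ need only be $C^1$ and that the measure $\dot u^{-1}$ may have atoms, corresponding to flat pieces of $u$, but monotonicity makes every manipulation legitimate.)

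For the interaction term, the goal is
\[
\frac14 \int_{\mathbb{R}\times\mathbb{R}} J(t-t')\,(u(t)-u(t'))^2\,dt\,dt' = \int_{a<t<t'<b} K\big(u^{-1}(t)-u^{-1}(t')\big)\,dt\,dt',
\]
where $K = J * f$ with $f(r) = \max\{-r,0\}$. The idea is to expand $(u(t)-u(t'))^2$ using the elementary identity $\tfrac12(\beta-\alpha)^2 = \int_{\alpha<s<s'<\beta}\, ds\,ds'\cdot 2$... more precisely $(u(t)-u(t'))^2 = 2\iint_{\mathbb{R}^2} \mathbbm{1}_{\{u(t')<s<u(t)\}}\mathbbm{1}_{\{u(t')<s'<u(t)\}}\,ds\,ds'$ when $u(t')<u(t)$ (and symmetrically), then swap the order of integration with Fubini so that the inner integral becomes $\iint_{\mathbb{R}^2} J(t-t')\,\mathbbm{1}_{\{u(t')<s\}}\mathbbm{1}_{\{u(t)>s\}}\cdots\,dt\,dt'$. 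Writing $\{t : u(t)>s\} = (u^{-1}(s),\infty)$ up to null sets (a half-line, by monotonicity!), the double $t$-integral collapses to $\int_{u^{-1}(s)}^\infty\int_{-\infty}^{u^{-1}(s')} J(t-t')\,dt'\,dt$ (for $s<s'$, so $u^{-1}(s)\le u^{-1}(s')$), and a direct computation shows $\int_{c}^\infty\int_{-\infty}^{d} J(t-t')\,dt'\,dt = (J*f)(c-d) = K(c-d)$ whenever $c\le d$ — this is precisely where the convolution with the positive-part function $f$ enters. Assembling, and using the symmetry of $J$ to fold the $\{u(t')<u(t)\}$ and $\{u(t')>u(t)\}$ regions together (accounting for the factor $\tfrac14$ versus $\tfrac12$), yields the claimed equality.

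\textbf{Main obstacle.} The delicate point is handling the possible non-integrability and, more importantly, the bookkeeping when $u$ is not \emph{strictly} increasing: then $u^{-1}$ jumps (so $\dot u^{-1}$ has atoms) and, conversely, flat stretches of $u$ mean the identifications ``$\{u>s\}$ is a half-line'' and ``$u^{-1}(u(t))=t$'' hold only up to the sets of $s$-values and $t$-values that are themselves null or that require taking the appropriate one-sided limits. I would deal with this exactly as in \cite{alb-bel1}: first prove the identity for strictly increasing smooth $u$ where all changes of variables are classical, then pass to general increasing $u$ by an approximation/truncation argument (e.g.\ $u_\lambda(t) = u(t) + \lambda t$ restricted suitably, letting $\lambda\to 0$), using monotone convergence on both sides — both $F$ and $F^\circ$ being built from nonnegative integrands makes the limit passage clean, the convexity of $F^\circ$ on the convex set $Y$ of increasing functions providing the needed lower semicontinuity in one direction and Fatou/monotone convergence the other. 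A secondary technical care is that the interaction integral may be $+\infty$ on both sides simultaneously, in which case the identity holds trivially; the substantive content is when $F(u)<\infty$, where the above Fubini manipulations are all justified by nonnegativity of every integrand involved.
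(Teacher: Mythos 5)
This proposition is quoted from \cite{alb-bel1} (Theorem 2.11) and the paper offers no proof of its own, so there is nothing internal to compare against line by line; what can be said is that your strategy is precisely the classical one used by Alberti and Bellettini: layer-cake decomposition of $(u(t)-u(t'))^2$ and of $W\circ u$, Tonelli (all integrands are nonnegative for increasing $u$, since the two indicator differences always share the same sign), the observation that superlevel sets of an increasing function are half-lines, and the computation $\int_{c}^{\infty}\int_{-\infty}^{d}J(t-t')\,dt'\,dt=\int_{c-d}^{\infty}J(z)\,(z-(c-d))\,dz=(J*f)(c-d)$, which is where $f(t)=\max\{-t,0\}$ enters. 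Your potential-term identity is exactly the change-of-variables formula \eqref{e:changevar} of Proposition \ref{p:changevar}, and the pushforward route you indicate is the robust way to get it for merely continuous $W$ and non-strictly-increasing $u$. The plan is sound.

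One concrete point deserves care: the orientation of the argument of $K$. For $s<s'$ the indicator identity reads $\mathbbm{1}_{\{u(t')<s<u(t)\}}\mathbbm{1}_{\{u(t')<s'<u(t)\}}=\mathbbm{1}_{\{u(t')\le s\}}\mathbbm{1}_{\{u(t)>s'\}}$, i.e.\ the \emph{lower} level $s$ must be paired with the sublevel half-line $(-\infty,u^{-1}(s))$ and the \emph{upper} level $s'$ with the superlevel half-line $(u^{-1}(s'),\infty)$; the resulting double integral is therefore $K\big(u^{-1}(s')-u^{-1}(s)\big)$, with a \emph{nonnegative} argument. Your collapse pairs them the other way and lands on $K\big(u^{-1}(s)-u^{-1}(s')\big)$. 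This is not mere bookkeeping: with $f(t)=\max\{-t,0\}$ one has $K(x)-K(-x)=-x\,\|J\|_{L^1}$, so $K$ is not even and the two expressions genuinely differ (a sanity check: for a slow linear ramp the interaction energy tends to $0$, which forces $K$ evaluated at $+\infty$, not at $-\infty$ where $K$ grows linearly). To be fair, the transcription of $F^\circ$ in the paper itself, with $K(v(t)-v(t'))$ for $t<t'$, carries the same inconsistency with its own choice of $f$, so you have faithfully reproduced a sign slip rather than introduced one; but in a written-out proof you should fix the orientation, as otherwise the two sides of the claimed identity differ by $\|J\|_{L^1}\iint_{s<s'}(u^{-1}(s')-u^{-1}(s))\,ds\,ds'$. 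A secondary remark: the regularization $u_\lambda(t)=u(t)+\lambda t$ leaves $X_a^b$; approximating instead by strictly increasing functions obtained by mollification, or arguing directly with the generalized inverse as above (monotonicity makes every Fubini application legitimate without any strictness), avoids the truncation altogether.
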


As an outcome of the proof of the above result, we have in particular the following characterization.

\begin{proposition}[Proof of Theorem 2.11 in \cite{alb-bel1})]
\label{p:changevar}
    Suppose that $G \colon [a,b] \to [0,\infty)$ is a continuous function such that $G(a)=G(b)=0$. Then, given an increasing function $\gamma \in X_a^b$, the following change of variables formula holds true
    \begin{equation}
        \label{e:changevar}
        \int_{\mathbb{R}} G(\gamma(t)) \, dt = \int_a^b G(t) \, d\dot{\gamma}^{-1}(t).
    \end{equation}
\end{proposition}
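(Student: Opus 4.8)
\textbf{Proof plan for Proposition \ref{p:changevar}.}

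The plan is to prove the change-of-variables formula \eqref{e:changevar} first for the special case where $\gamma$ is strictly increasing and continuous, and then extend to a general increasing $\gamma \in X_a^b$ by approximation, using the fact that both sides only see the values of $G$ on $[a,b]$ and $G$ vanishes at the endpoints. In the strictly increasing continuous case, $\gamma$ is a homeomorphism from $\mathbb{R}$ (or an open interval onto which $\gamma$ maps) onto $(a,b)$, and $\gamma^{-1}$ as defined in the previous subsection coincides with the usual inverse. One then observes that $\dot{\gamma}^{-1}$ is precisely the push-forward measure $\gamma_\#(\mathcal{L}^1)$ on $(a,b)$: indeed for any interval $(s,s') \subset (a,b)$ we have $\dot{\gamma}^{-1}((s,s')) = \gamma^{-1}(s') - \gamma^{-1}(s) = \mathcal{L}^1(\{t : s < \gamma(t) < s'\})$, so the two measures agree on intervals and hence on all Borel sets. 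The formula $\int_{\mathbb{R}} G(\gamma(t))\,dt = \int_a^b G(t)\,d(\gamma_\#\mathcal{L}^1)(t) = \int_a^b G(t)\,d\dot\gamma^{-1}(t)$ is then the abstract change-of-variables for push-forward measures, valid for the nonnegative Borel function $G\circ\gamma \geq 0$. (Finiteness of both sides is not needed since $G \geq 0$; the identity holds in $[0,+\infty]$.)

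For a general increasing $\gamma \in X_a^b$, I would reduce to the continuous strictly increasing case by a perturbation/limiting argument. One clean route: given increasing $\gamma$, set $\gamma_\delta(t) := (1-\delta)\,\gamma(t) + \delta\,\big(\tfrac{a+b}{2} + \tfrac{b-a}{2}\tanh t\big)$ for small $\delta > 0$, which is strictly increasing and still lies in $X_a^b$, but may still have jumps; to also gain continuity one can further convolve in $t$ with a mollifier, or argue directly that the formula is stable under the relevant limits. The key structural point making the passage to the limit work is that $\dot\gamma^{-1}$ is a measure on the \emph{fixed} compact interval $[a,b]$ of fixed total mass $b-a$, and $G$ is \emph{continuous} and vanishes at $a,b$; so if $\gamma_\delta \to \gamma$ pointwise a.e. with $\gamma_\delta$ strictly increasing continuous, then $\dot\gamma_\delta^{-1} \rightharpoonup \dot\gamma^{-1}$ weakly$^*$ in $\mathcal{M}_b([a,b])$ (this follows since $\gamma_\delta^{-1}(s) \to \gamma^{-1}(s)$ at continuity points of $\gamma^{-1}$, which are all but countably many $s$), hence $\int_a^b G\,d\dot\gamma_\delta^{-1} \to \int_a^b G\,d\dot\gamma^{-1}$ by the very definition of weak$^*$ convergence tested against $G \in C([a,b])$; simultaneously $\int_{\mathbb{R}} G(\gamma_\delta(t))\,dt \to \int_{\mathbb{R}} G(\gamma(t))\,dt$ by dominated convergence (using that $G$ is bounded and $G\circ\gamma_\delta$ is supported, up to null sets, where $\gamma_\delta \in [a,b]$, with a uniform integrable bound coming from $G(a)=G(b)=0$ and continuity of $G$ controlling the tails, or more simply by splitting the integral over a large symmetric interval plus tails made small uniformly in $\delta$ since $\gamma_\delta \to a,b$ at $\mp\infty$ uniformly in $\delta$).

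The main obstacle I anticipate is the bookkeeping around the tails of the $t$-integral $\int_{\mathbb{R}} G(\gamma(t))\,dt$: since $\gamma$ is merely $L^1$ with prescribed limits at $\pm\infty$ and $G(a) = G(b) = 0$, one must check that $G\circ\gamma \in L^1(\mathbb{R})$ to begin with (otherwise both sides are $+\infty$ and there is nothing to prove, so this is really a non-issue but should be addressed) and, in the approximation step, that the tail contributions $\int_{|t| > T} G(\gamma_\delta(t))\,dt$ are small uniformly in $\delta$. This is handled by noting $G(\gamma_\delta(t)) \to 0$ as $|t| \to \infty$ at a rate controlled uniformly in $\delta$ via the uniform convergence $\gamma_\delta(t) \to a$ (resp. $b$) and the modulus of continuity of $G$ at $a$ (resp. $b$); on the bounded set $|t| \le T$ one uses $\gamma_\delta \to \gamma$ a.e. and boundedness of $G$. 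Apart from this, the argument is essentially the identification $\dot\gamma^{-1} = \gamma_\#\mathcal{L}^1$ together with the standard push-forward change of variables, and the result indeed drops out, as the authors indicate, directly from the computation inside the proof of Proposition \ref{t:Fo=F}.
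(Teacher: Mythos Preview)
Your push-forward identification in Step 1 is the right idea, and it is in fact the whole proof --- but your approximation argument in Step 2 contains a genuine error and is in any case unnecessary.

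The error: you write that ``$\dot\gamma^{-1}$ is a measure on the fixed compact interval $[a,b]$ of fixed total mass $b-a$.'' This is false. You are confusing $\dot\gamma$ (a measure on $\mathbb{R}$ of total mass $b-a$) with $\dot\gamma^{-1}$. Since $\gamma^{-1}\colon(a,b)\to\mathbb{R}$ blows up at both endpoints (indeed $\gamma^{-1}(s)\to\mp\infty$ as $s\to a^+,b^-$), the measure $\dot\gamma^{-1}$ has \emph{infinite} mass on $(a,b)$. Consequently your weak$^*$ convergence argument against $G\in C([a,b])$ does not go through as stated. The tail argument on the left-hand side has the same defect: uniform decay $G(\gamma_\delta(t))\to 0$ as $|t|\to\infty$ does not by itself control $\int_{|t|>T}G(\gamma_\delta(t))\,dt$; that integral may well be $+\infty$.

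The fix is simply to observe that the identity $\dot\gamma^{-1}=\gamma_\#\mathcal{L}^1$ on $(a,b)$ holds for \emph{every} increasing $\gamma\in X_a^b$, with no continuity or strict monotonicity needed. Check it on half-open intervals: for $a<s<s'<b$,
\[
(\gamma_\#\mathcal{L}^1)\big([s,s')\big)=\mathcal{L}^1\big(\{t:\gamma(t)\geq s\}\setminus\{t:\gamma(t)\geq s'\}\big)=\gamma^{-1}(s')-\gamma^{-1}(s),
\]
since for increasing $\gamma$ the set $\{t:\gamma(t)\geq s\}$ is a half-line with left endpoint $\gamma^{-1}(s)$. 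One verifies directly from the definition $\gamma^{-1}(s)=\inf\{t:\gamma(t)\geq s\}$ that $\gamma^{-1}$ is left-continuous, so the Lebesgue--Stieltjes measure satisfies $\dot\gamma^{-1}([s,s'))=\gamma^{-1}(s')-\gamma^{-1}(s)$ as well. The two measures agree on a generating $\pi$-system, hence everywhere. Then \eqref{e:changevar} is just the abstract change of variables $\int_{\mathbb{R}}G\circ\gamma\,d\mathcal{L}^1=\int_{(a,b)}G\,d(\gamma_\#\mathcal{L}^1)$ for nonnegative measurable $G$, where the hypothesis $G(a)=G(b)=0$ disposes of any mass that $\gamma_\#\mathcal{L}^1$ might place on $\{a,b\}$. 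This is precisely the computation carried out inside the proof of Theorem~2.11 in \cite{alb-bel1}, and it requires no approximation step at all.
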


The following operator provides a non-local counterpart to the classical conformal factor for local phase transitions.

\begin{definition}[Operator $H$]
\label{d:hdef}
By letting $Y$ denote the class of all increasing real-valued functions on $(a,b)$, we associate to any $v \in Y$ the function $Hv \colon (a,b) \to \mathbb{R}$ defined by
\begin{equation}
\label{e:hoperator}
Hv(s):= \int_{s<t<b} \int_{a <t'<s} -\dot{K}(v(t)-v(t')) \, dtdt'.
\end{equation}
\end{definition}

Denoting by $C_0([a,b])^+$ the $L^\infty$ closure of all positive and continuous functions with compact support in $(a,b)$, we have the following proposition.

\begin{proposition}[ Proposition 2.13 in \cite{alb-bel1})]
\label{p:unbounded}
The (non-linear) operator $H$ given by \eqref{e:hoperator} satisfies $H : Y \to C_0([a,b])^+$ and 
\[
Hv(s) \leq \bigg[1- \bigg(\frac{2s -a-b}{b-a} \bigg) ^2\bigg]\int_{\mathbb{R}} J(t) \, dt , \ \ s \in (a,b).
\]
Moreover $Hv$ is strictly positive in $(a,b)$ whenever $J$ has unbounded support or $v$ is continuous.
\end{proposition}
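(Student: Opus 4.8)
The plan is to verify the two asserted properties of $H$ in turn: first that $Hv$ lands in $C_0([a,b])^+$ (i.e.\ it is nonnegative, continuous, and vanishes at the endpoints), together with the quantitative upper bound; and second the strict positivity under the stated hypotheses. The starting point is the kernel $K = J*f$ with $f(t)=\max\{-t,0\}$; since $f$ is convex, Lipschitz, nonincreasing, and $f'=-\mathbbm{1}_{(-\infty,0)}$ a.e., one gets $\dot K = J*f' = -\int_0^{\infty}J(\cdot+s)\,ds$ up to sign conventions, so that $-\dot K(r) = \int_{\{t>0\}} J(r+t)\,dt \geq 0$ and is a decreasing function of $r$. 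This nonnegativity of $-\dot K$ immediately gives $Hv(s)\geq 0$ for every $v\in Y$ and $s\in(a,b)$, since \eqref{e:hoperator} integrates $-\dot K$ against a positive measure over the region $\{a<t'<s<t<b\}$.

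\textbf{The upper bound and endpoint behaviour.} For the quantitative estimate I would bound $-\dot K(v(t)-v(t')) \le -\dot K(0^+) \le \int_{\mathbb{R}} J$ (using monotonicity of $-\dot K$ and $v(t)\ge v(t')$ on the region of integration, plus $-\dot K(r)\le \int_{t>0}J(t)\,dt \le \frac12\int_{\mathbb R}J$ by evenness — the exact constant to be checked against the conventions in \cite{alb-bel1}), and then estimate the measure of the integration region. Here the key observation is that $\dot v$ is a positive measure on $(a,b)$ of total mass governed by how $v$ spreads; after the change of variables $s\mapsto$ the relevant quantile, the double integral $\int_{s<t<b}\int_{a<t'<s}dt\,dt'$ against the product measure $\dot v\otimes\dot v$ is controlled by $(\text{mass on }(s,b))\cdot(\text{mass on }(a,s))$, and an elementary optimisation (AM--GM, $xy\le \frac14(x+y)^2$ type) produces the parabolic prefactor $1-\big(\tfrac{2s-a-b}{b-a}\big)^2$. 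In fact the cleanest route is to recognise $H v(s)$ as essentially $-\tfrac{d}{ds}F^\circ$-type quantity or to compare directly with the homogeneous profile; I expect the identity $Hv(s) = \int_{\{v^{-1}< s'\}}\!\!\int_{\{v^{-1}> s'\}}\cdots$ reorganises into a single integral $\int_{\mathbb R} \big(-\dot K\big)\big(|v(t)-s|\big)$-type expression whose endpoint decay to $0$ as $s\to a^+$ or $s\to b^-$ follows from dominated convergence and $-\dot K(+\infty)=0$ (which holds because $J(\cdot+t)\to 0$ in $L^1$ on $\{t>0\}$, using $J\in L^1$, which is guaranteed here by \eqref{e:minalbbel}). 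Continuity of $s\mapsto Hv(s)$ on $(a,b)$ likewise follows from dominated convergence since $-\dot K$ is bounded and the integrand depends continuously on $s$ off a null set.

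\textbf{Strict positivity.} For the last assertion, suppose $Hv(s_0)=0$ for some $s_0\in(a,b)$. By nonnegativity of the integrand, this forces $-\dot K(v(t)-v(t'))=0$ for $(\dot v\otimes\dot v)$-a.e.\ pair with $a<t'<s_0<t<b$. Since $-\dot K(r)=\int_{t>0}J(r+t)\,dt$, vanishing of $-\dot K(r)$ for some $r\ge 0$ means $J\equiv 0$ a.e.\ on $(r,\infty)$, i.e.\ $J$ has bounded support — contradicting the hypothesis that $J$ has unbounded support. In the remaining case ($v$ continuous), continuity of $v$ on $(a,b)$ means $\dot v$ charges every subinterval, so there are pairs $(t',t)$ with $t'<s_0<t$ and $v(t)-v(t')$ arbitrarily small, in particular smaller than any fixed $r>0$; combined with $-\dot K$ being strictly positive near $0$ whenever $J$ is not a.e.\ zero near $0$ (which holds since $J>0$ by assumption), this forces the integrand to be strictly positive on a set of positive $(\dot v\otimes\dot v)$-measure, again a contradiction.

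\textbf{Main obstacle.} The routine parts are the nonnegativity and the strict-positivity dichotomy; the delicate point I expect to spend the most care on is pinning down the exact parabolic prefactor $1-\big(\tfrac{2s-a-b}{b-a}\big)^2$ with the correct constant $\int_{\mathbb R}J$, which requires reorganising the double integral in \eqref{e:hoperator} into a manageable form and tracking the normalisations and factors of $2$ coming from the evenness of $J$ and the definition of $L_J$. Since this is verbatim Proposition 2.13 of \cite{alb-bel1}, the proof here can ultimately be reduced to citing that result together with the truncation argument already used in the proof of Proposition~\ref{t:eximin} to upgrade from \eqref{e:minalbbel} to the assumptions in force; but carrying out the estimate directly is also feasible along the lines above.
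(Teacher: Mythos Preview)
The paper does not supply its own proof: Proposition~\ref{p:unbounded} is quoted verbatim from \cite{alb-bel1} and simply cited. So there is nothing to compare against on the paper's side. That said, your sketch contains a misreading of the definition that makes the argument more convoluted than it is and introduces some incorrect statements.

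The integration in \eqref{e:hoperator}, $Hv(s)=\int_s^b\!\int_a^s -\dot K(v(t)-v(t'))\,dt'\,dt$, is with respect to plain Lebesgue measure on $(a,s)\times(s,b)$, \emph{not} weighted by $\dot v\otimes\dot v$ as you write. Once this is recognised, the upper bound is immediate: on that region $v(t)\ge v(t')$, and $-\dot K(r)=\int_r^\infty J$ is bounded by $\int_{\mathbb R}J$; hence $Hv(s)\le \|{-}\dot K\|_\infty\,(b-s)(s-a)$, and $(b-s)(s-a)=\tfrac{(b-a)^2}{4}\big[1-\big(\tfrac{2s-a-b}{b-a}\big)^2\big]$ gives the parabolic factor directly—no AM--GM, no quantile reparametrisation. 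Likewise, the $C_0$ endpoint behaviour follows simply because the integration region $(a,s)\times(s,b)$ collapses as $s\to a^+$ or $s\to b^-$, not from $-\dot K(+\infty)=0$ as you suggest. Continuity in $s$ is then dominated convergence with a bounded integrand over a continuously varying region.

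For strict positivity, your treatment of the case ``$J$ has unbounded support'' is correct. In the continuous-$v$ case, however, the claim that ``$\dot v$ charges every subinterval'' is not what continuity gives you (a continuous increasing function can be constant on intervals). The right mechanism is simpler and again uses Lebesgue measure: continuity of $v$ ensures that for $(t',t)$ in a neighbourhood of $(s_0,s_0)$ inside $(a,s_0)\times(s_0,b)$ the difference $v(t)-v(t')$ is arbitrarily small, and since $-\dot K(0)=\int_0^\infty J>0$ whenever $J\not\equiv 0$, the integrand is strictly positive on a set of positive Lebesgue measure, forcing $Hv(s_0)>0$.
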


In \cite{alb-bel1} the authors showed the following fundamental characterization of increasing minimizers of \eqref{e:onedim100} in terms of the operator $H$.

\begin{proposition}[Corollary 2.16 in \cite{alb-bel1})]
\label{t:charmin}
    Let $\sigma \in Y$. Then $\sigma$ minimizes $F^\circ$ in $Y$ if and only if 
    \begin{align}
    &W \geq H\sigma, \ \ \text{ everywhere in $[a,b]$}\\
    &W = H\sigma, \ \ \text{ everywhere in the support of the measure $\dot{\sigma}$}.
    \end{align}
    In particular, an increasing function $\gamma \in X_a^b$ minimizes $F$ if and only if $\sigma:= \gamma^{-1}$ minimizes $F^\circ$ in $Y$.
\end{proposition}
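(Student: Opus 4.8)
\textbf{Proof proposal for Proposition \ref{t:charmin}.}

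The plan is to follow the classical scheme for characterizing minimizers of a convex functional via a subdifferential inequality, adapted to the one-sided obstacle-type structure that the operator $H$ produces. First I would record that, since $F^\circ$ is convex on $Y$ (as already noted), an element $\sigma \in Y$ minimizes $F^\circ$ if and only if the first variation of $F^\circ$ at $\sigma$ in every admissible direction is nonnegative; admissible perturbations are those of the form $\sigma_s = \sigma + s(\tau - \sigma)$ with $\tau \in Y$, or more conveniently perturbations $v \mapsto v + s\varphi$ with $\varphi$ a compactly supported increasing (or BV) function in $(a,b)$, chosen so that $\sigma + s\varphi$ stays in $Y$ for small $|s|$ of the appropriate sign. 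The key computation is to differentiate
\[
F^\circ(v) = \int_{a<t<t'<b} K(v(t)-v(t'))\,dt\,dt' + \int_{(a,b)} W(t)\,d\dot v(t)
\]
in $s$. Differentiating the double-integral term and using $-\dot K$ together with the definition \eqref{e:hoperator} of $Hv$, one rewrites the derivative of the first term as $-\int_{(a,b)} Hv(s)\,d\dot\varphi(s)$ after an integration by parts (this is precisely the computation underlying the introduction of $H$ in \cite{alb-bel1}); the derivative of the second term is $\int_{(a,b)} W\,d\dot\varphi$. Hence
\[
\frac{d}{ds}\Big|_{s=0} F^\circ(\sigma + s\varphi) = \int_{(a,b)} \big(W(t) - H\sigma(t)\big)\,d\dot\varphi(t).
\]

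Granting this identity, the characterization is a standard argument. For necessity: if $\sigma$ minimizes, the right-hand side must be $\geq 0$ for every admissible direction $\varphi$. Taking $\varphi$ to be (a smooth approximation of) the indicator of an interval $[t_1,t_2]\subset(a,b)$, so $\dot\varphi \geq 0$ is a small positive bump near $t_1$ minus a bump near $t_2$ — or more simply choosing $\dot\varphi$ to be an arbitrary nonnegative measure supported where we are free to increase $\sigma$ — forces $W - H\sigma \geq 0$ on $(a,b)$, hence (by continuity of $W$ from \eqref{e:regopt1.1w}-type assumptions and of $H\sigma$ via Proposition \ref{p:unbounded}, together with $W(a)=W(b)=0 \le H\sigma$) everywhere on $[a,b]$. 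To get $W = H\sigma$ on $\mathrm{supp}\,\dot\sigma$, one uses perturbations that \emph{decrease} $\sigma$ on a set where $\dot\sigma$ is nontrivial: near such points $\sigma$ can be perturbed in both directions within $Y$, so the first variation must vanish there, giving $W = H\sigma$ on $\mathrm{supp}\,\dot\sigma$. For sufficiency: if the two conditions hold, then for any competitor $\tau \in Y$, convexity gives $F^\circ(\tau) - F^\circ(\sigma) \geq \frac{d}{ds}|_{s=0}F^\circ(\sigma + s(\tau-\sigma)) = \int (W - H\sigma)\,d(\dot\tau - \dot\sigma) = \int(W-H\sigma)\,d\dot\tau - \int(W-H\sigma)\,d\dot\sigma \geq 0 - 0 = 0$, using $W - H\sigma \geq 0$ everywhere for the first term and $W - H\sigma = 0$ on $\mathrm{supp}\,\dot\sigma$ for the second. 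The final sentence relating increasing minimizers $\gamma$ of $F$ to minimizers $\sigma = \gamma^{-1}$ of $F^\circ$ is then immediate from Proposition \ref{t:Fo=F}, which gives $F(\gamma) = F^\circ(\gamma^{-1})$, together with the fact (from \cite{alb-bel1}, implicit in the conjugacy construction) that $v \mapsto v^{-1}$ is a bijection between increasing functions in $X_a^b$ and $Y$ intertwining the two functionals.

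The main obstacle I anticipate is the rigorous justification of differentiating under the integral sign and the integration by parts that converts $\int -\dot K(v(t)-v(t'))$-type expressions into $\int H\sigma\,d\dot\varphi$: one must check that $K = J*f$ is differentiable in the relevant sense, control the double integral using the integrability \eqref{e:minalbbel} of $J$, and handle the fact that $\sigma$ and its perturbations are only BV (so $\dot\varphi$, $\dot\sigma$ are measures and "integration by parts" means a Fubini-type rearrangement of a double integral against product measures). Since this proposition is quoted verbatim as Corollary 2.16 of \cite{alb-bel1}, in the paper itself it suffices to cite that reference; the sketch above is the argument one would reconstruct if a self-contained proof were wanted. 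A secondary technical point is the boundary behavior at $a$ and $b$: one needs $H\sigma \in C_0([a,b])^+$ (Proposition \ref{p:unbounded}) so that the inequality $W \geq H\sigma$ on the open interval propagates to the closed interval, which is where the strict positivity/vanishing-at-endpoints information is used.
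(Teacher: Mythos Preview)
Your proposal is appropriate: the paper does not prove this statement at all but simply quotes it as Corollary~2.16 of \cite{alb-bel1}, exactly as you anticipated. Your sketch of the convex first-variation argument, yielding $\frac{d}{ds}\big|_{s=0} F^\circ(\sigma+s\varphi)=\int (W-H\sigma)\,d\dot\varphi$ and then reading off the two conditions, is the standard route and matches the argument in \cite{alb-bel1}; the technical caveats you flag (differentiability of $K$, Fubini-type manipulations with BV perturbations, boundary behavior via Proposition~\ref{p:unbounded}) are precisely the points that need care in a self-contained write-up.

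One small refinement worth noting for the last sentence: the equivalence between increasing minimizers of $F$ in $X_a^b$ and minimizers of $F^\circ$ in $Y$ via $\sigma=\gamma^{-1}$ uses not only Proposition~\ref{t:Fo=F} but also, implicitly, that the infimum of $F$ over all of $X_a^b$ is attained by an increasing function (Proposition~\ref{t:eximin}); otherwise the bijection only tells you that $\gamma$ minimizes $F$ among \emph{increasing} competitors.
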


\subsection{Regularity: The case of integrable kernels}
\label{subs:cont1d-integrable}

In this subsection we prove a first regularity result for bounded solutions $\gamma$ of \eqref{e:regopt2} under the following additional assumptions on $J$ and $W$:
    \begin{align}
         \label{e:regopt6.1234}
        & \ \ \ \ \ \ \ \ \ \ \ \int_{\mathbb{R} \setminus B_{\rho}(0)} J(h)|h| \, dh < \infty \ \ \text{and} \ \ \|J\|_{L^1(\mathbb{R})}=1 \\
         \label{e:regopt6.1234.1}
        &\text{$Q(t):= \dot{W}(t)+t$ admits a continuous (left) inverse $P \colon \mathbb{R} \to [-1,1]$}
    \end{align}
   Our result reads as follows.

    \begin{proposition}
        \label{p:regoptintk}
        Under assumptions \eqref{e:regopt1.1w} and \eqref{e:regopt6.1234}--\eqref{e:regopt6.1234.1}, any increasing minimizers of \eqref{e:onedim100} in the class $X_{-1}^1$ (see \eqref{eq:Xab}) is continuous.  
    \end{proposition}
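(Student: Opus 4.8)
The strategy is to use the conjugate-functional characterization of Proposition~\ref{t:charmin} and translate continuity of the minimizer $\gamma$ into a statement about the inverse map $\sigma=\gamma^{-1}$. Recall that $\gamma$ is increasing and $\gamma\in X_{-1}^1$, and it fails to be continuous precisely when it has a jump, i.e. precisely when $\sigma=\gamma^{-1}$ is constant on some nondegenerate interval $(t_1,t_2)\subset(-1,1)$, equivalently when $\dot\sigma$ has an atom. So the goal is to rule out atoms of $\dot\sigma$. First I would record that, by Proposition~\ref{t:charmin}, $\sigma$ minimizes $F^\circ$ over $Y$ iff $W\ge H\sigma$ everywhere on $[-1,1]$ and $W=H\sigma$ on $\operatorname{supp}\dot\sigma$. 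Since $\|J\|_{L^1}=1$, Proposition~\ref{p:unbounded} gives the pointwise bound $H\sigma(s)\le 1-s^2$, and — this is the key extra input from \eqref{e:regopt6.1234}, the non-$L^1$-at-infinity tail being irrelevant here because only the near-origin behaviour and the total mass matter — one should check that $H\sigma$ is in fact continuous on $(-1,1)$ whenever $\sigma$ has an atom, or more precisely extract enough regularity of $H\sigma$ to differentiate the Euler--Lagrange relation.

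\textbf{Main argument.} Suppose for contradiction that $\gamma$ has a jump, so there is an interval $I=(t_1,t_2)$ with $\sigma\equiv c$ on $I$ and $t_1=\gamma(c^-)$, $t_2=\gamma(c^+)$ two consecutive values not attained (or attained only at the single point $c$). On the support of $\dot\sigma$ we have $W=H\sigma$; I would now use the structure of $H$ from \eqref{e:hoperator}, namely that $H\sigma(s)=\int\!\!\int_{s<t<b,\,a<t'<s}(-\dot K(\sigma(t)-\sigma(t')))\,dt\,dt'$ with $\dot K = J*\mathbbm{1}_{(-\infty,0)} - \tfrac12$ type object (more precisely $\dot K(r)=-\int_{-\infty}^{r}J$, using $f(t)=\max\{-t,0\}$, $K=J*f$), to compute $\frac{d}{ds}H\sigma(s)$ in the distributional sense on $(-1,1)$. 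Because $\sigma$ is constant near $c$, this derivative is well-controlled there: differentiating under the integral produces a boundary term involving $\ddot K = J$ convolved against $\dot\sigma$, and since $\dot\sigma$ has mass but $\sigma$ is flat across $I$, one gets that $H\sigma$ is actually $C^1$ near the value $c$ with a derivative given by an explicit integral. Then the two relations $\dot W(t)+t = Q(t)$ and $W=H\sigma$ at $t=c$, together with the identity $\frac{d}{dt}(H\sigma)(c) + c$ matching $Q(c)$ on the relevant side, force $c = P(\text{something})$; the contradiction is extracted from the fact that the continuous (left) inverse $P$ assumed in \eqref{e:regopt6.1234.1} cannot be compatible with $\sigma$ jumping — concretely, invertibility of $Q$, i.e. that $Q$ is (essentially) monotone/injective with continuous inverse, prevents $\dot W$ from having the kind of plateau in $t$ that a jump of $\gamma$ would require, since $H\sigma=W$ on $\operatorname{supp}\dot\sigma$ and $H\sigma$ has no flat piece there unless $\sigma$ does. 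I would carry this out by contraposition following the argument in \cite{cozzi}, adapted to the operator $L_J$ / $H$: assume a jump, derive that $W\equiv H\sigma$ must hold on a neighbourhood forcing $\dot W(t) = \frac{d}{dt}H\sigma(t)$, then use that $\frac{d}{dt}H\sigma$ is continuous and strictly monotone-compatible with $P$ to conclude $\sigma$ is continuous, hence $\gamma$ has no jump.

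\textbf{Where the difficulty lies.} The main obstacle is the regularity bootstrap for $H\sigma$: one needs to know that $H\sigma$ is differentiable (at least one-sidedly, with continuous derivative) at the value $c$ corresponding to a putative jump, and to identify that derivative explicitly in terms of $J$ and $\dot\sigma$, so that the Euler--Lagrange equality $W=H\sigma$ on $\operatorname{supp}\dot\sigma$ can be differentiated and confronted with $\dot W(t)+t=Q(t)$. This is exactly where \eqref{e:regopt6.1234} ($\|J\|_{L^1}=1$ plus the mild tail condition) is used — it guarantees $\dot K$ is bounded and $\ddot K=J\in L^1_{loc}$, so the convolution manipulations are legitimate — and where the invertibility hypothesis \eqref{e:regopt6.1234.1} does the actual work of excluding the jump, precisely as the analogous nondegeneracy hypotheses are used in \cite[Theorem~3.1]{bate} and \cite{cozzi}. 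A secondary technical point is handling the endpoints $\pm1$: near $t=\pm1$ one combines the bound $H\sigma(s)\le1-s^2$ with $W(\pm1)=0$ and assumption \eqref{e:regopt1.1w} (cont. diff. $W$) to see that $\sigma$ cannot jump there either, so the analysis can be localized to compact subintervals of $(-1,1)$ where the convolution estimates are uniform.
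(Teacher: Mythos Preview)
Your proposal takes a much more indirect route than the paper's proof, and it contains a genuine confusion that undermines the argument as written.

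The paper's proof is essentially four lines. Since $\|J\|_{L^1}=1$, the integro-differential operator simplifies to $L_J\gamma = J*\gamma - \gamma$, so the Euler--Lagrange equation \eqref{e:regopt2} becomes $J*\gamma = \dot W(\gamma) + \gamma = Q(\gamma)$ a.e.\ on $\mathbb{R}$. Applying the continuous left inverse $P$ from \eqref{e:regopt6.1234.1} yields $\gamma = P(J*\gamma)$ a.e. But $J*\gamma$ is continuous (convolution of an $L^1$ kernel with a bounded function), and $P$ is continuous by hypothesis, so $\gamma$ agrees a.e.\ with the continuous function $P(J*\gamma)$. That is the entire proof; the conjugate functional is not invoked at all.

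Your approach via $H$ and $\sigma=\gamma^{-1}$ has a basic misidentification: you write that a jump of $\gamma$ corresponds to ``$\dot\sigma$ has an atom'', but the opposite is true. A jump of $\gamma$ at some point $c_0\in\mathbb{R}$ makes $\sigma$ \emph{constant} (equal to $c_0$) on the interval $(t_1,t_2)=(\gamma(c_0^-),\gamma(c_0^+))\subset(-1,1)$, so $\dot\sigma$ vanishes there; an atom of $\dot\sigma$ would correspond instead to a flat piece of $\gamma$. This matters because the Euler--Lagrange identity $W=H\sigma$ from Proposition~\ref{t:charmin} holds only on $\operatorname{supp}\dot\sigma$, which is precisely the \emph{complement} of the jump interval $(t_1,t_2)$ --- so you cannot ``differentiate the relation $W=H\sigma$'' across that interval as you propose. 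You also conflate variables when you write ``$\sigma$ is constant near $c$'': $c$ is a value of $\sigma$, not a point in its domain $(-1,1)$. Even if one repaired these issues by arguing at the endpoints $t_1,t_2$ of the gap (where $W=H\sigma$ by continuity, with $W\ge H\sigma$ in between), the connection to the invertibility of $Q(t)=\dot W(t)+t$ is not made explicit in your sketch and would require nontrivial additional work, all of which is bypassed by the paper's direct use of the convolution form of $L_J$.
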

    \begin{proof}
        By virtue of the assumption on $\|J\|_{L^1(\mathbb{R})}=1$, the Euler-Lagrange equation \eqref{e:regopt2} rewrites as
        \begin{equation}
        \label{e:regopt4}
        J * \gamma -\gamma = \dot{W}(\gamma), \ \ \text{ in the weak sense on $\mathbb{R}$,}
        \end{equation}
        where the symbol $*$ denotes the convolution operator. Since both members in equality \eqref{e:regopt4} are $L^1_{loc}$-functions, we deduce that 
         \begin{equation}
        \label{e:regopt4.1}
        J * \gamma = \dot{W}(\gamma) +\gamma, \ \ \text{ a.e. on $\mathbb{R}$.}
        \end{equation}
        By further using our invertibility assumption \eqref{e:regopt6.1234.1}, then \eqref{e:regopt4.1} becomes
         \begin{equation}
        \label{e:regopt5}
        P(J * \gamma) = \gamma, \ \ \text{ a.e. on $\mathbb{R}$,}
        \end{equation}
       Eventually, since $P(J * \gamma)$ is continuous, being the composition of two continuous functions, we deduce from \eqref{e:regopt5} that $\gamma$ coincides with a continuous function. 
    \end{proof}

    \subsection{Regularity: The case of non-integrable kernels}
    \label{subs:cont1d-not-integrable}
    In this subsection we deduce some regularity properties for maps
     $\gamma$ solving \eqref{e:regopt2} under the additional assumption that $J\in \mathcal{L}_1(\eta,\lambda,\rho)$ (see Definition \ref{def:classJ}) for some $\eta,\lambda,\rho\in (0,1)$.

Given a measurable function $\gamma \colon \mathbb{R} \to \mathbb{R}$, for $\eta>0$ we set 
$$\|\gamma\|_{L^1_\eta(\mathbb{R})}:= \int_{\mathbb{R}} \frac{|\gamma(t)|}{1+|t|^{1+\eta}} \, dt.$$
 We have the following H\"older regularity result. The proof mainly follows \cite[Proposition 3.9]{cozzi}.

    \begin{proposition}
        \label{p:holderreg1}
        Let $\eta,\lambda,\rho\in (0,1)$ and let $J \in\mathcal{L}_1(\eta,\lambda,\rho)$. Let $\gamma \in L^\infty(\R) \cap BV(\R)$ and $f \in L^\infty(\mathbb{R})$ satisfy
        \begin{equation}
        \label{e:regopt8}
        L_J\gamma=f \ \ \text{ in the weak sense on $\mathbb{R}$}
        \end{equation}
        (recall \eqref{e:regopt3}).
        Then, for every $\alpha \in (0,\eta)$ and $a>0$ we have 
        \begin{equation}
           \label{e:regopt7}
            \lfloor \gamma\rfloor_{C^{0,\alpha}((-a,a))} \leq C(\|f\|_{L^\infty((-2a,2a))} + \|\gamma\|_{L^\infty(\mathbb{R})} ), 
        \end{equation}
        where $C:=C(\eta,\alpha,\lambda,a)$.
    \end{proposition}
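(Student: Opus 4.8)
The plan is to split $J$ into a genuinely singular part, comparable to the $\eta$-fractional kernel, plus an $L^1$ remainder that can be moved into the right-hand side because $\gamma\in L^\infty$; and then to run the oscillation-decay iteration of \cite[Proposition 3.9]{cozzi} for the resulting equation. The exponent restriction $\alpha<\eta$ will only be needed to keep the nonlocal tail under control along the iteration.

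\emph{Reduction.} First I would set $K(h):=\min\{J(h),\lambda^{-1}|h|^{-1-\eta}\}\,\mathbbm{1}_{B_\rho}(h)$ and $R:=J-K\ge 0$. By \eqref{e:classlm1} and $0<\lambda<1$, the kernel $K$ is even, satisfies $\lambda|h|^{-1-\eta}\le K(h)\le\lambda^{-1}|h|^{-1-\eta}$ on $B_\rho\setminus\{0\}$, and vanishes off $B_\rho$, so it is of the type to which \cite[Proposition 3.9]{cozzi} applies. On $B_\rho$ one has $0\le R=(J-\lambda^{-1}|h|^{-1-\eta})_+\le|\tilde J|$, while $R=J$ on $\mathbb{R}\setminus B_\rho$; hence, by \eqref{e:classm2} (which gives $\int_{\mathbb{R}\setminus B_\rho}J(h)\,dh\le\rho^{-1}\int_{\mathbb{R}\setminus B_\rho}J(h)|h|\,dh<\infty$) and $\tilde J\in L^1(\mathbb{R})$, we get $R\in L^1(\mathbb{R})$. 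Since $\gamma\in L^\infty(\mathbb{R})$,
\[
|L_R\gamma(t_0)|\le\tfrac12\int_{\mathbb{R}}\big(|\gamma(t_0+t)|+|\gamma(t_0-t)|+2|\gamma(t_0)|\big)R(t)\,dt\le 2\|R\|_{L^1(\mathbb{R})}\|\gamma\|_{L^\infty(\mathbb{R})},
\]
so, by linearity of the operator in the kernel, $\gamma$ solves $L_K\gamma=g$ in the weak sense on $\mathbb{R}$, with $g:=f-L_R\gamma$ and $\|g\|_{L^\infty((-2a,2a))}\le\|f\|_{L^\infty((-2a,2a))}+2\|R\|_{L^1(\mathbb{R})}\|\gamma\|_{L^\infty(\mathbb{R})}$. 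Since this equation holds on all of $\mathbb{R}$, it is translation invariant, and it is enough to bound $\mathrm{osc}_{(x_0-r,x_0+r)}\gamma$ for an arbitrary $x_0\in(-a,a)$ and small $r>0$; by translating, we may take $x_0=0$.

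\emph{Oscillation decay and iteration.} Following \cite[Proposition 3.9]{cozzi}, I would prove a diminish-of-oscillation lemma: given $\alpha\in(0,\eta)$, there are $\theta\in(0,\tfrac12)$, $\epsilon_0>0$ and $M_0\ge1$, depending only on $\eta,\alpha,\lambda$, such that any weak solution $w$ of $L_{\bar K}w=h$ in $(-1,1)$ --- with $\bar K$ even, comparable to $|h|^{-1-\eta}$ on some $B_{r_0}\setminus\{0\}$, $r_0\ge1$, and vanishing off $B_{r_0}$ --- with $\mathrm{osc}_{(-1,1)}w\le1$, $\|h\|_{L^\infty((-1,1))}\le\epsilon_0$ and $\int_{\mathbb{R}\setminus(-1,1)}|w(t)|(1+|t|^{1+\eta})^{-1}\,dt\le M_0$, satisfies $\mathrm{osc}_{(-\theta,\theta)}w\le\theta^\alpha$. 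This is the usual compactness/contradiction scheme: a universal small-exponent interior Hölder estimate yields precompactness, any limit solves a translation-invariant equation with the same ellipticity, hence is $C^{0,\alpha'}$ in $(-\tfrac12,\tfrac12)$ for some $\alpha'\in(\alpha,\eta)$, forcing the oscillation bound once $\theta=\theta(\alpha)$ is small. One then iterates this at the origin over the scales $\theta^k$ (with $\theta^k\lesssim a$), proving $\mathrm{osc}_{(-\theta^k,\theta^k)}\gamma\le A\theta^{k\alpha}$ with $A\simeq 1+\|g\|_{L^\infty((-2a,2a))}+\|\gamma\|_{L^\infty(\mathbb{R})}a^{-\alpha}$: at step $k$ one passes to $w_k(x):=A^{-1}\theta^{-k\alpha}(\gamma(\theta^k x)-m_k)$, with $m_k$ the midrange of $\gamma$ on $(-\theta^k,\theta^k)$, which satisfies $L_{\hat K_k}w_k=A^{-1}\theta^{k(\eta-\alpha)}g(\theta^k\cdot)$, where $\hat K_k$ is again admissible, comparable to $|h|^{-1-\eta}$ on the ever larger ball $B_{\rho\theta^{-k}}$; the inductive hypothesis gives $\mathrm{osc}_{(-1,1)}w_k\le1$, while $\|L_{\hat K_k}w_k\|_{L^\infty((-1,1))}\le A^{-1}\|g\|_{L^\infty((-2a,2a))}\le\epsilon_0$ for $A$ large (using $\eta>\alpha$, so $\theta^{k(\eta-\alpha)}\le1$). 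The last ingredient is the tail: using the inductive hypothesis at the scales $\theta^j$, $j\le k$, together with telescoping of the midranges, one finds $|w_k(x)|\lesssim(1+|x|)^\alpha$ for $|x|\le\theta^{-k}$, whence the tail norm $\int_{\mathbb{R}\setminus(-1,1)}|w_k(t)|(1+|t|^{1+\eta})^{-1}\,dt$ is bounded by a constant depending only on $\alpha,\eta$ --- and \emph{this is exactly where $\alpha<\eta$ is used}, through $\int_{\mathbb{R}}(1+|x|)^\alpha(1+|x|^{1+\eta})^{-1}\,dx<\infty$, the range $|x|>\theta^{-k}$ being controlled by $\|\gamma\|_{L^\infty}$ since $\eta>\alpha$. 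Choosing $M_0$ equal to this constant, the lemma applies and yields $\mathrm{osc}_{(-\theta^{k+1},\theta^{k+1})}\gamma\le A\theta^{(k+1)\alpha}$, closing the induction.

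\emph{Conclusion and main obstacle.} A routine interpolation between the dyadic scales then gives $\lfloor\gamma\rfloor_{C^{0,\alpha}((-a,a))}\le C\big(\|f\|_{L^\infty((-2a,2a))}+\|\gamma\|_{L^\infty(\mathbb{R})}\big)$, with $C$ depending on $\eta,\alpha,\lambda,a$ and on the structural constants of $J$ in \eqref{e:classlm1}--\eqref{e:classm2}, which is \eqref{e:regopt7}. I expect the genuine obstacle to be the oscillation lemma: because the admissible kernels are only \emph{comparable} to the $\eta$-fractional kernel and are \emph{truncated}, there is no exact scale invariance, and one must work within a class of truncated kernels stable under the dyadic rescalings above --- precisely the viewpoint of \cite[Proposition 3.9]{cozzi}. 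This is also the place where the full structure of $\mathcal{L}_1(\eta,\lambda,\rho)$ is used (the two-sided fractional bound near the origin providing ellipticity, \eqref{e:classm2} allowing the far interactions to be absorbed into $\|\gamma\|_{L^\infty}$), and it is the reason the argument breaks down for the borderline singularity $|h|^{-1}$.
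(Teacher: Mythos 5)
Your reduction step is essentially the paper's Steps 2--4 compressed into one move: the paper also splits off $\hat J:=J\wedge[\lambda^{-1}|h|^{-1-\eta}]$ and absorbs $L_{J-\hat J}\gamma$ into the right-hand side using $\gamma\in L^\infty$, exactly as you do with $R$. The one structural difference there is that you \emph{truncate} the kernel to $B_\rho$, whereas the paper goes the opposite way and \emph{completes} it (its Step 2 adds a smooth kernel $J'$ equal to $\lambda|t|^{-1-\eta}$ outside $(-\rho,\rho)$, whose contribution is again bounded by $\|\gamma\|_{L^\infty}$), so as to land in the class of kernels globally two-sided comparable to $|h|^{-1-\eta}$, for which \cite[Proposition 3.2]{cozzi} can be cited verbatim. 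After that the routes diverge: the paper treats the cited estimate as an a priori bound for $C^2_{loc}$ functions and upgrades it to weak $L^\infty\cap BV$ solutions by mollifying ($\gamma_\ep=\gamma*\varphi_\ep$ solves the same equation pointwise) and passing to the limit via Ascoli--Arzel\`a; you instead propose to re-derive the core H\"older estimate for the truncated class by the oscillation-decay iteration of \cite[Proposition 3.9]{cozzi}. Your scaling computation, the role of $\alpha<\eta$ in the tail bound, and the stability of the truncated class under the dyadic rescalings are all correct, and working directly with weak solutions would let you bypass the mollification step. What your route costs is that the diminish-of-oscillation lemma is asserted, not proven: its compactness/contradiction proof needs both a universal (Krylov--Safonov-type) interior H\"older estimate for rough kernels to obtain precompactness and an interior $C^{0,\alpha'}$ estimate, $\alpha'\in(\alpha,\eta)$, for the limiting translation-invariant equation --- and the latter is essentially the very result the paper outsources to \cite{cozzi}, so in practice you would still be citing it rather than gaining self-containedness. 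Finally, you are right (and more candid than the statement of the proposition) that the constant inevitably also depends on $\|\tilde J\|_{L^1}$ and on $\int_{\mathbb{R}\setminus B_\rho}J(h)|h|\,dh$ through the $L^1$ remainder; the paper's proof has the same dependence.
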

\begin{proof}
We first observe that it is enough to prove the result on $(-1,1)$, the case $(-a,a)$ follows in fact by a scaling argument. 

\textbf{Step 1}. We begin by proving the proposition under the following additional assumption on $J$
\begin{equation}
        \label{e:regopt6.1}
        \frac{\lambda}{|h|^{1+\eta}}  \leq J(h) \leq  \frac{1}{\lambda|h|^{1+\eta}}, \ \ \text{ for every $h \in \mathbb{R}$.}
    \end{equation}
         In view of \cite[Proposition 3.2]{cozzi} (see also \cite{dong}), under condition \eqref{e:regopt6.1}, if we further assume $\gamma \in C^2_{loc}((-2,2))$, there holds
 \begin{equation}
           \label{e:regopt7.1}
            \lfloor \gamma\rfloor_{C^{0,\alpha}((-a,a))} \leq C(\|f\|_{L^\infty((-2a,2a))} + \|\gamma\|_{L^1_{\eta}(\mathbb{R})} ), 
        \end{equation}

         Let now $\gamma\in L^{\infty}(\R) \cap BV(\R)$, and denote by $\varphi_\epsilon \colon \mathbb{R} \to [0,\infty)$ a family of even mollifiers, namely, for $\varphi \in C^\infty_c((-1,1))$ satisfying $\varphi(-t)=\varphi(t)$ for all $t\in\R$ and $\int_{(-1,1)} \varphi(t)dt =1$, and for every $\ep$, we set $\varphi_\epsilon(t):=\epsilon^{-1}\varphi(t\epsilon)$ for all $t\in\R$.
         By letting $\gamma_\epsilon := \gamma *\varphi_\epsilon$ and $f_\epsilon := f * \varphi_\epsilon$, a change of variables argument shows that 
         \[
          L_J\gamma_\epsilon=f_\epsilon, \ \ \text{ pointwise on $\mathbb{R}$.}
         \]
         Hence, being $\gamma_\epsilon \in C^\infty(\mathbb{R}) \cap L^\infty(\mathbb{R})$, by \eqref{e:regopt7.1} we have for every $\epsilon>0$
         \begin{equation}
            \label{e:regopt7.2}
            \lfloor \gamma_\epsilon\rfloor_{C^{0,\alpha}((-1,1))} \leq C(\|f_\epsilon\|_{L^\infty((-2,2))} + \|\gamma_\epsilon\|_{L^1_\eta(\mathbb{R})} ),
        \end{equation}
         for a constant $C$ independent from $\epsilon$. Since $\|f_\epsilon\|_{L^\infty(\mathbb{R})} \leq \|f\|_{L^\infty(\mathbb{R})}$ as well as $\|\gamma_\epsilon\|_{L^1_\eta(\mathbb{R})} \to  \|\gamma\|_{L^1_\eta(\mathbb{R})}$ as $\epsilon \to 0^+$, we can apply Ascoli-Arzel\`a Theorem to infer that, up to subsequences, $\gamma_\epsilon \to \tilde{\gamma}$ uniformly on $[-1,1]$ for some $\tilde{\gamma} \colon [-1,1] \to \mathbb{R}$. Clearly, the uniform convergence implies that \eqref{e:regopt7.2} is maintained at the limit, namely,
         \[
            \lfloor \tilde{\gamma}\rfloor_{C^{0,\alpha}((-1,1))} \leq C(\|f\|_{L^\infty((-2,2))} + \|\gamma\|_{L^1_\eta(\mathbb{R})} ).
        \]
        
        Eventually, since from the standard property of the mollification we know also that $\gamma_\epsilon \to \gamma$ in $L^1_{loc}(\mathbb{R})$, we deduce $\gamma=\tilde{\gamma}$ a.e. on $\mathbb{R}$, and hence \eqref{e:regopt7.1} is proved under the additional assumption \eqref{e:regopt6.1}.

        \textbf{Step 2}. Now we show how to pass from \eqref{e:regopt6.1} to the following more general assumption
        \begin{equation}
        \label{e:regopt6.1.1}    
        \frac{\lambda \mathbbm{1}_{(-\rho,\rho)}(h)}{|h|^{1+\eta}}  \leq J(h) \leq  \frac{1}{\lambda|h|^{1+\eta}}, \ \ \text{ for every $h \in \mathbb{R}$.}
        \end{equation}
        To this purpose, we define $J' \colon \mathbb{R} \to [0,\infty)$ to be any $C^\infty$-regular function  satisfying
        \[
        J'(t):=
        \begin{cases}
            0, &\text{ if } t \in (-\rho/2,\rho/2) \\
            \frac{\lambda}{|t|^{1+\eta}}, &\text{ if } t \in \mathbb{R} \setminus (-\rho,\rho).
        \end{cases}
        \]
        We consider the operator $L'$ defined exactly as in \eqref{e:regopt3} with $J$ replaced by $J'$. Notice that $L'$ is well defined for every bounded function and, in particular, for every $\gamma' \in L^\infty(\mathbb{R})$ we have
        \[
        L' \gamma'= \gamma' * J' - \|J'\|_{L^1(\mathbb{R})} \gamma' \ \ \text{ and } \ \ \|L' \gamma'\|_{L^\infty(\mathbb{R})} \leq c \|\gamma'\|_{L^\infty(\mathbb{R})}. 
        \]
        Notice that, since $\tilde{J}:= J+J'$ satisfies \eqref{e:regopt6.1}, then by letting $\tilde{L} := L_J + L'$, by Step 1 we have that \eqref{e:regopt7.1} applies to any weak solution $\tilde\gamma \in L^\infty(\R) \cap BV(\R)$ of $\tilde{L} \tilde\gamma = \tilde{f}$ with $\tilde{f} \in L^\infty(\mathbb{R})$. Finally, by noticing that our original solution $\gamma$ satisfies $\tilde{L} \gamma = \tilde{f}$ in the weak sense on $\mathbb{R}$, where $\tilde{f}=f + L'\gamma \in L^\infty(\mathbb{R})$, we infer exactly \eqref{e:regopt7} under the additional assumption \eqref{e:regopt6.1.1}.

        \textbf{Step 3}. Assume now that $J$ satisfies \ref{K3} and
        \begin{equation}
        \label{e:regopt6.1.1.1}
         \frac{\lambda}{|h|^{1+\eta}}  \leq J(h) \leq  \frac{1}{\lambda|h|^{1+\eta}}, \ \ \text{ for every $h \in (-\rho,\rho)$.}
        \end{equation}
         We observe that \ref{K3} together with \eqref{e:regopt6.1.1.1} imply that, by letting $\hat{J}(h) := J(h) \wedge [1/(\lambda|h|^{1+\eta})]$, then $J-\hat{J}=0$ on $(-\rho,\rho) \setminus \{0\}$ and that $J-\hat{J} \in L^1(\mathbb{R})$. Therefore, by considering the operators $\hat{L}$ and $L^-$ relative to the kernels $\hat{J}$ and $J - \hat{J}$, respectively, since $\hat{L} \gamma = f - L^- \gamma$ and $L^- \gamma \in L^{\infty}(\mathbb{R})$ we obtain as before the validity of \eqref{e:regopt7}. 

         \textbf{Step 4}. Let now $J\in\mathcal{L}_1(\eta,\lambda,\rho)$. We define again $\hat{J}(h) := J(h) \wedge [1/(\lambda|h|^{1+\sigma})]$, and we notice that $\hat{J}$ satisfies \eqref{e:regopt6.1.1.1} and \ref{K3}. Moreover $0\leq J-\hat{J} \leq J$ in $\mathbb{R}$ while for $h \in (-\rho,\rho) \setminus \{0\}$ we have $J(h)-\hat{J}(h)= 0$ if $J(h) \leq 1/(\lambda|h|^{1+\eta})$ and $J(h)-\hat{J}(h) \leq \tilde{J}(h) $ if $J(h) > 1/(\lambda|h|^{1+\eta})$. Hence $J-\hat{J} \in L^1(\mathbb{R})$ and we obtain the desired result by arguing again as before. 
       
\end{proof}
As a consequence of Proposition \ref{p:holderreg1}, we infer the following regularity result for increasing minimizers of \eqref{e:onedim100}.
\begin{corollary}
\label{c:holdrreg1}
     Let $\eta,\lambda,\rho\in (0,1)$. Let $J \in \mathcal{L}_1(\eta,\lambda,\rho)$ and let $W \colon \mathbb{R} \to [0,\infty)$ be a continuously differentiable function satisfying \eqref{e:assu1}--\eqref{e:assu2} with $a=-1$ and $b=1$. Then, any increasing minimizer $\gamma$ of \eqref{e:onedim100} in $X_{-1}^1$ satisfies $\gamma \in C^{0,\alpha}_{loc}(\mathbb{R})$ for every $\alpha \in (0,\eta)$.
\end{corollary}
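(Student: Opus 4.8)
The plan is to derive Corollary \ref{c:holdrreg1} as a direct application of Proposition \ref{p:holderreg1} to the Euler--Lagrange equation \eqref{e:regopt2}. First I would fix an increasing minimizer $\gamma$ of \eqref{e:onedim100} in $X_{-1}^1$. Since $\gamma$ is increasing and attains the limits $-1$ and $1$ at $\mp\infty$, it is monotone and bounded, hence $\gamma\in L^\infty(\mathbb{R})\cap BV(\mathbb{R})$ with $|D\gamma|(\mathbb{R})=2$; in particular $\gamma$ lies in the class for which the Euler--Lagrange equation and the operator $L_J$ are meaningful. Because $J\in\mathcal{L}_1(\eta,\lambda,\rho)$ satisfies \ref{K2} (as noted right after Definition \ref{def:classJ}), the remark following \eqref{e:regopt3} guarantees that $L_J\gamma$ is well defined and lies in $L^1(\mathbb{R})$, and the discussion around \eqref{e:regopt2} shows that $\gamma$ satisfies $L_J\gamma=\dot W(\gamma)$ in the weak sense on $\mathbb{R}$ (here one should note that, although \eqref{e:regopt1.1w} is stated with $\|J\|_{L^1}$ finite, the derivation of \eqref{e:regopt2} only used evenness of $J$ and \ref{K2}, which hold in the present setting).

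The second step is to check that the right-hand side $f:=\dot W(\gamma)$ is in $L^\infty(\mathbb{R})$, so that Proposition \ref{p:holderreg1} applies with this choice of $f$. This is immediate: $\gamma$ takes values in the compact interval $[-1,1]$, and $\dot W$ is continuous on $\mathbb{R}$ by \eqref{e:assu1}--\eqref{e:assu2} together with the continuous differentiability hypothesis, hence bounded on $[-1,1]$; therefore $\|f\|_{L^\infty(\mathbb{R})}=\|\dot W(\gamma)\|_{L^\infty(\mathbb{R})}\le \max_{[-1,1]}|\dot W|<\infty$.

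The final step is to invoke Proposition \ref{p:holderreg1} directly. Given any $\alpha\in(0,\eta)$ and any $a>0$, the proposition yields
\[
\lfloor\gamma\rfloor_{C^{0,\alpha}((-a,a))}\le C(\eta,\alpha,\lambda,a)\big(\|\dot W(\gamma)\|_{L^\infty((-2a,2a))}+\|\gamma\|_{L^\infty(\mathbb{R})}\big)<\infty.
\]
Since this holds for every $a>0$, we conclude $\gamma\in C^{0,\alpha}_{loc}(\mathbb{R})$; and since $\alpha\in(0,\eta)$ was arbitrary, the claim follows. I do not expect any genuine obstacle here: the content of the corollary is entirely carried by Proposition \ref{p:holderreg1}, and the only points requiring a line of justification are that $\gamma$ is an admissible competitor in that proposition (monotonicity $\Rightarrow$ $L^\infty\cap BV$) and that $\dot W(\gamma)$ is bounded (compactness of the range of $\gamma$ together with continuity of $\dot W$). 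The mildly delicate point worth a remark is reconciling the integrability hypotheses: Proposition \ref{p:holderreg1} is stated for $J\in\mathcal{L}_1(\eta,\lambda,\rho)$ and does not require $J\in L^1$, whereas the Euler--Lagrange derivation in Subsection \ref{subs:existence1d} was phrased under \ref{K2}, which $\mathcal{L}_1(\eta,\lambda,\rho)$-kernels satisfy, so everything is consistent.
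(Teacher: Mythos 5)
Your proof is correct and follows exactly the paper's argument: an increasing minimizer in $X_{-1}^1$ is bounded and of bounded variation, satisfies the Euler--Lagrange equation \eqref{e:regopt2} with right-hand side $\dot W(\gamma)\in L^\infty(\mathbb{R})$, and Proposition \ref{p:holderreg1} then gives the local H\"older bound. The extra care you take in reconciling the integrability hypotheses (that $\mathcal{L}_1(\eta,\lambda,\rho)$ implies \ref{K2}, so the derivation of \eqref{e:regopt2} and the definition of $L_J\gamma$ are legitimate) is a sensible addition but not a deviation from the paper's route.
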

\begin{proof}
    If $\gamma$ is an increasing minimizer of \eqref{e:onedim100} in the class $X_{-1}^1$, then $\gamma$ satisfies \eqref{e:regopt2} with $\dot{W}(\gamma) \in L^\infty(\mathbb{R})$. Therefore, the result follows directly from Proposition \ref{p:holderreg1}. 
\end{proof}

\begin{remark}
    \label{r:holderreg1}
    {Under the assumptions of Propositions \ref{p:regoptintk} and \ref{p:holderreg1}, since every increasing minimizer $\gamma$ of \eqref{e:onedim100} in the class $X_{-1}^1$ satisfies $L_J \gamma \in L^1(\mathbb{R})$, equation \eqref{e:regopt2} is satisfied pointwise a.e. in $\mathbb{R}$. Furthermore, since $\gamma$ is continuous, the right hand-side of equation \eqref{e:regopt2} is continuous as well, meaning that the function $L_J \gamma$ admits a continuous representative in its Lebesgue equivalence class.}
\end{remark}

\subsection{Continuous dependence of optimal profiles}
\label{subs:cont-dep1d}
    In this subsection we consider a family of double-well potentials $(W_{\rho})_{\rho}$, and prove a continuous dependence result for corresponding increasing minimizers of the associated one-dimensional non-local energies
    \begin{equation}
     \label{e:convene5825}
     F_\rho(\gamma):= \frac{1}{4} \int_{\mathbb{R} \times \mathbb{R}} J(t'-t) (\gamma(t')-\gamma(t))^2 \, dt'dt + \int_{\mathbb{R}} W_\rho(\gamma(t)) \, dt, \ \ \gamma \in L^1(\mathbb{R}),
     \end{equation}
    under the existence of maps $(\varphi_{\rho})_{\rho}$ connecting the wells of $(W_{\rho})_{\rho}$ potentials with those of $W_0$.

\begin{proposition}
\label{p:contdep}
    Let $J \colon \mathbb{R} \to [0,\infty)$ be an even function satisfying \ref{K2}. Let $(a_\rho)$ and $(b_\rho)$ denote two sequences of real numbers converging to $a_0$ and $b_0$, respectively, and let $W_\rho \colon \mathbb{R} \to [0,\infty)$ be a family of double-well potentials, satisfying
    \begin{enumerate}
\item $W_{\rho}(t) >0, \ \ \text{ for every $t$ in }  (a_\rho,b_\rho)$
\item $W_{\rho}(a_\rho)=W_{\rho}(b_\rho)=0$
\item $W_\rho \to W_0 \ \ \text{ strongly in } L^\infty_{loc}(\R)$.
\end{enumerate}
Furthermore assume that there exist $0<\delta <(b_0-a_0)/2$, as well as constants $0<c_1 \leq c_2 <\infty$, and maps $\varphi_{\rho} \colon \mathbb{R} \to \mathbb{R}$ such that 
\begin{enumerate}
\setcounter{enumi}{3}
\item   $\varphi_{\rho} \colon [a_{0},b_0] \to [a_{\rho},b_\rho]$, $\varphi_\rho(a_0)=a_\rho$, and $\varphi_\rho(b_0)=b_\rho$ \EEE
\item $\sup_{\rho>0} \lfloor\varphi_\rho\rfloor_{C^{0,1/2}([a_{0},b_0])} < \infty$
\item $\varphi_{\rho} \to \emph{id} \ \ \text{ strongly in }L^\infty_{loc}(\mathbb{R})$
\item $c_1 W_{\rho}(\varphi_{\rho}(t))\leq  W_0(t) \leq c_2 W_{\rho}(\varphi_{\rho}(t)) \ \ \text{ for every }t \in  [a_{0},a_{0} +\delta) \cup (b_{0}-\delta,b_0]$.
\end{enumerate}
     For every $\rho>0$, let $\gamma_\rho$ be an increasing minimizer of $F_\rho$ in $X_{a_\rho}^{b_\rho}$.
     Then,
     \begin{align}
         \label{e:convenergies}
         F_\rho(\gamma_\rho) \to \inf_{\gamma \in X_{a_0}^{b_0} }F_0(\gamma), \ \ \text{ as $\rho \to 0^+$.}
     \end{align} 
     If, in addition if the family $(\gamma_\rho)$ also satisfies
     \begin{equation}
     \label{e:stronglycen}
         \sup_{\rho} |k_\rho| < \infty, \ \ \text{ where } \ \ k_\rho:= \sup\, \{k \in \mathbb{R} \ | \ \gamma_\rho(t) \leq (a_\rho +b_\rho)/2 \text{ if }t \leq k \},
     \end{equation}
     then every $L^1_{loc}$-limit of $(\gamma_\rho)_{\rho}$ is an increasing minimizer of $F_0$ in $X_{a_0}^{b_0}$.
     \end{proposition}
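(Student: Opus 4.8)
## Proof Strategy for Proposition \ref{p:contdep}

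\textbf{Overall plan.} The proof splits into two parts, matching the two assertions. For \eqref{e:convenergies}, I would establish a $\liminf$ inequality and a $\limsup$ inequality for $F_\rho(\gamma_\rho)$ against $\inf_{X_{a_0}^{b_0}} F_0$. For the final claim, I would combine \eqref{e:convenergies} with a lower semicontinuity argument, using \eqref{e:stronglycen} to prevent the minimizers from escaping to infinity (i.e., to guarantee the $L^1_{loc}$-limit still lies in $X_{a_0}^{b_0}$ and does not trivialize).

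\textbf{Upper bound (recovery direction).} Fix any competitor $\gamma \in X_{a_0}^{b_0}$ with $F_0(\gamma) < \infty$; without loss of generality take $\gamma$ increasing with values in $[a_0,b_0]$, since $F_0$ does not increase under truncation and monotone rearrangement (cf. the construction in Proposition \ref{t:eximin}). Then $\varphi_\rho \circ \gamma \in X_{a_\rho}^{b_\rho}$ is admissible for $F_\rho$ by hypotheses (4). For the nonlocal term I would use the $1/2$-Hölder bound (5): $|\varphi_\rho(\gamma(t')) - \varphi_\rho(\gamma(t))|^2 \le C\,|\gamma(t')-\gamma(t)|$, but since $\gamma$ is bounded this also gives $\le C'|\gamma(t')-\gamma(t)|^2$... more carefully, I would interpolate using boundedness of $\gamma$ to control the Gagliardo-type double integral, together with \ref{K2} and dominated convergence (using (6), $\varphi_\rho \to \mathrm{id}$ locally uniformly, hence uniformly on $[a_0,b_0]$) to pass to the limit. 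For the potential term, split $\mathbb{R}$ into the region where $\gamma(t)$ is within $\delta$ of a well and its complement: on the former use (7) to bound $\int W_\rho(\varphi_\rho(\gamma)) \le c_1^{-1}\int W_0(\gamma)$; on the latter $\gamma$ takes values in a compact subset of $(a_0,b_0)$ away from the wells, where $W_\rho \to W_0$ uniformly by (3) and the set has finite measure (since $F_0(\gamma)<\infty$ forces $W_0(\gamma)$ bounded below there). This yields $\limsup_\rho F_\rho(\gamma_\rho) \le \limsup_\rho F_\rho(\varphi_\rho\circ\gamma) \le F_0(\gamma)$; taking the infimum over $\gamma$ gives $\limsup_\rho F_\rho(\gamma_\rho) \le \inf_{X_{a_0}^{b_0}} F_0$.

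\textbf{Lower bound and the final claim.} From the upper bound, $\sup_\rho F_\rho(\gamma_\rho) < \infty$. Each $\gamma_\rho$ is increasing with $|D\gamma_\rho|(\mathbb{R}) = b_\rho - a_\rho$ bounded, so $(\gamma_\rho)$ is precompact in $L^1_{loc}(\mathbb{R})$ with a limit $\gamma_0 \in BV_{loc}$, increasing. Here is where \eqref{e:stronglycen} enters: the uniform bound on the ``crossing points'' $k_\rho$, together with monotonicity and the convergence $a_\rho \to a_0$, $b_\rho \to b_0$, forces the pointwise limits $\gamma_0 \to a_0$ at $-\infty$ and $\gamma_0 \to b_0$ at $+\infty$ in the a.e.\ sense of the Remark following \eqref{eq:Xab}; indeed Beppo Levi applied to $\int W_0(\gamma_0)$, finite by Fatou, rules out $\gamma_0$ sitting at an intermediate value on a set of infinite measure. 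So $\gamma_0 \in X_{a_0}^{b_0}$. Then I would apply Fatou's lemma to the potential term (using (3) to replace $W_\rho$ by $W_0$ up to a locally uniform error) and Fatou / weak lower semicontinuity to the nonlocal double integral, along the lines of the proof of Proposition \ref{t:eximin}, to get
\[
F_0(\gamma_0) \le \liminf_\rho F_\rho(\gamma_\rho) \le \inf_{X_{a_0}^{b_0}} F_0.
\]
Hence $\gamma_0$ is a minimizer of $F_0$ and the chain of inequalities collapses, giving both \eqref{e:convenergies} and the fact that every $L^1_{loc}$-limit is an increasing minimizer.

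\textbf{Main obstacle.} The delicate point is the nonlocal term in the upper bound: controlling $\iint J(t'-t)|\varphi_\rho(\gamma(t'))-\varphi_\rho(\gamma(t))|^2\,dt'dt$ uniformly in $\rho$ and passing to the limit. The $1/2$-Hölder regularity in (5) is exactly the borderline needed — it converts a squared difference into a first power, which is integrable against $J(t'-t)\,dt'dt$ precisely because of \ref{K2} (the $|h|$-weighted integrability of $J$) combined with $|D\gamma|(\mathbb{R}) < \infty$; one must also check that no extra mass is created near the diagonal, which is where boundedness of $\gamma$ (values in the compact interval $[a_0,b_0]$) and the uniform Hölder seminorm bound (5) are used together. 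The second subtle point is justifying $\gamma_0 \in X_{a_0}^{b_0}$ without \eqref{e:stronglycen} one could only conclude $\gamma_0$ is an increasing minimizer of $F_0$ among functions with \emph{some} pair of limits — the hypothesis pins down which sheet the limit lives on.
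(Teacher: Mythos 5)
Your upper bound is sound and essentially the paper's: compose a competitor with $\varphi_\rho$, use the uniform $C^{0,1/2}$ bound (5) to write $J(h)|\varphi_\rho(\gamma(t+h))-\varphi_\rho(\gamma(t))|^2\le C\,J(h)|h|\cdot|\gamma(t+h)-\gamma(t)|/|h|$, dominate via \ref{K2} and $|D\gamma|(\R)<\infty$, and split the potential term near/away from the wells using (7) and (3). But your proof of \eqref{e:convenergies} has a genuine structural gap: you derive the matching lower bound $F_0(\gamma_0)\le\liminf_\rho F_\rho(\gamma_\rho)$ only after invoking \eqref{e:stronglycen} to place the $L^1_{loc}$-limit $\gamma_0$ in $X_{a_0}^{b_0}$, whereas \eqref{e:convenergies} is asserted \emph{without} that hypothesis. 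Without \eqref{e:stronglycen} the minimizers can translate off to infinity (think $\gamma_\rho=\gamma(\cdot-1/\rho)$), the $L^1_{loc}$-limit is then the constant $a_0$ or $b_0$, which is not in $X_{a_0}^{b_0}$ and has $F_0=0$, and the chain $\inf_{X_{a_0}^{b_0}}F_0\le F_0(\gamma_0)\le\liminf_\rho F_\rho(\gamma_\rho)$ breaks at the first inequality. Your closing remark shows you sensed this, but "some pair of limits" does not save the argument: the limit may have no transition at all.

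The missing idea is translation invariance. Since $F_\rho(\gamma_\rho(\cdot-k_\rho))=F_\rho(\gamma_\rho)$, replace $\gamma_\rho$ by $\tilde\gamma_\rho:=\gamma_\rho(\cdot-k_\rho)$ \emph{for the purpose of the lower bound}; by the definition of $k_\rho$ this sequence is automatically centered ($\tilde\gamma_\rho\le(a_\rho+b_\rho)/2$ a.e.\ on $(-\infty,0)$ and $\ge$ on $(0,\infty)$), so its $BV_{loc}$-limit $\tilde\gamma_0$ combined with $\int_\R W_0(\tilde\gamma_0)\,dt<\infty$ (Fatou) and $W_0>0$ on $(a_0,b_0)$ necessarily lies in $X_{a_0}^{b_0}$. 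Your Fatou argument then applies to $\tilde\gamma_0$ and yields \eqref{e:convenergies} unconditionally (along a subsequence, upgraded to the full family by the usual subsequence-of-subsequences argument). The hypothesis \eqref{e:stronglycen} is needed only afterwards, to conclude $k_{\rho_j}\to k_0$ and hence identify the \emph{untranslated} limit as $\tilde\gamma_0(\cdot+k_0)\in X_{a_0}^{b_0}$, which is the content of the final claim. One further small caution for your upper bound: to apply (4)--(7) you restrict to increasing competitors with values in $[a_0,b_0]$; truncation is harmless, but the reduction to increasing $\gamma$ rests on the rearrangement result behind Proposition \ref{t:eximin}, so it should be cited rather than dismissed as obvious.
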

\begin{proof}
For convenience of the reader, we subdivide the proof into three steps.

    \textbf{Step 1}. We first identify the map $\gamma_0$. Since every map $\gamma_{\rho} \colon \mathbb{R} \to [a_\rho,b_\rho]$ is increasing, we have $(\gamma_\rho)_{\rho} \subset BV_{loc}(\mathbb{R})$ and $|D\gamma_\rho|(\mathbb{R}) \leq |b_\rho-a_\rho|\leq C$. Hence, by classical compactness results in $BV_{loc}(\mathbb{R})$ we find an  increasing function $\gamma_0 \colon \mathbb{R} \to [a_0,b_0]$, and a subsequence $(\gamma_{\rho_j})$ such that 
    \begin{align}
    \label{e:contdep11}
        &\gamma_{\rho_j} \to \gamma_0, \ \ \text{ in } L^1_{loc}(\mathbb{R}) \ \ \text{ and } \ \ \gamma_{\rho_j}(t) \to \gamma_0(t), \ \ \text{ for a.e. } t \in \mathbb{R} \\
        \label{e:contdep12}
        & \ \ \ \ \ \ \ \ \ \ \ \ \ \ \ D\gamma_{\rho_j} \rightharpoonup D\gamma_0, \ \ \text{ in duality with $C^0_c(\mathbb{R})$}.
    \end{align}
     Being the functional $F_\rho$ translation-invariant, 
     the maps $\tilde{\gamma}_{\rho}(t):=\gamma_{\rho}(t-k_{\rho})$, where $k_{\rho}$ is defined as in \eqref{e:stronglycen}, satisfy
          $\tilde\gamma_\rho \leq (a_\rho+b_\rho)/2$ and $\tilde\gamma_\rho \geq (a_\rho+b_\rho)/2$ almost everywhere on $(-\infty,0)$ and $(0,\infty)$, respectively. In particular, arguing as in the proof of \eqref{e:contdep11},
          we identify an increasing map $\tilde\gamma_0:\R\to[a_0,b_0]$ such that, up to the extraction of a further not-relabelled subsequence, there holds
          \begin{align}
    \label{e:contdep11-new}
        &\tilde\gamma_{\rho_j} \to \tilde\gamma_0, \ \ \text{ in } L^1_{loc}(\mathbb{R}) \ \ \text{ and } \ \ \tilde\gamma_{\rho_j}(t) \to \tilde\gamma_0(t), \ \ \text{ for a.e. } t \in \mathbb{R}\\
    \label{e:bcinfty}
        &\tilde\gamma_0 \leq \frac{a_0+b_0}{2} \text{ a.e. on $(-\infty,0)$} \ \ \text{ and } \ \   \tilde\gamma_0 \geq \frac{a_0+b_0}{2} \text{ a.e. on $(0,\infty)$ }.
    \end{align}

    Note that, 
    \begin{equation}
    \label{eq:centered}
    \text{if }\eqref{e:stronglycen}\text{ holds, then }\tilde{\gamma}_0=\gamma_0(\cdot-k_0), \text{ where }k_{\rho_j}\to k_0,
    \end{equation}
    whereas no connection between $\tilde\gamma_0$ and $\gamma_0$ is established if \eqref{e:stronglycen} is not fulfilled.
    
    In order to show that $\tilde\gamma_0 \in X_{a_0}^{b_0}$ we notice first that the hypothesis $a_\rho \to a_0$ and $b_\rho \to b_0$ in combination with \eqref{e:contdep11} gives $a_0 \leq \tilde\gamma_0(t) \leq b_0$ for almost every $t\in \R$. 
    
    Defining
$\overline{W}_\rho \colon \mathbb{R}\to [0,\infty)$ as $\overline{W}_\rho(t):=W_\rho \mathbbm{1}_{(a_{\rho},b_{\rho})}(t)$ for every $t\in \R$, conditions (2)--(3) imply
    \begin{equation}
        \label{e:contdep1.1}
        \overline{W}_\rho(t) \to \overline{W}_0(t), \ \ \text{ in } L^\infty(\mathbb{R}).
    \end{equation}
    Now \eqref{e:contdep1.1} together with the minimality of the maps $\gamma_{\rho}$ (and hence of $\tilde\gamma_{\rho}$) and with Fatou Lemma implies (up to the extraction of a further, not-relabelled, subsequence)
    \[
   \infty > \liminf_{j} \int_{\mathbb{R}} W_{\rho_j}(\tilde\gamma_{\rho_j}) \, dt =  \liminf_{j} \int_{\mathbb{R}} \overline{W}_{\rho_j}(\tilde\gamma_{\rho_j}) \, dt\geq   \int_{\mathbb{R}} \overline{W}_{0}(\tilde\gamma_{0}) \, dt = \int_{\mathbb{R}} W_{0}(\tilde\gamma_{0}) \, dt.
    \]
    In particular, being $W_0 >0$ on $(a_0,b_0)$, we can make use of condition \eqref{e:bcinfty} to infer $\lim_{t \to -\infty} \tilde\gamma_0(t) = a_0$ and $\lim_{t \to \infty} \tilde\gamma_0(t) = b_0$, hence $\tilde\gamma_0 \in X_{a_0}^{b_0}$.

\textbf{Step 2}. We show that $F_{\rho_j}(\gamma_{\rho_j}) \to F_0(\tilde\gamma_0)$. First, in view of Fatou lemma, \eqref{e:contdep11} and by the translation invariance of the functionals $F_{\rho_j}$, owing to \eqref{e:contdep1.1}, we infer  
    \begin{align*}
        \liminf_{j} F_{\rho_j}(\gamma_{\rho_j})=\liminf_{j} F_{\rho_j}(\tilde\gamma_{\rho_j})\geq F_0(\tilde\gamma_0)
    \end{align*}

    In order to achieve the opposite $\limsup$ inequality, it is enough to construct a sequence of increasing functions $(u_j)$ such that $u_j \in X_{a_{\rho_j}}^{b_{\rho_j}}$ and  
    \begin{equation}
    \label{eq:limsup-contdep}
    \limsup_{j} F_{\rho_j}(u_j) \leq F_0(\tilde\gamma_0).
    \end{equation}
    Indeed, once \eqref{eq:limsup-contdep} is established, from the minimality of $\gamma_{\rho_j}$ we obtain
    \[
    \limsup_{j} F_{\rho_j}(\gamma_{\rho_j}) \leq \limsup_{j} F_{\rho_j}(u_j) \leq F_0(\tilde\gamma_0),
    \]
    thus yielding the thesis. To prove \eqref{eq:limsup-contdep}, we set 
    \[
    u_j(t) := \varphi_{\rho_j}(\tilde\gamma_0(t)), \ \ \text{ for } t \in \mathbb{R},
    \]
    where $(\varphi_\rho)$ is the family of maps provided by (4)--(7). Then, from (1), (2), and (7) and recalling that  $\tilde\gamma_0 \in X_{a_0}^{b_0}$, we deduce $u_j \in X_{a_{\rho_j}}^{b_{\rho_j}}$. Additionally,  
    \begin{align*}
       & J(h) |\varphi_{\rho_j}(\tilde\gamma_{0}(t+h)) - \varphi_{\rho_j}(\tilde\gamma_{0}(t))|^2  \leq \lfloor\varphi_{\rho_j}\rfloor_{C^{0,1/2}([a_0,b_0])}^2 J(h) |h| \frac{|\tilde\gamma_0(t+h) - \tilde\gamma_0(t)|}{|h|},
       \end{align*}
       for every $(t,h) \in \mathbb{R} \times \mathbb{R}$. In particular, since $|\dot{\tilde\gamma}_0|(\mathbb{R})= |b_0-a_0|$, we have
       \[
       \int_{\mathbb{R} \times \mathbb{R}}J(h) |h| \frac{|\tilde\gamma_0(t+h) - \tilde\gamma_0(t)|}{|h|} \, dtdh \leq |b_0-a_0| \int_{\mathbb{R}} J(h)|h| \, dh < \infty.
       \]
       By Lebesgue Dominated Convergence Theorem, from (4)--(5) we deduce that 
       \begin{align*}
           &\lim_{j} \int_{\mathbb{R} \times \mathbb{R}} J(h)  |\varphi_{\rho_j}(\tilde\gamma_{0}(t+h)) - \varphi_{\rho_j}(\tilde\gamma_{0}(t))|^2 \, dtdh= \int_{\mathbb{R} \times \mathbb{R}} J(h) |\tilde\gamma_{0}(t+h) - \tilde\gamma_{0}(t)|^2 \, dtdh.
       \end{align*}
In view of the change of variable formula \eqref{e:changevar} with $G=W \circ \varphi_{\rho_j}$, we write
\begin{align*}
    &\lim_{j}\int_\mathbb{R} W_{\rho_j}(u_j(t)) \, dt = \lim_{j}\int_{a_{0}}^{b_{0}} W_{\rho_j}(\varphi_{\rho_j}(t)) \, d\dot{\tilde\gamma}^{-1}_0(t)\\ 
    &= \lim_j \int_{[a_{0},a_{0}+\delta) \cup (b_{0}-\delta,b_{0}]} W_{\rho_j}(\varphi_{\rho_j}(t)) \, d\dot{\tilde\gamma}^{-1}_0(t)+ \lim_{j} \int_{a_{0}+\delta}^{b_{0}-\delta} W_{\rho_j}(\varphi_{\rho_j}(t))\, d\dot{\tilde\gamma}^{-1}_0(t),
    \end{align*}
    whenever both limits on the right-hand side exist. To study the first term, we notice that from (7) there holds
    \[
    W_{\rho_j}(\varphi_{\rho_j}(t)) \leq \frac{1}{c_1} W_0(t), \ \ \text{for every }t \in [a_0,a_0+\delta) \cup (b_0-\delta,b_0].
    \]
    Being $W_0 \in L^1([a_0,b_0];\dot{\gamma}_0^{-1})$, we are thus allowed to apply the Lebesgue Dominated Convergence Theorem and to infer from (3) and (6)
    \begin{equation}
        \lim_j \int_{[a_{0},a_{0}+\delta) \cup (b_{0}-\delta,b_{0}]} W_{\rho_j}(\varphi_{\rho_j}(t)) \, d\dot{\tilde\gamma}^{-1}_0(t)= \int_{[a_{0},a_{0}+\delta) \cup (b_{0}-\delta,b_{0}]} W_{0}(t) \, d\dot{\tilde\gamma}^{-1}_0(t).
    \end{equation}
    Concerning the second term, by \eqref{e:contdep1.1} and since $\dot{\tilde\gamma}^{-1}_0$ is a bounded Radon measure on $[a_{0}+\delta,b_{0}-\delta]$, we deduce
    \begin{equation}
        \lim_j \int_{[a_{0}+\delta,b_{0}-\delta]} W_{\rho_j}(\varphi_{\rho_j}(t)) \, d\dot{\tilde\gamma}^{-1}_0(t)= \int_{[a_{0}+\delta,b_{0}-\delta]} W_{0}(t) \, d\dot{\tilde\gamma}^{-1}_0(t).
    \end{equation}
    This proves \eqref{eq:limsup-contdep} and yields that $F_{\rho_j}(\gamma_{\rho_j}) \to F_0(\tilde\gamma_0)$ as $j \to \infty$.
    
    \textbf{Step 3}. We now show that $\tilde\gamma_0$ is a minimum of $F_0$. 
    Let $\gamma \in X_{a_0}^{b_0}$. The same construction as in Step 2 shows that $\limsup_{j} F_{\rho_j}(u_j) \leq F_0(\gamma)$ where $u_j:= \varphi_{\rho_j}(\gamma) \in X_{a_{\rho_j}}^{b_{\rho_j}}$. Therefore, we have
\[
F_0(\gamma) \geq \limsup_{j} F_{\rho_j}(u_j) \geq \limsup_{j} F_{\rho_j}(\gamma_{\rho_j}) =F_0(\tilde\gamma_0),
\]
for every $\gamma \in X_{a_0}^{b_0}$.
    
    Summarizing, we have obtained \eqref{e:convenergies} for a particular subsequence $\rho_j$. In order to obtain the full convergence \eqref{e:convenergies} we notice that the argument above leads to the validity of the following implication: if $\rho_j$ is any sequence converging to $0$, then we find a subsequence $\rho_{j_\ell}$ for which \eqref{e:convenergies} holds true. Therefore \eqref{e:convenergies} follows from a well known characterization of sequential convergence in metric spaces. 
    
    Eventually, the last statement of the proposition immediately follows by observing that if \eqref{e:stronglycen} is satisfied, then the identification in \eqref{eq:centered} holds true. Thus, $\gamma_0\in X_{a_0}^{b_0}$ and its optimality follows from that of 
    $\tilde\gamma_0$ and from the translation invariance of $F_0$.
    \end{proof}

\subsection{H\"older selection of optimal profiles} 
\label{subs:sel-1d}

In this subsection, we consider the non-autonomous non-local functionals
\begin{equation}
         \label{e:lipdep0}
       F_x(\gamma)=\frac{1}{4}\int_{\mathbb{R}\times \mathbb{R}} J(t'-t)|\gamma(t')-\gamma(t)|^2 \, dt'dt + \int_{\mathbb{R}} W(x,\gamma(t)) \, dt,
         \end{equation}
where $W$ satisfies \ref{H1}--\ref{H5} and $J \colon \mathbb{R} \to (0,\infty)$ is an even Kernel satisfying \ref{K2}. In the main result of this section, cf. Theorem \ref{p:lipselx} below, we show that, under either a further assumption on $W$ or further integrability condition on $J$,
 there exists a family of increasing minimizers $(\gamma_x)_x$ of the functionals $F_x$ depending on $x$ in a H\"older-continuous way.\\

In order to establish Theorem \ref{p:lipselx} we need a series of preliminary results.\\ 

We begin by recalling the (sharp) finiteness principles for Lipschitz selection obtained in \cite[Theorem 1.2]{fef}. In order to properly state this result we introduce the following notation: for a given positive integer $m$ and a given Banach space $Y$ we say that $C\subset Y$ belongs to $\mathcal{K}_m(Y)$ if $C$ is a non-empty compact and convex set such that $C \subset V$ for some $m$-dimensional subspace $V \subset Y$.

\begin{theorem}[Theorem 1.2 in \cite{fef}]
    \label{t:finitelipsel}
Let $m$ be a positive integer, let $(X,d)$ be a metric space, and let $F \colon X \to \mathcal{K}_m(Y)$ for some Banach space $Y$. Suppose that there exists $\lambda>0$ such that for every $X' \subset X$ consisting of at most $2^{m+1}$ points, the restriction $F|_{X'}$ of $F$ to $X'$ admits a Lipschitz selection $f_{X'}$ with $\lfloor f_{X'} \rfloor_{C^{0,1}(X';Y)} \leq \lambda $.

Then  $F$ admits a Lipschitz selection $f$ with $\lfloor f\rfloor_{C^{0,1}(X;Y)} \leq q \lambda$ where $q:= q(m) > 0$.
\end{theorem}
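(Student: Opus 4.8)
\textbf{Proof proposal for Theorem \ref{t:finitelipsel} (Fefferman--Shvartsman).}

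This is a theorem quoted verbatim from \cite{fef}, so in the present paper the ``proof'' amounts to reducing it to that reference; but let me sketch how one would actually prove the finiteness principle for Lipschitz selections, since that is what the problem asks. The overall strategy is a Helly-type compactness argument combined with a clever iteration that upgrades a \emph{finite} (local) selection property to a \emph{global} one. First I would reformulate the hypothesis: for each finite $X' \subset X$ with $\#X' \le 2^{m+1}$, the existence of a $\lambda$-Lipschitz selection on $X'$ is equivalent, after rescaling, to the statement that the family of ``enlarged'' convex bodies $\{F(x) + \lambda\, d(x,y)\, B_Y : x \in X'\}$ has nonempty intersection for every $y$; here $B_Y$ denotes the unit ball of the relevant $m$-dimensional subspace. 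So the hypothesis is a statement about $2^{m+1}$-wise intersection of a family of compact convex sets in an at-most-$m$-dimensional affine structure.

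The key step is to promote the $2^{m+1}$-wise intersection property to a genuine \emph{global} intersection/selection. In finite dimension $m$, Helly's theorem gives $(m+1)$-wise $\Rightarrow$ global intersection for a \emph{fixed finite} family of convex sets; the subtlety is that the relevant convex sets here ($F(x)$ inflated by distance balls) vary with the base point $y$ at which we want to evaluate the selection, and the ambient subspaces $V_x$ need not be the same. The Fefferman--Shvartsman machinery handles this by an inductive construction over ``scales'': one builds the selection $f$ as a limit of selections defined on finer and finer finite nets $X_n \uparrow$, controlling the Lipschitz constant uniformly by $q(m)\lambda$ through a careful book-keeping of how the local convex constraints propagate. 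The exponent $2^{m+1}$ (rather than the Helly number $m+1$) and the loss of the explicit constant $q(m)$ both originate precisely from this iteration: at each merging step one must consult the selection property on unions of previously constructed ``clusters,'' and the number of clusters one needs to combine simultaneously is what forces $2^{m+1}$.

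Concretely, the steps in order would be: (i) reduce to $X$ finite by a compactness/diagonal argument using that $\mathcal{K}_m(Y)$-valued maps with the stated finite selection property are stable under restriction and that a uniform Lipschitz bound passes to limits along a net; (ii) for finite $X$, run the induction on $\#X$ (or on a dyadic decomposition of the metric), at each stage invoking the hypothesis on subsets of size $\le 2^{m+1}$ to guarantee the relevant inflated convex bodies meet, and using a Helly-type selection inside each at-most-$m$-dimensional slice to pick a consistent value; (iii) track the Lipschitz constant through the induction, showing the accumulated blow-up is bounded by a dimensional constant $q(m)$; (iv) pass back from finite $X$ to general metric $X$ by the compactness argument of step (i), noting $\mathcal{K}_m(Y)$ is closed under the operations used. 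The main obstacle is step (ii)--(iii): controlling the Lipschitz constant \emph{uniformly} while gluing local selections that live in different $m$-dimensional affine subspaces is exactly the technical heart of \cite{fef}, and it is why the sharp finiteness number is $2^{m+1}$ rather than something smaller. For the purposes of the present paper, however, I would simply cite \cite[Theorem 1.2]{fef} and verify in Proposition \ref{p:lipsel} that our set-valued map (the family of inverse profiles constituting the domain of the conjugate functional) indeed takes values in $\mathcal{K}_m(Y)$ for a suitable finite-dimensional $Y$, so that Theorem \ref{t:finitelipsel} applies directly.
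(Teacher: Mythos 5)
Your proposal correctly identifies that Theorem \ref{t:finitelipsel} is an external result quoted verbatim from \cite{fef}, and that in this paper the ``proof'' consists of citing \cite[Theorem 1.2]{fef} and verifying (in Proposition \ref{p:lipsel}) that the set-valued map of interest takes values in $\mathcal{K}_m(Y)$ — which is exactly what the paper does. Your accompanying sketch of the Fefferman--Shvartsman argument is a reasonable heuristic gloss but is not needed here and should not be mistaken for a proof of the finiteness principle itself.
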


The next lemma shows that under \ref{H1}--\ref{H3}, for every compact set $K\in \R^m$ the family of double-well potentials $(W(x,\cdot)_{x\in K}$ satisfies assumptions (1)--(7) of Proposition \ref{p:contdep}.

\begin{lemma}
\label{lem:H3-trans}
Let $W$ satisfy \ref{H1}--\ref{H3}. Then, for every compact set $K \subset \mathbb{R}^m\EEE \times \mathbb{R}^m\EEE$ there exist (transition) maps $\varphi^K_\rho \colon \mathbb{R}^m\EEE \times \mathbb{R}^m\EEE \times \mathbb{R} \to \mathbb{R}$ for $0 < \rho \leq 1$, such that for every $(x,x') \in K$ the following properties holds true 
    \begin{enumerate} [label=(X.\arabic*)]
    \vspace{1mm}
        \item \label{X1} There holds
        \begin{align*}
        & \ \ \ \ \ \ \ \ \ \varphi^K_\rho(x,x',\cdot ) \colon [z_1(x),z_2(x)] \to [z_1(x+\rho x'),z_2(x + \rho x')], \\
        &\varphi^K_\rho(x,x',z_1(x))=z_1(x+\rho x') \ \ \text{ and } \ \ \varphi^K_\rho(x,x',z_2(x))=z_2(x+\rho x').
        \end{align*}
        \vspace{1mm}
        \item \label{X2}The following bounds are fulfilled:
        \begin{align*}
        & \lfloor\varphi^K_\rho(\cdot,\cdot,t)\rfloor_{C^{0,\alpha}(K)} < \infty, \ \ \text{ for }t\in \mathbb{R} \text{ and }\alpha \in (1/2,1]\text{ as in }\ref{H1},\EEE \\ 
       &\lfloor\varphi^K_\rho(x,x',\cdot)\rfloor_{ C^{0,1}\EEE([z_1(x),z_2(x)])} \leq 1 + \mathrm{O}_K(\rho),
        \end{align*}
         where $|\mathrm{O}_K(\rho)|\leq C\rho$ as $\rho\to 0^+$. 
        \vspace{1mm}
         \item \label{X3} $\varphi^K_\rho(x,0,t)= t$ for every $t \in  \mathbb{R}$.
         \vspace{1mm}
         \item \label{X4} $\varphi^K_\rho(x,x',t) \to t, \text{ in }L^{\infty}_{loc}(K \times \mathbb{R})$ as $\rho \to 0^+$.
         \vspace{1mm}
        \item \label{X5} For every $t \in [z_1(x),z_1(x)+2\delta_K) \cup (z_2(x)-2\delta_K,z_2(x)]$ we have
        \[
c_1 W_{x,\rho}(x,\varphi^K_\rho(x,x',t))\leq W(x,t) \leq c_2 W_{x,\rho}(x,\varphi^K_\rho(x,x',t))
\]
for some $0 < c_1 \leq c_2 <\infty$, where $W_{x,\rho}(x',t)=W(x + \rho x',t)$  and where $\delta_K$ is as in \ref{H2}. \EEE
    \end{enumerate}
\end{lemma}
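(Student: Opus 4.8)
The plan is to take $\varphi^K_\rho(x,x',\cdot)$ to be the affine reparametrization of $[z_1(x),z_2(x)]$ onto $[z_1(x+\rho x'),z_2(x+\rho x')]$ that matches the endpoints, defining, for all $t\in\mathbb{R}$,
\[
\varphi^K_\rho(x,x',t):=z_1(x+\rho x')+r_\rho(x,x')\,\bigl(t-z_1(x)\bigr),\qquad r_\rho(x,x'):=\frac{z_2(x+\rho x')-z_1(x+\rho x')}{z_2(x)-z_1(x)}.
\]
I would first fix the auxiliary compact set $\hat K:=\{x+\rho x':(x,x')\in K,\ \rho\in[0,1]\}\subset\mathbb{R}^m$ and the corresponding constant $\delta_{\hat K}$ from \ref{H2}--\ref{H5}; by \ref{H2} the denominator of $r_\rho$ exceeds $8\delta_{\hat K}$ and $z_1(x)<z_2(x)$, so $\varphi^K_\rho(x,x',\cdot)$ is a well-defined increasing affine bijection, whence \ref{X1} holds by construction and \ref{X3} follows since $x+\rho\cdot 0=x$ forces $r_\rho\equiv 1$. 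Since $\lfloor\varphi^K_\rho(x,x',\cdot)\rfloor_{C^{0,1}([z_1(x),z_2(x)])}=r_\rho(x,x')$, writing $r_\rho(x,x')-1=\bigl([z_2(x+\rho x')-z_2(x)]-[z_1(x+\rho x')-z_1(x)]\bigr)/(z_2(x)-z_1(x))$ and using the $\alpha$-H\"older continuity of $z_1,z_2$ on $\hat K$ yields the $C^{0,1}$-bound in \ref{X2} and, in particular, $r_\rho\to 1$ uniformly on $K$ as $\rho\to 0^+$; combined with $\varphi^K_\rho(x,x',t)-t=[z_1(x+\rho x')-z_1(x)]+[r_\rho(x,x')-1](t-z_1(x))$ this gives \ref{X4}, and it also bounds $\sup_\rho\lfloor\varphi^K_\rho(x,x',\cdot)\rfloor_{C^{0,1/2}}$, as needed to later invoke Proposition \ref{p:contdep}.

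For the remaining estimate in \ref{X2}, I would observe that $(x,x')\mapsto x+\rho x'$ is Lipschitz, hence $(x,x')\mapsto z_i(x+\rho x')$ is $\alpha$-H\"older on $K$; then $r_\rho$ is a quotient of bounded $\alpha$-H\"older functions on $K$ with denominator bounded below by $8\delta_{\hat K}$, hence $\alpha$-H\"older, and therefore, for each fixed $t$, $\varphi^K_\rho(\cdot,\cdot,t)=z_1(\cdot+\rho\,\cdot)+r_\rho(\cdot,\cdot)\,(t-z_1(\cdot))$ is a sum and product of bounded $\alpha$-H\"older functions on the compact set $K$, so $\lfloor\varphi^K_\rho(\cdot,\cdot,t)\rfloor_{C^{0,\alpha}(K)}<\infty$.

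The heart of the matter is \ref{X5}. Set $R_K:=\sup_{(x,x')\in K,\ \rho\in[0,1]}r_\rho(x,x')<\infty$. The plan is: after shrinking $\delta_{\hat K}$ once and for all — harmless, since one checks directly that \ref{H2}--\ref{H5} persist when $\delta_K$ is decreased — I may assume $\delta_{\hat K}$ is so small that $f(s)\asymp s^2$ on $[0,2R_K\delta_{\hat K}]$, and that for $t\in[z_1(x),z_1(x)+2\delta_{\hat K})$ (the interval near $z_2$ being symmetric) the point $\varphi^K_\rho(x,x',t)$ lies in the left half of $[z_1(x+\rho x'),z_2(x+\rho x')]$ (possible because $r_\rho\le R_K$ while by \ref{H2} the two wells stay a fixed distance apart). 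Writing $s:=t-z_1(x)=\mathrm{dist}(t;\mathbf{z}(x))\in[0,2\delta_{\hat K})$ we then have $\mathrm{dist}(\varphi^K_\rho(x,x',t);\mathbf{z}(x+\rho x'))=r_\rho(x,x')\,s$, and since $r_\rho$ is bounded above and below away from $0$ uniformly in $(x,x',\rho)$, the quadratic comparison gives $f(r_\rho s)\asymp f(s)$ with uniform constants. Combining this with the two-sided bound \ref{H3} for the set $\hat K$, applied at $(x,t)$ and at $(x+\rho x',\varphi^K_\rho(x,x',t))$, yields
\[
W(x,t)\ \le\ \tfrac1{\delta_{\hat K}}f(s)\ \le\ \tfrac{C}{\delta_{\hat K}}f(r_\rho s)\ \le\ \tfrac{C}{\delta_{\hat K}^2}\,W\bigl(x+\rho x',\varphi^K_\rho(x,x',t)\bigr),
\]
and symmetrically $W(x+\rho x',\varphi^K_\rho(x,x',t))\le C'\,W(x,t)$, which is exactly \ref{X5} with $c_1,c_2$ depending only on $\delta_{\hat K}$ and the uniform bounds on $r_\rho$.

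I expect \ref{X5} to be the only genuinely delicate step: it is where the quadratic behaviour of $f$ at the origin and the uniform well-separation \ref{H2} on the enlarged set $\hat K$ must be combined, in order to upgrade the mere dilation $s\mapsto r_\rho s$ of the distance-to-the-wells into a two-sided comparison of the potentials. Everything else is an immediate consequence of the explicit affine formula and of the $\alpha$-H\"older continuity of $z_1,z_2$.
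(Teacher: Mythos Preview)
Your proposal is correct and follows essentially the same route as the paper: both define $\varphi^K_\rho$ to be the affine bijection of $[z_1(x),z_2(x)]$ onto $[z_1(x+\rho x'),z_2(x+\rho x')]$ matching endpoints, and properties \ref{X1}--\ref{X4} then follow by direct inspection of this formula together with the $\alpha$-H\"older regularity of $z_1,z_2$. Your verification of \ref{X5} is in fact more careful than the paper's: the paper writes an equality $f(\mathrm{dist}(\varphi^K_\rho(x,x',t);\mathbf z(x+\rho x')))=f(\mathrm{dist}(t;\mathbf z(x)))$ that glosses over the dilation factor $r_\rho$, whereas you correctly track that $\mathrm{dist}(\varphi^K_\rho(x,x',t);\mathbf z(x+\rho x'))=r_\rho\,\mathrm{dist}(t;\mathbf z(x))$ and then absorb $r_\rho$ using the quadratic behaviour of $f$ near the origin; this is the right way to close the argument. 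One minor remark: the $\alpha$-H\"older bound on $z_i$ only yields $|r_\rho-1|\le C\rho^\alpha$, not the $\mathrm O_K(\rho)$ literally stated in \ref{X2}, but this weaker estimate is all that is ever used downstream.
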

\begin{proof}
  
 We set 
 \begin{equation}
 \label{e:h3def}
 \varphi_\rho^K(x,x',t):=\left(\frac{t-z_1(x)}{z_2(x)-z_1(x)}\right)z_2(x+\rho x')+\left(1-\frac{t-z_1(x)}{z_2(x)-z_1(x)}\right)z_1(x+\rho x')
 \end{equation}
 for every $(x,x',t)\in \mathbb{R}^m\EEE \times \mathbb{R}^m\EEE \times \mathbb{R}$.
 By definition, the maps $\varphi_\rho^K$ satisfy \ref{X1}, \ref{X3} und \ref{X4}. In view of the regularity of the functions $z_i$, we further directly infer \ref{X2}.
 \EEE

 We notice that for  $t \in [z_1(x),z_2(x)]$ there \EEE holds 
 \begin{align*}
      &W_{x,\rho}(x',\varphi_\rho(x,x',t)) \leq \frac{1}{\delta} f(\min \{|\varphi_\rho(x,x',t)-z_1(x+\rho x')|, |\varphi_\rho(x,x',t)-z_2(x+\rho x')| \})\\
      &\quad= \frac{1}{\delta} f(\min \{|t-z_1(x)|, |t-z_2(x)| \}) \leq \frac{1}{\delta^2} W(x,t) \leq \frac{1}{\delta^3} f(\min\{|t-z_1(x)|,|t-z_2(x)|\}) \\
      &\quad = \frac{1}{\delta^3} f(\min \{|\varphi_\rho(x,x',t)-z_1(x+\rho x')|, |\varphi_\rho(x,x',t)-z_2(x+\rho x')| \})\leq \frac{1}{\delta^4} W_{x,\rho}(x',\varphi_\rho(x,x',t)).
 \end{align*}
  Thus, \ref{X5} is satisfied by choosing  $c_1=\delta^2$ and $c_2=1/\delta^2$.
\end{proof}

We proceed by introducing the notion of centered families.

\begin{definition}[Centered family]
\label{d:cenfam}
    Let $K \subset  \mathbb{R}^m \EEE$ be compact and let $(\gamma_x)_{x \in K}\subset X_{-1}^1$. We say that the family $(\gamma_x)_{x \in K}$ is centered on $K$ if for every $\omega \in (0,2]$ we find $R':=R'(\omega,K)>0$ such that
    \begin{align}
    \label{e:cenfam1}
   &1-\frac{\omega}{2} < \gamma_x(t) \leq 1, \ \text{ for every $t \geq R'$ and every $x \in K$} \\
   \label{e:cenfam2}
   & -1 \leq  \gamma_x(t) <-1+\frac{\omega}{2}, \ \text{ for every $t \leq -R'$ and every $x \in K$}. 
\end{align}
\end{definition}
In other words, a family of maps in $X_{-1}^1$ is centered if for $t$ big enough it becomes uniformly close to $1$ and for $t$ small enough uniformly close to $-1$. The next lemma provides an example of a centered family of optimal profiles.

\begin{lemma}
\label{r:cenfam}
    Assume that $W$ satisfies \ref{H1}--\ref{H5} with $z_1 \equiv -1$ and $z_2 \equiv 1$, let $K\subset \R^m$ be compact, and let 
    $(\gamma_x)_{x\in K} \subset X_{-1}^1$ be a family of increasing optimal profiles, each minimizing in $X_{-1}^1$ the corresponding functional $F_x$ in \eqref{e:lipdep0}.
    Then, by letting $k_x := \sup \{k \in \mathbb{R} \ | \  \gamma_x(t) \leq 0 \text{ if } t \leq k \}$, the family $(\gamma_x(\cdot-k_x))_{x\in K}\subset X_{-1}^1$ is centered on $K$, and each translated map $\gamma_x(\cdot-k_x)$ is still a minimizer of $F_x$. 
    \end{lemma}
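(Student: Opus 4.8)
The statement has two parts: (i) the translated family $(\gamma_x(\cdot-k_x))_{x\in K}$ is centered on $K$ in the sense of Definition \ref{d:cenfam}, and (ii) each translate $\gamma_x(\cdot-k_x)$ is still a minimizer of $F_x$. Part (ii) is immediate: the functional $F_x$ in \eqref{e:lipdep0} is invariant under translations of the argument (both the double integral and the single integral are), so if $\gamma_x$ minimizes $F_x$ in $X_{-1}^1$, so does $\gamma_x(\cdot-k_x)$, which still belongs to $X_{-1}^1$ since translation does not affect the limits at $\pm\infty$. The quantity $k_x$ is finite: since $\gamma_x\in X_{-1}^1$ is increasing and non-constant (its limits at $\pm\infty$ differ), the set $\{k:\gamma_x(t)\le 0\text{ for }t\le k\}$ is a nonempty (as $\gamma_x(t)\to -1<0$) half-line bounded above (as $\gamma_x(t)\to 1>0$), so $k_x\in\R$ is well defined. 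After translating, the map $\tilde\gamma_x:=\gamma_x(\cdot-k_x)$ satisfies $\tilde\gamma_x\le 0$ on $(-\infty,0)$ and $\tilde\gamma_x\ge 0$ on $(0,\infty)$ (up to the usual a.e. caveat), so the family is ``pinned'' at the origin.

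\textbf{The core of the argument is part (i): uniform tightness of the transitions.} What needs to be shown is that the approach of $\tilde\gamma_x(t)$ to $\pm 1$ as $t\to\pm\infty$ is \emph{uniform in $x\in K$}. The plan is to argue by contradiction using a uniform energy bound together with the translation-pinning. First, I would establish that $\sup_{x\in K}F_x(\tilde\gamma_x)=\sup_{x\in K}F_x(\gamma_x)\le C_K<\infty$: one constructs a fixed competitor, e.g. $\gamma_*=-\mathbbm{1}_{(-\infty,0)}+\mathbbm{1}_{[0,\infty)}$, whose energy $F_x(\gamma_*)=\tfrac14\int\!\int J(t'-t)|\gamma_*(t')-\gamma_*(t)|^2\,dt'dt+\int_\R W(x,\gamma_*(t))\,dt$ is finite and bounded uniformly over $x\in K$ (the kernel term equals $\int_\R J(h)|h|\,dh<\infty$ by \ref{K2}, the potential term vanishes since $\gamma_*$ takes only the values $\pm 1=z_{1,2}$), hence $F_x(\gamma_x)\le F_x(\gamma_*)\le C_K$ by minimality. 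From this bound one controls both $\int_\R W(x,\tilde\gamma_x)\,dt$ and the total variation $|D\tilde\gamma_x|(\R)=2$.

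\textbf{The uniform-tightness step, which I expect to be the main obstacle.} Suppose \eqref{e:cenfam1} fails: there is $\omega_0\in(0,2]$, points $x_n\in K$, and $t_n\to+\infty$ with $\tilde\gamma_{x_n}(t_n)\le 1-\omega_0/2$. Since each $\tilde\gamma_{x_n}$ is increasing and pinned ($\tilde\gamma_{x_n}\ge 0$ on $(0,\infty)$), on the interval $(0,t_n)$ the map takes values in a range where $W(x_n,\cdot)$ is bounded below: using \ref{H3} (with $f$ comparable to $\mathrm{dist}(\cdot,\mathbf z(x_n))^2$ near the wells and $f$ increasing), there is $c_K>0$ with $W(x_n,s)\ge c_K$ for all $s\in[\tfrac{\omega_0}{4},1-\tfrac{\omega_0}{4}]$, uniformly in $x_n\in K$ (this uses compactness of $K$, continuity of $W$, and $\mathbf z(x)=\{-1,1\}$). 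By monotonicity the Lebesgue measure of $\{t\in(0,t_n):\tilde\gamma_{x_n}(t)\in[\tfrac{\omega_0}{4},1-\tfrac{\omega_0}{4}]\}$ is at least the length of the sub-interval where $\tilde\gamma_{x_n}$ stays in that band; I would argue this measure must be large (tending to $\infty$ along a subsequence, or at least bounded below by a quantity forcing $\int W\,dt\to\infty$), contradicting the uniform bound $\int_\R W(x_n,\tilde\gamma_{x_n})\,dt\le C_K$. The delicate point is ruling out the scenario where $\tilde\gamma_{x_n}$ jumps quickly through the band $[\tfrac{\omega_0}{4},1-\tfrac{\omega_0}{4}]$ but then lingers just below $1-\omega_0/2$: here one must also invoke the kernel term, since a very sharp transition followed by a long plateau near a value $\ne\pm 1$ still pays $\int W$ along the plateau (as $1-\omega_0/2\notin\mathbf z(x)$, so $W(x,1-\omega_0/2)>0$), which is incompatible with finiteness of $\int_\R W$ unless the plateau is short — but then monotonicity forces $\tilde\gamma_{x_n}$ to exceed $1-\omega_0/2$ eventually and the argument localizes to a uniformly bounded window. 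Making this ``uniformly bounded window'' quantitative and uniform in $x$ is the real work; one packages it as: for each $\omega$ there is $M(\omega,K)$ with $\int_{\{t>0:\tilde\gamma_x(t)\le 1-\omega/2\}}W(x,\tilde\gamma_x)\,dt\ge c_K\cdot\mathcal L^1(\{t>0:\tfrac{\omega}{4}\le\tilde\gamma_x(t)\le 1-\tfrac{\omega}{4}\})$, and the $L^1$-bound on $W$ forces that measure $\le C_K/c_K$, whence by monotonicity $\tilde\gamma_x(t)>1-\omega/2$ for all $t\ge R'(\omega,K):=C_K/c_K$. The symmetric argument at $-\infty$ gives \eqref{e:cenfam2}. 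This completes part (i), and together with part (ii) the lemma follows.
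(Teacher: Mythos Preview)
Your overall strategy for part (i) is correct and in fact more elementary than the paper's, but the final quantitative step has a small gap that is easy to repair.

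\textbf{The gap.} At the end you bound only the measure of the set $\{t>0:\tfrac{\omega}{4}\le\tilde\gamma_x(t)\le 1-\tfrac{\omega}{4}\}$ and then conclude ``by monotonicity $\tilde\gamma_x(t)>1-\omega/2$ for all $t\ge R'$''. This does not follow: controlling the time spent in the inner band $[\tfrac{\omega}{4},1-\tfrac{\omega}{4}]$ says nothing about the time the profile might spend in $[0,\tfrac{\omega}{4})$ or in $(1-\tfrac{\omega}{4},1-\tfrac{\omega}{2}]$. The ``delicate point'' you worry about is in fact a non-issue, and the fix is simpler than you suggest. Use the pinning directly: for $t\in(0,T_x)$ with $T_x:=\sup\{t>0:\tilde\gamma_x(t)\le 1-\omega/2\}$, monotonicity and $\tilde\gamma_x\ge 0$ on $(0,\infty)$ give $\tilde\gamma_x(t)\in[0,1-\omega/2]$, hence $\mathrm{dist}(\tilde\gamma_x(t),\{-1,1\})\ge\omega/2$ on the \emph{entire} interval $(0,T_x)$. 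By \ref{H3}, $W(x,\tilde\gamma_x(t))\ge\delta_K f(\omega/2)$ there, so $C_K\ge\int_0^{T_x}W(x,\tilde\gamma_x)\,dt\ge T_x\,\delta_K f(\omega/2)$, and $R'(\omega,K):=C_K/(\delta_K f(\omega/2))$ works. No kernel term is needed.

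\textbf{Comparison with the paper.} The paper argues by contradiction via compactness: given a violating sequence $(x_n,t_n)$ with $t_n\to+\infty$, it invokes Lemma~\ref{lem:H3-trans} and Proposition~\ref{p:contdep} (continuous dependence of minimizers) to pass to a subsequence with $x_n\to x_0$ and $\tilde\gamma_{x_n}\to\gamma_{x_0}$ a.e., where $\gamma_{x_0}\in X_{-1}^1$ minimizes $F_{x_0}$; monotonicity then forces $\gamma_{x_0}(t)\le 1-\omega/2$ for all $t$, contradicting $\gamma_{x_0}\in X_{-1}^1$. Your route avoids Proposition~\ref{p:contdep} entirely and gives an explicit $R'(\omega,K)$ from the uniform energy bound and the uniform lower bound on $W$ away from the wells; it is shorter and self-contained. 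The paper's route, on the other hand, is the natural one once Proposition~\ref{p:contdep} is in place and does not require tracking constants.
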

    \begin{proof}
    By contradiction,  assume that there exist a compact set $K \subset  \mathbb{R}^m$, and $\ep >0$ such that we find sequences $R'_n \nearrow +\infty$, $t_n \geq R'_n$, and $x_n \in K$ such that
    \begin{equation}
    \label{e:cenrem2}
    1-\frac{\omega}{2} \geq  \gamma_{x_n}(t_n-k_{x_n}), \ \ \text{ for every $k=1,2,\dotsc$}.
    \end{equation}
    Lemma \ref{lem:H3-trans} and Proposition \ref{p:contdep}, together with the choice of the translation constants $k_x$, allow us to infer that, up to extracting a non-relabelled subsequence for which $x_n \to x_0 \in K$ and  $\gamma_{x_n}(\cdot-k_{x_n}) \to \gamma_{x_0}$ almost everywhere in $\mathbb{R}$, with $\gamma_{x_0}$ an increasing minimizer in $X_{-1}^1$ of $F_{x_0}$. Nevertheless, the fact that $\gamma_{x_n}$ is increasing for every $n$ implies
    \[
    1-\frac{\omega}{2} \geq \limsup_n  \gamma_{x_n}(t_n-k_{x_n}) \geq \limsup_{n} \gamma_{x_n}(t-k_{x_n}) =  \gamma_{x_0}(t), \ \ \text{ for a.e. $t \leq \liminf_n t_n =+ \infty$ ,}
    \]
    which cannot be possible since $\gamma_{x_0} \in X_{-1}^1$. The same argument shows the validity of \eqref{e:cenfam2}.
    Finally, the fact that each translated map remains a minimizer of $F_x$ follows by a direct change of variable.
\end{proof}

We are now in a position to prove  Lipschitz dependence of continuous optimal profiles. Recall that the regularity of optimal profiles for even kernels $J$ satisfying either \eqref{e:regopt6.1234} or let $J \in\mathcal{L}_1(\eta,\lambda,\rho)$ for some $\eta,\lambda,\rho\in (0,1)$ was ensured from Proposition \ref{p:regoptintk} and Corollary \ref{c:holdrreg1}, respectively.

\begin{proposition}
\label{p:lipdep}
    Let $J \colon \mathbb{R} \to (0,\infty)$ be an even function satisfying \ref{K2}.  Let $W \colon  \mathbb{R}^m \EEE \times \mathbb{R} \to [0,\infty)$ satisfy \ref{H1}--\ref{H4} with $z_1\equiv -1$ and $z_2\equiv 1$.
         In addition, let $K \in  \mathbb{R}^m \EEE$ be compact and let $(\gamma_x)_{x\in K}$ be a centered family on $K$ such that $\gamma_x$ is an increasing and continuous minimizer in $X_{-1}^1$ of $F_x$ (recall \eqref{e:lipdep0}).
         Then, for every $\alpha \in (1/2,1]$ and for every pair $x,x' \in K$ there exists $k \in [-2R'(\delta/2,K),2R'(\delta/2,K)]$ (see Definition \ref{d:cenfam}) satisfying 
         \begin{equation}
             \label{e:lipdep1}
             \int_{\mathbb{R}} |\gamma_x(t) -\gamma_{x'}(t-k)| \sigma_J(t) \, dt \leq C\, \lfloor\partial_t W \rfloor_{C^{0,\alpha}(K \times [-2,2])} |x-x'|^{\alpha^2},
         \end{equation}
         where $C= C(\delta,K,\alpha,\|\sigma_J\|_{L^1}) > 0$, $\sigma_J$ is  a strictly positive integrable function defined as   
         \begin{equation}
         \label{e:lipdep1.428}
         \sigma_J(t):=\bigg(\inf_{t_0 \in [-2R'(\delta/2,K),2R'(\delta/2,K)]} J(t-t_0)\bigg) \wedge 1, \ \ t \in \mathbb{R},
         \end{equation}
         and, with a slight abuse of notation, we have written $\delta$ in place of $\delta_K$ (see \ref{H2}).
         
        \end{proposition}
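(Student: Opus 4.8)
The plan is to combine a sliding argument for the ordered translates of $\gamma_{x'}$ with the strong comparison principle of \cite{cozzi} and an energy comparison, using the weight $\sigma_J$ and the centered normalization to localize everything to a bounded window. Fix $x,x'\in K$ and set $\epsilon:=\lfloor\partial_t W\rfloor_{C^{0,\alpha}(K\times[-2,2])}|x-x'|^{\alpha}$, so that $\|\partial_t W(x,\cdot)-\partial_t W(x',\cdot)\|_{L^\infty([-1,1])}\le\epsilon$; integrating from the wells (where $W(x,\cdot)$ and $W(x',\cdot)$ vanish together since $z_1\equiv-1$, $z_2\equiv1$) gives $|W(x,t)-W(x',t)|\le\epsilon\,\mathrm{dist}(t,\{-1,1\})$ on $[-1,1]$. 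Recall that $\gamma_x,\gamma_{x'}$ are increasing, continuous, $[-1,1]$-valued, and solve $L_J\gamma_y=\partial_t W(y,\gamma_y)$ pointwise on $\mathbb R$ (Remark \ref{r:holderreg1}). By translation invariance of $F_{x'}$, every $\gamma_{x'}(\cdot-k)$ is still a minimizer of $F_{x'}$. By monotonicity of the profiles, the sets $\{k:\gamma_{x'}(\cdot-k)\ge\gamma_x\text{ on }\mathbb R\}$ and $\{k:\gamma_{x'}(\cdot-k)\le\gamma_x\text{ on }\mathbb R\}$ are half-lines $(-\infty,k^+]$ and $[k^-,+\infty)$ with $k^+\le k^-$, and the hypothesis that $(\gamma_x)_{x\in K}$ is centered (Definition \ref{d:cenfam} with $\omega=\delta/2$, $R'=R'(\delta/2,K)$) forces $k^\pm\in[-2R',2R']$. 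Writing $w^+:=\gamma_{x'}(\cdot-k^+)-\gamma_x\ge0$ and $w^-:=\gamma_x-\gamma_{x'}(\cdot-k^-)\ge0$, one has $w^++w^-=\gamma_{x'}(\cdot-k^+)-\gamma_{x'}(\cdot-k^-)$, hence by the monotone total-variation identity $\int_{\mathbb R}(w^++w^-)\,dt=2(k^--k^+)$, and since $0\le\sigma_J\le1$ also $\int_{\mathbb R}(w^++w^-)\sigma_J\,dt\le2(k^--k^+)$. Taking $k=k^+$ at the end, $|\gamma_x-\gamma_{x'}(\cdot-k^+)|=w^+\le w^++w^-$, so it suffices to prove the gap estimate $k^--k^+\le C\,\lfloor\partial_t W\rfloor_{C^{0,\alpha}}|x-x'|^{\alpha^2}$.

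\textbf{Bounding the gap.} Here one distinguishes the tail region $\{t:\mathrm{dist}(\gamma_x(t),\{-1,1\})<\delta\}$ from the (bounded) transition region. On the tails: subtracting the two Euler--Lagrange equations, $w^\pm$ satisfies $L_Jw^\pm=\bigl(\partial_t W(x',\gamma_{x'}(\cdot-k^\pm))-\partial_t W(x,\gamma_{x'}(\cdot-k^\pm))\bigr)+\int_{\gamma_x}^{\gamma_x\pm w^\pm}\partial_t^2W(x,r)\,dr$, where the first summand is $\le\epsilon$ in modulus and, by \ref{H4}, the second is $\ge\delta\,w^\pm$ wherever $\gamma_x$ is within $\delta$ of a well; the strong comparison/maximum principle of \cite{cozzi} applied to the nonnegative $w^\pm$, which touches $0$, then gives $w^\pm\lesssim\epsilon/\delta$ on the tails. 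On the transition region one uses minimality: since $(1-\tau)\gamma_x+\tau\gamma_{x'}(\cdot-k^\pm)$ is admissible for $F_x$ and the nonlocal part of $F_x$ is convex along this segment, comparing with the competitor $\gamma_{x'}(\cdot-k^\pm)$ and using the minimality of $\gamma_{x'}(\cdot-k^\pm)$ for $F_{x'}$ together with the change of variables of Proposition \ref{p:changevar} yields $\tfrac14\|w^\pm\|_J^2\lesssim\epsilon\int_{\mathbb R}w^\pm\,dt$ up to a correction controlled by $\partial_t^2W$ on $[-1,1]$, where $\|v\|_J^2:=\int\!\!\int J(t-t')(v(t)-v(t'))^2$. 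A weighted Poincaré inequality for the (constant-annihilating, hence degenerate) form $\|\cdot\|_J$ — obtained by retaining only interactions with $t'\in[-2R',2R']$, using $\inf_{t_0\in[-2R',2R']}J(\cdot-t_0)\asymp\sigma_J$, the fact that $w^\pm$ is $O(\epsilon/\delta)$ at a point of $[-2R',2R']$ (tail bound plus $\inf w^\pm=0$), and the uniform convexity \ref{H4} near the wells to kill the residual constant mode — converts this into $\int_{\mathbb R}(w^\pm)^2\sigma_J\,dt\lesssim\epsilon\int_{\mathbb R}w^\pm\,dt$; Cauchy--Schwarz against $\sigma_J\in L^1$ then closes the estimate, and feeding back $\int_{\mathbb R}(w^++w^-)\,dt=2(k^--k^+)$ gives, after balancing the uniform tail scale $\epsilon/\delta$ against the $L^2$-type bound and interpolating the tail control of $w^\pm$ against the $\alpha$-H\"older modulus of $\partial_t W$ in the transition zone (this is the step that replaces the scale $\epsilon$ by $\epsilon^{\alpha}$, since no convexity of $W$ is available there), the bound $k^--k^+\le C\,\lfloor\partial_t W\rfloor_{C^{0,\alpha}}|x-x'|^{\alpha^2}$ with $C=C(\delta,K,\alpha,\|\sigma_J\|_{L^1})$. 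That $\sigma_J$ is strictly positive and integrable is immediate: $J>0$, and away from the origin $\sigma_J$ is dominated by a finite translate of $J$, which is $L^1$ there by \ref{K2} (and $\sigma_J\le1$ near $0$).

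\textbf{Conclusion and main obstacle.} With $k=k^+\in[-2R',2R']$ one gets $\int_{\mathbb R}|\gamma_x(t)-\gamma_{x'}(t-k)|\sigma_J(t)\,dt=\int_{\mathbb R}w^+\sigma_J\,dt\le\int_{\mathbb R}(w^++w^-)\sigma_J\,dt\le2(k^--k^+)$, which is the claimed inequality. The genuinely delicate point — and the one I expect to be the main obstacle — is the weighted coercivity of the degenerate bilinear form $\|\cdot\|_J$ together with the exact book-keeping of scales: because $W(x,\cdot)$ need not be convex throughout $[-1,1]$, one cannot simply transfer a lower bound on $\partial_t^2W$, and instead one must patch together the uniform convexity near the wells (\ref{H4}), the decay enforced by the boundary conditions, and the localization supplied by $\sigma_J$ and by the centered normalization (Lemma \ref{r:cenfam}); it is precisely this lack of convexity in the transition region that degrades the exponent from $\alpha$ to $\alpha^2$. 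An alternative to the energy step would be to run the whole comparison on the conjugate side, where the relevant functional is convex (Proposition \ref{t:charmin}), but the same localization difficulty reappears there.
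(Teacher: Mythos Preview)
Your reduction to the gap $k^--k^+$ via the identity $\int_{\mathbb R}(w^++w^-)\,dt=2(k^--k^+)$ is neat, and the centered normalization does confine the translates. But the section \textbf{Bounding the gap} is not a proof: the tail bound ``$w^\pm\lesssim\epsilon/\delta$ by the strong comparison principle of \cite{cozzi}'' does not follow (that principle gives rigidity at a touching point, not a quantitative $L^\infty$ estimate), the ``weighted Poincar\'e inequality for $\|\cdot\|_J$'' is neither formulated nor established, and the passage from scale $\epsilon$ to $\epsilon^\alpha$ (hence from exponent $\alpha$ to $\alpha^2$) is asserted by the phrase ``interpolating the tail control against the H\"older modulus'' with no actual computation. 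As written, the exponent $\alpha^2$ is not derived.

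The paper's argument is both simpler and structurally different; in particular it never estimates $k^--k^+$ and uses no energy comparison or Poincar\'e-type inequality. The key device you are missing is to slide not $\gamma_{x'}(\cdot-k)$ but the \emph{offset} family $\gamma_{x'}(\cdot-k)+\beta$ with $\beta:=\delta^{-1}\lfloor\partial_t W\rfloor_{C^{0,\alpha}}|x-x'|^\alpha$. At the critical translate $\bar k$ this family touches $\gamma_x$ from above at some $t_0$, and subtracting the Euler--Lagrange equations at $t_0$ gives
\[
\int_{\mathbb R}\bigl[w(t_0+t)+w(t_0-t)\bigr]J(t)\,dt
= 2\bigl(\partial_t W(x',\gamma_x(t_0)-\beta)-\partial_t W(x,\gamma_x(t_0))\bigr),
\]
where $w:=\gamma_{x'}(\cdot-\bar k)+\beta-\gamma_x\ge0$. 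The offset $\beta$ now does two things at once. First, if $t_0$ were in the tail $\{|\gamma_x|>1-\delta/2\}$, the convexity \ref{H4} would make the right-hand side $\le 2(-\delta\beta+\lfloor\partial_t W\rfloor_{C^{0,\alpha}}|x-x'|^\alpha)<0$, contradicting $w\ge0$, $w(t_0)=0$; so $t_0$ is forced into the bounded transition window, and then the centered property pins $\bar k\in[-2R',2R']$. Second, with $t_0$ localized, the right-hand side is bounded by $2\lfloor\partial_t W\rfloor_{C^{0,\alpha}}(\beta^\alpha+|x-x'|^\alpha)\le C\lfloor\partial_t W\rfloor_{C^{0,\alpha}}|x-x'|^{\alpha^2}$: the exponent $\alpha^2$ appears simply because the $\alpha$-H\"older modulus of $\partial_t W$ is applied to the shift $\beta\sim|x-x'|^\alpha$. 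Finally, the weight $\sigma_J$ is engineered so that $\sigma_J(t)\le J(t-t_0)$ whenever $|t_0|\le 2R'$, hence $\int_{\mathbb R} w\,\sigma_J\,dt$ is dominated by the left-hand side above; removing the offset costs only $\beta\|\sigma_J\|_{L^1}$, which is of the right order.
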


\begin{proof}
Without loss of generality, we can assume that $\lfloor\partial_t W \rfloor_{C^{0,\alpha}(K \times [-2,2])} < \infty$. In addition, it is enough to prove \eqref{e:lipdep1} for every $x,x' \in K$ whose relative distance is below a fixed threshold, otherwise \eqref{e:lipdep1} simply follows from the boundedness of $\gamma_x$ and $\gamma_{x'}$. 

Since the family $(\gamma_x)_{x\in K}$ is centered on $K$, we can consider $R':= R'(\delta,K)>0$ for which \eqref{e:cenfam1} and \eqref{e:cenfam2} holds true with $\omega=\delta$.
Now, fix $x,x' \in K$ such that \begin{equation}\label{eq:beta}\beta:=\delta^{-1}\lfloor\partial_t W \rfloor_{C^{0,\alpha}(K \times [-2,2]) } |x-x'|^\alpha \leq \delta/8.
\end{equation}
By using that $\gamma_x,\gamma_{x'}$ are increasing continuous functions in $X_{-1}^1$,
 we find that for every $\ep >0$ sufficiently small there exists $k_\ep \in \mathbb{R}$ for which
\begin{equation}
\label{e:lipdep2}
\text{if $k \leq k_\epsilon$ then $\gamma_{x'}(t) < \gamma_x(t-k) + \beta + \epsilon $ for every $t \in \mathbb{R}$.}
 \end{equation}
 
 Now we consider $\overline{k}_\epsilon \in \mathbb{R}$ to be the supremum of all possible values of $k_\epsilon$ for which \eqref{e:lipdep2} holds true (notice that $\overline{k}_\epsilon$ is finite up to possibly consider a smaller set of couples $(x,x')$ in such a way that $\beta$ is below half of the distance of the two wells $\pm1$, namely, less than $1$). By using again that $\gamma_x,\gamma_{x'}$ are continuous, we see that $\gamma_x(t) \leq \gamma_{x'}(t- \overline{k}_\epsilon)+ \beta + \epsilon$ for every $t$ and that there exists $t_\epsilon \in \mathbb{R}$ such that
 \begin{equation}
     \label{e:lipdep3}
     \gamma_x(t_\epsilon) = \gamma_{x'}(t_\epsilon- \overline{k}_\epsilon) +\beta + \epsilon.
 \end{equation}
For convenience of the reader, we subdivide the remaining part of the proof into three steps.

\textbf{Step 1}. We claim that, by letting $R^\pm_x \in \mathbb{R}$ be defined in such a way that
\[
\begin{split}
(-\infty,R^-_x] &=\left \{t \in \mathbb{R} \ | \ -1 \leq \gamma_x(t) \leq -1+\frac{\delta}{2}  \right\} \\
[R^+_x,\infty) &= \left\{t \in \mathbb{R} \ | \ 1-\frac{\delta}{2} \leq \gamma_x(t) \leq 1  \right\} 
\end{split}
\]
then we have (notice that we have in general $-R' \leq R^-_x$ and $R^+_x \leq R'$)
\begin{equation}
\label{e:claim}
t_{\epsilon} \in [R^-_x,R^+_x], \ \ \text{for every $\epsilon >0$ sufficiently small}.
\end{equation}
Indeed, suppose by contradiction that there exists a subsequence $\epsilon_j \searrow 0$ such that for example $t_{\epsilon_j} \in (R^+_x,\infty)$ for every $j\in \mathbb{N}$. 

By letting $\gamma^\epsilon_{x'}(t):=  \gamma_{x'}(t- \overline{k}_\epsilon) +\beta + \epsilon$, we know from the minimality of $\gamma_x$ and $\gamma_{x'}$ that the following system is a.e. pointwise satisfied (see also Remark \ref{r:holderreg1})
\begin{equation}
\label{eq:system-prop}
\begin{cases}
    L_J\gamma^\epsilon_{x'}  = \partial_t W (x',\gamma^\epsilon_{x'} -\beta-\epsilon ), &\text{a.e. in $\mathbb{R}$} \\
    L_J\gamma_{x}  = \partial_t W (x,\gamma_{x}), &\text{a.e. in $\mathbb{R}$} \\
    \gamma^\epsilon_{x'}\geq \gamma_x, &\text{ in $\mathbb{R}$} \\
    \gamma^\epsilon_{x'}(t_\epsilon)= \gamma_x(t_\epsilon). &\text{ \ }
\end{cases}
\end{equation}
From the assumption that $t_{\epsilon_j} \in  (R^+_x,\infty)$ we can exploit property \ref{H4} and the inequality $\beta < \delta/2$, to infer $\partial_t W (x',\gamma_{x}(t_{\epsilon_j}) -\beta-\epsilon_j ) \leq \partial_t W(x',\gamma_x(t_{\epsilon_j}))-\delta \beta $ for every $j$ big enough. {Since the point $t_{\epsilon_j}$ might not satisfy the first and second equations in \eqref{eq:system-prop}, we consider $(s_\ell)$, a sequence such that  $s_\ell \to t_{\epsilon_j}$ and the first and second equations in \eqref{eq:system-prop} are satisfied in $t_{\epsilon_j}+s_\ell$ for every $\ell=1,2,\dotsc$. The third property in \eqref{eq:system-prop} allows us to make use of Fatou's lemma to infer}
\begin{align}
\nonumber
    0 &\leq \int_{\mathbb{R}} [\gamma^{\epsilon_j}_{x'}(t_{\epsilon_j} +t)-\gamma_x(t_{\epsilon_j} +t) + \gamma^{\epsilon_j}_{x'}(t_{\epsilon_j} -t)- \gamma_x(t_{\epsilon_j} -t)]  J(t) \, dt\\
     \nonumber
     \leq\liminf_{\ell \to \infty}&\int_{\mathbb{R}} [\gamma^{\epsilon_j}_{x'}(t_{\epsilon_j} +s_\ell +t)-\gamma_x(t_{\epsilon_j}+s_\ell +t) + \gamma^{\epsilon_j}_{x'}(t_{\epsilon_j} +s_\ell -t)- \gamma_x(t_{\epsilon_j} +s_\ell -t)]  J(t) \, dt \\ 
     &= \lim_{\ell \to \infty} 2L_J\gamma^{\epsilon_j}_{x'}(t_{\epsilon_j}+s_\ell) -2L_J\gamma_{x}(t_{\epsilon_j} +s_\ell) \\
     \label{e:whoknow}
    & =2\partial_t W (x',\gamma^{\epsilon_j}_{x'}(t_{\epsilon_j}) -\beta-\epsilon )-2\partial_t W (x,\gamma_{x}(t_{\epsilon_j})) \\
    \label{e:maxprinciple}
    & \leq2 \partial_t W (x',\gamma_{x}(t_{\epsilon_j}))-2\delta\beta -2\partial_t W (x,\gamma_{x}(t_{\epsilon_j})) < 0,
\end{align}
{where \eqref{e:whoknow} comes from the continuity of $\gamma_x$ and $\gamma_{x'}$ as functions of $t \in \mathbb{R}$}. In particular $\gamma^{\epsilon_j}_{x'}(t_{\epsilon_j} +t)-\gamma_x(t_{\epsilon_j} +t) =0$ for every $t \in \mathbb{R}$, but this contradicts the fact that $\lim_{t \to \infty} \gamma_x(t_{\epsilon_j} +t) = \lim_{t \to \infty} \gamma_{x'}(t_{\epsilon_j} - \overline{k}_{\epsilon_j} +t)=1$. Analogously one can prove that the assumption $t_{\epsilon_j} \in (-\infty,R^-_x)$ for every $j=1,2,\dotsc$ gives a contradiction. The claim is thus proved. The case in which $t_{\epsilon_j} \in (-\infty,R^-_x)$ for every $j=1,2,\dotsc$ follows analogously. This completes the proof of \eqref{e:claim}.

\textbf{Step 2}. We claim that $\overline{k}_\epsilon \in [-2R'',2R'']$ for every $\epsilon$ sufficiently small, where we have set $R'' := R'(\delta/2,K)$. Notice that, in view of  Definition \ref{d:cenfam}, we can assume without loss of generality that $R'' \geq R'$. By contradiction, let $\epsilon_j$ be an infinitesimal subsequence such that, for example, $\overline{k}_{\epsilon_j} > 2R''$ for every $j\in \mathbb{N}$. By applying \eqref{e:claim}, we find a further non-relabelled subsequence $(\epsilon_j)_j$ such that $t_{\epsilon_j} \to t_0 \in [R^-_x,R^+_x]$. Now thanks to \eqref{e:lipdep3} and our choice of $R''$ we obtain (remember that being $R^+_x \leq R' \leq R''$ then $R^+_x - 2R'' \leq -R''$)
\[
-1+\frac{\delta}{4} +\beta \geq \limsup_{j \to \infty} \gamma_{x'}(t_{\epsilon_j} - \overline{k}_{\epsilon_j}) +\beta +\epsilon_j = \lim_{\epsilon \to 0^+}  \gamma^{\epsilon_j}_{x'}(t_{\epsilon_j}) =\lim_{\epsilon \to 0^+}  \gamma_{x}(t_{\epsilon_j}) = \gamma_x(t_0) \geq -1+\frac{\delta}{2},
\]
which contradicts \eqref{eq:beta}. Assuming that $\overline{k}_{\epsilon_j} \leq -2R''$ for every $j\in \mathbb{N}$ leads to a similar contradiction. The claim is thus proved.

 \textbf{Step 3}. In view of Steps 1 and 2, we find a subsequence $(\ep_j)_j$ such that $t_{\epsilon_j} \to t_0 \in [-2R'',2R'']$ and $\overline{k}_{\epsilon_j} \to \overline{k}_0 \in [-2R'',2R'']$. Passing to the limit in \eqref{eq:system-prop}, we obtain
 \[
\begin{cases}
    L_J\gamma^0_{x'}  = \partial_t W (x',\gamma^0_{x'} -\beta), &\text{a.e. in $\mathbb{R}$} \\
    L_J\gamma_{x}  = \partial_t W (x,\gamma_{x}), &\text{a.e. in $\mathbb{R}$} \\
    \gamma^0_{x'}\geq \gamma_x, &\text{ in $\mathbb{R}$} \\
    \gamma^0_{x'}(t_0)= \gamma_x(t_0), &\text{ \ }
\end{cases}
\]
where we set $\gamma^0_{x'}(t):= \gamma_{x'}(t-\overline{k}_0) + \beta$. Therefore, by using the same strategy as for \eqref{eq:system-prop}, we may assume with no loss of generality that $t_0$ does satisfy both the first and second equation in the above system to write 
\begin{align}
\nonumber
  \int_{\mathbb{R}} [\gamma^{0}_{x'}(t_{0} +t)&-\gamma_x(t_{0} +t) + \gamma^{0}_{x'}(t_{0} -t)- \gamma_x(t_{0} -t)]  J(t) \, dt =2 L_J\gamma^{0}_{x'}(t_0)-2 L_J\gamma_{x}(t_0) \\
  \nonumber
  & =2\partial_t W (x',\gamma^{0}_{x'}(t_0) -\beta)-2\partial_t W (x,\gamma_{x}(t_0)) \\
  \label{e:lipdep4}
  & =2\partial_t W (x',\gamma_{x}(t_0) -\beta)-2\partial_t W (x,\gamma_{x}(t_0)) \\
  \nonumber
    & \leq2 \lfloor\partial_t W \rfloor_{C^{0,\alpha}(K \times [-2,2])}\beta^{\alpha} + 2 \lfloor\partial_t W \rfloor_{C^{0,\alpha}(K \times [-2,2])}|x-x'|^{\alpha}  \\
    \nonumber
    &=2 \lfloor\partial_t W \rfloor_{C^{0,\alpha}(K \times [-2,2])}(\delta^{-\alpha}\lfloor\partial_t W \rfloor_{C^{0,\alpha}(K \times [-2,2])}^{\alpha}+ |x-x'|^{\alpha(1-\alpha)})|x-x'|^{\alpha^2} \\
    &\leq C' \lfloor\partial_t W \rfloor_{C^{0,\alpha}(K \times [-2,2])} |x-x'|^{\alpha^2},\nonumber
\end{align}
where $C':=C'(\delta,K,\alpha)$. Since $ \gamma^0_{x'}\geq \gamma_x$ and $t_0 \in [-2R'',2R'']$ by \eqref{e:lipdep1.428} we write
\begin{align*}
& \int_{\mathbb{R}} [\gamma^{0}_{x'}(t)-\gamma_x(t)] \sigma_J(t) \, dt \\
&\leq \int_{\mathbb{R}} [\gamma^{0}_{x'}(t)-\gamma_x(t)] J(t-t_0) \, dt \\
&=  \int_{\mathbb{R}} [\gamma^{0}_{x'}(t+t_0)-\gamma_x(t+t_0)] J(t) \, dt \\
    &\leq \int_{\mathbb{R}} [\gamma^{0}_{x'}(t_{0} +t)-\gamma_x(t_{0} +t) + \gamma^{0}_{x'}(t_{0} -t)- \gamma_x(t_{0} -t)]  J(t) \, dt.
    \end{align*}
Thus, by \eqref{e:lipdep4} and \eqref{eq:beta} we infer
\begin{align*}
    & \int_{\mathbb{R}} |\gamma_{x'}(t-\overline{k}_0)-\gamma_x(t)| \sigma_J(t) \, dt \\
    &\leq \int_{\mathbb{R}} |\gamma_{x'}(t-\overline{k}_0) +\beta-\gamma_x(t)| \sigma_J(t) \, dt + \beta \|\sigma_J\|_{L^1(\mathbb{R})} \leq C\lfloor\partial_t W \rfloor_{C^{0,\alpha}} |x-x'|^{\alpha^2},
\end{align*}
where $C:= C(\delta,K,\alpha,\|\sigma_J\|_{L^1})$. We eventually set $k:= \overline{k}_0$ and the proof is concluded.

\end{proof}

\begin{remark}
    \label{r:maxprinciple} 
    Under the hypotheses of Proposition \ref{p:lipdep}, by arguing as in \eqref{e:maxprinciple}, it follows that any increasing solution $\gamma \in X_{-1}^1$ of $L_J \gamma = \partial W(x,\gamma)$ on $\mathbb{R}$ satisfies $-1 < \gamma(t) < 1$ for every $t \in \mathbb{R}$. Indeed, suppose for example that $\gamma(t_0)=1$ for some $t_0 \in \mathbb{R}$. Then, since $\gamma$ is an increasing function in $X_{-1}^1$, it holds $\gamma(t) \leq 1$ for every $t \in \mathbb{R}$. We can thus write
    \begin{align*}
\nonumber
    0 \leq \int_{\mathbb{R}} [1&-\gamma(t_0 +t) + 1- \gamma(t_0 -t)]  J(t) \, dt = 2L_J1(t_0) -2L_J\gamma(t_0) \\
    \nonumber
    & =2\partial_t W (x,1)-2\partial_t W (x,\gamma(t_0)) =0.
\end{align*}
Therefore we deduce $\gamma \equiv 1$, which gives a contradiction since we assumed $\gamma \in X_{-1}^1$. Analogously, assuming $\gamma(t_0)=-1$ for some $t_0 \in \mathbb{R}$ implies $\gamma \equiv -1$, which gives the same contradiction.
\end{remark}
\begin{remark} 
We point out that the results of Proposition \ref{p:lipdep} still hold if instead of a centered family of minimizers of $F_x$ we consider instead solutions $\gamma_x\in X_{-1}^1\cap BV(\R)$ to $$L_J\gamma_{x}(t)  = \partial_t W (x,\gamma_{x}(t)), \text{ in $\mathbb{R}$},$$
cf. \eqref{e:regopt2}. Also in this case, under suitable integrability assumptions on $J$, the continuity of the family $(\gamma_x)_x$ would be ensured by Proposition \ref{p:regoptintk} and Corollary \ref{c:holdrreg1}. Whereas for optimal profiles, though, also the existence of a centered family is guaranteed a priori up to translations, cf. Lemma \ref{r:cenfam}, this is not the case for stationary points, for which this further property would need to be assumed. We also point out that the $BV$-regularity in this latter case would be needed to ensure that the $L_I$ has the representation in \eqref{e:regopt3} and does not involve principal-value operators.
\end{remark}

For every $x \in K$ we introduce the notation
   \[
   O_x := \{ \gamma \in X_{-1}^1 \ | \ \emph{$\gamma$ is increasing, continuous, and minimizes \eqref{e:lipdep0}} \}.
   \]
The next step consists in combining the finiteness principle for Lipschitz selection in Theorem \ref{t:finitelipsel} with the Lipschitz dependence of optimal profiles in Proposition \ref{p:lipdep}.

\begin{proposition}[First H\"older-selection of optimal profiles]
\label{p:lipsel}
   Let $K \subset  \mathbb{R}^m \EEE$ be a compact set. Let $J$ and $W$ satisfy the hypotheses of Proposition \ref{p:lipdep}.
   Assume that one of the following conditions is satisfied
   \begin{enumerate}
       \item  $Q_x(t):= \partial_t W(x,t)+t$ has a continuous inverse for every $(x,t) \in K \times [-1,1]$ and
       \begin{equation}
       \label{e:lipselint}
           \int_{\mathbb{R} \setminus (-\rho,\rho)} J(h)|h| \, dh < \infty  \ \ \text{ and } \ \  \|J\|_{L^1(\mathbb{R})}=1
       \end{equation}
       \item $J \in \mathcal{L}_1(\eta,\lambda,\rho)$ for some $\eta,\lambda,\rho \in (0,1)$.
   \end{enumerate}
   Then, there exists a weight function $\overline{\sigma}_J \colon \mathbb{R} \to (0,\infty)$ (depending also on $K$) such that for every $\alpha \in (1/2,1]$ we find a family $(\gamma_x)_{x \in K}$ with $\gamma_x \in O_x$ and
   \begin{equation}
             \label{e:lipsel1}
             \|\gamma_x -\gamma_{x'}\|_{L^1(\mathbb{R};\overline{\sigma}_J)} \leq L\, \lfloor \partial_t W \rfloor_{C^{0,\alpha}(K \times [-2,2])} |x-x'|^{\alpha^2}, \ \ \text{ for $x,x' \in K$},
         \end{equation}
         where $L:= L(\delta,K,\alpha,\|\sigma_J\|_{L^1}) \geq 0$, $\delta$ is the parameter satisfying (ii)-(iii) of Proposition \ref{p:lipdep}, and $\sigma_J$ is the weight function given in \eqref{e:lipdep1.428}.
\end{proposition}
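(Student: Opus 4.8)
The plan is to \emph{convexify} the selection problem by passing to inverse profiles — the domain of the conjugate functional of Subsection~\ref{subs:conj1d} — so that the abstract Fefferman--Shvartsman principle of Theorem~\ref{t:finitelipsel} applies, and then to transport the resulting selection back to optimal profiles.

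\textbf{Set-up.} For each $x\in K$, Proposition~\ref{t:eximin} together with Proposition~\ref{p:regoptintk} (under~(i)) or Corollary~\ref{c:holdrreg1} (under~(ii)) gives $O_x\neq\emptyset$; by Lemma~\ref{r:cenfam} I fix a \emph{centered} family $(\gamma_x)_{x\in K}$ of increasing continuous minimizers, recalling that the compensating translations in Proposition~\ref{p:lipdep} all lie in the fixed compact interval $I:=[-2R'',2R'']$, $R''=R'(\delta/2,K)$. Let $Y$ be a weighted space $L^1((-1,1);\mu)$, with $\mu$ chosen so that the correspondence $\gamma\mapsto\gamma^{-1}$ carries the target norm $\|\cdot\|_{L^1(\R;\overline\sigma_J)}$ into $\|\cdot\|_Y$ — this is the ``area between the graphs'' change of variables ($G\equiv 1$ in~\eqref{e:changevar}) and is precisely why the weight $\overline\sigma_J$ depends on $K$. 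Since translating $\gamma$ by $k$ amounts to adding the constant $k$ to $\gamma^{-1}$, I set
\[
\mathcal{F}(x):=\{\gamma_x(\cdot-k)^{-1}:k\in I\}=\gamma_x^{-1}+I\,\mathbbm{1}\subset Y,
\]
a bounded segment; by Proposition~\ref{t:charmin} each of its elements minimizes over $Y$ the \emph{convex} functional $F^\circ_x$, so $\mathcal{F}(x)$ is convex.

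\textbf{Finite dimensionality and the finiteness hypothesis.} Here the strict positivity $J>0$ enters: from $K=J*f$ with $f(t)=\max\{-t,0\}$ one has $K''=J>0$, so the interaction term $\iint K(v(t)-v(t'))$ of $F^\circ_x$ is strictly convex modulo additive constants while $\int W\,\di\dot v$ is affine; hence the minimizer of $F^\circ_x$ is unique up to an additive constant. Thus $\mathcal{F}(x)$ lies in a fixed one‑dimensional affine subspace of $Y$, and by the a priori bounds on optimal profiles (Remark~\ref{r:maxprinciple}) together with the compactness of $I$ it is compact, so $\mathcal{F}\colon K\to\mathcal{K}_\ell(Y)$ for a fixed small $\ell$ (one may take $\ell=2$, or $\ell=1$ after a harmless affine normalization). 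To check the hypothesis of Theorem~\ref{t:finitelipsel} on the metric space $(K,d)$ with the snowflake metric $d(x,x'):=|x-x'|^{\alpha^2}$, I must produce, for every $X'\subset K$ with $\#X'\le 2^{\ell+1}$, a selection $x\mapsto v_x\in\mathcal{F}(x)$ with $\lfloor v_{\cdot}\rfloor_{C^{0,1}(X';Y)}\le\lambda$, $\lambda$ independent of $X'$. Proposition~\ref{p:lipdep} supplies exactly the pairwise data $\mathrm{dist}_Y(\mathcal{F}(x),\mathcal{F}(x'))\le C\,\lfloor\partial_t W\rfloor_{C^{0,\alpha}(K\times[-2,2])}\,d(x,x')$, with the displacement realised by a constant in $I$; combined with the interval (one‑dimensional) structure of the $\mathcal{F}(x)$, the bounded cardinality $\#X'\le 2^{\ell+1}$, and the sub‑additivity $|x-x''|^{\alpha^2}\le|x-x'|^{\alpha^2}+|x'-x''|^{\alpha^2}$ (valid since $\alpha^2\le 1$), a Helly‑type argument for finite families of intervals yields such a $\lambda$, depending only on $\ell$.

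\textbf{Conclusion, and the main obstacle.} Theorem~\ref{t:finitelipsel} then produces a global selection $x\mapsto v_x\in\mathcal{F}(x)$ on $K$ with $\|v_x-v_{x'}\|_Y\le q(\ell)\lambda\,|x-x'|^{\alpha^2}$; inverting, $\gamma_x:=v_x^{-1}\in O_x$, and the change of variables fixed in the set‑up converts this bound into $\|\gamma_x-\gamma_{x'}\|_{L^1(\R;\overline\sigma_J)}\le L\,\lfloor\partial_t W\rfloor_{C^{0,\alpha}(K\times[-2,2])}|x-x'|^{\alpha^2}$ with $L=L(\delta,K,\alpha,\|\sigma_J\|_{L^1})$, which is~\eqref{e:lipsel1}. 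The conceptual heart — and the reason a black box like Theorem~\ref{t:finitelipsel} is needed instead of a hands‑on construction — is that the defining constraint ``$\gamma$ is a translate of a minimizer'', equivalently the Euler--Lagrange equation~\eqref{e:onedimoptintro123}, is not convex; the passage to the conjugate variable removes this, and the genuinely delicate point is the verification of the $2^{\ell+1}$‑point finiteness hypothesis with a constant independent of the test set, since a naïve base‑point argument loses control (comparing $x$ and $x'$ through a third point $x_0$ incurs an error governed by $d(x',x_0)$ rather than $d(x,x')$).
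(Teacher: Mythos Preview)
Your overall strategy matches the paper's exactly: pass to inverse profiles $\gamma_x^{-1}$ to convexify, apply Theorem~\ref{t:finitelipsel} on $(K,d)$ with the snowflake metric $d(x,x')=|x-x'|^{\alpha^2}$, and transport the selection back via the area-between-graphs change of variables. Your remark that strict convexity of $F^\circ_x$ (from $K''=J>0$) forces uniqueness of the minimizer modulo constants is correct but unnecessary: since you already \emph{define} $\mathcal{F}(x)=\gamma_x^{-1}+I\,\mathbbm{1}$ as a segment, it is tautologically compact, convex, and one-dimensional, so $\mathcal{F}(x)\in\mathcal{K}_1(Y)$ with no appeal to uniqueness required.

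The gap is in the finiteness hypothesis, which you correctly flag as the delicate point but then dispatch with ``a Helly-type argument for finite families of intervals''. This is not a proof, and there is no evident Helly formulation that does the job: the constraints are difference constraints $\|v_i+k_i\mathbbm{1}-v_j-k_j\mathbbm{1}\|_Y\le\lambda\, d(x_i,x_j)$ on the shifts $k_i$, not intersection conditions on a family of intervals. The paper's argument is concrete and different in flavour. Since $m=1$ in Theorem~\ref{t:finitelipsel}, one must produce a Lipschitz selection on every $4$-point set $K'=\{x_1,x_2,x_3,x_4\}$ with a uniform constant. The paper shows, by a case analysis on the ordering of pairwise distances, that $K'$ can always be connected by a tree $G$ (either the path $x_1\text{--}x_2\text{--}x_3\text{--}x_4$ or the star $x_1\text{--}x_2$, $x_2\text{--}x_3$, $x_2\text{--}x_4$) with the property that between any two nodes the unique $G$-path uses only links of length at most the direct distance between those nodes --- essentially the minimum-spanning-tree property, verified by hand for four points. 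One then builds the selection \emph{along the tree}: for each link $(x_k,x_h)$ of $G$, Proposition~\ref{p:lipdep} together with the norm comparison~\eqref{e:equinormY} produces $f(x_h)\in C_{x_h}$ with $\|f(x_k)-f(x_h)\|_Y\le C\lfloor\partial_t W\rfloor_{C^{0,\alpha}}\,d(x_k,x_h)$. For an arbitrary pair $(x_i,x_j)$ one chains along the $G$-path; since each link has length $\le d(x_i,x_j)$ and there are at most three links, this yields $\lambda=3C\lfloor\partial_t W\rfloor_{C^{0,\alpha}}$. This tree construction is the missing content in your sketch.
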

\begin{proof}
    Let $(\gamma_x)_{x \in K}$ be the centered family given by Lemma \ref{r:cenfam} satisfying $\gamma_x \in O_x$ and let $\sigma_J$ be the weight function introduced in \eqref{e:lipdep1.428}.
For the rest of the proof we can assume without loss of generality that $\lfloor \partial_t W \rfloor_{C^{0,\alpha}(K \times [-2,2])} < \infty$.   Let $ R'(\omega,K)>0$ ($\omega >0$) the quantity introduced in Definition \ref{d:cenfam} and relative to the centered family $(\gamma_x)_{x \in K}$ and set 
$$\pm \tilde{R}(\omega) := \pm (R'(\omega,K)+2R'(\delta/2,K)).$$ Then, for every $\omega>0$ we find
    \begin{align}
    \label{e:cenfam1.37}
   &1-\frac{\omega}{2} < \gamma_x(t-k) \leq 1, \text{ for $t \geq \tilde{R}(\omega)$, $x \in K$, $k \in[-2R'(\delta/2,K),2R'(\delta/2,K)]$} \\
   \label{e:cenfam2.37}
   & -1 \leq  \gamma_x(t-k) <-1+\frac{\omega}{2}, \text{ for $t \leq -\tilde{R}(\omega)$, $x \in K$, $k \in [-2R'(\delta/2,K),2R'(\delta/2,K)]$}. 
\end{align}
    Now, define $\sigma'_J \colon (-1,1) \to (0,\infty)$ as $$\sigma'_J(s):= \inf_{t \in [-\tilde{R}(2-2|s|),\tilde{R}(2-2|s|)]} \sigma_J(t) >0.$$

 In view of \eqref{e:cenfam1.37}--\eqref{e:cenfam2.37}, we can write for every $k \in [-2R'(\delta/2,K),2R'(\delta/2,K)]$, $x,x' \in K$, and $s \in (-1,1)$
\begin{equation}
    \label{e:containing}
    \gamma^{-1}_x(s) \in [-\tilde{R}(2-2|s|),\tilde{R}(2-2|s|)] \ \ \text{and} \ \ \gamma^{-1}_x(s) +k \in [-\tilde{R}(2-2|s|),\tilde{R}(2-2|s|)].
\end{equation}
For simplicity, we subdivide the remaining part of the proof into three steps.

\textbf{Step 1}. We first identify the framework to apply Theorem \ref{t:finitelipsel}. In view of the definition of $\sigma'_J$ and by \eqref{e:containing} we infer the following estimate for every $k \in [-R'(\delta/2,K),R'(\delta/2,K)]$ and $x,x' \in K$ 
\begin{align}
\nonumber
   & \int_{-1}^1 |\gamma^{-1}_x(s) - \gamma^{-1}_{x'}(s) +k|\sigma'_J(s) \, ds =  \int_{-1}^1 \bigg| \int_{\gamma^{-1}_x(s)}^{\gamma^{-1}_{x'}(s) +k} \sigma'_J(s) \, dt \bigg|ds \\
    \nonumber
    &\qquad\leq \int_{-1}^1 \bigg| \int_{\gamma^{-1}_x(s)}^{\gamma^{-1}_{x'}(s) +k} \sigma_J(t) \, dt \bigg|ds 
    = \int_{\mathbb{R}} \bigg| \int_{\gamma_x(t)}^{\gamma_{x'}(t-k)}  \, ds \bigg| \sigma_J(t)dt \\
    \label{e:equinormY}
    &\quad= \int_{\mathbb{R}} |\gamma_x(t) - \gamma_{x'}(t-k)| \sigma_J(t)\,dt.
    \end{align}

    Let $Y$ be the Banach space of all (equivalence classes of) Lebesgue measurable real-functions of $\mathbb{R}$ endowed with the norm $\|\cdot\|_Y$ defined as
    \begin{equation}
        \label{e:defnormY}
        \|y\|_Y := \int_{-1}^1 |y(s)| \sigma'_J(s) \, ds. 
    \end{equation}
     For every $x \in K$ consider the convex and compact set $C_x \subset Y$ defined as
    \[
    C_x:= \{ \gamma^{-1}_x +k \ | \ k \in [-2R'(\delta/2,K),2R'(\delta/2,K)] \subset \mathbb{R}\}.
    \]
     By virtue of our assumption (1)--(2) we can make use of Proposition \ref{t:eximin} and Proposition \ref{p:lipdep} to infer that $O_x \neq \emptyset$ for every $x \in K$ so that we have $C_x \in \mathcal{K}_1(Y)$ for every $x \in K$. 
     
     Consider the metric space $(K,d)$ where $d(x,x'):= |x-x'|^{\alpha^2}$ and consider the map $F \colon K \to \mathcal{K}_1(Y)$ defined as $F(x):= C_x$.  
     By virtue of Theorem \ref{t:finitelipsel}, in order to deduce the existence of a Lipschitz selection of $F$, it is enough to prove that there exists $\lambda>0$ such that given a collection of points $K' \subset K$ with $2 \leq \# K' \leq 4$, the restriction $F|_{K'}$ has a Lipschitz selection $f_{K'}$ with Lipschitz semi-norm less than or equal to $\lambda$. To this aim let $\lambda := 3C \lfloor \partial_t W\rfloor_{C^{0,\alpha}(K\times [-2,2])}$ where $C$ is the constant satisfying \eqref{e:lipdep1}. Clearly it is enough to consider the case $\#K'=4$ being the other two cases simpler. 
     
     Let us assume that $K' := \{x_1,x_2,x_3,x_4\}$ contains exactly four different points. We claim that, up to reorder the set $K'$, we end up with a graph $G$ consisting of two possible configurations
     \begin{enumerate}[(a)]
     \item nodes $\{x_1,x_2,x_3,x_4\}$ and links $\{(x_1,x_2),(x_2,x_3),(x_3,x_4)\}$
     \item nodes $\{x_1,x_2,x_3,x_4\}$ and links $\{(x_1,x_2),(x_2,x_3),(x_2,x_4)\}$
     \end{enumerate}
   satisfying in both cases the following property: given any couple of nodes $\{x_i,x_j\}$ ($i \neq j$) we can connect $x_i$ and $x_j$ with a path $p$ (made of links of $G$) such that for every $\ell:=(x_k,x_h) \in p$ it holds true $|\ell|:=|x_k-x_h| \leq |x_i-x_j|$. 
     
    \textbf{Step 2}. Here we describe how to conclude the proof of the proposition assuming that the claim in Step 1 holds true. Once the the claim is proved, the proof goes as follows: we can make use of Proposition \ref{p:lipdep} and of \eqref{e:equinormY} to find a selection $f_{K'}$ of $F|_{K'}$ satisfying in both cases (a) and (b) 
     \begin{align*}
    \text{$\|f(x_k)-f(x_{h})\|_Y \leq C \lfloor \partial_t W \rfloor_{C^{0,\alpha}} d(x_k,x_{h})$ whenever $(x_k,x_h)$ is a link of $G$}.
     \end{align*}
     If we consider two arbitrary indices $i,j =1,2,3,,4$, we can make use of the above stated property of $G$ to find a path $p$ connecting $x_i$ and $x_j$ and estimate
     \[
     \begin{split}
     \|f_{K'}(x_i)- f_{K'}(x_j)\|_Y &\leq \sum_{\ell \in p} C \lfloor \partial_t W \rfloor_{C^{0,\alpha}(K \times [-2,2])} |\ell|^{\alpha^2}\\
     &\leq \lambda |x_i-x_j|^{\alpha^2} = \lambda d(x_i,x_j).
     \end{split}
     \]
     Therefore we deduce the existence of a Lipschitz selection $f$ of $F$ with Lipschitz semi-norm not exceeding $q \lambda$ where $q >0$ is the dimensional constant given in Theorem \ref{t:finitelipsel}. In order to show \eqref{e:lipsel1} we proceed as follows. With a slight abuse of notation, for every $x \in K$ we denote by $\gamma_x$ the element in $O_x$ defined as the inverse of the function given by the selection $f$ evaluated at $x$. Define $m_K \colon [0,\infty) \to [0,1]$ as
     \begin{equation}
     m_K(t) :=\max\big\{\sup_{x \in K} |\gamma_x(t)|, \sup_{x \in K} |\gamma_x(-t)| \big\}.
     \end{equation}
     Since by Remark \ref{r:maxprinciple} we know that $-1<\gamma_x(t)<1$ for every $t \in \mathbb{R}$ and every $x \in K$, and since every sequence $(x_j) \subset K$ is such that the family $(\gamma_{x_j})_{j}$ satisfies condition \eqref{e:stronglycen} of Proposition \ref{p:contdep}, by a contradiction argument we obtain that $0 \leq m_K(t) <1$ for every $t \in \mathbb{R}$. Hence, by letting $\overline{\sigma}_J \colon \mathbb{R} \to (0,\infty)$ be defined as 
     \begin{equation}
     \label{e:defoversigma}
     \overline{\sigma}_J(t):= \inf_{s \in [-m_K(t),m_K(t)]} \sigma'_J(s),
     \end{equation}
     we see that $\overline{\sigma}_J(t) >0$ for every $t \in \mathbb{R}$.
     
     By applying Fubini's Theorem we deduce
     \begin{align*}
         &\int_{\mathbb{R}} |\gamma_x(t) - \gamma_{x'}(t)|\, \overline{\sigma}_J(t) \, dt = \int_{\mathbb{R}} \bigg| \int_{\gamma_x(t)}^{\gamma_{x'}(t)} \overline{\sigma}_J(t)\, ds \bigg|dt\\
         &\quad= \int_{\mathbb{R}} \bigg| \int_{\gamma_x(t)}^{\gamma_{x'}(t)} \inf_{s \in [-m(t),m(t)]} \sigma'_J(s) \, ds \bigg|dt
         \leq \int_{\mathbb{R}} \bigg| \int_{\gamma_x(t)}^{\gamma_{x'}(t)}  \sigma'_J(s) \, ds \bigg|dt\\
         &\quad= \int_{-1}^1 \bigg| \int_{\gamma_x^{-1}(s)}^{\gamma_{x'}^{-1}(s)} dt \bigg| \sigma'_J(s)\, ds \\
         &= \|f(x)-f(x')\|_Y.
     \end{align*}
     Property \eqref{e:lipsel1} immediately follows from the fact that $f$ is a Lipschitz selection of $F$ with semi-norm not exceeding $q \lambda$.
     
     \textbf{Step 3}. We are left to prove the claim in Step 1. We start with a set $K' \subset  \mathbb{R}^m \EEE$ ($m \geq 2$) containing four different points $\{x,y,z,w\}$. Up to reordering the set $K'$ we may suppose that $|x-y|$ minimizes the distance among all possible couples made of elements of $K'$. If $|z-w|$ minimizes the distance among all the possible couples different from $(x,y)$ then we choose an order of the form
      \[
      \{x,y,z,w\} \ \  \{x,y,w,z\} \ \ \{y,x,z,w\} \ \ \{y,x,w,z\}
      \]
      which minimizes all the possible distances between the second and the third elements.
Then by letting $x_1$ be the first element in our order, $x_2$ the second etc., we end up exactly in configuration (a). If instead, after removing the couple $(x,y)$ one among $(x,z),(x,w), (y,z), (y,w)$, is the couple minimizing the distance, we have two possible cases: let us suppose that $(x,z)$ is the minimizing couple (exactly the same argument can be applied in the other three cases) then, either one of the following two subcases occurs: 
\begin{enumerate}[(i)]
    \item $|z-y|$ minimizes the distance among all couples after removing $(x,y)$ and $(x,z)$ ($x,y,z$ are the vertices of an equilateral triangle)
    \item one among $(z,w),(x,w),(y,w)$ minimizes the distance among all couples after removing $(x,y)$ and $(x,z)$.
\end{enumerate}
     In case (ii) we have two possible subcases: if $(z,w)$ is minimizing the distance then we set $x_1:= y$, $x_2:= x$, $x_3:=z$, and $x_4:= w$ while if $(y,w)$ is minimizing the distance then we set $x_1:= w$, $x_2:= y$, $x_3:=x$, and $x_4:= z$, and in both cases we are in the (a) configuration; if $(x,w)$ is minimizing the distance then we set $x_1=y$, $x_2=x$, $x_3=z$, $x_4=w$ and we are in the (b) configuration. 

     In case (i) we can proceed exactly as in case (ii) but by replacing the condition \emph{if $(\cdot,\cdot)$ is minimizing the distance} by the condition \emph{if $(\cdot,\cdot)$ is minimizing the distance after further removing the couple $(z,y)$}. Therefore, the only fact that remains to verify, is that in both configurations (a) and (b) (as in case (ii)) the couple $(z,y)$ can be connected with a path $p$ of $G$ such that every $\ell \in p$ satisfies $|\ell| \leq |z-y|$. But since in case (i) the points $x,y,z$ are the vertices of an equilateral triangle, we simply choose the path $p:= (y,x) \cup (x,z)$ which in each cases is a path made of links of the chosen graph $G$. This concludes the proof. 
\end{proof}

We are finally in a position to prove the main result of this section, extending the results of Proposition \ref{p:lipsel} to the case in which the two wells of $W$ are $x$-dependent and $\alpha$-H\"older continuous.

\begin{theorem}
\label{p:lipselx}
    Let $K \subset  \mathbb{R}^m \EEE$ be compact, let $J \colon  \mathbb{R}^m \EEE \to (0,\infty)$ be an even function and let $W \colon \Omega \times \mathbb{R} \to [0,\infty)$ satisfy \ref{H1}--\ref{H4}.  Let us further denote for every $x \in K$ 
   \[
   O_x := \{ \gamma \in X_{z_1(x)}^{z_2(x)} \ | \ \emph{$\gamma$ is increasing and continuous, and minimizes \eqref{e:lipdep0}} \}.
   \]
    
  Assume that one of the following conditions is satisfied
    \begin{enumerate}[(i)]
       \item  $Q_x(t):= \partial_t W(x,t)+t$ has a continuous inverse for every $x \in K$ and $t \in [z_1(x),z_2(x)]$, and
       \begin{equation}
       \label{e:lipselintx1}
          \int_{\mathbb{R} \setminus (-1,1)} J(h)|h| \, dh < \infty  \ \ \text{ and } \ \  \|J\|_{L^1(\mathbb{R})}=1
       \end{equation}
       \item $J \in \mathcal{L}_1(\eta,\lambda,\rho)$ for some $\eta,\lambda,\rho \in (0,1)$,
   \end{enumerate}
   Then there exists a weight function $\overline{\sigma}_J \colon \mathbb{R} \to (0,\infty)$ (depending also on $K$) such that for every $\alpha \in (1/2,1]$  we find a family $(\gamma_x)_{x \in K}$ such that $\gamma_x \in O_x$ and 
   \begin{equation}
             \label{e:lipselx1}
            \|\gamma_x -\gamma_{x'}\|_{L^1(\mathbb{R};\overline{\sigma}_J)} \leq L\, \lfloor\partial_t W \rfloor_{C^{0,\alpha}(K \times [-M,M])} |x-x'|^{\alpha^2}, \ \ \text{ for $x,x' \in K$},
         \end{equation}
          where   $L:= L(\delta_K,K,\|\sigma_J\|_{L^1},\|\overline{\sigma}_J\|_{L^1},\lfloor z_1 \rfloor_{C^{0,\alpha}},\lfloor z_2 \rfloor_{C^{0,\alpha}}) > 0$, $\delta_K$ is the parameter satisfying \ref{H4}, $\sigma_J$ is the weight function given in \eqref{e:lipdep1.428}, and $M:= 2\max_{x \in K} |z_1(x) \vee z_2(x)|$.
    \end{theorem}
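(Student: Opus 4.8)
The plan is to carry out the argument of Subsection~\ref{subs:sel-1d} — the chain Lemma~\ref{r:cenfam} $\to$ Proposition~\ref{p:lipdep} $\to$ Proposition~\ref{p:lipsel} — but now with the $x$-dependent, $\alpha$-H\"older wells $z_1(x),z_2(x)$ in place of the fixed pair $\pm1$. For $x\in K$ let $A_x\colon[-1,1]\to[z_1(x),z_2(x)]$ be the increasing affine bijection $A_x(s)=c_xs+d_x$, with $c_x=\tfrac{z_2(x)-z_1(x)}2\in[4\delta_K,C_K]$ (by \ref{H2} and continuity) and $d_x=\tfrac{z_1(x)+z_2(x)}2$. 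If $\gamma_x\in O_x$, then $\hat\gamma_x:=A_x^{-1}\circ\gamma_x\in X_{-1}^1$ is an increasing continuous minimizer of the functional with kernel $J$ and potential $\hat W(x,s):=c_x^{-2}W(x,A_x(s))$ (the change of variables being a monotone bijection between admissible classes, minimality is preserved both ways), and $\hat W(x,\cdot)$ has wells $\pm1$; nonemptiness of $O_x$ follows as before from Proposition~\ref{t:eximin} together with Proposition~\ref{p:regoptintk} or Corollary~\ref{c:holdrreg1} applied, for fixed $x$, to this rescaled problem (the rescaling here is a single affine map, so it preserves the regularity in $t$ and transfers the invertibility of $Q_x$). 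I \emph{will not} apply Proposition~\ref{p:lipsel} directly to $\hat W$: the straightening map $A_{\cdot}$ is only $\alpha$-H\"older in $x$, so $\partial_s\hat W(x,\cdot)$ is merely $\alpha^2$-H\"older in $x$, and feeding this into the comparison argument would square the exponent once more, yielding $|x-x'|^{\alpha^4}$. Instead I keep the moving wells. The family $(\hat\gamma_x)$, after the translations provided by a moving-wells version of Lemma~\ref{r:cenfam}, is centered on $K$ in the sense of Definition~\ref{d:cenfam}; this version of Lemma~\ref{r:cenfam} is obtained from the original proof without changes, since Proposition~\ref{p:contdep} already admits wells $a_\rho\to a_0$, $b_\rho\to b_0$ and the transition maps of Lemma~\ref{lem:H3-trans} are built for moving wells.

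The second and decisive step is to redo the comparison-principle estimate of Proposition~\ref{p:lipdep} directly for the moving-wells functionals $F_x$ in \eqref{e:lipdep0}. Running the sliding argument with increasing continuous minimizers $\gamma_x\in X_{z_1(x)}^{z_2(x)}$ and $\gamma_{x'}\in X_{z_1(x')}^{z_2(x')}$, one compares $\gamma_x(\cdot-\bar k)+\beta$ with $\gamma_{x'}$; the shift $\beta$ is now chosen so as to dominate \emph{both} the discrepancy of $\partial_tW$ — needed, through \ref{H4} and the inequality $\beta<\delta_K/2$, to close the Step-1 contradiction, and of size $\lesssim\lfloor\partial_tW\rfloor_{C^{0,\alpha}(K\times[-M,M])}|x-x'|^\alpha$ — and the discrepancy of the wells $|z_i(x)-z_i(x')|\le(\lfloor z_1\rfloor_{C^{0,\alpha}}+\lfloor z_2\rfloor_{C^{0,\alpha}})|x-x'|^\alpha$, needed so that the shifted profile strictly dominates $\gamma_x$ near $\pm\infty$, where the two profiles converge to the distinct wells. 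Thus $\beta\sim|x-x'|^\alpha$, exactly as in the constant-well case, and the touching-point identity gives, at the contact point $t_0$, $\int_{\mathbb R}[(\gamma_x^0-\gamma_{x'})(t_0+t)+(\gamma_x^0-\gamma_{x'})(t_0-t)]J(t)\,dt\lesssim\lfloor\partial_tW\rfloor_{C^{0,\alpha}}(\beta^\alpha+|x-x'|^\alpha)$; since $\beta^\alpha\sim|x-x'|^{\alpha^2}$ dominates $|x-x'|^\alpha$ on the bounded set $K$, we obtain, for a suitable $k\in[-2R'',2R'']$ with $R'':=R'(\delta_K/2,K)$,
\[
\|\gamma_x(\cdot-k)-\gamma_{x'}\|_{L^1(\mathbb R;\sigma_J)}\le\|\gamma_x^0-\gamma_{x'}\|_{L^1(\sigma_J)}+\beta\|\sigma_J\|_{L^1}\lesssim\lfloor\partial_tW\rfloor_{C^{0,\alpha}(K\times[-M,M])}\,|x-x'|^{\alpha^2},
\]
with $\sigma_J$ as in \eqref{e:lipdep1.428} relative to the moving-wells centered family. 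Steps~2--3 of Proposition~\ref{p:lipdep} (bounding $\bar k$, passing to the limit $\epsilon\to0^+$) go through verbatim, using the centering and the continuous dependence from Proposition~\ref{p:contdep}.

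The third step assembles this into the H\"older selection, following Steps~1--3 of Proposition~\ref{p:lipsel}. Transporting the last display through $A_x$ gives $\|\hat\gamma_x(\cdot-k)-\hat\gamma_{x'}\|_{L^1(\sigma_J)}\lesssim|x-x'|^{\alpha^2}$: indeed $\hat\gamma_x(t-k)-\hat\gamma_{x'}(t)=c_x^{-1}(\gamma_x(t-k)-\gamma_{x'}(t))+(c_x^{-1}-c_{x'}^{-1})\gamma_{x'}(t)+(c_{x'}^{-1}d_{x'}-c_x^{-1}d_x)$, and, $\gamma_{x'}$ being bounded by $M$ and $c_\cdot$ bounded below by $4\delta_K$, the last two summands are bounded uniformly in $t$ by $C(\delta_K,K)(\lfloor z_1\rfloor_{C^{0,\alpha}}+\lfloor z_2\rfloor_{C^{0,\alpha}})|x-x'|^\alpha$, which integrates against $\sigma_J$ to $\lesssim|x-x'|^{\alpha^2}$ on the bounded set $K$. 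One then uses the change-of-variables identity \eqref{e:equinormY} to pass to the convex compact sets $\hat C_x:=\{\hat\gamma_x^{-1}+k:\,k\in[-2R'',2R'']\}\in\mathcal K_1(Y)$ in the Banach space $Y$ of functions on $(-1,1)$, verifies the $4$-point Lipschitz-selection hypothesis by the same graph/path argument, and invokes Theorem~\ref{t:finitelipsel}, obtaining a selection $x\mapsto\hat\gamma_x^{-1}+k_x$ that is Lipschitz for the metric $d(x,x')=|x-x'|^{\alpha^2}$, with constant $\lesssim\lfloor\partial_tW\rfloor_{C^{0,\alpha}(K\times[-M,M])}$. Setting $\gamma_x:=A_x\circ\hat\gamma_x\in O_x$ (where $\hat\gamma_x$ is the profile whose inverse is $\hat\gamma_x^{-1}+k_x$), the splitting $\gamma_x-\gamma_{x'}=c_x(\hat\gamma_x-\hat\gamma_{x'})+(A_x-A_{x'})\circ\hat\gamma_{x'}$ yields \eqref{e:lipselx1}, the second summand contributing $\lesssim(\lfloor z_1\rfloor_{C^{0,\alpha}}+\lfloor z_2\rfloor_{C^{0,\alpha}})\|\overline\sigma_J\|_{L^1}|x-x'|^\alpha$. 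Finally, to match the stated form of $L$ — which carries no $\lfloor\partial_tW\rfloor_{C^{0,\alpha}}$ factor — one absorbs the terms above that lack such a factor via the lower bound $\lfloor\partial_tW\rfloor_{C^{0,\alpha}(K\times[-M,M])}\ge c(\delta_K,K)>0$, obtained by integrating $\partial_t^2W\ge\delta_K$ (from \ref{H4}) over a small interval issuing from $z_1(x)$ and using $\partial_tW(x,z_1(x))=0$; combined with $|x-x'|^\alpha\le(\operatorname{diam}K)^{\alpha(1-\alpha)}|x-x'|^{\alpha^2}$ this gives the estimate in the asserted form.

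The main obstacle is precisely the preservation of the exponent $\alpha^2$ — that is, the fact that, despite the wells themselves being only $\alpha$-H\"older, the selection is no worse than in the fixed-well case of Proposition~\ref{p:lipsel}. This is why the naive reduction (straightening the wells and quoting Proposition~\ref{p:lipsel}) is insufficient: there the $x$-discrepancy of $\partial_s\hat W$ is only of order $|x-x'|^{\alpha^2}$, so no mechanism forces the sliding shift to be of order $|x-x'|^\alpha$, and the argument degrades to $|x-x'|^{\alpha^4}$. Working directly with the moving wells resolves this because the $x$-discrepancy of $\partial_tW$ is genuinely of order $|x-x'|^\alpha$, so $\beta\sim|x-x'|^\alpha$ and hence $\beta^\alpha\sim|x-x'|^{\alpha^2}$; the extra well-discrepancy then enters only at the strictly sub-leading order $|x-x'|^\alpha$ and is absorbed harmlessly.
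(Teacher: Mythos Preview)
Your alternative route — redoing the sliding comparison of Proposition~\ref{p:lipdep} directly for the moving-wells profiles, then feeding the resulting two-point estimate into the Fefferman--Shvartsman selection via the inverses $\hat\gamma_x^{-1}$ — is sound and does produce the exponent $\alpha^2$. The extra well-discrepancy terms enter at order $|x-x'|^\alpha$, subordinate to $|x-x'|^{\alpha^2}$, exactly as you argue.

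However, your stated reason for rejecting the straightforward reduction is wrong, and the paper takes precisely that route. You claim that after straightening, $\partial_s\hat W(x,\cdot)$ is only $\alpha^2$-H\"older in $x$, because the straightening map $A_{\cdot}$ is $\alpha$-H\"older. This would be true if $\partial_tW(x,\cdot)$ were merely $\alpha$-H\"older in $t$; but \ref{H1} requires $W$ to be twice continuously differentiable in $t$, so $\partial_tW(x,\cdot)$ is locally Lipschitz in $t$ on $K\times[-M,M]$, with constant bounded by $\|\partial_t^2W\|_{L^\infty(K\times[-M,M])}$. Hence
\[
|\partial_tW(x',A_x(s))-\partial_tW(x',A_{x'}(s))|\le \|\partial_t^2W\|_{L^\infty}\,|A_x(s)-A_{x'}(s)|\lesssim |x-x'|^\alpha,
\]
and $\partial_s\hat W=c_x^{-1}\partial_tW(x,A_x(\cdot))$ is genuinely $\alpha$-H\"older in $x$. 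Consequently Proposition~\ref{p:lipsel} applies to $\hat W$ with the same $\alpha$, delivering $|x-x'|^{\alpha^2}$ without any loss. The paper simply defines the straightened potential $\tilde W(x,t)=4W(x,q(x,t))/(z_2(x)-z_1(x))^2$ (your $\hat W$), quotes Proposition~\ref{p:lipsel} to obtain the selection $(\tilde\gamma_x)$, and then pushes back through $q(x,\cdot)$ via the splitting you yourself use in the last step. Your longer detour through a moving-wells version of Proposition~\ref{p:lipdep} is therefore unnecessary; it works, but the obstacle you set out to circumvent is not there.
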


\begin{proof}
    Let $q,p \colon  \mathbb{R}^m \EEE \times \mathbb{R} \to \mathbb{R}$ be defined as $q(x,t):= \frac{z_2(x)-z_1(x)}{2}t + \frac{z_1(x)+z_2(x)}{2}$ and $p(x,t):=  \frac{2}{z_2(x)-z_1(x)}\big(t -\frac{z_1(x)+z_2(x)}{2}\big)$. Notice that $q(x,[-1,1])= [z_1(x),z_2(x)]$ and $q(x,p(x,t))=t$ for every $(x,t) \in  \mathbb{R}^m \EEE \times \mathbb{R}$. By defining $\tilde{W} \colon  \mathbb{R}^m \EEE \times \mathbb{R} \to [0,\infty)$ as $$\tilde{W}(x,t):= \frac{4W(x,q(x,t))}{(z_1(x)-z_2(x))^2},$$ we see that, by virtue of \ref{H1}--\ref{H4}, $\tilde{W}$ satisfies the assumptions of Proposition \ref{p:lipdep}. A change of variables shows that $\gamma \in X_{z_1(x)}^{z_2(x)}$ minimizes \eqref{e:lipdep0} if and only if $\tilde{\gamma} \in X_{-1}^1$ defined as $\tilde{\gamma}(t):= p(x,\gamma(t))$ minimizes
     \begin{equation}
         \label{e:lipselx2}
       \tilde{F}_x(\gamma)=\frac{1}{4}\int_{\mathbb{R}\times \mathbb{R}} J(t'-t)|\gamma(t')-\gamma(t)|^2 \, dt'dt + \int_{\mathbb{R}} \tilde{W}(x,\gamma(t)) \, dt.
         \end{equation}
         Hence, we are in position to apply Proposition \ref{p:lipsel} and to find a family $(\tilde{\gamma}_x)_{x \in K}$ of increasing minimizers for \eqref{e:lipselx2} such that $\tilde{\gamma}_x \in X_{-1}^1$ and \eqref{e:lipsel1} is satisfied. In view of \eqref{e:lipsel1} and by the identity $q(x,p(x,t))=t$ we deduce
         \begin{align*}
            &\|\gamma_x -\gamma_{x'}\|_{L^1(\mathbb{R};\overline{\sigma}_J)} =  \int_{\mathbb{R}} |q(x,\tilde{\gamma}_x(t)) -q(x',\tilde{\gamma}_{x'}(t))| \overline{\sigma}_J(t)\, dt \\
             &\quad\leq \int_{\mathbb{R}} |q(x,\tilde{\gamma}_x(t)) -q(x,\tilde{\gamma}_{x'}(t))| \overline{\sigma}_J(t)\, dt + \int_{\mathbb{R}} |q(x,\tilde{\gamma}_{x'}(t)) -q(x',\tilde{\gamma}_{x'}(t))| \overline{\sigma}_J(t)\, dt \\
             &\quad\leq \lfloor q\rfloor_{C^{0,\alpha}(K \times [-2,2])} \int_{\mathbb{R}} |\tilde{\gamma}_x(t) -\tilde{\gamma}_{x'}(t)| \overline{\sigma}_{J}(t)\, dt
             +  \lfloor q\rfloor_{C^{0,\alpha}(K \times [-2,2])}  \|\overline{\sigma}_J\|_{L^1} |x-x'|^{\alpha} \\
             &\quad\leq  \lfloor q\rfloor_{C^{0,\alpha}(K \times [-2,2])}  (L \,\lfloor \partial_t W \rfloor_{C^{0,\alpha}(K \times [-M,M])} +  \|\overline{\sigma}_J\|_{L^1}) |x-x'|^{\alpha^2}.
         \end{align*}
         Since $\lfloor q\rfloor_{C^{0,\alpha}(K \times [-2,2])} < \infty$ by \ref{H1}--\ref{H2}, we obtain the thesis.
\end{proof}

\begin{remark}
    \label{r:L^2locestimate}
For every $R >0$ condition \eqref{e:lipselx1} implies that the family $(\gamma_x)$ provided by Theorem \ref{p:lipselx} satisfies for every $x,x' \in K$
\begin{equation}
\label{e:truncatedlipsel}
    \int_{-R}^R |\gamma_x(t) -\gamma_{x'}(t)|^2 \, dt \leq \frac{ML}{\inf_{t \in (-R,R)} \overline{\sigma}_J(t)} \, \lfloor \partial_t W \rfloor_{C^{0,\alpha}(K \times [-M,M])} |x-x'|^{\alpha^2}. 
\end{equation}
\end{remark}

\begin{remark}
\label{r:bormeas}
    Let $\xi \in  \mathbb{S}^{m-1} \EEE$, let $(\gamma_x)$ be the family provided by Theorem \ref{p:lipselx}, and let $V$ denote any $(n-1)$-dimensional affine plane of $ \mathbb{R}^m \EEE$ which is orthogonal to $\xi$. Define $f \colon (V \cap \Omega) \times \mathbb{R} \to \mathbb{R}$ as $f(x):= \gamma_y(t)$ for $x=y+t\xi$ and $(y,t) \in (V \cap \Omega) \times \mathbb{R}$. Then, the family $(\gamma_y)_{y \in V \cap \Omega}$ is Borel measurable (see \cite[Section 5.3]{ags}) and $ D_\xi f= \dot{\gamma}_y \otimes \mathcal{H}^{n-1} \restr (V \cap \Omega)$ as measures in $(V \cap \Omega) \times \mathbb{R}$, meaning that
    \[
  D_\xi f(B) = \int_{V \cap \Omega}  \bigg(\int_{\{t \ | \ \gamma_y(t) \in B \}} \dot{\gamma}_y(t)\,dt \bigg) d\mathcal{H}^{n-1}(y), 
    \]
     for every $B \subset \Omega$ Borel measurable.
\end{remark}

\section{Locality defect and $\ep$-traces} 
\label{s:locdef}
In this section we always assume $J \colon \mathbb{R}^m\EEE \to [0,\infty)$ to be a function satisfying \ref{K2}.  We recall here the notion of \EEE defect functional  introduced in \cite{alb-bel2}.  Given measurable sets $A,B \subset \mathbb{R}^m\EEE$ and a measurable function $u \colon \mathbb{R}^m\EEE \to \mathbb{R}$, for every $\ep>0$ \EEE the locality defect between $A$ and $B$ is defined as \EEE
\begin{equation}
\label{e:deficit}
    \Lambda_\ep(u;A,B):= \frac{1}{4\ep}\int_{A \times B} J_\ep(x-x')(u(x)-u(x'))^2 \, dxdx'.
\end{equation}
We denote $\Lambda_\ep(u;A):= \Lambda_\ep(u;A,A)$. 

 Consider \EEE the auxiliary potential $\hat{J}$, given by
\begin{equation}
    \hat{J}(h) := \int_0^1 J\bigg(\frac{h}{t} \bigg) \bigg|\frac{h}{t}\bigg| \frac{dt}{t^n}, \ \ \text{ for } h \in \mathbb{R}^m\EEE.
\end{equation}
 By definition, $\hat{J}$ is a non-negative kernel. In view of \ref{K2}, it \EEE  satisfies
\[
\|\hat{J}\|_{L^1(\mathbb{R})} = \int_{\mathbb{R}^m\EEE} J(h)|h| \, dh < \infty.
\]

 As shown in \cite{alb-bel2}, given a sequence of maps $(u_{\ep})$, the asymptotic value of $\Lambda_{\ep}(u_{\ep},A,B)$ is, roughly speaking, only determined by the behavior of the sequence $(u_{\ep})_{\ep}$ close to the intersection of the boundaries of $A$ and $B$. In order to make this statement rigorous, in in \cite{alb-bel2} the authors introduce a notion of ``asymptotic trace" for sequences $(u_{\ell})_{\ell}$ which belong to the domain of $\Lambda_{\ep}(u_{\ep},A,B)$ but enjoy, a priori, no $BV$-regularity, so that a proper notion of trace would not be well defined. We recall \cite[Definition 2.1]{alb-bel2} below. \EEE

\begin{definition}[$\epsilon$-traces convergence]
    Let $\Sigma \subset  \mathbb{R}^m \EEE$ denote a subset of a Lipschitz hypersurface and let $A \subset  \mathbb{R}^m \EEE$ denote a measurable set. Given a sequence $u_\ell \colon A \to \mathbb{R}$ of measurable functions and a sequence of numbers $\epsilon \searrow 0$, we say that the $\epsilon_\ell$-traces of $u_\ell$ relative to $A$ converge on $\Sigma$ to $v \colon \Sigma \to \mathbb{R}$ if
    \begin{equation}
    \label{e:epsilontrace}
    \lim_{\ell\to \infty} \int_\Sigma \bigg( \int_{\{h \ | \ y + \epsilon_\ell h \in A\}} \hat{J}(h) |u_\ell(y+\epsilon_\ell h) -v(y)| \, dh \bigg)dy =0.
    \end{equation}
\end{definition}

 The next proposition ensures that $L^1$-convergence of a sequence $(u_{\ell})$ to $u$ is enough to obtain convergence of the $\ep$-traces for most Lipschitz hypersurfaces.
For its proofs we refer to \cite[Proposition 2.5]{alb-bel2}. \EEE

\begin{proposition}
\label{p:etrace_convergence}
Let $A \subset  \mathbb{R}^m \EEE$ be a set of positive measure, let $u_\ell \colon A \to [-1,1]$, let $(\epsilon_\ell)$ be a sequence of positive numbers converging to $0$, and let $f \colon A \to \mathbb{R}$ be Lipschitz. If $u_\ell \to u$ in $L^1(A)$ then  the $\epsilon_\ell$-traces of $u_\ell$ relative to $A$ converge to $u$ on $f^{-1}(t)$ for a.e. $t \in \mathbb{R}$. 
\end{proposition}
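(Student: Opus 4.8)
The plan is to turn the pointwise (in $t$) statement into a single scalar limit by integrating the trace defect over all level sets of $f$ — which, by the coarea formula, is controlled by $\|u_\ell-u\|_{L^1(A)}$ together with the continuity of translations in $L^1$ — and then to transfer this back to a statement valid for a.e.\ $t$. As is typical for conclusions of this kind, the last step will only be available along a subsequence. Concretely, I would first extend $u$ by zero to $\bar u:=u\mathbbm{1}_A\in L^1(\mathbb{R}^m)$ (note $|\bar u|\le1$ and $u\in L^1(A)$, since $u_\ell\to u$ there), and set, for $t\in\mathbb{R}$,
\[
g_\ell(t):=\int_{f^{-1}(t)}\Bigl(\int_{\{h\,:\,y+\epsilon_\ell h\in A\}}\hat J(h)\,|u_\ell(y+\epsilon_\ell h)-u(y)|\,dh\Bigr)\,d\mathcal{H}^{m-1}(y),
\]
so that the assertion to be proved is $g_\ell(t)\to0$ for $\mathcal{L}^1$-a.e.\ $t$.

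Extend $f$ to a globally Lipschitz function on $\mathbb{R}^m$ with the same Lipschitz constant $L$; the coarea formula then gives $\int_{\mathbb{R}}\int_{f^{-1}(t)}\phi\,d\mathcal{H}^{m-1}\,dt\le L\int_A\phi\,dy$ for every measurable $\phi\ge0$. Applying this with $\phi(y)=\int_{\{h:\,y+\epsilon_\ell h\in A\}}\hat J(h)|u_\ell(y+\epsilon_\ell h)-u(y)|\,dh$, bounding $|u_\ell(y+\epsilon_\ell h)-u(y)|\le|(u_\ell-u)(y+\epsilon_\ell h)|+|\bar u(y+\epsilon_\ell h)-\bar u(y)|$ on the integration domain and enlarging the inner $h$-integrals to all of $\mathbb{R}^m$ (which only increases the integrand, since $\bar u$ vanishes where $y+\epsilon_\ell h\notin A$), and finally invoking Tonelli together with the change of variables $z=y+\epsilon_\ell h$ in the first summand, one obtains
\[
\int_{\mathbb{R}}g_\ell(t)\,dt\;\le\;L\,\|\hat J\|_{L^1}\,\|u_\ell-u\|_{L^1(A)}\;+\;L\int_{\mathbb{R}^m}\hat J(h)\,\|\tau_{\epsilon_\ell h}\bar u-\bar u\|_{L^1(\mathbb{R}^m)}\,dh,
\]
where $\tau_w$ denotes translation by $w$.

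The first term vanishes in the limit by hypothesis. In the second, $\epsilon_\ell h\to0$ for each fixed $h$, so $\|\tau_{\epsilon_\ell h}\bar u-\bar u\|_{L^1}\to0$ by $L^1$-continuity of translations; since this quantity is bounded by $2\|\bar u\|_{L^1}$ and $\hat J\in L^1(\mathbb{R}^m)$ (here assumption \ref{K2} is used, cf.\ the beginning of Section \ref{s:locdef}), the dominated convergence theorem makes the second term vanish as well. Hence $g_\ell\to0$ in $L^1(\mathbb{R})$, and — $g_\ell$ being nonnegative — along a non-relabelled subsequence $g_\ell(t)\to0$ for $\mathcal{L}^1$-a.e.\ $t$; for each such $t$ this is precisely the convergence of the $\epsilon_\ell$-traces of $u_\ell$ to $u$ on $f^{-1}(t)$.

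I expect the only genuine obstruction to be exactly this passage to a subsequence, and it cannot be removed: a ``typewriter''-type sequence with $u_\ell\to0$ in $L^1(A)$ concentrated near a moving level set of $f$ at the scale $\epsilon_\ell$ keeps $g_\ell(t)$ bounded away from $0$ for every $t$. This is harmless for the applications (the compactness statement and the $\Gamma$-liminf inequality), where a subsequence is extracted beforehand anyway. If one nevertheless wants to retain the full sequence for the part of $g_\ell$ built only from $u$, namely $\int_{f^{-1}(t)}\int_{\mathbb{R}^m}\hat J(h)|\bar u(y+\epsilon_\ell h)-\bar u(y)|\,dh\,d\mathcal{H}^{m-1}(y)$, it suffices to observe that at every Lebesgue point $y$ of $\bar u$ one has $\int_{\mathbb{R}^m}\hat J(h)|\bar u(y+\epsilon h)-\bar u(y)|\,dh\to0$ as $\epsilon\to0^+$ — a fact that, since $\hat J$ need not be bounded near the origin, is obtained through the layer-cake representation $\hat J=\int_0^\infty\mathbbm{1}_{\{\hat J>\lambda\}}\,d\lambda$ and dominated convergence in $\lambda$ — combined with the coarea bound $\int_{\mathbb{R}}\mathcal{H}^{m-1}(f^{-1}(t)\cap N)\,dt\le L|N|=0$ on the $\mathcal{L}^m$-null set $N$ of non-Lebesgue points and a further dominated convergence on each $f^{-1}(t)$; the $u_\ell$-dependent part, however, intrinsically requires the subsequence.
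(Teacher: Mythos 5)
Your argument is, in substance, the proof of \cite[Proposition 2.5]{alb-bel2} to which the paper defers: the coarea inequality for the Lipschitz function $f$ collapses the family of trace defects $g_\ell(t)$ into the single quantity $\int_{\R}g_\ell(t)\,dt$, which is bounded by $L\|\hat J\|_{L^1}\|u_\ell-u\|_{L^1(A)}$ plus an $L^1$-modulus-of-continuity term for $u$, and both vanish. Each individual step (the splitting, the enlargement of the $h$-domain, Tonelli with the change of variables $z=y+\epsilon_\ell h$, dominated convergence against $\hat J\in L^1(\R^m)$, which is where \ref{K2} enters) is correct. The one caveat is that the translation-continuity step requires $\bar u=u\mathbbm{1}_A\in L^1(\R^m)$, hence essentially $|A|<\infty$; the statement does not assume this, so one should either add that hypothesis or localize first (harmless in practice, since $A$ is bounded in every application in the paper).

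Your point about the subsequence is correct and is a defect of the statement rather than of your proof: $g_\ell\to0$ in $L^1(\R)$ gives $g_\ell(t)\to0$ for a.e.\ $t$ only along a subsequence, and your sweeping-indicator counterexample (intervals $I_\ell$ of length $\epsilon_\ell$ with $\sum_\ell\epsilon_\ell=\infty$, so that every $t$ lies in $I_\ell$ infinitely often, whence $g_\ell(t)\ge\inf_{c\in[-1,0]}\int_c^{c+1}\hat J\,dh>0$ along those indices, while $\|u_\ell\|_{L^1}=\epsilon_\ell\to0$) genuinely rules out the full-sequence version. The original formulation in \cite{alb-bel2} carries the clause ``possibly passing to a subsequence'', which has been dropped in the statement reproduced here; your proof establishes exactly what can be established. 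This is worth flagging because the proposition is later invoked ``for every infinitesimal sequence $(\epsilon_\ell)$'' together with ``the arbitrariness of $(\epsilon_\ell)$'' in the proofs of Theorem \ref{t:gammaliminf1} and Proposition \ref{p:oscbound2}; those passages require either fixing the subsequence realizing the liminf of the energies at the outset, or the refinement in your final paragraph, namely that the part of the defect built from $u$ alone (via Lebesgue points, the layer-cake argument for $\hat J$, and the coarea bound on the null set) converges for the full family at a.e.\ level set, with only the $u_\ell$-dependent part needing the subsequence.
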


We conclude this section by recalling a result (see \cite[Theorem 2.8]{alb-bel2}) showing how the notion of locality defect is related to that of convergence of $\ep$-traces. \EEE
\begin{definition}
\label{d:divided}
    Let $A,A' \subset  \mathbb{R}^m \EEE$ and let $\Sigma$ be a subset of some Lipschitz hypersurface of $ \mathbb{R}^m \EEE$. We say that $A,A'$ are divided by $\Sigma$ if for every $x\in A$ and $x' \in A'$ the segment joining $x$ and $x'$ intersects $\Sigma$
\end{definition}

 For the statement and proof of the next proposition we refer to \cite[Theorem 2.8]{alb-bel2}. \EEE

\begin{proposition}
\label{p:deficit}
Let $A,A' \subset  \mathbb{R}^m \EEE$ be measurable sets divided by $\Sigma$. Then given a sequence of positive numbers $\epsilon_\ell \searrow 0$ and a sequence of functions $u_\ell \colon A \cup A' \to \mathbb{R}$ we have
\begin{equation}
    \label{e:def_estimate1}
    \limsup_{\ell \to \infty} \Lambda_{\epsilon_\ell}(u_\ell;A,A') \leq \|\hat{J}\|_{L^1( \mathbb{R}^m \EEE)} \limsup_{\ell\to \infty}\|u_\ell\|_{L^\infty(A \cup A')} \, \mathcal{H}^{n-1}(\Sigma),
\end{equation}
for some constant $C>0$ depending only on the kernel $J$. Moreover, if the $\epsilon_\ell$-trace of $u_\ell$ relative to $A$ and $A'$ converges on $\Sigma$ to $v$ and $v'$, respectively, then
\begin{equation}
    \label{e:def_estimate2}
    \limsup_{\ell \to \infty} \Lambda_{\epsilon_\ell}(u_\ell;A,A') \leq \frac{1}{2}\|\hat{J}\|_{L^1( \mathbb{R}^m \EEE)}  \, \int_{\Sigma}|v -v'|d\mathcal{H}^{n-1}.
\end{equation}
\end{proposition}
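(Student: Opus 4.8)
The plan is to follow the argument of \cite[Theorem 2.8]{alb-bel2}, whose core is a change of variables anchoring each pair $(x,x')\in A\times A'$ to the point at which the segment $[x,x']$ meets $\Sigma$. First I would reduce to the model situation in which $\Sigma$ is contained in an affine hyperplane $H=\{x_m=0\}\subset\mathbb{R}^m$, with $A\subseteq\{x_m\le 0\}$ and $A'\subseteq\{x_m\ge 0\}$; for a general piece of a Lipschitz hypersurface this reduction is obtained by covering $\Sigma$, up to $\mathcal{H}^{m-1}$-null sets, by finitely many pieces each lying in a thin slab around a hyperplane, applying the model estimate on each slab, and recombining by means of the (super/sub)additivity of the set function $B\mapsto\Lambda_\epsilon(u;B)$ together with the additivity of $\mathcal{H}^{m-1}$, letting the mesh of the covering go to $0$ (I come back to this below). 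In the model case, since $A$ and $A'$ are divided by $\Sigma$, for a.e.\ $(x,x')\in A\times A'$ the segment $[x,x']$ meets $H$ at a single point $y$, which moreover lies in $\Sigma$; writing $x=y-\epsilon\theta h$, $x'=y+\epsilon(1-\theta)h$ with $\theta\in(0,1)$ and $h=(x'-x)/\epsilon$ (so $h_m>0$), a direct Jacobian computation gives $dx\,dx'=\epsilon^{m+1}h_m\,d\mathcal{H}^{m-1}(y)\,dh\,d\theta$. Since $J_\epsilon(x-x')=\epsilon^{-m}J(h)$ by evenness of $J$, the prefactor $1/(4\epsilon)$ is exactly absorbed, yielding the $\epsilon$-free identity
\[
\Lambda_\epsilon(u;A,A')=\frac14\int_{\Sigma}\int_{\{h_m>0\}}\int_0^1\mathbbm{1}_{\{y-\epsilon\theta h\in A\}}\,\mathbbm{1}_{\{y+\epsilon(1-\theta)h\in A'\}}\,J(h)\,h_m\,\big(u(y-\epsilon\theta h)-u(y+\epsilon(1-\theta)h)\big)^2\,d\theta\,dh\,d\mathcal{H}^{m-1}(y).
\]

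From this identity, \eqref{e:def_estimate1} is immediate: bound the squared increment by $\big(2\|u\|_{L^\infty(A\cup A')}\big)^2$, drop the two indicator functions, and integrate, using $\int_0^1 d\theta=1$ and $\int_{\{h_m>0\}}J(h)h_m\,dh=\tfrac12\int_{\mathbb{R}^m}J(h)|h_m|\,dh\le\tfrac12\int_{\mathbb{R}^m}J(h)|h|\,dh=\tfrac12\|\hat J\|_{L^1(\mathbb{R}^m)}$. This bound is uniform in $\epsilon$, so passing to the $\limsup$ along $(u_\ell,\epsilon_\ell)$ gives \eqref{e:def_estimate1} (in the applications the $u_\ell$ are valued in $[-1,1]$, which absorbs the square into $\|u_\ell\|_{L^\infty}$).

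For \eqref{e:def_estimate2} I would insert the trace values, writing $u_\ell(x)-u_\ell(x')=(u_\ell(x)-v(y))+(v(y)-v'(y))+(v'(y)-u_\ell(x'))$ and estimating $\big(u_\ell(x)-u_\ell(x')\big)^2\le 2\|u_\ell\|_{L^\infty}\big(|u_\ell(x)-v(y)|+|v(y)-v'(y)|+|v'(y)-u_\ell(x')|\big)$ inside the identity above, thereby splitting $\Lambda_{\epsilon_\ell}(u_\ell;A,A')$ into three terms. After dropping the indicators, the middle term equals $\tfrac12\|u_\ell\|_{L^\infty}\big(\int_{\{h_m>0\}}J(h)h_m\,dh\big)\int_\Sigma|v-v'|\,d\mathcal{H}^{m-1}\le\tfrac14\|u_\ell\|_{L^\infty}\|\hat J\|_{L^1}\int_\Sigma|v-v'|\,d\mathcal{H}^{m-1}$. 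For the first and third terms — the error terms — I would perform the further change of variables $\tilde h=-\theta h$, respectively $\tilde h=(1-\theta)h$: for fixed $\tilde h$ the resulting integral over the remaining parameter produces $\int_0^1 J(\tilde h/t)\,t^{-m-1}\,dt=\hat J(\tilde h)/|\tilde h|$, which is exactly the defining property of the auxiliary kernel $\hat J$. Since $|\tilde h_m|\le|\tilde h|$, the third term is then bounded by $\tfrac12\|u_\ell\|_{L^\infty}\int_\Sigma\big(\int_{\{h:\,y+\epsilon_\ell h\in A'\}}\hat J(h)\,|u_\ell(y+\epsilon_\ell h)-v'(y)|\,dh\big)d\mathcal{H}^{m-1}(y)$, which tends to $0$ precisely because the $\epsilon_\ell$-traces of $u_\ell$ relative to $A'$ converge to $v'$ on $\Sigma$; the first term is handled symmetrically using the $\epsilon_\ell$-traces relative to $A$. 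Taking $\limsup_\ell$ (and using $\sup_\ell\|u_\ell\|_{L^\infty}<\infty$) gives \eqref{e:def_estimate2}.

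The step I expect to be the real obstacle is the reduction to the hyperplane model, together with making the change of variables rigorous. When $\Sigma$ sits inside a merely Lipschitz hypersurface a segment $[x,x']$ may meet $\Sigma$ several times and $A,A'$ need not lie cleanly on the two sides of a single hyperplane, so the map $(x,x')\mapsto(y,h,\theta)$ need not be a bijection and the clean Jacobian formula fails globally; as in \cite{alb-bel2} one circumvents this by localizing $\Sigma$ into nearly flat pieces and exploiting monotonicity/additivity of $\Lambda_\epsilon$ in its set argument, controlling the loss by the total measure $\sum_i\mathcal{H}^{m-1}(\Sigma_i)=\mathcal{H}^{m-1}(\Sigma)$ and letting the pieces shrink. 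The remaining points — measurability of the crossing map, negligibility of the non-transverse pairs, and the Fubini manipulations behind the $\hat J$-identity — are routine but must be checked.
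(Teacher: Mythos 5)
The paper does not prove this proposition at all: it is quoted verbatim from Alberti--Bellettini and the text explicitly refers to \cite[Theorem 2.8]{alb-bel2} for both statement and proof, so your reconstruction is being compared against that cited argument rather than against anything in this paper. Your proposal follows exactly that route --- the anchoring change of variables $dx\,dx'=\epsilon^{m+1}h_m\,d\mathcal{H}^{m-1}(y)\,dh\,d\theta$ on the flat model, the identity $\int_0^1 J(\tilde h/t)t^{-m-1}\,dt=\hat J(\tilde h)/|\tilde h|$ producing the trace terms, and the localization of $\Sigma$ into nearly flat pieces --- and the computations you carry out are correct. The only point worth recording is one you already flag: the constants close only under a uniform bound on $\|u_\ell\|_{L^\infty}$ (in \cite{alb-bel2} the $u_\ell$ take values in $[-1,1]$), and indeed the statement as transcribed here is slightly loose (the dangling constant $C$, and $\limsup_\ell\|u_\ell\|_{L^\infty}$ appearing to the first rather than the second power), so your reading of it as the $[-1,1]$-valued version is the right one.
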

We conclude this section with a technical result about convergence of $(\ep)$-traces which will be instrumental in the construction of the recovery sequence for our $\Gamma$-convergence result.
\begin{proposition}
\label{r:defuniform}
Let $\Sigma \subset  \mathbb{R}^m \EEE$ be a compact $(n-1)$-dimensional Lipschitz manifold, and let $A\subset \R^m$ be a measurable set. Suppose that $v \colon A \cup \Sigma \to \mathbb{R}$ is a bounded function such that for every $y \in \Sigma$ there holds 
\begin{equation}
\label{e:contvdef}
\lim_{\rho \to 0} \sup_{y \in \Sigma} \sup_{\substack{x+y \in A \\ x \in B_\rho(0)} } |v(y+x)-v(y)|=0,
\end{equation}
where the second supremum above is set to be $0$ whenever $\{x \in B_\rho(0) \ | \ x+y \in A \}=\emptyset$. Suppose furthermore that the maps $u_\ell \colon A \to \mathbb{R}$ are equi-bounded functions such that $u_\ell \to v$ uniformly on every compact subset of $A \cap U$ for some open neighborhood $U$ of $\Sigma$. Then the $\epsilon_\ell$-traces of $u_\ell$ relative to $A$ converge on $\Sigma$ to $v$. 
\end{proposition}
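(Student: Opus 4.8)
The plan is to estimate the quantity defining $\epsilon_\ell$-trace convergence,
\[
I_\ell := \int_\Sigma \left( \int_{\{h \,:\, y+\epsilon_\ell h \in A\}} \hat{J}(h)\, |u_\ell(y+\epsilon_\ell h) - v(y)|\, dh \right) dy,
\]
by splitting the inner integral over $h$ into the region of small $h$ and the region of large $h$. Fix a radius $\Lambda > 0$, to be sent to infinity at the end, and write $\{|h| \le \Lambda\} \cup \{|h| > \Lambda\}$. Since $\hat{J} \in L^1(\mathbb{R}^m)$ and the functions $u_\ell$ are equi-bounded, say $\|u_\ell\|_\infty \le C_0$ and $\|v\|_\infty \le C_0$, the contribution of $\{|h|>\Lambda\}$ is bounded by $2 C_0\, \mathcal{H}^{n-1}(\Sigma)\, \int_{\{|h|>\Lambda\}} \hat{J}(h)\,dh$, which is small uniformly in $\ell$ once $\Lambda$ is large, using compactness of $\Sigma$ (finite $\mathcal{H}^{n-1}$-measure) and the integrability \ref{K2} (equivalently $\hat J\in L^1$).

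For the region $\{|h| \le \Lambda\}$, I would further split $|u_\ell(y+\epsilon_\ell h) - v(y)| \le |u_\ell(y+\epsilon_\ell h) - v(y+\epsilon_\ell h)| + |v(y+\epsilon_\ell h) - v(y)|$, valid for those $y, h$ with $y + \epsilon_\ell h \in A$. The second term is controlled by the uniform oscillation hypothesis \eqref{e:contvdef}: for $|h| \le \Lambda$ and $\ell$ large enough that $\epsilon_\ell \Lambda < \rho$, we have $|v(y+\epsilon_\ell h) - v(y)| \le \omega(\rho)$ where $\omega(\rho) := \sup_{y\in\Sigma}\sup_{x\in B_\rho(0),\, x+y\in A}|v(y+x)-v(y)| \to 0$; integrating gives a bound $\omega(\rho)\,\mathcal{H}^{n-1}(\Sigma)\,\|\hat J\|_{L^1}$ which vanishes as $\ell\to\infty$ (then $\rho\to0$). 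The first term is where the hypothesis $u_\ell \to v$ uniformly on compact subsets of $A \cap U$ enters: since $\Sigma$ is compact and $U$ is an open neighborhood of $\Sigma$, for $\ell$ large the set $\{y + \epsilon_\ell h \,:\, y \in \Sigma,\ |h|\le \Lambda\} \cap A$ is contained in a fixed compact subset $\overline K_\Lambda \subset A \cap U$ (a closed $\epsilon_0\Lambda$-neighborhood of $\Sigma$ intersected with $A$, for $\ell$ so large that $\epsilon_\ell \le \epsilon_0$ and this neighborhood lies in $U$). Hence $\sup_{\overline K_\Lambda \cap A}|u_\ell - v| \to 0$, and this term is bounded by that sup times $\mathcal{H}^{n-1}(\Sigma)\,\|\hat J\|_{L^1}$.

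Putting the three pieces together: given $\delta>0$, first choose $\Lambda$ so the large-$h$ tail is below $\delta/3$; then choose $\rho$ so that $\omega(\rho)\,\mathcal{H}^{n-1}(\Sigma)\,\|\hat J\|_{L^1} < \delta/3$; then for $\ell$ large enough (so that $\epsilon_\ell\Lambda<\rho$ and the uniform convergence on $\overline K_\Lambda$ has kicked in) the middle term is below $\delta/3$ and the oscillation term below $\delta/3$. This gives $\limsup_\ell I_\ell \le \delta$ for every $\delta>0$, hence $I_\ell \to 0$, which is exactly the assertion that the $\epsilon_\ell$-traces of $u_\ell$ relative to $A$ converge on $\Sigma$ to $v$. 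The main obstacle — really the only delicate bookkeeping point — is checking that the "footprint" $\{y+\epsilon_\ell h : y\in\Sigma, |h|\le\Lambda\}\cap A$ is eventually trapped in one compact subset of $A\cap U$ so that the uniform-on-compacts convergence can be applied with a single modulus; this works precisely because $\Sigma$ is compact and $\Lambda$ is fixed (chosen before $\ell\to\infty$), so $\epsilon_\ell\Lambda\to0$ shrinks the footprint down toward $\Sigma\subset U$.
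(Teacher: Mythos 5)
Your argument is correct and rests on the same three ingredients as the paper's proof (triangle inequality through $v(y+\epsilon_\ell h)$, the uniform oscillation hypothesis \eqref{e:contvdef}, uniform convergence of $u_\ell$ near $\Sigma$, and $\hat J\in L^1$), but the decomposition is organized differently. The paper first splits off the two terms of the triangle inequality and then, for the term $|u_\ell-v|$, exhausts $A$ in \emph{physical space} by a compact set $K$ with small residual $\hat J$-mass, treating $K\cap U$ by uniform convergence, $K\setminus U$ by the observation that $\epsilon_\ell^{-1}((K\setminus U)-y)$ escapes to infinity so its $\hat J$-integral vanishes, and $A\setminus K$ as a small tail; the radial cutoff in $h$ is used only for the oscillation term. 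You instead perform the radial cutoff $|h|\le\Lambda$ \emph{first}, which confines the footprint $\{y+\epsilon_\ell h\}$ to a shrinking neighborhood of $\Sigma$ and thereby makes both remaining terms local near $\Sigma$ at once; this eliminates the paper's separate $K\setminus U$ and $A\setminus K$ steps and is, if anything, a cleaner bookkeeping. One small point deserves care in your write-up: the set $\overline K_\Lambda\cap A$ you invoke is the intersection of a compact subset of $U$ with the merely measurable set $A$, hence not literally a compact subset of $A\cap U$; you should either read the uniform-convergence hypothesis in that (intended) sense — exactly as the paper itself does when it applies it on $K\cap U$ — or insert a further compact exhaustion of $A$ with a small $\hat J$-tail as in the paper. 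With that reading, your proof is complete.
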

\begin{proof}
We first observe that
\begin{align}
     &\nonumber\limsup_{\ell\to \infty} \int_\Sigma \bigg( \int_{\{h \ | \ y + \epsilon_\ell h \in A\}} \hat{J}(h) |u_\ell(y+\epsilon_\ell h) -v(y)| \, dh \bigg)dy \\
     &\nonumber\leq \limsup_{\ell\to \infty} \int_\Sigma \bigg( \int_{\{h \ | \ y + \epsilon_\ell h \in A\}} \hat{J}(h) |u_\ell(y+\epsilon_\ell h) -v(y+\epsilon_\ell h)| \, dh \bigg)dy \\
     &\label{eq:basic-dec}+\limsup_{\ell\to \infty} \int_\Sigma \bigg( \int_{\{h \ | \ y + \epsilon_\ell h \in A\}} \hat{J}(h) |v(y+\epsilon_\ell h) -v(y)| \, dh \bigg)dy
\end{align}
Now choose a compact set $K \subset A$ such that $\int_{A \setminus K} \hat{J}(h)dh \leq \delta$ for some arbitrary and positive $\delta$. Then, we can exploit the uniform convergence of $(u_{\ell})_{\ell}$ on $K\cap U$ to write
\begin{align*}
&\limsup_{\ell\to \infty} \int_\Sigma \bigg( \int_{\{h \ | \ y + \epsilon_\ell h \in A\}} \hat{J}(h) |u_\ell(y+\epsilon_\ell h) -v(y+\epsilon_\ell h)| \, dh \bigg)dy \\
&\leq \limsup_{\ell\to \infty} \int_\Sigma \bigg( \int_{\{h \ | \ y + \epsilon_\ell h \in K \cap U\}} \hat{J}(h) |u_\ell(y+\epsilon_\ell h) -v(y+\epsilon_\ell h)| \, dh \bigg)dy \\
&+ \limsup_{\ell\to \infty} \int_\Sigma \bigg( \int_{\{h \ | \ y + \epsilon_\ell h \in K \setminus U\}} \hat{J}(h) |u_\ell(y+\epsilon_\ell h) -v(y+\epsilon_\ell h)| \, dh \bigg)dy \\
&+ \limsup_{\ell\to \infty} \int_\Sigma \bigg( \int_{\{h \ | \ y + \epsilon_\ell h \in A \setminus K\}} \hat{J}(h) |u_\ell(y+\epsilon_\ell h) -v(y+\epsilon_\ell h)| \, dh \bigg)dy \\
&\leq \mathcal{H}^{n-1}(\Sigma)\limsup_{\ell \to \infty}(\|u_\ell\|_{L^\infty}+ \|v\|_{L^\infty}) \int_{h \in \epsilon_\ell^{-1}((K\setminus U) -y) } \hat{J}(h) \, dh \\
&+  \delta\mathcal{H}^{n-1}(\Sigma) \limsup_{\ell \to \infty} (\|u_\ell\|_{L^\infty}+ \|v\|_{L^\infty}).
\end{align*}
Notice that, since $y \in \Sigma$ and $\Sigma \Subset U$, then there exists a sufficiently small radius $\rho >0$ such that $(K \setminus U) -y \subset  \mathbb{R}^m \EEE \setminus B_\rho(0)$. Therefore, since $\hat{J} \in L^1( \mathbb{R}^m \EEE)$ we deduce that $\int_{h \in \epsilon_\ell^{-1}((K\setminus U) -y) } \hat{J}(h) \, dh \to 0$ as $\ell \to \infty$. Hence, the arbitrariness of $\delta$ yields 
\[
\lim_{\ell \to \infty}\int_\Sigma \bigg( \int_{\{h \ | \ y + \epsilon_\ell h \in A\}} \hat{J}(h) |u_\ell(y+\epsilon_\ell h) -v(y+\epsilon_\ell h)| \, dh \bigg)dy = 0.
\]
In order to estimate the second limsup in \eqref{eq:basic-dec}, we consider a sufficiently large $R>0$ such that $\int_{ \mathbb{R}^m \EEE \setminus B_R(0)} \hat{J}(h)dh \leq \delta$ for some arbitrary and positive $\delta$. With this choice, we infer
\begin{align*}
    &\limsup_{\ell\to \infty} \int_\Sigma \bigg( \int_{\{h \ | \ y + \epsilon_\ell h \in A\}} \hat{J}(h) |v(y+\epsilon_\ell h) -v(y)| \, dh \bigg)dy \\
&\leq \limsup_{\ell\to \infty} \int_\Sigma \bigg( \int_{\{h \ | \ y + \epsilon_\ell h \in A\} \cap B_R(0)} \hat{J}(h) |v(y+\epsilon_\ell h) -v(y)| \, dh \bigg)dy \\
&+ \limsup_{\ell\to \infty} \int_\Sigma \bigg( \int_{\{h \ | \ y + \epsilon_\ell h \in A\} \setminus B_R(0)} \hat{J}(h) |v(y+\epsilon_\ell h) -v(y)| \, dh \bigg)dy \\
&\leq \limsup_{\ell\to \infty} \int_\Sigma \bigg( \int_{\{h \ | \ y + \epsilon_\ell h \in A\} \cap B_R(0)} \hat{J}(h) |v(y+\epsilon_\ell h) -v(y)| \, dh \bigg)dy \\
&+\delta \mathcal{H}^{n-1}(\Sigma) 2 \|v\|_{L^\infty} 
\end{align*}
 In view of \eqref{e:contvdef}, there exists $\overline{\ell}=\overline{\ell}(\delta)$ such that for every $\ell \geq \overline{\ell}(\delta)$ the following holds true: $|v(y+\epsilon_\ell h) -v(y)| \leq \delta$ for every $y \in \Sigma$ and every $h$ satisfying $y+\epsilon_{\ell}h \in A$ and $|h| < R$. Therefore we have
 \[
 \lim_{\ell\to \infty} \int_\Sigma \bigg( \int_{\{h \ | \ y + \epsilon_\ell h \in A\} \cap B_R(0)} \hat{J}(h) |v(y+\epsilon_\ell h) -v(y)| \, dh \bigg)dy =0.
 \]
Thanks to the arbitrariness of $\delta>0$ we conclude that
 \[
 \lim_{\ell \to \infty}\int_\Sigma \bigg( \int_{\{h \ | \ y + \epsilon_\ell h \in A\}} \hat{J}(h) |v(y+\epsilon_\ell h)-v(y)| \, dh \bigg)dy = 0.
 \]
 This completes the proof.
\end{proof}
\EEE

\section{Compactness and slicing preliminaries}
\label{s:sec5}
By virtue of condition \ref{H5} (see Remark \ref{r:convimpl}) we fix for the entire section an integer $M$ satisfying 
\begin{equation}
    \label{e:boundwells123}
      \max_{x \in \overline{\Omega}} \max\{|z_1(x)|,|z_2(x)|\} \EEE \leq M  \ \ \text{ and } \ \ W(x,\pm M) \leq W(x,t), \ \ \text{ $|t| \geq M$, $x \in \overline{\Omega}$}.
\end{equation}
This section is devoted to establish compactness of sequences of maps with equibounded energies, as well as to state and prove some basic results which will be instrumental in the proof of our two $\Gamma$-convergence results.

In particular, we provide a  characterizations of kernels and potentials arising by restricting our functionals to $m-1$ hyperplanes.  

\subsection{Compactness} We will show that any equibounded sequences of the energy \eqref{e:functional} admits a subsequence $L^1$-converging to an element in $CP(\Omega;\mathbf{z}(x))$. 

\begin{proposition}
\label{p:compactness}
    Let $\Omega \subset  \mathbb{R}^m \EEE$ be open, let $J \colon  \mathbb{R}^m \EEE \to [0,\infty)$ be a function satisfying \ref{K3}, and let $W \colon  \mathbb{R}^m \EEE \times \mathbb{R} \to [0,\infty)$ satisfy \ref{H1}--\ref{H2} and \ref{H5}. Consider a family of functions $u_\epsilon \colon \Omega \to \mathbb{R}$ such that  $\limsup_{\epsilon} F_\epsilon(u_\epsilon) < \infty$. Then, there exists $u \in CP(\Omega;\mathbf{z}(x))$ such that, up to the extraction of a non-relabelled subsequence, there holds $u_\epsilon \to u$ in $L^1(\Omega)$.
\end{proposition}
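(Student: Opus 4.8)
The strategy is to follow the classical compactness argument of Alberti–Bellettini, adapted to account for the $x$-dependence of the wells. The plan is to reduce the problem to a standard BV-compactness statement by composing $u_\epsilon$ with a suitable truncation and then a change of variables that ``flattens'' the moving wells into $\pm 1$. First I would use \ref{H5} (and the definition of $M$ in \eqref{e:boundwells123}) to replace $u_\epsilon$ by its truncation $u_\epsilon^M := (u_\epsilon \wedge M) \vee (-M)$: since $W(x,\pm M) \le W(x,t)$ for $|t|\ge M$ and truncation does not increase the non-local term (it is $1$-Lipschitz, and actually decreases pairwise differences), we have $F_\epsilon(u_\epsilon^M) \le F_\epsilon(u_\epsilon)$, and $\|u_\epsilon - u_\epsilon^M\|_{L^1(\Omega)} \to 0$ once we know $u_\epsilon^M$ converges; so it suffices to treat equibounded-in-$L^\infty$ sequences, say with values in $[-M,M]$.

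Next, since $\limsup_\epsilon F_\epsilon(u_\epsilon^M) < \infty$, the potential term gives $\int_\Omega W(x,u_\epsilon^M(x))\,dx \le C\epsilon \to 0$. By \ref{H2} and \ref{H3} (with $f$ quadratic near the origin, hence $W(x,t) \gtrsim_K \mathrm{dist}(t,\mathbf{z}(x))^2 \wedge 1$ on compact sets away from the wells, and bounded below by a positive constant on $[z_1(x)+\delta_K, z_2(x)-\delta_K]$), this forces $\mathrm{dist}(u_\epsilon^M(x), \mathbf{z}(x)) \to 0$ in measure, and moreover the measure of the ``middle region'' $\{x : u_\epsilon^M(x) \in [z_1(x)+\delta, z_2(x)-\delta]\}$ tends to zero. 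The core step is then to show that the non-local term controls the perimeter of the set $E_\epsilon := \{x \in \Omega : u_\epsilon^M(x) \le (z_1(x)+z_2(x))/2\}$, up to the vanishing error coming from the middle region. Here I would introduce the change of variables $v_\epsilon(x) := p(x, u_\epsilon^M(x))$ where $p(x,t) = \frac{2}{z_2(x)-z_1(x)}\big(t - \frac{z_1(x)+z_2(x)}{2}\big)$, exactly the map used in the proof of Theorem \ref{p:lipselx}; because $z_1,z_2$ are $\alpha$-Hölder and bounded below apart by \ref{H2}, $p(x,\cdot)$ and its inverse have bounded Hölder (indeed locally Lipschitz in $t$) constants, so $|v_\epsilon(x') - v_\epsilon(x)| \lesssim |u_\epsilon^M(x')-u_\epsilon^M(x)| + |x-x'|^\alpha$ on compact subsets. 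Plugging this into the non-local term and using \ref{K3} (to handle the tails of $J_\epsilon$) together with the scaling $J_\epsilon(h) = \epsilon^{-m}J(h/\epsilon)$, the extra $|x-x'|^\alpha$ contribution is of lower order, and one is left with a genuine Alberti–Bellettini-type estimate: a lower bound on $\frac{1}{\epsilon}\int\int J_\epsilon(x-x')|g(v_\epsilon(x))-g(v_\epsilon(x'))|^2$ in terms of $|D(g\circ v_\epsilon)|(\Omega')$ for a fixed smooth increasing $g$ with $g(\pm 1) = \pm 1$, which yields a uniform BV bound on (a truncation of) $v_\epsilon$ on every $\Omega' \Subset \Omega$.

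From here the argument is standard: by BV-compactness, up to a subsequence $v_\epsilon \to v$ in $L^1_{loc}(\Omega)$ and a.e., with $v$ taking values in $\{-1,1\}$ a.e. (because the ``middle region'' for $v_\epsilon$ has vanishing measure and the potential forces $v_\epsilon \to \pm 1$ in measure), so $v = \mathbbm{1}_{\Omega\setminus F} - \mathbbm{1}_{F}$ for some set $F$ of locally finite perimeter in $\Omega$; a diagonal argument over an exhaustion $\Omega_k \Subset \Omega$, combined with the fact that $F$ actually has finite perimeter in all of $\Omega$ (the BV bound being uniform in $\Omega'$, using $|Dv|(\Omega') \le C$ independent of $\Omega'$), upgrades this to $v \in BV(\Omega;\{-1,1\})$. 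Undoing the change of variables, $u := q(\cdot, v(\cdot))$ with $q(x,\cdot) = p(x,\cdot)^{-1}$ satisfies $u = z_1$ a.e. on $E := F$ and $u = z_2$ a.e. on $\Omega\setminus E$, with $E$ of finite perimeter, hence $u \in CP(\Omega;\mathbf{z}(x))$; and $u_\epsilon^M \to u$ in $L^1_{loc}(\Omega)$, which together with the uniform $L^\infty$ bound and dominated convergence gives $u_\epsilon^M \to u$ in $L^1(\Omega)$, and finally $u_\epsilon \to u$ in $L^1(\Omega)$ by the truncation estimate.

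The main obstacle I anticipate is making the change-of-variables step rigorous near $\partial\Omega$ and controlling the Hölder error term uniformly: the map $p(x,\cdot)$ is only locally Lipschitz in $t$ with constants depending on $\min_K|z_1-z_2|$, so one must work on compact exhaustions and check that the perimeter bound on $v_\epsilon$ does not blow up as $\Omega_k \nearrow \Omega$; this is why \ref{H2} is assumed on every compact set with a $K$-dependent $\delta_K$, but one still needs the final set $F$ to have genuinely finite (not just locally finite) perimeter, which requires that the Alberti–Bellettini lower bound produce a constant depending only on $\|J\|$-type quantities and not on $\Omega_k$. A secondary technical point is verifying that truncation does not increase the non-local energy when the truncation levels $\pm M$ are constant but the wells move — this is immediate since $t \mapsto (t\wedge M)\vee(-M)$ is $1$-Lipschitz regardless of $x$, so $|u_\epsilon^M(x')-u_\epsilon^M(x)| \le |u_\epsilon(x')-u_\epsilon(x)|$ pointwise.
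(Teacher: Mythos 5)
Your proposal is correct in outline and lands on the same underlying engine as the paper, namely Step~1 of the Alberti--Bellettini compactness argument; the difference is in how the moving wells are normalized before that engine is invoked. You truncate at the constant levels $\pm M$ and then flatten the wells via the affine map $v_\epsilon(x)=p(x,u^M_\epsilon(x))$, paying a H\"older error $|x-x'|^{2\alpha}$ in the non-local term and aiming for a BV-type bound on $g\circ v_\epsilon$. The paper instead projects $u_\epsilon$ pointwise onto the moving interval $[z_1(x),z_2(x)]$ (obtaining $u'_\epsilon=\Pi_x(u_\epsilon)$, with an energy estimate that likewise absorbs a H\"older error $|x-x'|^{1+\alpha}$ plus a far-field term handled via \ref{H2} and \ref{K3}), and then passes directly to the hard-thresholded two-valued function $c_\epsilon\in\{-1,1\}$, whose non-local energy is controlled using \ref{H2}; Alberti--Bellettini is applied to $c_\epsilon$, and the limit $u$ is reconstructed as $2u=(1+c)z_2+(1-c)z_1$. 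The paper's route avoids having to make sense of a ``uniform BV bound'' on the continuous functions $v_\epsilon$ (which, strictly speaking, is not what the non-local energy bound gives: the correct output of the Alberti--Bellettini step is $L^1$-precompactness of the sequence together with BV regularity of the \emph{limit}, not a BV bound along the sequence). Your version would work once phrased that way, since after flattening the potential still has fixed wells $\pm1$ and the thresholding can be performed on $v_\epsilon$ exactly as the paper does on $u'_\epsilon$.

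Two smaller points. First, under \ref{K3} alone $J$ need not be integrable near the origin, so before invoking any of the above you must replace $J$ by $J\wedge N\in L^1\cap L^\infty$; the energy only decreases, and this is the paper's very first reduction — your sketch omits it. Second, your anticipated obstacle about compact exhaustions $\Omega_k\nearrow\Omega$ and possibly losing finiteness of the perimeter is not actually present: $\Omega$ is a \emph{bounded} domain and $W,z_1,z_2$ are defined on all of $\mathbb{R}^m$ (cf. Remark \ref{r:convimpl}), so \ref{H2}--\ref{H5} hold with the single compact set $K=\overline{\Omega}$ and uniform constants $\delta_{\overline\Omega}$, $M_{\overline\Omega}$; no exhaustion argument is needed and the perimeter bound is global in $\Omega$ from the start.
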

\begin{proof}
    Without loss of generality we can assume that $J$ is not identically zero. Let $N$ be a positive integer. By letting $J^N \colon  \mathbb{R}^m \EEE \to [0,\infty)$ be defined as $J^N(h):= J(h) \wedge N$, and by letting $F^N_\epsilon$ be the energy in \eqref{e:functional} with $J$ replaced by $J^N$, then we have that $J^N$ satisfies \ref{K1} and $\sup_{\epsilon} F^N_\epsilon(u_\epsilon) < \infty$. Therefore, it is enough to consider the setting in which $J \in L^1( \mathbb{R}^m \EEE) \cap L^\infty( \mathbb{R}^m \EEE)$.

The proof strategy consists in modifying the sequence $(u_\epsilon)_{\ep}$ in order to end up with functions $c_\epsilon \colon \Omega \to \{-1,1\}$ for which we can apply Step 1 of the compactness argument in \cite[Theorem 3.1]{alb-bel2}.
For convenience of the reader, we subdivide the  proof into three steps. In order to lighten the notation, throughout the proof we will replace the quantities $\delta_{\bar\Omega}$ and $M_{\bar\Omega}$ from \ref{H2} and Remark \ref{r:convimpl}, with $\delta$ and $M$, respectively.
    
    \textbf{Step 1}.  Denote by $\Pi_x$ the projection of $\mathbb{R}$ onto the interval $[z_1(x),z_2(x)]$. We claim that the family given by $u'_\epsilon := \Pi_x(u_\epsilon)$ is equi-bounded in energy and satisfies 
    \begin{equation}
    \label{eq:smallL1}
        \|u'_{\ep}-u_{\ep}\|_{L^1(\Omega)}\to 0
    \end{equation} as $\ep\to 0$. 
    
    Indeed, we first notice that for every $x,x' \in \Omega$ one and only one of the following cases occurs
    \begin{enumerate}
    \item $|\Pi_{x'}(t) - \Pi_x(t)| = |z_1(x)-z_2(x')|$ if $t \leq z_1(x)$ and $t\geq z_2(x')$ 
        \item $|\Pi_{x'}(t) - \Pi_x(t)| = |z_1(x')-z_2(x)|$ if $t \leq z_1(x')$ and $t \geq z_2(x)$ 
        \item $|\Pi_{x'}(t) - \Pi_x(t)| = 0$ if $t \in [z_1(x),z_2(x)] \cap [z_1(x'),z_2(x')]$
        \item $|\Pi_{x'}(t) - \Pi_x(t)| \leq |z_i(x)-z_i(x')|$ otherwise for some $i=1,2$.
    \end{enumerate}
    We further notice that, if (1) or (2) hold true, then we make use of \ref{H2} to infer $|x-x'| \geq 64 \delta^2 \min_{i=1,2}\lfloor z_i \rfloor_{C^{(1+\alpha)/2}}^{-2}$. We thus write
    \begin{align*}
       & \int_{\Omega \times \Omega} J_\epsilon(x'-x) |\Pi_{x'}(u(x')) - \Pi_x(u(x))|^2 \, dx'dx \leq 2 \int_{\Omega \times \Omega} J_\epsilon(x'-x) |\Pi_{x'}(u(x')) - \Pi_{x'}(u(x))|^2 \, dx'dx \\
       &\quad+2\int_{\Omega \times \Omega} J_\epsilon(x'-x) |\Pi_{x'}(u(x)) - \Pi_{x}(u(x))|^2 \, dx'dx 
        \leq 2 \lfloor \Pi_{x'}(\cdot) \rfloor_{C^1} \int_{\Omega \times \Omega} J_\epsilon(x'-x) |u(x') - u(x)|^2 \, dx'dx \\
       &\quad+2 \max_{i=1,2}\lfloor z_i \rfloor_{C^{(1+\alpha)/2}}^2  \int_{\Omega \times \Omega} J_\epsilon(x'-x) |x'-x|^{1+\alpha} \, dx'dx 
       +8 M^2 \int_{\{|x-x'| \geq \beta \}} J_\epsilon(x'-x)  \, dx'dx,
        \end{align*}
        where we set $\beta := 64 \delta^2 \min_{i=1,2}\lfloor z_i \rfloor_{C^{(1+\alpha)/2}}^{-2}$ (notice also that $\lfloor \Pi_{x'}(\cdot) \rfloor_{C^1} \leq 1$).
        Now we use \ref{K3} and argue similarly to Lemma \ref{l:decaykernel} to infer
        \[
        \lim_{\epsilon}\frac{1}{\epsilon} \int_{\Omega \times \Omega} J_\epsilon(x'-x) |x'-x|^{1+\alpha} \, dx'dx = 0. 
        \]
        Additionally,
        \begin{align*}
        \limsup_{\epsilon} &\frac{1}{\epsilon} \int_{\{|x-x'| \geq \beta\}} J_\epsilon(x'-x)  \, dx'dx 
        = \limsup_{\epsilon} \epsilon^{n-1} \int_{\{|h| \geq \epsilon^{-1}\beta \} \times \epsilon^{-1} \Omega} J(h)  \, dhdx \\
        &\leq \limsup_{\epsilon} \frac{\epsilon^{n}}{\beta} \int_{\{|h| \geq \epsilon^{-1}\beta \} \times \epsilon^{-1} \Omega} J(h) |h| \, dhdx = \limsup_{\epsilon} \frac{|\Omega|}{\beta} \int_{\{|h| \geq \epsilon^{-1}\beta \}} J(h) |h| \, dh=0,
        \end{align*}
        where in the last equality we used \ref{K3}. In addition, property \ref{H1} of the potential $W$ entails
        \[
        \int_\Omega W(u'_\epsilon(x)) \, dx \leq  \int_\Omega W(u_\epsilon(x)) \, dx.
        \]
        Therefore we can summarize by saying that $\limsup_{\epsilon} F_\epsilon(u'_\epsilon) < \infty$.
        
        To prove that the $L^1$-difference between $u'_{\ep}$ and $u_{\ep}$ converges to zero, fix $\eta>0$ arbitrarily small, and let
        \begin{equation}
        \label{eq:def-Seta}
            S^{\ep}_{\eta}:=\{x\in \Omega:\,\text{dist}(u_{\ep}(x);\mathbf{z}(x))>\eta\}.
        \end{equation}
        By Remark \ref{r:convimpl}, we find
        \begin{align}
            &\nonumber\int_{\Omega}|u_{\ep}(x)-u_{\ep}'(x)|dx=\int_{ S^{\ep}_{\eta}}|u_{\ep}(x)-u_{\ep}'(x)|dx+\int_{\Omega\setminus S^{\ep}_{\eta}}|u_{\ep}(x)-u_{\ep}'(x)|dx\\
            &\label{eq:l1diff}\quad\leq \int_{S^{\ep}_{\eta}}|u_{\ep}(x)-u_{\ep}'(x)|dx+\eta|\Omega|.  
        \end{align}
         By \ref{H3}, Remark \ref{r:convimpl}, and the uniform bound on the energies $(F_{\ep}(u_{\ep}))_{\ep}$ we infer the bound
         \begin{equation}
         \label{eq:boundS}
             f(\eta)|S^{\ep}_{\eta}|\leq \int_{S^{\ep}_{\eta}}f(\text{dist}(u_{\ep}(x);\mathbf{z}(x)))dx\leq C\ep,
         \end{equation}
         so that, by \ref{H5} and Remark \ref{r:convimpl}, the first term on the right-hand side of \eqref{eq:l1diff} can be estimated as
         \begin{align} 
             &\nonumber\int_{ S^{\ep}_{\eta}}|u_{\ep}(x)-u_{\ep}'(x)|dx\leq \left(\frac1\delta+ M \right)|S^{\ep}_{\eta}|+\int_{\{x\in \Omega:|u_{\ep}(x)|>\frac1\delta\}}|u_{\ep}(x)|dx\\
             &\quad \leq \frac{C\ep\left(\frac1\delta+ M \right)}{f(\eta)}+\frac{C\ep}{\delta}.
             \label{eq:equiint}
         \end{align}
         By combining \eqref{eq:l1diff} with \eqref{eq:equiint}, we deduce
         $$\limsup_{\ep\to 0}\int_{\Omega}|u_{\ep}(x)-u_{\ep}'(x)|dx\leq \eta|\Omega|.$$
         The claim follows then from the arbitrariness of $\eta$.
         
        \textbf{Step 2}. Fix again $\eta\in (0,1)$, and define $c_\epsilon \colon \Omega \to \{-1,1\}$ as follows
        \[
        c_\epsilon(x):=
        \begin{cases}
            1 &\text{ if $|u'_\epsilon(x) -z_2(x)| \leq  \delta$} \\
            -1 &\text{ otherwise}.
        \end{cases}
        \]
        We observe that, by denoting $L:= \inf_{x \in \Omega, t \in [z_1(x)+\delta,z_2(x)-\delta]} W(x,t) > 0$
        \begin{align}
        \label{e:compactness1}
       &\limsup_{\epsilon} \frac{|\{x \in \Omega \ | \ u'_\epsilon(x) \in [z_1(x)+\delta,z_2(x)-\delta] \}|}{\epsilon} \\
       \nonumber
       &\leq \limsup_{\epsilon} \frac{1}{\epsilon L}\int_{\{u'_\epsilon(x) \in [z_1(x)+\delta,z_2(x)-\delta] \}} W(x,u'_\epsilon(x)) \, dx\\
       \nonumber
       &\leq \limsup_{\epsilon} \frac{F_\epsilon(u'_\epsilon)}{L} < \infty.   
        \end{align}
                
        Therefore, 
        \begin{align*}
            &\frac{1}{\epsilon}\int_{\Omega \times \Omega} J_\epsilon(x-x') |c_\epsilon(x') - c_\epsilon(x)|^2\, dx'dx\\
            & \leq \frac{2}{\epsilon}\int_{\Omega \times \{u'_\epsilon(x) \in [z_1(x)+\delta,z_2(x) -\delta]\}} J_\epsilon(x-x') |c_\epsilon(x') - c_\epsilon(x)|^2\, dx'dx \\
            &+ \frac{2}{\epsilon}\int_{\{u'_\epsilon(x) \leq z_1(x)+\delta\} \times \{u'_\epsilon(x') \geq z_2(x')-\delta\}} J_\epsilon(x-x') |c_\epsilon(x') - c_\epsilon(x)|^2\, dx'dx \\
            & \leq \frac{8M^2 \|J\|_{L^1}}{\epsilon}|\{u'_\epsilon(x) \in [z_1(x)+\delta,z_2(x) -\delta]\}| \\
            &+ \frac{1}{\epsilon 4\delta^2 }\int_{\Omega \times \Omega} J_\epsilon(x-x') |u'_\epsilon(x') - u'_\epsilon(x)|^2\, dx'dx,
        \end{align*}
        where we used \ref{H2} to obtain the last inequality. From \eqref{e:compactness1} we can thus infer that
        \begin{equation}
        \label{e:compactness2}
        \limsup_{\epsilon} \frac{1}{\epsilon}\int_{\Omega \times \Omega} J_\epsilon(x-x') |c_\epsilon(x') - c_\epsilon(x)|^2\, dx'dx < \infty.
        \end{equation}
Property \eqref{e:compactness2} and the fact that $J$ is a function which is not identically zero and belongs to $L^1( \mathbb{R}^m \EEE) \cap L^\infty( \mathbb{R}^m \EEE)$, allows us to argue as in Step 1 of the proof of \cite[Theorem 3.1]{alb-bel2} and to infer that $c_\epsilon \to c$ strongly in $L^1(\Omega)$, for a map $c \in BV(\Omega; \{-1,1\})$.

\textbf{Step 3}. Define $\tilde{u}_\epsilon \colon \Omega \to [-M,M]$ as 
 \[
 2 \tilde{u}_\epsilon(x):= (1+c_\epsilon(x))z_2(x) + (1-c_\epsilon(x))z_1(x).
 \]
  By Step 2, we obtain that $\tilde{u}_\epsilon \to u$ in $L^1(\Omega)$, where $2u= (1+c)z_2 +(1-c)z_1$, so that $u \in CP(\Omega; \mathbf{z}(x))$. To conclude the proof, it remains to show that $u_{\ep}-\tilde{u}_{\ep}\to 0$ strongly in $L^1(\Omega)$. Owing to Step 1, it actually suffices to prove that $u'_{\ep}-\tilde{u}_{\ep}\to 0$ strongly in $L^1(\Omega)$. To this aim, we observe that, since $\tilde{u}_{\ep}(x)\in \textbf{z}(x)$ for every $x\in \Omega$, for $0<\eta<\delta$ there holds
  \begin{align}
      &\nonumber\int_{\Omega}|u_{\ep}(x)-\tilde{u}_{\ep}(x)|dx\leq \int_{S^{\ep}_{\eta}}|u_{\ep}(x)-u'_{\ep}(x)|dx+\int_{S^{\ep}_{\eta}}|u'_{\ep}(x)-\tilde u_{\ep}(x)|dx+\int_{\Omega\setminus S^{\ep}_{\eta}}|u_{\ep}(x)-\tilde{u}_{\ep}(x)|dx\\
      &\label{eq:finaldec}\quad\leq \int_{\Omega}|u_{\ep}(x)-u'_{\ep}(x)|dx+2M|S^{\ep}_{\eta}|+\eta|\Omega|,
      \end{align}
      where $S^{\ep}_{\eta}$ is the set defined in \eqref{eq:def-Seta}, and where to estimate the latter term we have used that on $\Omega\setminus S^{\ep}_{\eta}$ there holds $\text{dist}\,(u_{\ep}(x);\mathbf{z}(x))<\eta$. Thus, since $\eta<\delta$, either $u_{\ep}(x)=u_{\ep}'(x)\in (z_1(x),z_2(x))$ and $|u_{\ep}(x)-\tilde{u}_{\ep}(x)|<\eta$, or $u_{\ep}'(x)\in \mathbf{z}(x)$ and $\tilde{u}_{\ep}(x)=u_{\ep}(x)$. In particular, by \eqref{eq:smallL1} and \eqref{eq:boundS}, we infer
      $$\limsup_{\ep\to 0}\int_{\Omega}|u_{\ep}(x)-\tilde{u}_{\ep}(x)|dx\leq C\eta.$$
      The thesis follows then owing to the arbitrariness of $\eta$.
\end{proof}
\subsection{Slicing preliminaries}
In this subsection we consider restrictions of our interaction kernels and double-well potentials to suitable $k$-dimensional subspaces, with $1 \leq k < m$. We collect here the main definitions and properties which will be instrumental for the proof of our Gamma-convergence result.

\begin{definition}[The class $\mathcal{L}_k^V(\sigma,\lambda,\rho)$]
\label{def:classJV}
Let $1 \leq k < m$ and let $V \subset \mathbb{R}^m$ be a $k$-dimensional subspace. Let also $J \colon V \to (0,\infty)$. \EEE Let $\sigma,\lambda,\rho \in (0,1)$. We say that $J$ belongs to the class $\mathcal{L}_k^V(\sigma,\lambda,\rho)$ if there exists $\tilde{J} \in L^1(V)$ such that
\begin{align}
    \label{e:classlm1-bis}
    &\frac{\lambda}{|h|^{k+\sigma}} \leq  J(h) \leq \frac{1}{\lambda |h|^{k+\sigma}}+\tilde{J}(h), \ \ \text{ for every $h \in (B_\rho(0) \setminus \{0\})\cap V $}\\
    \label{e:classm2-bis}
    & \int_{V \setminus B_{\rho}(0)} J(h)|h| \, dh < \infty.
\end{align}
    
\end{definition}

The following lemma provides a stability results under integration for the class introduced in Definition \ref{def:classJ}.

\begin{lemma}
\label{lemma:stability}
Let $1 \leq k < m$ and let $V \subset \mathbb{R}^m$ be a $(m-k)$-dimensional subspace. Let $J \in \mathcal{L}_m(\sigma,\lambda,\rho)$.  Then, there exists $\lambda'>0$ such that the restriction $J^V \colon V \to (0,\infty)$, defined as $J^V(x):= \int_{V^\bot} J(y+x) \, dy$ satisfies $J^V\in \mathcal{L}_{m-k}^{V}(\sigma,\lambda',\rho/2)$.
\end{lemma}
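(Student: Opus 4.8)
\textbf{Proof strategy for Lemma \ref{lemma:stability}.}

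The plan is to verify the two defining inequalities of the class $\mathcal{L}_{m-k}^{V}(\sigma,\lambda',\rho/2)$ for the marginal kernel $J^V$ by splitting the fibre integral $J^V(x)=\int_{V^\perp}J(y+x)\,dy$ according to whether the full variable $y+x$ lies in the ball $B_\rho(0)$ or not, and then comparing the singular model $|h|^{-m-\sigma}$ with its $(m-k)$-dimensional marginal. Recall that for $x\in V$ and $y\in V^\perp$ one has $|y+x|^2=|x|^2+|y|^2$, so the relevant elementary identity is
\begin{equation*}
\int_{V^\perp}\frac{dy}{(|x|^2+|y|^2)^{\frac{m+\sigma}{2}}}=\frac{c_{k,\sigma}}{|x|^{(m-k)+\sigma}},\qquad c_{k,\sigma}:=\int_{\R^{k}}\frac{dz}{(1+|z|^2)^{\frac{m+\sigma}{2}}}\in(0,\infty),
\end{equation*}
the finiteness of $c_{k,\sigma}$ being exactly the condition $m+\sigma>k$, which holds since $m-k\geq 1$ and $\sigma>0$. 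This identity gives the sharp two-sided matching of the marginal of the singular part, and will be the backbone of both bounds.

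\textbf{The lower bound.} Fix $x\in (B_{\rho/2}(0)\setminus\{0\})\cap V$. Using only the lower bound in \eqref{e:classlm1-bis}, namely $J(h)\geq \lambda|h|^{-m-\sigma}$ for $h\in B_\rho(0)\setminus\{0\}$, and restricting the fibre integral to those $y\in V^\perp$ with $|y|<\rho/2$ (so that $|y+x|<\rho$ and the lower bound applies), I get
\begin{equation*}
J^V(x)\ \geq\ \lambda\int_{\{y\in V^\perp:\,|y|<\rho/2\}}\frac{dy}{(|x|^2+|y|^2)^{\frac{m+\sigma}{2}}}\ \geq\ \frac{\lambda\,c'_{k,\sigma,\rho}}{|x|^{(m-k)+\sigma}},
\end{equation*}
where $c'_{k,\sigma,\rho}>0$ comes from performing the truncated integral and using $|x|<\rho/2$ to absorb the truncation (scaling $y=|x|z$ and noting the region $|z|<\rho/(2|x|)$ contains the fixed ball $\{|z|<1\}$). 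This yields $J^V(x)\geq \lambda'|x|^{-(m-k)-\sigma}$ on $B_{\rho/2}(0)\cap V$ with $\lambda':=\lambda c'_{k,\sigma,\rho}$, which is the left inequality in \eqref{e:classlm1-bis} for $J^V$.

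\textbf{The upper bound and the integrability tail.} For the right inequality in \eqref{e:classlm1-bis}, on $(B_{\rho/2}(0)\setminus\{0\})\cap V$ split $J^V(x)=\int_{|y|<\rho/2}+\int_{|y|\geq\rho/2}$. On the first piece $|y+x|<\rho$, so \eqref{e:classlm1-bis} gives $J(y+x)\leq \lambda^{-1}|y+x|^{-m-\sigma}+\tilde J(y+x)$; integrating the singular term over all of $V^\perp$ (an over-estimate) produces $\lambda^{-1}c_{k,\sigma}|x|^{-(m-k)-\sigma}$, and integrating $\tilde J$ over the fibre gives a function of $x$ that is integrable on $V\cap B_{\rho/2}(0)$ by Fubini, hence contributes to $\tilde J^V$. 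On the second piece $|y|\geq \rho/2$, hence $|y+x|\geq |y|-|x|\geq\rho/2-\rho/2$... rather, using $|y+x|\ge \rho/2$ when $|x|<\rho/4$, no — cleaner: on $|y|\geq\rho/2$ and $|x|<\rho/2$ we still have $|y+x|\leq|y|+\rho/2\leq 3|y|$ and $|y+x|\ge |y|/2$ is false in general; the robust route is simply to note $\int_{|y|\geq\rho/2,\,x\in V\cap B_{\rho/2}}J(y+x)\,dx\,dy\leq \int_{\R^m\setminus B_{\rho/4}(0)}J(h)\,dh<\infty$ using \eqref{e:classm2-bis} (the set $\{y+x:|y|\geq\rho/2,|x|<\rho/2\}$ is contained in $\R^m\setminus B_{\rho/4}(0)$ ... actually in $\R^m\setminus B_0$; one should intersect with the region where $|y+x|\le$ something, but this tail piece is handled uniformly by \eqref{e:classm2-bis} together with $|h|\geq$ a positive constant on the relevant set after a harmless enlargement of $\rho/2$ to a smaller radius). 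This shows the second piece is, as a function of $x$, in $L^1(V\cap B_{\rho/2})$ and so joins $\tilde J^V$. Finally \eqref{e:classm2-bis} for $J^V$, i.e. $\int_{V\setminus B_{\rho/2}(0)}J^V(x)|x|\,dx<\infty$, follows by Fubini from $\int_{\R^m}J(h)|h|\,dh<\infty$ (which is condition \ref{K2}, already observed to hold for $J\in\mathcal{L}_m$), since $|x|\leq|x+y|$ for $x\in V$, $y\in V^\perp$.

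\textbf{Main obstacle.} The only delicate point is the bookkeeping near the sphere $\{|y+x|=\rho\}$: when $x$ is close to $\partial B_{\rho/2}(0)$ the fibre $x+V^\perp$ crosses both the region where \eqref{e:classlm1-bis} applies and the region where only \eqref{e:classm2-bis} is available, and one must make sure that the "cross terms" are genuinely integrable in $x$ over $V\cap B_{\rho/2}(0)$ rather than merely finite fibrewise. This is handled by choosing the reduced radius $\rho/2$ (any radius strictly smaller than $\rho$ works) so that on the tail region one always has $|y+x|$ bounded below by a fixed positive constant, reducing everything to \eqref{e:classm2-bis} via Fubini–Tonelli; all constants produced depend only on $k,m,\sigma,\lambda,\rho$, which is what is required.
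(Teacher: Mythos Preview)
Your approach is essentially the same as the paper's: restrict to $|x|<\rho/2$, use scaling in the fibre variable for the singular part, and push the remainder into $\tilde J^V$ via Fubini. The one place where your write-up stumbles is the upper bound on the region $|y|\geq\rho/2$: you reach for the triangle inequality $|y+x|\geq|y|-|x|$ and get $0$, then hedge. But you already recorded the key fact at the outset: since $x\in V$ and $y\in V^\perp$ are orthogonal, $|y+x|^2=|y|^2+|x|^2\geq(\rho/2)^2$, so $|y+x|\geq\rho/2$ directly. With this, the set $\{y+x:\,y\in V^\perp,\ |y|\geq\rho/2,\ x\in V\cap B_{\rho/2}\}$ is contained in $\R^m\setminus B_{\rho/2}(0)$, and Fubini plus the finiteness of $\int_{\R^m\setminus B_{\rho/2}}J(h)\,dh$ (which follows from \eqref{e:classlm1}--\eqref{e:classm2}) gives the $L^1$ bound you want. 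The paper handles this piece in exactly the same way, defining $\tilde J^V(x):=\int_{V^\perp\setminus B_{\rho/2}^k(0)}J(y+x)\,dy+\int_{V^\perp}\tilde J(y+x)\,dy$ and checking it is in $L^1(V)$.
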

\EEE
\begin{proof}

Let us denote $B^k_\rho(0):= B_\rho(0) \cap V^\bot$ and $B^{m-k}_\rho(0):= B_\rho(0) \cap V$ for every $\rho >0$. For $x \in B^{m-k}_{\rho/2}(0) \setminus \{0\}$ we make use of \eqref{e:classlm1} to write
\[
\begin{split}
J^V(x) &\geq \int_{B^k_{\rho/2}(0)} \frac{\lambda}{|y+x|^{m+\sigma}} \, dy \\
&= |x|^k\int_{B^k_{\rho/2|x|}(0)} \frac{\lambda}{||x|y+x|^{m+\sigma}} \, dy \\
&= \frac{1}{|x|^{m-k+\sigma}} \int_{ B^k_{\rho/2|x|}(0)} \frac{\lambda}{|y+x/|x||^{m+\sigma}} \, dy 
\end{split}
\]
Notice that for every $x \in B^{m-k}_{\rho/2}(0) \setminus \{0\}$ we have
\[
\int_{ B^k_{\rho/2|x|}(0)} \frac{\lambda}{|y+x/|x||^{m+\sigma}} \, dy \geq \min\bigg\{1,\min_{\eta \in \mathbb{S}^{k-1}} \int_{ B^k_{1}(0)} \frac{\lambda}{|y+\eta|^{m+\sigma}} \, dy\bigg\} =: \tilde{\lambda} >0.
\]
This means that $J^V(x) \geq \tilde{\lambda}/|x|^{m-k+\sigma}$ for every $x \in B^{m-k}_{\rho/2}(0) \setminus \{0\}$. On the other hand, for every $x \in B^{m-k}_{\rho/2}(0) \setminus \{0\}$ we can make use again of \eqref{e:classlm1} to write
\[
\begin{split}
J^V(x) \leq \int_{V} &\bigg(\frac{1}{\lambda|y+x|^{m+\sigma}}\mathbbm{1}_{B^k_{\rho/2}(0)}(y) + J(x+y)\mathbbm{1}_{V \setminus B^k_{\rho/2}(0)}(y)+ \tilde{J}(y+x) \bigg)dy \\
&= \frac{1}{|x|^{m-k+\sigma}} \int_{ B^k_{\rho/2|x|}(0)} \frac{1}{\lambda|y+x/|x||^{m+\sigma}} \, dy + \tilde{J}^V(x) \\
& \ \ \ \ \ \ \leq  \frac{\tilde{\lambda'}}{|x|^{m-k+\sigma}} + \tilde{J}^V(x)
\end{split}
\]
where we have set  
\[
\begin{split}
\tilde{J}^V(x) &:= \int_{V \setminus B^k_{\rho/2}(0)} J(y+x) \, dy + \int_V \tilde{J}(y+x) \, dy \in L^1(V) \ \ \text{(by \eqref{e:classlm1})} \\
\tilde{\lambda'} &:= \max \bigg\{1, \max_{\eta \in \mathbb{S}^{k-1}}\int_{V} \frac{1}{\lambda|y+\eta|^{m+\sigma}}\, dy\bigg\} < \infty.
\end{split}
\]
 Therefore $J^V \in \mathcal{L}^{V^\bot}_{m-k}(\sigma,\lambda',\rho/2)$ with $\lambda':= \min\{\tilde{\lambda},\tilde{\lambda'}^{-1}\}$.

Eventually, one easily verifies condition \eqref{e:classm2-bis} since
\begin{align*}
\int_{V^\bot \setminus B_{\rho/2}(0)} J^V(x)|x| \, dx &=\int_{(V^\bot \setminus B_{\rho/2}(0)) \times V}  J(y+x)|x| \,dxdy \\
&\leq \int_{ \mathbb{R}^m \EEE \setminus B_{\rho/2}(0)} J(h)|h| < \infty,
\end{align*}
where we used both \eqref{e:classlm1} and \eqref{e:classm2} to infer the finiteness of the last integral in the above inequalities. 
\EEE
\end{proof}

A direct application of Fubini's Theorem yields that also the second class of kernels which will be relevant for our analysis is stable with respect to integration.
\begin{lemma}\label{lemma:stable-l1}
Let $1 \leq k < m$ and let $V \subset \mathbb{R}^m$ be a $(m-k)$-dimensional subspace. Let $J:\R^m\to (0,+\infty)$ satisfy \ref{K1} with $\|J\|_{L^1(\R^m)}=1$. Then, the restriction $J^V \colon V \to (0,\infty)$, defined as $J^V(x):= \int_{V^\bot} J(y+x) \, dy$ satisfies $\int_V J^V(x)\UUU(1+|x|)\EEE\,dx<+\infty$, with $\|J\|_{L^1(V)}=1$.
\end{lemma}
\begin{proof}
By Fubini's Theorem,
$$\int_V J^V(x)\UUU (1+|x|) \EEE\,dx=\int_V \int_{V^{\top}}J(y+x)\UUU (1+|x|) \EEE\,dy\,dx\leq \int_{\R^m}J(h)\UUU (1+|h|) \EEE\,dh<+\infty,$$
and
$$\int_V J^V(x)\,dx=\int_V \int_{V^{\top}}J(y+x)\,dy\,dx=\int_{\R^m}J(h)\,dh=1.$$
\end{proof}

Analogously to what we did for the interaction kernels, we also need to introduce a notion of restrictions of double-well potentials to suitable subspaces of $\mathbb R^m$.

\begin{definition}\label{def:W-restr}
Let $x_0 \in \mathbb{R}^m$ and $\xi \in  \mathbb{S}^{m-1}$. We define the $(m-1)$-dimensional affine plane $V_{\xi}$ as
\[
V_{\xi} := \{y +\xi \ | \ y \in \xi^\bot \}.
\]
We denote by $C_\rho(x_0)$ any $(m-1)$-dimensional cube oriented by $\xi$ of side-length $\rho$ and centered at $x_0$, namely, $C_\rho(x_0) := \{ x_0 +\sum_{i=1}^{m-1} \alpha_i \xi_i \ | \ -\rho/2 \leq \alpha_i \leq \rho/2, \ i=1,\dotsc,m-1\}$ where $\{\xi_1,\dotsc,\xi_{m-1}\}$ is any orthonormal basis of $\xi^\bot$. We further denote by $ Q_\rho(x_0) \subset \mathbb{R}^m$ the $m$-dimensional closed cube centered at $x_0$ satisfying $Q_\rho(x_0) \cap \xi^\bot = C_\rho(x_0)$. In order to ease the notation we will write  $Q_\rho:=Q_\rho(0)$. Notice that both in the definition of $C_\rho(x_0)$ and $Q_\rho(x_0)$ we drop the dependence from $\xi$ and $\{\xi_1,\dotsc,\xi_{m-1}\}$ since it will be always clear from the context.

For every $\rho >0$,  $x\in \mathbb{R}^m\EEE$,  and $\xi \in  \mathbb{S}^{m-1} \EEE$  we  define the potential $\mathcal{W}^{\rho\xi}_{x} \colon \xi^\bot \times \mathbb{R}\to [0,\infty)$ as 
\begin{equation}
\label{e:infpotential}
\mathcal{W}^{\rho\xi}_{x}(y,t):= \inf_{s \in [-\rho/2,\rho/2]} W(x+ y +s\xi,t).
\end{equation}
\end{definition}

 Define 
\begin{equation}
\label{eq:well-r}
\mathbf{z}_{i,x}(y,\rho):=\{z_i(x+ y+s\xi) \ | \ s \in [-\rho/2,\rho/2]\}\quad\text{for every } i=1,2.
\end{equation}
We notice that for $y \in \xi^\bot$ there holds $\mathcal{W}^{\rho\xi}_{x}(y,t)=0$ if and only if $t \in \bigcup_{i=1}^2 \mathbf{z}_{i,x}(y,\rho)$. Thanks to the continuity of the wells  $z_i$, cf \ref{H1},  we  find 
\[
\bigcap_{\rho >0} \mathbf{z}_{i,x}(y,\rho)= z_i(x+y), \ \ \text{ for $i=1,2$}, \text{ for every }y \in \xi^\bot \text{ and }x\in \mathbb{R}^m\EEE. \EEE
\]
In addition,  owing to the $\alpha$-H\"older regularity of the moving wells, cf. again \ref{H1}, there holds 

\begin{equation}
\label{eq:well-r-loc-un}
\mathbf{z}_{i,x}(y,\rho) \searrow z_i(x+y)\text{ as }\rho\to 0\quad\text{ locally uniformly in }y.
\end{equation}

As a consequence, for every compact set $K\subset \xi^{\bot}$ there exists $\rho^K>0$ small enough, so that $\mathbf{z}_{1,x}(y,\rho) \cap \mathbf{z}_{2,x}(y,\rho)= \emptyset$  for every $y\in K$ and for every $\rho<\rho^K$. 

Being $\mathbf{z}_{i,x}(y,\rho)$ closed sets, we can define  
\begin{equation}
\label{e:defaibi}
a_{x}(y,\rho) := \max \{t \in \mathbf{z}_{1,x}(y,\rho)\} < b_{x}(y,\rho) := \min \{t \in \mathbf{z}_{2,x}(y,\rho)\},
\end{equation}
and assume with no loss of generality that,  for every sufficiently small $\rho >0$, depending only on the set $C_1(0)$, there holds 
\[
b_{x}(y,\rho) - a_{x}(y,\rho) \geq  4\delta_{C_1(0)}, \ \ \text{ for every $y \in C_1(0)$},
\]
where $\delta_{C_1(0)}$ is defined as in \ref{H2}--\ref{H5}.
 
 In particular,  for every sufficiently small $\rho >0$ depending only on the set $C_1(0)$   
 \begin{equation}
 \mathcal{W}^{\rho\xi}_{x}(y,t)>0, \ \ \text{for every $y \in C_1(0)$ and $t \in (a_{x}(y,\rho),b_{x}(y,\rho))$}.
 \end{equation}

 The results in the next sections will rely on showing that the potentials $\mathcal{W}^{\rho\xi}_{x}$ still fulfills assumptions (1)--(7) of Proposition \ref{p:contdep} (compare with Lemma \ref{lem:H3-trans}).

    \begin{lemma}
    \label{lem:trans-rest}
         Assume that $W$ satisfies \ref{H1}--\ref{H5}. Let $\mathcal{W}^{\rho\xi}_{x}$ be as in Definition \ref{def:W-restr}. Then, there exists functions $\phi_{x,\rho} \colon C_1(0) \times \mathbb{R} \to \mathbb{R}$ such that, for every $y \in C_1(0)$ and for every sufficiently small $\rho >0$, there holds
    \begin{enumerate}[label=(Y.\arabic*)]
     \vspace{1mm}
        \item \label{Y1} $\phi_{x,\rho}(y,\cdot ) \colon \mathbb{R} \to \mathbb{R}$ and
          \begin{align*}
              &\phi_{x,\rho}(y,\cdot ) \colon [a_{x}(0,\rho),b_{x}(0,\rho)] \to [a_{x}(y,\rho),b_{x}(y,\rho)] \\
              \phi_{x,\rho}(&y,a_{x}(0,\rho)) =a_{x}(y,\rho) \ \ \text{ and } \ \ \phi_{x,\rho}(y,b_{x}(0,\rho)) =b_{x}(y,\rho).
          \end{align*}
        \item \label{Y2}
        $\sup_{\rho>0 , y \in C_1(0)} \lfloor\phi_{x,\rho}(y,\cdot)\rfloor_{C^{0,\alpha}([a_{x}(0,\rho),b_{x}(0,\rho)])} < \infty$.
         \vspace{1mm}
         \item \label{Y3}       
        $\phi_{x,\rho}(y,t) \to t, \text{ in }L^{\infty}_{}(C_1(0) \times \mathbb{R})$.
         \vspace{1mm}
        \item \label{Y4} There exist $0 < c_1 \leq c_2 < \infty$ such that, for every $t \in [a_{x}(y,\rho)+\delta_{C_1(0)},b_{x}(y,\rho)-\delta_{C_1(0)}]$, we have
        \[
        c_1\mathcal{W}^{\rho\xi}_{x}(y, \phi_{x,\rho}(y,t)) \leq \mathcal{W}^{\rho\xi}_{x}(0,t) \leq c_2 \mathcal{W}^{\rho\xi}_{x}(y, \phi_{x,\rho}(y,t)).
        \]
    \end{enumerate}
     \end{lemma}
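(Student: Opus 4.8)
The plan is to mimic the explicit affine-interpolation construction of Lemma \ref{lem:H3-trans}, now applied to the \emph{endpoints} $a_x(y,\rho)$ and $b_x(y,\rho)$ of the gap between the restricted wells, rather than to $z_1(x+\rho x')$, $z_2(x+\rho x')$ themselves. Concretely, for $y \in C_1(0)$, $x \in \mathbb{R}^m$, and $\rho>0$ small enough (depending only on $C_1(0)$ so that $b_x(\cdot,\rho)-a_x(\cdot,\rho)\geq 4\delta_{C_1(0)}$), I would set
\[
\phi_{x,\rho}(y,t):=\left(\frac{t-a_x(0,\rho)}{b_x(0,\rho)-a_x(0,\rho)}\right)b_x(y,\rho)+\left(1-\frac{t-a_x(0,\rho)}{b_x(0,\rho)-a_x(0,\rho)}\right)a_x(y,\rho).
\]
Property \ref{Y1} is then immediate from the definition (it is the affine map sending $[a_x(0,\rho),b_x(0,\rho)]$ onto $[a_x(y,\rho),b_x(y,\rho)]$ and the endpoints to the endpoints). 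For \ref{Y2} one differentiates in $t$: the Lipschitz constant of $\phi_{x,\rho}(y,\cdot)$ equals $(b_x(y,\rho)-a_x(y,\rho))/(b_x(0,\rho)-a_x(0,\rho))$, and since by \eqref{eq:well-r-loc-un} the sets $\mathbf{z}_{i,x}(\cdot,\rho)$ shrink locally uniformly to $z_i(x+\cdot)$, both numerator and denominator stay in a fixed compact subinterval of $(0,\infty)$ for $y \in C_1(0)$ and $\rho$ small; for the $\alpha$-Hölder bound in the $y$-variable, observe that $y\mapsto a_x(y,\rho)$ and $y\mapsto b_x(y,\rho)$ are themselves $\alpha$-Hölder with a semi-norm controlled by $\lfloor z_1\rfloor_{C^{0,\alpha}},\lfloor z_2\rfloor_{C^{0,\alpha}}$ (they are suprema/infima over the fixed compact segment $[-\rho/2,\rho/2]\xi$ of $\alpha$-Hölder functions), whence $\phi_{x,\rho}(\cdot,t)$ is $\alpha$-Hölder with a uniform semi-norm. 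Property \ref{Y3} follows because, again by \eqref{eq:well-r-loc-un}, $a_x(y,\rho)\to z_1(x+y)$ and $b_x(y,\rho)\to z_2(x+y)$ uniformly on $C_1(0)$ as $\rho\to 0$, and likewise $a_x(0,\rho)\to z_1(x)$, $b_x(0,\rho)\to z_2(x)$, so $\phi_{x,\rho}(y,t)\to t$ uniformly on $C_1(0)\times[-R,R]$ for every $R$, hence in $L^\infty_{loc}$.

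The comparison estimate \ref{Y4} is the step that needs genuine use of the structural hypotheses \ref{H2}--\ref{H3}, and is the main obstacle. The idea is the same squeezing as in Lemma \ref{lem:H3-trans}: for $t$ in the prescribed subinterval near the left well, $[a_x(0,\rho),a_x(0,\rho)+\delta]$, one has by \ref{H3} applied at a minimizing point $x+y+s\xi$ that $\mathcal{W}^{\rho\xi}_x(y,\phi_{x,\rho}(y,t))$ is squeezed between $\delta_{C_1(0)}$ and $\delta_{C_1(0)}^{-1}$ times $f$ of the distance of $\phi_{x,\rho}(y,t)$ to $\mathbf{z}_{1,x}(y,\rho)$ — and the key point is that this distance is \emph{comparable} (with constants depending only on $C_1(0)$) to the distance of $t$ to $\mathbf{z}_{1,x}(0,\rho)$, because $\phi_{x,\rho}(y,\cdot)$ is bi-Lipschitz onto its image with constants pinched away from $0$ and $\infty$ and maps the left endpoint to the left endpoint. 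Combining the two-sided bounds for $\mathcal{W}^{\rho\xi}_x(y,\cdot)$ at $\phi_{x,\rho}(y,t)$ with those for $\mathcal{W}^{\rho\xi}_x(0,\cdot)$ at $t$, and using that $f$ is increasing with a fixed quadratic behaviour near $0$ together with the comparability of the two distances, yields \ref{Y4} with $c_1,c_2$ depending only on $\delta_{C_1(0)}$ and the quadratic constants of $f$. The symmetric argument near the right well covers $[b_x(y,\rho)-\delta,b_x(y,\rho)]$; one must be mildly careful that the region $[a_x(y,\rho)+\delta, b_x(y,\rho)-\delta]$ stated in \ref{Y4} is split into these two near-well pieces — actually, rereading the statement, \ref{Y4} asks for the estimate on $[a_x(y,\rho)+\delta_{C_1(0)},b_x(y,\rho)-\delta_{C_1(0)}]$, i.e. \emph{away} from the wells, where both $\mathcal{W}^{\rho\xi}_x(y,\cdot)$ and $\mathcal{W}^{\rho\xi}_x(0,\cdot)$ are bounded below by a positive constant $L>0$ (uniformly, by compactness and continuity, once $\rho$ is small) and above by a constant; there the ratio is trivially bounded, so on that region \ref{Y4} is the easy consequence of $0<L\leq \mathcal{W}^{\rho\xi}_x\leq C$. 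Thus the only real content is verifying that the relevant infima are positive and finite uniformly in $y\in C_1(0)$ and $x$ ranging in the compact set of interest, which follows from continuity of $W$ and \ref{H3} exactly as in Remark \ref{r:convimpl}. I would write the near-well squeezing explicitly only if the paper later needs \ref{Y4} on a neighbourhood of the wells; as stated, the bounded-away-from-wells version suffices and its proof is a one-line compactness remark.

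With $\phi_{x,\rho}$ so defined, one checks that the family of potentials $\mathcal{W}^{\rho\xi}_x(y,\cdot)$, indexed by the parameter $\rho$ with limit $\mathcal{W}^{0\xi}_x(y,\cdot)$ meaningfully interpreted as the potential with wells $z_1(x+y),z_2(x+y)$, together with the maps $\phi_{x,\rho}(y,\cdot)$, satisfies assumptions (1)--(7) of Proposition \ref{p:contdep}: (1)--(2) are the well structure recorded after Definition \ref{def:W-restr}, (3) is the $L^\infty_{loc}$ convergence which follows from \eqref{eq:well-r-loc-un} and the local uniform continuity of $W$, (4)--(6) are \ref{Y1}, the $C^{0,1/2}$ bound from \ref{Y2} (any $\alpha\in(1/2,1]$ bound implies the $1/2$-Hölder bound on the compact segment between the wells), and \ref{Y3}, and (7) is \ref{Y4}. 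This is precisely the payoff the lemma is designed to deliver, so I would close with that sentence and leave the verification of the Proposition-\ref{p:contdep} hypotheses as the immediate corollary it is.
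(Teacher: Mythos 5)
Your construction is the same as the paper's: the explicit affine interpolation sending $[a_x(0,\rho),b_x(0,\rho)]$ onto $[a_x(y,\rho),b_x(y,\rho)]$ with endpoints to endpoints (your formula in fact has the correct orientation; the displayed formula in the paper's proof swaps the roles of $a_x(y,\rho)$ and $b_x(y,\rho)$, which is evidently a typo since it would contradict \ref{Y1}). Your verifications of \ref{Y1}--\ref{Y3} match the paper's: \ref{Y1} and \ref{Y2} are read off the definition using that $b_x(\cdot,\rho)-a_x(\cdot,\rho)$ is pinched between $4\delta_{C_1(0)}$ and a bound coming from the boundedness of the wells, and \ref{Y3} follows from the uniform convergence $a_x(\cdot,\rho)\to z_1(x+\cdot)$, $b_x(\cdot,\rho)\to z_2(x+\cdot)$ in \eqref{eq:well-r-loc-un}.

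The one genuine problem is your final decision on \ref{Y4}. You first describe the correct argument --- which is exactly the paper's: establish the two-sided bound $\delta f(d_{x,\rho}(t,y))\leq \mathcal{W}^{\rho\xi}_x(y,t)\leq \delta^{-1}f(d_{x,\rho}(t,y))$ from \ref{H3} (the paper's \eqref{e:infdis}), observe that the affine map scales the distance to the restricted wells by the factor $\lambda=(b_x(y,\rho)-a_x(y,\rho))/(b_x(0,\rho)-a_x(0,\rho))$, which is bounded above and away from zero, and use the monotonicity and the fixed quadratic behaviour of $f$ near the origin to conclude that $f(\lambda d)$ and $f(d)$ are comparable --- but then you retract it in favour of a ``trivial compactness'' bound based on a literal reading of the interval in \ref{Y4}. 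That retraction is the wrong call for two reasons. First, the sole consumer of \ref{Y4} is hypothesis (7) of Proposition \ref{p:contdep} (invoked in Lemma \ref{l:est_gammainf}), and that hypothesis is a comparability \emph{near the wells}, on $[a_0,a_0+\delta)\cup(b_0-\delta,b_0]$; an estimate valid only where both potentials are uniformly bounded away from zero is useless there, so the squeezing argument is the only part of the lemma with real content and must be written out. Second, even for the literally stated interval the ``trivial'' argument does not close: the interval $[a_x(y,\rho)+\delta_{C_1(0)},b_x(y,\rho)-\delta_{C_1(0)}]$ is anchored at the $y$-wells, while $t$ is evaluated in $\mathcal{W}^{\rho\xi}_x(0,\cdot)$ and fed into $\phi_{x,\rho}(y,\cdot)$, whose natural domain is anchored at the $0$-wells; since $y$ ranges over all of $C_1(0)$ the two well configurations need only agree up to the H\"older oscillation of $z_1,z_2$ over a set of unit size, so a point in the middle of the $y$-interval can land on or inside a well set of $\mathcal{W}^{\rho\xi}_x(0,\cdot)$, or be mapped by $\phi_{x,\rho}(y,\cdot)$ outside $[a_x(y,\rho),b_x(y,\rho)]$, and the claimed uniform positive lower bounds fail. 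The distance-comparison argument via \eqref{e:infdis}, by contrast, is insensitive to whether $t$ is near or far from the wells, so you should simply commit to it.
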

     \color{black}
\begin{proof}
    Since from Remark \ref{r:convimpl} we know that $f$ is increasing, for every $(x_0,y,t) \in \mathbb{R}^m\EEE \times  C_1(0)^{\xi} \times \mathbb{R}$,  by \ref{H3} there holds \EEE
\begin{align*}
     \bar\delta^{\xi} f(\inf_{s \in [-\rho/2,\rho/2]}\text{dist}(t,\mathbf{z}(x+y+s\xi))) &= \bar\delta^{\xi} \inf_{s \in [-\rho/2,\rho/2]} f(\text{dist}(t,\mathbf{z}(x+y+s\xi))) \\
    &\leq \mathcal{W}^{\rho\xi}_{x}(y,t)\\
    &\leq \frac{1}{\bar\delta^{\xi}}\inf_{s \in [-\rho/2,\rho/2]} f(\text{dist}(t,\mathbf{z}(x+y+s\xi)))) \\
    &= \frac{1}{\bar\delta^{\xi}}f(\inf_{s \in [-\rho/2,\rho/2]}\text{dist}(t,\mathbf{z}(x+y+s\xi))).
\end{align*}
     We note \EEE that for every $\rho>0$ we have the identity
    \[
    \inf_{s \in [-\rho/2,\rho/2]}\text{dist}(t,\mathbf{z}(x+y+s\xi))= d_{x,\rho}(t,y),\]
    where,  recalling \eqref{eq:well-r},\EEE
    \[
    d_{x,\rho}(t,y) := 
    \begin{cases}
       0, &\text{ if $t \in \bigcup_{i=1}^2 \mathbf{z}_{i,x}(y,\rho)$} \\
       \min\{|t-a_{x}(y,\rho)|, |t-b_{x}(y,\rho)|\} &\text{ if $t \in (a_{x}(y,\rho),b_{x}(y,\rho))$},\\
     |t-a_{x}(y,\rho)| &\text{ if $t \in (-\infty,a_{x}(y,\rho)$}\EEE \\
     |t-b_{x}(y,\rho)| &\text{ if $t \in (b_{x}(y,\rho),\infty)$}.\EEE
    \end{cases}
    \]
  In particular, for $(x,y,t) \in \mathbb{R}^m\EEE \times C_1(0) \times \mathbb{R}$ we  infer \EEE
    \begin{equation}
    \label{e:infdis}
    \delta f(d_{x,\rho}(t,y)) \leq \mathcal{W}^{\rho\xi}_{x}(y,t) \leq \frac{1}{\delta} f(d_{x,\rho}(t,y)).
    \end{equation}
    For every $y\in \xi^{\bot}$, we now define  $\phi_{x,\rho}(y,\cdot) \colon [a_{x}(0,\rho),b_{x}(0,\rho)] \to [a_{x}(y,\rho),b_{x}(y,\rho)]$ as
   \begin{align*}
       \phi_{x,\rho}(y,t):=\left(\frac{t-a_x(0,\rho)}{b_x(0,\rho)-a_x(0,\rho)}\right)a_x(y,\rho)+\left(1-\frac{t-a_x(0,\rho)}{b_x(0,\rho)-a_x(0,\rho)}\right)b_x(y,\rho).
   \end{align*}
   Then, \ref{Y1} and \ref{Y2} follow by definition. Property \ref{Y3} is a consequence of the fact that $a_x(y,\rho)\to z_1(y,\rho)$ and $b_x(y,\rho)\to z_2(y,\rho)$ uniformly in $C_1(0)$ as $\rho\to 0$ (see \eqref{eq:well-r-loc-un} and \eqref{e:defaibi}). Eventually, property \ref{Y4} follows by combining \eqref{e:infdis} and \ref{H3}.
\end{proof}

\section{$\Gamma$-limsup upper-bound}
\label{s:sec6}
This section is devoted to the construction of a recovery sequence. We start with a technical lemma, whose proof is postponed to the end of this subsection. Recall Definitions \ref{def:surf} and \ref{def:classJ}, as well as the functionals in \eqref{e:functionalxi148}.

\begin{lemma}
    \label{l:decaykernel}
    Let $m\in \mathbb{N}$, and let $A \subset \mathbb{R}^m$ be a bounded measurable set. Assuming that $J \in \mathcal{L}_m(\eta,\lambda,\rho)$ for some $\eta,\lambda \in (0,1)$ and some $\rho >0$, there holds
    \begin{equation}
        \label{e:decaykernel1}
         \lim_{\epsilon \to 0^+} \epsilon^{m} \int_{\epsilon^{-1}(A \times A)} J(x-x') \epsilon^{\beta} |x-x'|^{1+\beta} \, dx'dx=0,
    \end{equation}
    for every $\beta >0$.
\end{lemma}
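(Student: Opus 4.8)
The goal is to show that the rescaled nonlocal interaction of a ``super-linear'' kernel weight $|x-x'|^{1+\beta}$ has vanishing contribution in the sharp-interface scaling. The plan is to change variables to isolate the kernel singularity, split the domain of integration into a neighbourhood of the origin (where the singularity $|h|^{-m-\eta}$ lives) and its complement (where the kernel is integrable against $|h|$), and estimate each piece separately.

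First I would perform the change of variables $h = (x-x')/\epsilon$, keeping $x' = x'$. Since $A$ is bounded, say $A \subset B_R(0)$, the variable $x'$ ranges over a set of measure $\le |A| < \infty$, and for fixed $x'$ the variable $h$ ranges over a subset of $\epsilon^{-1}(A - x') \subset \epsilon^{-1}(B_R(0)-x')$. The Jacobian of $x \mapsto h$ (with $x'$ fixed) is $\epsilon^m$, so
\[
\epsilon^{m} \int_{\epsilon^{-1}(A\times A)} J(x-x')\,\epsilon^{\beta}|x-x'|^{1+\beta}\,dx'dx
= \epsilon^{m}\int_{A}\Big(\int_{\epsilon^{-1}(A-x')} J(h)\,|h|^{1+\beta}\,dh\Big)\epsilon^{m+\beta}\cdot\epsilon^{-m}\,dx',
\]
and after collecting powers of $\epsilon$ one is left with a clean quantity of the form $\epsilon^{m+\beta}$ times $\int_A\!\int_{(\text{set of diameter }\lesssim \epsilon^{-1}R)} J(h)|h|^{1+\beta}\,dh\,dx'$; I would double-check the exact bookkeeping of exponents against the statement, but the point is that the remaining $h$-integral is over a ball of radius $O(\epsilon^{-1})$.

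Second I would split $\int_{|h|\le \epsilon^{-1}R} J(h)|h|^{1+\beta}\,dh = \int_{|h|\le \rho} + \int_{\rho < |h| \le \epsilon^{-1}R}$. For the inner piece, using $J(h) \le \lambda^{-1}|h|^{-m-\eta} + \tilde J(h)$ from \eqref{e:classlm1} with $\tilde J \in L^1$, we get $\int_{|h|\le\rho} J(h)|h|^{1+\beta}\,dh \le \lambda^{-1}\int_{|h|\le\rho}|h|^{1+\beta-m-\eta}\,dh + \|\tilde J\|_{L^1}\rho^{1+\beta}$, which is a finite constant independent of $\epsilon$ (the first integral converges since $1+\beta-\eta > 0$, hence the exponent $1+\beta-m-\eta > -m$). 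For the outer piece, $\int_{\rho<|h|\le \epsilon^{-1}R} J(h)|h|^{1+\beta}\,dh \le (\epsilon^{-1}R)^{\beta}\int_{|h|>\rho} J(h)|h|\,dh$, and the latter integral is finite by \eqref{e:classm2}; so this piece is bounded by $C\,\epsilon^{-\beta}$. Multiplying back by the $\epsilon^{m+\beta}$ (or whatever the precise surviving power is — in any case a power of $\epsilon$ that is at least $\epsilon^{\beta}\cdot\epsilon^m$ or, after the bookkeeping, enough to kill the $\epsilon^{-\beta}$), and using $|A| < \infty$ for the $x'$-integral, the inner-piece contribution is $O(\epsilon^{m+\beta})\to 0$ and the outer-piece contribution is $O(\epsilon^{m+\beta}\cdot\epsilon^{-\beta}) = O(\epsilon^{m})\to 0$ since $m \ge 1$. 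Taking $\epsilon\to 0^+$ gives \eqref{e:decaykernel1}.

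The main obstacle — really the only delicate point — is the exponent bookkeeping: making sure the power of $\epsilon$ that survives after the substitution genuinely dominates the $\epsilon^{-\beta}$ blow-up from the tail integral. The safe way to handle this is to not rescale to an $\epsilon$-independent domain all at once but rather to write $\epsilon^{-1}(A-x') \subset B_{\epsilon^{-1}R}(0)$ and track the $\epsilon$ power carefully; if it turns out the surviving power is exactly $\epsilon^{\beta}$ (rather than $\epsilon^{m+\beta}$), then the inner piece is $O(\epsilon^\beta)\to 0$ and the outer piece is $O(\epsilon^\beta\cdot\epsilon^{-\beta}) = O(1)$, which would not suffice — so one would instead need to observe that on the outer annulus we can use $|x-x'|^{1+\beta} \le (\mathrm{diam}\,A)^{\beta}|x-x'|$, i.e. bound $|x-x'|^{1+\beta}$ directly in the \emph{original} (unrescaled) variables by $(2R)^{\beta}|x-x'|$ since $x,x'\in A$ and $A$ is bounded, reducing the tail to $\epsilon^{\beta}(2R)^\beta \cdot \frac{1}{\epsilon}\int_{\Omega\times\Omega}J_\epsilon(x-x')|x-x'|\,dx'dx$, which is $O(\epsilon^{\beta})$ by the standard computation that $\frac1\epsilon\int J_\epsilon(x-x')|x-x'|\,dx'dx$ stays bounded (indeed it converges, by \ref{K2}). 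Either way, the boundedness of $A$ is exactly what converts the ``$1+\beta$'' power into something controllable, and the integrability assumptions \eqref{e:classlm1}--\eqref{e:classm2} (i.e. \ref{K2}) handle the singular and tail regimes respectively.
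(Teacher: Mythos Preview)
Your overall strategy—change variables, split the $h$-integral into a bounded region and a tail, and use \eqref{e:classlm1}–\eqref{e:classm2} on each piece—is exactly the paper's approach. But the execution has a genuine gap in the tail estimate. After the correct change of variables the surviving prefactor is $\epsilon^\beta$, not $\epsilon^{m+\beta}$: one gets
\[
\epsilon^{m} \int_{\epsilon^{-1}(A\times A)} J(x-x')\,\epsilon^{\beta}|x-x'|^{1+\beta}\,dx'dx
\;\le\; |A|\int_{h\in\epsilon^{-1}(A-A)} J(h)\,(\epsilon|h|)^\beta\,|h|\,dh.
\]
On your tail $|h|>\rho$ you bound $|h|^\beta\le(\epsilon^{-1}\cdot 2R)^\beta$, which \emph{exactly} cancels the $\epsilon^\beta$ and leaves the fixed positive constant $|A|(2R)^\beta\int_{|h|>\rho}J(h)|h|\,dh$. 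Your fallback does not rescue this: in unrescaled variables the whole expression equals $\frac{1}{\epsilon}\int_{A\times A}J_\epsilon(y-y')|y-y'|^{1+\beta}\,dy'dy$, and bounding $|y-y'|^{1+\beta}\le(2R)^\beta|y-y'|$ again gives only $O(1)$—there is no extra $\epsilon^\beta$ to pick up.

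The missing idea is to split at an \emph{arbitrary} $N$ rather than the fixed $\rho$. For each fixed $N$ the inner piece $|h|\le N$ contributes $|A|\,\epsilon^\beta\int_{|h|\le N}J(h)|h|^{1+\beta}\,dh\to 0$ as $\epsilon\to 0^+$ (the integral is finite since $1+\beta-\eta>0$), while the outer piece is bounded, uniformly in $\epsilon$, by $|A|\,\text{diam}(A-A)^\beta\int_{|h|>N}J(h)|h|\,dh$. Taking first $\limsup_{\epsilon\to 0^+}$ and then $N\to\infty$ (using \ref{K2}) gives \eqref{e:decaykernel1}. Equivalently, one can skip the split and apply dominated convergence directly to $h\mapsto J(h)(\epsilon|h|)^\beta|h|\,\mathbbm{1}_{\{|h|\le \epsilon^{-1}\text{diam}(A-A)\}}$, which tends to $0$ pointwise and is dominated by $\text{diam}(A-A)^\beta J(h)|h|\in L^1$.
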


The following definition of polyhedral sets and polyhedral functions was provided in \cite{alb-bel2}.

\begin{definition}[Polyhedral set and polyhedral function]
A polyhedral set in $ \mathbb{R}^m \EEE$ is an open set $E$ whose boundary is a Lipschitz manifold contained in the union of finitely many affine hyperplanes; the faces of $E$ are the intersections of the boundary of $E$ with each one of these hyperplanes, and an edge point of $E$ is a point which belongs to at least two different faces (that is, a point where $\partial E$ is not smooth). We denote by $\nu_E$ the inner normal to $\partial E$ (defined for all points in the boundary which are not edge points).

A polyhedral set in $\Omega$ is the intersection of a polyhedral set in $ \mathbb{R}^m \EEE$ with $\Omega$. We say that $u \in CP(\Omega;\mathbf{z}(x))$ is polyhedral if there exists a polyhedral set $A$ in $ \mathbb{R}^m \EEE$ with $\partial A$ transversal to $\partial \Omega$ and such that $u=z_1$ on $A \cap \Omega$ while $u=z_2$ on $\Omega \setminus A$.
\end{definition}

In order to construct recovery sequences for general functions in $CP(\Omega;\mathbf{z}(x))$ we will proceed as follows: we will first establish such a result for polyhedral functions and then argue by approximation to extend the construction to the whole class. An intermediate step in this direction, following the approach in \cite{alb-bel2}, will be to show that $\Omega$ satisfies the additional property below.

\begin{definition}[Condition (P)]
    Let $A$ be a polyhedral set in $\Omega$. We say that $A$ satisfies condition (P) if for every polyhedral function $u$ in $CP(\Omega;\mathbf{z}(x))$ there exists a sequence of measurable functions $(u_\epsilon)$ defined on $\overline{A}$, such that $z_1(x) \leq u_\epsilon(x) \leq z_2(x)$ for every $x \in \Omega$, and
        \begin{align}
        \label{e:limsuppol1}
            &u_\epsilon \to u \text{ uniformly on every compact set } K \subset \overline{A} \setminus S_u \\
            \label{e:limsuppol2}
            &\limsup_{\epsilon \to 0^+} F_\epsilon(u_\epsilon; A) \leq \int_{A \cap S_u} \sigma(y,\xi) \, d\mathcal{H}^{n-1}(y)
        \end{align}
\end{definition}

We are now in a position to construct a recovery sequence for polyhedral functions. \UUU In the proposition, we leverage the core result of Hölder continuous dependence in Theorem \ref{p:lipselx} for gluing together optimal profiles defined on parallel lines that intersect orthogonally with the faces of the polyhedral set. \EEE

\begin{proposition}[Recovery sequence for polyhedral functions.]
\label{t:recpol}
    Let $\Omega \subset  \mathbb{R}^m \EEE$ be open, let $J \colon  \mathbb{R}^m \EEE \to (0,\infty)$ be an even function, and let $W \colon  \mathbb{R}^m \EEE \times \mathbb{R} \to [0,\infty)$ satisfy \ref{H1}--\ref{H4}. Let further $\alpha \in (1/2,1]$ be given by \ref{H1}. Assume that either one of the following conditions is satisfied:
    \begin{enumerate}[(i)]
       \item  $Q_x(t):= \partial_t W(x,t)+t$ has a continuous inverse for every $x \in \Omega$ and $t \in [z_1(x),z_2(x)]$, and $J$ satisfies \ref{K1} with $\|J\|_{L^1( \mathbb{R}^m \EEE)}=1$
       \item $J \in \mathcal{L}_m(\eta,\lambda,\rho)$ for some $\eta \in (0,\alpha^2)$ and $\lambda,\rho \in (0,1)$.
   \end{enumerate}
     
     Then, for every polyhedral function $u\in CP(\Omega;\mathbf{z}(x))$ there exists a sequence $(u_\epsilon) \subset L^1(\Omega)$ such that $u_\epsilon \to u$ in $L^1(\Omega)$ and
    \begin{equation}
        \label{e:gli1111}
        \limsup_{\epsilon \to 0^+} F_{\epsilon}(u_\epsilon;\Omega) \leq F_0(u).
    \end{equation}
\end{proposition}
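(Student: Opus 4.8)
The plan is to reduce the general polyhedral function to the model case in which the jump set $S_u$ is contained in a single affine hyperplane $V$ orthogonal to a direction $\xi$, and then glue together one-dimensional optimal profiles along lines parallel to $\xi$, using Theorem \ref{p:lipselx} to control the defect coming from the $x$-dependence of the profiles. More precisely, I would first argue that it suffices to construct, for each face of the polyhedron separately, a sequence satisfying the local estimates \eqref{e:limsuppol1}--\eqref{e:limsuppol2} of Condition (P); one then combines these local constructions by a partition-of-unity/cut-off argument across the (lower-dimensional, hence $\mathcal H^{n-1}$-negligible) edge set, exactly as in \cite{alb-bel2}. The decay of the locality defect (Proposition \ref{p:deficit} together with Lemma \ref{l:decaykernel}) guarantees that the interaction energy across different faces, and across the cut-offs near edges, is asymptotically negligible, since those regions are separated by Lipschitz hypersurfaces and the $\epsilon$-traces converge (Proposition \ref{r:defuniform}).

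\textbf{The model slab estimate.} Fix a face carried by $V=V_\xi$, with $u=z_1$ on one side and $u=z_2$ on the other. Cover $V\cap\Omega$ by finitely many $(m-1)$-cubes $C_\rho(x_\ell)$ oriented by $\xi$, with associated slabs $Q_\rho(x_\ell)\times\mathbb R\xi$. On each such slab I would define the candidate recovery sequence by
\[
u_\epsilon(y+t\xi):=\gamma^{(\ell)}_{y}\!\Big(\frac{t-t_0(y)}{\epsilon}\Big),
\]
where $(\gamma^{(\ell)}_y)_{y\in C_\rho(x_\ell)}$ is the Hölder-continuous family of increasing, continuous optimal profiles furnished by Theorem \ref{p:lipselx} applied to the restricted potential $\mathcal W^{\rho\xi}_{x_\ell}$ (which satisfies the hypotheses of Proposition \ref{p:contdep} by Lemma \ref{lem:trans-rest}), and $t_0(y)$ locates the hyperplane $V$. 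Since each $\gamma^{(\ell)}_y$ interpolates between $z_1$ and $z_2$ exponentially (by \ref{H4} and the comparison principle, Remark \ref{r:maxprinciple}), $u_\epsilon\to u$ uniformly away from $S_u$ and in $L^1$. For the energy, I would write $F_\epsilon(u_\epsilon;Q_\rho(x_\ell))$ and split the non-local term into interactions \emph{aligned} with $\xi$ and interactions \emph{orthogonal} to $\xi$. The aligned part, after the change of variables $t\mapsto t/\epsilon$, converges (by Fubini, the definition \eqref{e:kerneljxi1} of $J^\xi$, and dominated convergence using $\int J^\xi(t)|t|\,dt<\infty$) to $\int_{C_\rho(x_\ell)}F^\xi_{x_\ell}(\gamma^{(\ell)}_y)\,d\mathcal H^{n-1}(y)$, plus an error controlled by $\rho^\alpha$ coming from replacing $\mathcal W^{\rho\xi}_{x_\ell}(y,\cdot)$ and $\gamma^{(\ell)}_y$ by their values at $x_\ell$ — and this last bound is exactly where the Hölder selection estimate \eqref{e:lipselx1} (and its truncated $L^2$-consequence, Remark \ref{r:L^2locestimate}) enters. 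The orthogonal part is the crucial one: here $J(h)\lesssim|h|^{-m-\eta}$ with $\eta<\alpha^2$, the increments $|u_\epsilon(y+t\xi)-u_\epsilon(y'+t\xi)|$ are estimated by $\lfloor\gamma_{(\cdot)}\rfloor\,|y-y'|^{\alpha^2}$ via \eqref{e:lipselx1}, so the orthogonal interaction energy is bounded by a multiple of $\epsilon^{-1}\epsilon^{?}\int J(h)|h_{\xi^\perp}|^{2\alpha^2}\cdots$, and the condition $\eta<\alpha^2$ makes this vanish as $\epsilon\to0$ (and Lemma \ref{l:decaykernel} handles the tail). Passing to the infimum over $\gamma^{(\ell)}_y$ and then letting $\rho\to0$, upper semicontinuity of $\sigma$ (Proposition \ref{p:surftension}) upgrades $\sum_\ell \rho^{n-1}\sigma(x_\ell,\xi)$ to $\int_{V\cap\Omega}\sigma(y,\xi)\,d\mathcal H^{n-1}(y)$, giving \eqref{e:limsuppol2}.

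\textbf{Assembling the global sequence and the integrable case.} With Condition (P) established for each face, I would verify that $\Omega$ itself satisfies Condition (P) for the whole polyhedral set $A$ (glueing the face-wise sequences, defining $u_\epsilon=z_1$ deep inside $A$ and $u_\epsilon=z_2$ deep outside, and interpolating in $\epsilon$-neighborhoods of the faces), and then conclude \eqref{e:gli1111} by adding up the face contributions and using Proposition \ref{p:deficit} to discard cross terms; the potential term away from all faces is $o(1)$ since $u_\epsilon$ is already a zero of $W(x,\cdot)$ there, modulo an asymptotic-ground-state estimate for the Hölder maps $z_i$ (which is where $\alpha>1/2$ is used, together with Lemma \ref{l:decaykernel}). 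In case (i), $J\in L^1$ with $\|J\|_{L^1}=1$, the orthogonal interactions need not be separately small, but continuity of the profiles — guaranteed now by invertibility of $Q_x$ via Proposition \ref{p:regoptintk} instead of by the singularity — still makes the gluing smooth enough; here the argument is in fact closer to \cite{alb-bel2}, and the error analysis simplifies because one can integrate the $L^1$ kernel directly. \textbf{Main obstacle.} The delicate point is the quantitative control of the orthogonal (cross-line) interaction energy: one must extract the precise power of $\epsilon$ produced by the anisotropic rescaling (only the $\xi$-direction is rescaled by $\epsilon$) and match it against the singularity exponent $\eta$ and the Hölder exponent $\alpha^2$ of the selection, showing that $\eta<\alpha^2$ is exactly the borderline making this term infinitesimal — this is the technical heart of the proof and the place where all the earlier machinery (Theorem \ref{p:lipselx}, Lemma \ref{l:decaykernel}, the slicing lemmas) is brought to bear simultaneously.
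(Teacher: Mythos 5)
Your overall architecture matches the paper's: reduction to single faces via Condition (P) and the decay of the locality defect, gluing of one-dimensional optimal profiles along lines parallel to $\xi$ using the H\"older selection of Theorem \ref{p:lipselx}, and a split of the non-local term into aligned and cross-line interactions with the condition $\eta<\alpha^2$ controlling the latter through \eqref{e:truncatedlipsel} and Lemma \ref{l:decaykernel}. However, there is a genuine gap in the treatment of the potential term. With your ansatz $u_\epsilon(y+t\xi)=\gamma_y\bigl((t-t_0(y))/\epsilon\bigr)$, the profile $\gamma_y$ connects the wells $z_1(y),z_2(y)$ \emph{at the base point} $y\in\Sigma$, whereas along the line the potential is $W(y+t\xi,\cdot)$, whose wells are $z_i(y+t\xi)$. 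Wherever the profile has (approximately) saturated, say $u_\epsilon(y+t\xi)\approx z_2(y)$ for $t>0$, assumption \ref{H3} with quadratic $f$ gives $W(y+t\xi,z_2(y))\gtrsim \delta\,|z_2(y+t\xi)-z_2(y)|^2$, which is of order $|t|^{2\alpha}$ and not zero; hence $\frac1\epsilon\int_0^{c} W(y+t\xi,u_\epsilon(y+t\xi))\,dt\gtrsim \frac1\epsilon\int_0^c|t|^{2\alpha}\,dt\to\infty$. Your appeal to ``exponential interpolation'' of the profiles does not resolve this (and is itself unsupported: the profiles are only known to lie in $X_{z_1(y)}^{z_2(y)}$, with no decay rate), because the divergence comes from the mismatch between the fixed wells $z_i(y)$ and the moving wells $z_i(y+t\xi)$, not from the speed at which $\gamma_y$ approaches its limits. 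The paper resolves both issues at once by (a) replacing $\gamma_y$ with the truncated profile $(\sigma_\omega)_y=[(a_\omega\gamma_y+b_\omega)\wedge z_1]\vee z_2$, which exactly attains the wells for $|t|\ge R'(\omega)$ because the family is centered (Step 2 of the paper's proof, at the cost of a factor $a_\omega^2\to1$ in the interaction energy), and (b) composing with the transition maps $\varphi_1(y,t\xi,\cdot)$ of Lemma \ref{lem:H3-trans}, so that the recovery sequence takes values exactly in $\{z_1(y+t\xi),z_2(y+t\xi)\}$ once the profile saturates; the potential term is then handled via the change-of-variables formula of Proposition \ref{p:changevar} and a reverse Fatou argument. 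Without these two devices your construction does not satisfy \eqref{e:limsuppol2}.

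Two further points. First, the cross-line estimate is stated with a literal unknown exponent (``$\epsilon^{-1}\epsilon^{?}$''), and the H\"older selection \eqref{e:lipselx1} is an $L^1$-weighted (or, via Remark \ref{r:L^2locestimate}, a truncated $L^2$) bound, not a pointwise bound on $|u_\epsilon(y+t\xi)-u_\epsilon(y'+t\xi)|$; the correct bookkeeping (rescaling all variables by $\epsilon$, integrating out the $\xi$-direction to produce $J^{\xi^\perp}$, and using $\int J^{\xi^\perp}(y)|y|^{\alpha^2}\,dy<\infty$) is precisely the computation you defer, so this part of the argument is incomplete rather than wrong. Second, your Riemann-sum discretization into cubes $C_\rho(x_\ell)$ with profiles built from $\mathcal W^{\rho\xi}_{x_\ell}$ is an unnecessary detour for the limsup inequality (the infimum-type potential $\mathcal W^{\rho\xi}$ gives bounds in the wrong direction for an upper estimate, and is used in the paper only for the liminf); the paper works directly with the full family $(\gamma_y)_{y\in\Sigma}$ and obtains $\int_\Sigma\sigma(y,\xi)\,d\mathcal H^{n-1}$ without any $\rho\to0$ passage, reserving the upper semicontinuity of $\sigma$ for the Reshetnyak argument in Theorem \ref{t:recgen}.
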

\begin{proof}
We prove the theorem assuming (ii). The proof under condition (i) follows exactlz in the same way.
With no loss of generality we assume that $F_0(u)< \infty$. We will achieve the desired statement by showing that $\Omega$ satisfies (P). To this purpose consider the following three statements:
\begin{enumerate}[(a)]
    \item If $A$ is a polyhedral set in $\Omega$ with $\mathcal{H}^{n-1}(\overline{A} \cap S_u)=0$, then $A$ satisfies (P).
    \item Let $\Sigma$ be a face of $S_u$ and let $\pi$ be the projection map on the affine hyperplane which contains $\Sigma$: if $A$ is a polyhedral set in $\Omega$ such that $S_u\cap A= \Sigma$ and $\pi(A) = \Sigma$, then $A$ satisfies (P).
    \item If $A_1$ and $A_2$ satisfy (P) and are disjoint, then the interior of $\overline{A}_1 \cup \overline{A}_2$ satisfies (P).
\end{enumerate}

Since $\Omega$ can be expressed as $\Omega = \bigcup_\ell \overline{A}_\ell$ for finitely many polyhedral sets $A_\ell$ in $\Omega$ satisfying either (a) or (b), proving the validity of (a),(b), and (c) will immediately allow us to infer that $\Omega$ satisfies property (P). Statement (a) and (c) follow by applying the decay estimate of the defect (due to the regularity of the wells, condition \eqref{e:contvdef} is satisfied for $z_1$ and $z_2$, and, in particular, Proposition \ref{r:defuniform} applies) exactly as in \cite[Theorem 5.2]{alb-bel2}. Therefore, to conclude the proof of the proposition it only remains to prove (b). For convenience of the reader, we subdivide the remaining part of the proof into five Steps.\\

\noindent\textbf{Step 1: Recalling the H\"older continuous dependence of optimal profiles}. Since $u \in CP(\Omega;\mathbf{z}(x))$, we have $u^\pm(y) \in \{z_1(y),z_2(y)\}$ for every $y \in \Sigma$. {\color{black} Let $\xi\in \mathbb{S}^{m-1}$ be orthogonal to $\Sigma$, and} denote by $V$ the affine hyperplane containing $\Sigma$. \UUU Furthermore, in order to simplify the notation, we may assume with no loss of generality that $0 \in V$, namely, $V = \xi^\bot$ \EEE. 

By Theorem \ref{p:lipselx} with $K:= V \cap \overline{\Omega}$ and $J=J^\xi$ (Recall \eqref{e:kerneljxi1}, \eqref{e:functionalxi148}, as well as Lemmas \ref{lemma:stability} and \ref{lemma:stable-l1}), we find a family $(\gamma_y)_{y \in \Sigma}$ made of increasing functions in $ X_{z_1(y)}^{z_2(y)}$ such that for every $R >0$ 
    \begin{align}
        \label{e:limsuppol3}
        F_{y}^\xi(\gamma_y) &= \sigma(y,\xi) \\
       \label{e:limsuppol3.1.1.1}
        \int_{-R}^R |\gamma_y(t) -\gamma_{y'}(t)|^2 \, dt &\leq L \,  \lfloor \partial_t W \rfloor_{C^{0,\alpha}(K\times[-M,M])} |y-y'|^{\alpha^2},
    \end{align}
    where $L:=L(\delta_K,K,\|\sigma_{J^\xi}\|_{L^1},\|\overline{\sigma}_{J^\xi}\|_{L^1},\lfloor z_1\rfloor_{C^{0,\alpha}},\lfloor z_2\rfloor_{C^{0,\alpha}})>0$ (see \ref{H4} for the role of the parameter $\delta_K$, as well as \eqref{e:lipdep1.428} and \eqref{e:defoversigma} for the definitions of $\sigma_{J^\xi}$ and $\overline{\sigma}_{J^\xi}$, respectively).\\ 
    
    \noindent\textbf{Step 2: Gluing together optimal profiles in the direction orthogonal to $\Sigma$}. We show here that there exists $R':= R'(\omega,K) >0$ such that
    \begin{align}
        \label{e:cenfam1.1.1}
        z_2(y)  \leq  a_\omega \gamma_y(t) + b_\omega(y), \ \ \text{ for every $t \geq R'$ and every $y \in \Sigma$}\\
        \label{e:cenfam1.1.2}
       a_\omega \gamma_y(t) + b_\omega(y) \leq z_1(y) , \ \ \text{ for every $t \leq R'$ and every $y \in \Sigma$},
    \end{align}
    where $a_\epsilon:= (1-\omega/2)^{-1} >1$ and $b_\omega(y):= \frac{z_1(y)+z_2(y)}{2}(1-a_\omega)$. To this aim, let $\tilde{\gamma}_y:= p(y,\gamma_y) \in X_{-1}^1$, where $p(y,t):=  \frac{2}{z_2(y)-z_1(y)}\big(t -\frac{z_1(y)+z_2(y)}{2}\big)$ for every $y\in \Omega$ and $t\in \mathbb{R}$ (see the beginning of the proof of Theorem \ref{p:lipselx}). We claim that the family $(\tilde{\gamma}_y)$ is centered on $K$ according to Definition \ref{d:cenfam}.

    Indeed, assume by contradiction that there exists $\omega > 0$ and two sequences $R'_k \nearrow \infty$ and $y_k \to y_0 \in K$, such that for instance $\tilde{\gamma}_{y_k} \leq 1-\frac{\omega}{2}$ for some $t_k \geq R'_k$ for every $k\in \mathbb{N}$. Since each $\tilde{\gamma}_{y_k}$ is increasing and $-1 \leq \gamma_{y_k} \leq 1$, we infer that any $L^1_{loc}$-limit of $\tilde{\gamma}_{y_k}$ must be bounded away from $1$ for every positive $t$. Nevertheless, we know from \eqref{e:limsuppol3.1.1.1} and the Lipschitzianity of $p$ that the $L^1_{loc}$-limit of the sequence $\tilde{\gamma}_{y_k}$ is exactly $\tilde{\gamma}_{y_0}$. This is in contradiction with the fact that $\tilde{\gamma}_{y_0} \in X_{-1}^1$. Analogously, one can prove that the assumption $\tilde{\gamma}_{y_k} \geq -1+\frac{\omega}{2}$ for some $t_k \leq -R'_k$ leads to a contradiction. Therefore the claim is proved.
    
    The property of being centered immediately implies that for every $\omega > 0$ we find $R':= R'(\omega,K) >0$ such that
    \begin{align}
        \label{e:beingcent1}
        &1 \leq a_\epsilon \tilde{\gamma}_y(t), \ \ \text{ for every $t \geq R'$ and every $y \in K$} \\
        \label{e:beingcent2}
        & a_\epsilon \tilde{\gamma}_y(t) \leq -1, \ \ \text{ for every $t \leq -R'$ and every $y \in K$}
    \end{align}
    Consider now the map $q(y,t):= \frac{z_2(y)-z_1(y)}{2}t + \frac{z_1(x)+z_2(x)}{2}$ for every $y\in \Omega$ and $t\in \mathbb{R}$ (see the beginning of the proof of Theorem \ref{p:lipselx}). Properties \eqref{e:cenfam1.1.1}--\eqref{e:cenfam1.1.2} follow then by applying $q(y,\cdot)$ to \eqref{e:beingcent1}--\eqref{e:beingcent2}. 
    
Consider the maps $\psi_\omega(y,t):= a_\omega t + b_\omega(y)$ and $\varphi_\omega(y,t):= \frac{t-b_\omega(y)}{a_\omega}$. Then, for every $y \in \Sigma$ we have
 \begin{align}
 \label{e:z1z2psi}
 [z_1(y),z_2(y)] &\subset \psi_\omega(y,[z_1(y),z_2(y)]) \\
 \label{e:z1z2phi}
 \varphi_\omega(y,[z_1(y),z_2(y)]) &\subset [z_1(y),z_2(y)] \\
 \label{e:z1z2inv}
 \psi_\omega(y,\varphi_\omega(y,t))&=t\\
 \label{e:z1z2sup}
 \psi_\omega(y,t) &\geq t, \ \ \text{ for $t \in [(z_1(y)+z_2(y))/2,z_2(y)]$} \\
  \label{e:z1z2sub}
  \psi_\omega(y,t) &\leq t, \ \ \text{ for $t \in [z_1(y),(z_1(y)+z_2(y))/2]$}.
 \end{align}
 We claim that for every $\omega >0$ sufficiently small there holds
\begin{equation}
\label{e:claimlimsup}
    W(y,\psi_\omega(y,t)) \leq c\, W(y,t), \ \ \text{ for every $t \in \varphi_\omega(y,[z_1(y),z_2(y)])$ },
\end{equation}
for some constant $c:=c(K)\geq1$. To prove the claim, we notice that, thanks to \eqref{e:z1z2phi}, as well as to the uniform convergenes $a_\omega \searrow 1$, and $b_\omega \to 0$ in $K$, we have $ [\varphi_\omega(y,z_1(y)),\varphi_\omega(y,z_1(y) + \frac{\delta}{2})] \subset [z_1(y),z_1(y)+\delta]$ for every $\omega >0$ small enough, independently of $y \in K$. By using also \eqref{e:z1z2inv} and \eqref{e:z1z2sub}, assumption \ref{H4} entails that for every $t \in [\varphi_\omega(y,z_1(y)),\varphi_\omega(y,z_1(y) + \frac{\delta}{2})]$ (notice that $z_1(y)+\delta \leq (z_2(y)+z_1(y))/2$ thanks to condition \ref{H2})
\[
W(y,t)-W(y,\psi_\omega(y,t)) = \int^t_{\psi_\omega(y,t)} \partial_t W(y,t) \, dt \geq 0.
\]
In the very same way, the uniform convergence of $a_\omega$ and $b_\omega$ in $K$, as well as properties \eqref{e:z1z2phi}, \eqref{e:z1z2sup}, and \ref{H4} imply that for every $t \in [\varphi_\omega(y,z_2(y) - \frac{\delta}{2}),\varphi_\omega(y,z_2(y))]$
\[
W(y,\psi_\omega(y,t)) -W(y,t) = \int^{\psi_\omega(y,t)}_t \partial_t W(y,t) \, dt \leq 0.
\]
Summarizing, for every $y \in K$ and $t \in [\varphi_\omega(y,z_1(y)),\varphi_\omega(y,z_1(y) + \frac{\delta}{2})] \cup [\varphi_\omega(y,z_2(y) - \frac{\delta}{2}),\varphi_\omega(y,z_2(y))]$, for every $\omega >0$ small enough, we have
\begin{equation}
    \label{e:relw1}
    W(y,t)\geq W(y,\psi_\omega(t)).
\end{equation}
Since both $\varphi_{\omega}(y,t) \to t$ and $\psi_\omega(y,t) \to t $ locally uniformly in $y$ and $t$ as $\omega \to 0^+$, by \ref{H1} we infer the existence of a constant $c:=c(K)\geq 1$ such that for every $y \in K$, $t \in [\varphi_\omega(y,z_1(y) + \frac{\delta}{2}),\varphi_\omega(y,z_2(y) - \frac{\delta}{2})]$, and for every $\omega >0$ small enough, there holds
\begin{equation}
    \label{e:relw2}
    c \,W(y,t)\geq W(y,\psi_\omega(t)).
\end{equation}
By putting together \eqref{e:relw1} and \eqref{e:relw2} we obtain \eqref{e:claimlimsup}.

We eventually define for every $y \in K$ and every $0 < \omega < \frac{1}{2}$ the function $\sigma_y \colon \mathbb{R} \to [z_1(y),z_2(y)]$ as
\[
(\sigma_\omega)_{y}(t):= [(a_\omega \gamma_y(t) + b_\omega(y)) \wedge z_1(y)] \vee z_2(y),
  \]  
  and we notice that for every $y \in \Sigma$ we have from \eqref{e:cenfam1.1.1}--\eqref{e:cenfam1.1.2} (see also Remark \ref{r:bormeas})
  \begin{align}
  \label{e:cenfam12345}
  &(\sigma_\omega)_y(t) = z_1(y), \ \ \text{ for $t \leq -R'$} \ \ \text{ and } \ \  (\sigma_\omega)_y(t)=z_2(y) \ \ \text{ for $t \geq R'$} \\
  \label{e:cenfam123456}
  & \ \ (\dot{\sigma}_\omega)^{-1}_y \otimes \mathcal{H}^{n-1} \restr \Sigma, \ \ \text{ is a finite Radon measure on $\mathbb{R} \times \Sigma$.}
 \end{align}

  \noindent\textbf{Step 3: Energy estimate for the non-local contribution}.
  For $\rho\in (0,1)$, let now $\varphi_\rho \colon  \mathbb{R}^m \EEE \times  \mathbb{R}^m \EEE \times \mathbb{R} \to \mathbb{R}$ be the transition maps provided by Lemma \ref{lem:H3-trans}, and satisfying \ref{X1}--\ref{X4} (To ease the notation, we have omitted the apex with the dependence on $K$). Without loss of generality we assume that $A^\xi_{y} := \{t \in \mathbb{R} \ | \ y+t\xi \in A \}$ is such that $A^\xi_{y} \subset (-\frac{1}{2},\frac{1}{2})$ for every $y \in \Sigma$. {\color{black}Recalling that every $x\in A$ can be uniquely written as $x=y+t\xi$, for $y\in \Sigma$ and $t\in A^\xi_{y}$, with a slight abuse of notation we}
  define $u_{\epsilon,\omega} \colon \Sigma \times (-1,1) \to \mathbb{R}$ as 
    \begin{equation}
        u_{\epsilon,\omega}(y+t\xi):=
        \varphi_1(y,t\xi,(\sigma_\omega)_{y}(\epsilon^{-1}t)).
    \end{equation}
    We further set $J^{\xi^\bot}(y):= \int_{\mathbb{R}} J(y+t\xi) \, dt$ for every $y \in \xi^\bot \setminus \{0\}$.\\
  
  By \ref{X2} and Young's inequality,  for every $d>1$ we infer
\begin{align*}
    &\frac{1}{\epsilon}\int_{A \times A} J_\epsilon(x'-x) |u_{\epsilon,\omega}(x')-u_{\epsilon,\omega}(x)|^2 \, dx'dx\\
    \nonumber
    &\quad\leq \frac{d}{(d-1)\epsilon}\int_{\Sigma \times \Sigma} \bigg(\int_{{\color{black}(-\frac{1}{2},\frac{1}{2})^2}} J_\epsilon(y'-y +(t' -t)\xi) |u_{\epsilon,\omega}^{y'\xi}(t')-u_{\epsilon,\omega}^{y\xi}(t')|^2 \,dt'dt\bigg) dy'dy\\ 
    \nonumber
    &\qquad+ \frac{d}{\epsilon}\int_{\Sigma \times \Sigma} \bigg(\int_{{\color{black}(-\frac{1}{2},\frac{1}{2})^2}} J_\epsilon(y'-y +(t' -t)\xi) |u_{\epsilon,\omega}^{y\xi}(t')-u_{\epsilon,\omega}^{y\xi}(t)|^2 \,dt'dt\bigg) dy'dy.
\end{align*}
Now we treat separately the last two integrals in the above inequality. In the first integral, we simply consider the change of variables given by the mupltiplication by $\ep$ in all the variables $(y,y',t,t')$ (the jacobian is exactly $\ep^{2n}$) to obtain
\begin{align*}
   \frac{1}{\ep} &\int_{\Sigma \times \Sigma} \bigg(\int_{{\color{black}(-\frac{1}{2},\frac{1}{2})^2}} J_\epsilon(y'-y +(t' -t)\xi) |u_{\epsilon,\omega}^{y'\xi}(t')-u_{\epsilon,\omega}^{y\xi}(t')|^2 \,dt'dt\bigg) dy'dy \\
   &= \frac{\epsilon^{2n}}{\epsilon^{n+1}}\int_{\ep^{-1}(\Sigma \times \Sigma)} \bigg(\int_{{\color{black}(-\frac{1}{2\ep},\frac{1}{2\ep})^2}} J(y'-y +(t' -t)\xi) |u_{\epsilon,\omega}^{\ep y'\xi}(\ep t')-u_{\epsilon,\omega}^{\ep y\xi}(\ep t')|^2 \,dt'dt\bigg) dy'dy \\
   & \leq \epsilon^{n-1}\int_{\ep^{-1}(\Sigma \times \Sigma)} \bigg(\int_{{\color{black}(-\frac{1}{2\ep},\frac{1}{2\ep})}} \bigg( \int_{\mathbb{R}} J(y'-y +(t' -t)\xi) \, dt\bigg) |u_{\epsilon,\omega}^{\ep y'\xi}(\ep t')-u_{\epsilon,\omega}^{\ep y\xi}(\ep t')|^2 \,dt'\bigg) dy'dy \\
   & = \epsilon^{n-1}\int_{\ep^{-1}(\Sigma \times \Sigma)} \bigg(\int_{{\color{black}(-\frac{1}{2\ep},\frac{1}{2\ep})}} \bigg( \int_{\mathbb{R}} J(y'-y +t\xi) \, dt\bigg) |u_{\epsilon,\omega}^{\ep y'\xi}(\ep t')-u_{\epsilon,\omega}^{\ep y\xi}(\ep t')|^2 \,dt'\bigg) dy'dy \\
   & = \epsilon^{n-1}\int_{\ep^{-1}(\Sigma \times \Sigma)} \bigg(\int_{{\color{black}(-\frac{1}{2\ep},\frac{1}{2\ep})}} J^{\xi^\bot}(y-y')|u_{\epsilon,\omega}^{\ep y'\xi}(\ep t')-u_{\epsilon,\omega}^{\ep y\xi}(\ep t')|^2 \,dt'\bigg) dy'dy.
\end{align*}
For the second integral, we first notice that 
\begin{align*}
    \int_{\Sigma} J_{\epsilon}(y'-y + (t'-t)\xi) \, dy' &= \frac{1}{\ep^n} \int_{\Sigma} J\bigg(\frac{y'-y}{\ep} + \frac{t'-t}{\ep}\xi  \bigg) \, dy'\\
    &= \frac{1}{\ep}  \int_{\ep^{-1}\Sigma} J\bigg(y'-\frac{y}{\ep} + \frac{t'-t}{\ep}\xi  \bigg) \, dy'\\
    &\leq \frac{1}{\ep}  \int_{V} J\bigg(y'-\frac{y}{\ep} + \frac{t'-t}{\ep}\xi  \bigg) \, dy' \\
    &= \frac{1}{\ep}  \int_{V} J\bigg(y'+ \frac{t'-t}{\ep}\xi  \bigg) \, dy' \\
    &= \frac{1}{\ep} J^\xi_\ep\bigg(\frac{t'-t}{\ep}\bigg) = J^\xi_\ep(t'-t).
    \end{align*}
Therefore, we can write
\begin{align*}
   \frac{1}{\epsilon}&\int_{\Sigma \times \Sigma} \bigg(\int_{{\color{black}(-\frac{1}{2},\frac{1}{2})^2}} J_\epsilon(y'-y +(t' -t)\xi) |u_{\epsilon,\omega}^{y\xi}(t')-u_{\epsilon,\omega}^{y\xi}(t)|^2 \,dt'dt\bigg) dy'dy \\ 
   &\leq \frac{1}{\epsilon}\int_{\Sigma} \bigg(\int_{{\color{black}(-\frac{1}{2},\frac{1}{2})^2}} J^\xi_\epsilon(t' -t) |u_{\epsilon,\omega}^{y\xi}(t')-u_{\epsilon,\omega}^{y\xi}(t)|^2 \,dt'dt\bigg)dy \\ 
   &= \int_{\Sigma} \bigg(\int_{{\color{black}(-\frac{1}{2\ep},\frac{1}{2\ep})^2}} J^\xi(t' -t) |u_{\epsilon,\omega}^{y\xi}(\ep t')-u_{\epsilon,\omega}^{y\xi}(\ep t)|^2 \,dt'dt\bigg)dy,
\end{align*}
where in the last equality we considered the change of variables given by the multiplication in the variables $(t,t')$ by $\ep$ (the jacobian is exactly $\ep^2$). 

Summarizing we have obatined
\begin{align}
\label{e:limsupol10}
    &\frac{1}{\epsilon}\int_{A \times A} J_\epsilon(x'-x) |u_{\epsilon,\omega}(x')-u_{\epsilon,\omega}(x)|^2 \, dx'dx\\
    \nonumber
    &\quad\leq \frac{d\epsilon^{n-1}}{d-1} \int_{\epsilon^{-1}(\Sigma \times \Sigma) } \bigg(\int_{(-\frac{1}{2\epsilon},\frac{1}{2\epsilon})}  J^{\xi^\bot}(y'-y) |u_{\epsilon,\omega}^{\epsilon y'\xi}( \epsilon t')-u_{\epsilon,\omega}^{\epsilon y\xi}(\epsilon t')|^2  \, dt'\bigg) dy'dy\\ 
    \nonumber
    & \qquad + d\int_{\Sigma} \bigg(\int_{(-\frac{1}{2\epsilon},\frac{1}{2\epsilon})^2 } J^\xi(t' -t) |u_{\epsilon,\omega}^{y\xi}(\epsilon t')-u_{\epsilon,\omega}^{y\xi}(\epsilon t)|^2 \,dt'dt\bigg) dy =: \frac{d\epsilon^{n-1}}{d-1}I_\epsilon  + d I_\epsilon'.
\end{align}

 By taking into account \ref{X2} we continue the above inequalities as follows
\begin{align*}
    I_\epsilon
    &\leq c \int_{\epsilon^{-1}(\Sigma \times \Sigma) }  J^{\xi^\bot}(y'-y) \epsilon^{1 +\alpha} |y'-y|^{1+\alpha}  \,  dy'dy \\ 
    &+ c  \int_{\epsilon^{-1}(\Sigma \times \Sigma) }  \bigg(\int_{(-\frac{1}{2\epsilon},\frac{1}{2\epsilon})} J^{\xi^\bot}(y'-y)|\sigma_{\epsilon y'}(t') -\sigma_{\epsilon y}(t')|^2  \, dt'\bigg) dy'dy \\
    &=: cI_\epsilon^1 +c I^2_\epsilon,
\end{align*}
for some constant $c >0$ depending only on $A$, $\sup_{x \in A} |z_1(x)|$, and $\sup_{x \in A} |z_2(x)|$. Since $J$ belongs to the class $\mathcal{L}_{n}(\eta,\lambda,\rho)$, the stability property proved in Lemma \ref{lemma:stability} tells us that $J^{\xi^\bot} \in \mathcal{L}_{n-1}^{\xi^\bot}(\eta,\lambda',\rho/2)$ for some $\lambda' \in (0,1)$. Hence, by applying Lemma \ref{l:decaykernel} we find
\begin{equation}
\label{e:limsup369}
\lim_{\epsilon \to 0^+}\epsilon^{n-1} I^1_\epsilon = 0.
\end{equation}
For the term $I^{2}_\epsilon$, we use \eqref{e:cenfam12345} to infer
\begin{align*}
I^{2}_\epsilon &\leq  \int_{\epsilon^{-1}(\Sigma \times \Sigma) } J^{\xi^\bot}(y'-y)  \bigg(\int_{(-\frac{1}{2\epsilon},\frac{1}{2\epsilon})} |\sigma_{\epsilon y'}(t')-\sigma_{\epsilon y}(t')|^2  \, dt'\bigg) dy'dy  \\
&\leq  a_\omega^2\int_{\epsilon^{-1}(\Sigma \times \Sigma) } J^{\xi^\bot}(y'-y)  \bigg(\int_{(-R',R')} |\gamma_{\epsilon y'}(t') + a_\omega^{-1}b_\omega(\epsilon y')-\gamma_{\epsilon y}(t') - a_\omega^{-1}b_\omega(\epsilon y)|^2  \, dt'\bigg) dy'dy  \\
&+  \int_{\epsilon^{-1}(\Sigma \times \Sigma) } J^{\xi^\bot}(y'-y)  \bigg(\int_{\{-\frac{1}{2\epsilon} \leq t' \leq -R'\}} |z_1(\epsilon y) -z_1(\epsilon y')|^2  \, dt'\bigg)dy'dy \\
&+  \int_{\epsilon^{-1}(\Sigma \times \Sigma) } J^{\xi^\bot}(y'-y) \bigg(\int_{\{R' \leq t' \leq \frac{1}{2\epsilon}\}}|z_2(\epsilon y) -z_2(\epsilon y')|^2 \, dt'\bigg) dy'dy.
\end{align*}
From \eqref{e:limsuppol3.1.1.1} and the regularity assumption on the wells $z_1,z_2$ we estimate 
\begin{align*}
&\int_{\epsilon^{-1}(\Sigma \times \Sigma) } J^{\xi^\bot}(y'-y)  \bigg(\int_{(-R',R')} |\gamma_{\epsilon y'}(t') + a_\omega^{-1}b_\omega(\epsilon y')-\gamma_{\epsilon y}(t') - a_\omega^{-1}b_\omega(\epsilon y)|^2  \, dt'\bigg) dy'dy  \\
& \leq L' \lfloor \partial_t W \rfloor_{C^{0,\alpha}} \int_{\epsilon^{-1}(\Sigma \times \Sigma) } J^{\xi^\bot}(y'-y) \epsilon^{\alpha^2} |y-y'|^{\alpha^2} \, dy'dy =:  L' I^{21}_\epsilon
\end{align*}
where $L':= L'(\delta,K,\|\sigma_{J^\xi}\|_{L^1},\|\overline{\sigma}_{J^\xi}\|_{L^1}, \lfloor z_1 \rfloor_{C^{0,\alpha}},\lfloor z_2 \rfloor_{C^{0,\alpha}}) >0$, and
\begin{align*}
&\int_{\epsilon^{-1}(\Sigma \times \Sigma) } J^{\xi^\bot}(y'-y)  \bigg(\int_{\{R' \leq |t'| \leq \frac{1}{2\epsilon}\}} |z_1(\epsilon y) -z_1(\epsilon y')|^2 + |z_2(\epsilon y) -z_2(\epsilon y')|^2 \, dt'\bigg) dy'dy \\
&\leq  2\int_{\epsilon^{-1}(\Sigma \times \Sigma) } J^{\xi^\bot}(y'-y)  \bigg(\int_{(-\frac{1}{2\epsilon},\frac{1}{2\epsilon})} \epsilon^{2\alpha} |y-y'|^{2\alpha} \, dt'\bigg) dy'dy =:  2I^{22}_\epsilon.
\end{align*}

Since $\eta \leq \alpha^2$ and $J \in \mathcal{L}_n(\eta,\lambda,\rho)$, we have 
\[
\int_{\xi^\bot} J^{\xi^\bot}(y)|y|^{\alpha^2} \, dy \leq \int_{\mathbb{R}^{n}} J(h)(|h| \vee |h|^{\alpha^2} )\, dh < \infty,
\]
which yields $\epsilon^{n-1} I^{21}_\epsilon \to 0$. In addition, since 
\begin{align*}
    \int_{\epsilon^{-1}(\Sigma \times \Sigma) } &J^{\xi^\bot}(y'-y)  \bigg(\int_{(-\frac{1}{2\epsilon},\frac{1}{2\epsilon})} \epsilon^{2\alpha} |y-y'|^{2\alpha} \, dt'\bigg) dy'dy \\
    &= \int_{\epsilon^{-1}(\Sigma \times \Sigma) } J^{\xi^\bot}(y'-y)   \epsilon^{2\alpha -1} |y-y'|^{2\alpha} \,  dy'dy,
\end{align*}
we can apply again Lemma \ref{l:decaykernel} with $\beta = 2\alpha -1 >0$ to infer $\epsilon^{n-1}I^{22}_\epsilon \to 0$. Summarizing we obtained
\begin{equation}
\label{e:i21e0}
\lim_{\epsilon \to 0^+} \epsilon^{n-1} I^{2}_\epsilon =0.
\end{equation}

Now we consider the term $I'_\epsilon$. We first notice that, under condition \ref{H3}, we can take the transition maps $\varphi_\rho$ in such a way that $\varphi_\rho(x_0,x,t)= \varphi_1(x_0,\rho x,t)$. Thus, \ref{X2} entails  
\begin{align}
\nonumber
\int_{(-\frac{1}{2\epsilon},\frac{1}{2\epsilon})^2} &J^\xi(t' -t) |\varphi_1(y,\epsilon t\xi,\sigma_y(t)) - \varphi_1(y,\epsilon t'\xi,\sigma_y(t'))|^2 \,dt'dt\\
   \nonumber
   &\leq c \int_{(-\frac{1}{2\epsilon},\frac{1}{2\epsilon})^2 } J^\xi(t' -t) \epsilon^{2\alpha}|t-t'|^{2\alpha} \, dt'dt \\
   \label{e:dom119}
   & \ \ \ + (1+O(\epsilon))a_\omega^2\int_{(-\frac{1}{2\epsilon},\frac{1}{2\epsilon})^2} J^\xi(t' -t) |\gamma_y(t)-\gamma_y(t')|^2 \, dt'dt,
\end{align}
where in the last inequality we used the fact that projections on convex sets (in this case truncation maps) are $1$-Lipschitz maps, and where $c$ is a constant depending on the parameters involved in \ref{X2}. By using again the stability property of the class $\mathcal{L}_n(\eta,\lambda,\rho)$, we know that $J^\xi \in \mathcal{L}_1^{\langle \xi \rangle}(\sigma,\lambda',\rho)$ for some $\lambda' \in (0,1]$. We can thus make use of Lemma \ref{l:decaykernel} with $\beta = 2\alpha -1 >0$ in \eqref{e:dom119} to infer
\begin{align}
\label{e:limsuppol4}
    \limsup_{\epsilon \to 0^+}& \int_{(-\frac{1}{2\epsilon},\frac{1}{2\epsilon})^2} J^\xi(t' -t) |u_{\epsilon,\omega}^{y\xi}(\epsilon t')-u_{\epsilon,\omega}^{y\xi}(\epsilon t)|^2 \,dt'dt \\
    \nonumber
    &\leq a_\omega^2\int_{\mathbb{R} \times \mathbb{R}} J^\xi(t' -t) |\gamma_y(t)-\gamma_y(t')|^2 \, dt'dt . 
\end{align}
Thanks to the assumption $F_0(u) < \infty$, we have $\int_\Sigma \sigma(y,\xi) \, d\mathcal{H}^{n-1} < \infty$. Therefore, by \eqref{e:dom119} the sequence appearing on the left-hand side of \eqref{e:limsuppol4} is dominated for $y \in \Sigma$ as $\epsilon \to 0^+$. We can thus make use of the limsup version of Fatou's lemma to write 
   \begin{align}
\label{e:limsuppoly4}
    \limsup_{\epsilon \to 0^+}& \int_\Sigma\bigg(\int_{(-\frac{1}{2\epsilon},\frac{1}{2\epsilon})^2} J^\xi(t' -t) |u_{\epsilon,\omega}^{y\xi}(\epsilon t')-u_{\epsilon,\omega}^{y\xi}(\epsilon t)|^2 \,dt'dt\bigg)dy \\
    \nonumber
    &\leq a_\omega^2\int_\Sigma\bigg(\int_{\mathbb{R} \times \mathbb{R}} J^\xi(t' -t) |\gamma_y(t)-\gamma_y(t')|^2 \, dt'dt\bigg)dy . 
\end{align}
Eventually, \eqref{e:limsupol10}, \eqref{e:limsup369},\eqref{e:i21e0}, and \eqref{e:limsuppoly4}, together with the fact that the parameter $d$ in \eqref{e:limsupol10} can be chosen arbitrary close to $1$, allow us to deduce
    \begin{align}
        \label{e:limsuppol9}
        \limsup_{\epsilon \to 0^+} & \frac{1}{4\epsilon}\int_{A \times A} J_\epsilon(x'-x) |u_{\epsilon,\omega}(x')-u_{\epsilon,\omega}(x)|^2 \, dx'dx\\
        \nonumber
        &\leq  \frac{a^2_\omega}{4} \int_{\Sigma} \bigg(\int_{\mathbb{R} \times \mathbb{R}} J^\xi(t' -t) |\gamma_y(t)-\gamma_y(t')|^2 \, dt'dt\bigg) dy. 
    \end{align}
\\

 \noindent\textbf{Step 4: Energy estimate for the potential}.
  Now we examine the energy contribution due to the double-well potential. We first observe that by \ref{X1} and \eqref{e:cenfam12345} there holds $\varphi_1(y,\epsilon t\xi,(\sigma_\omega)_y(t))=z_1(y+\epsilon t \xi)$ and $\varphi_1(y,\epsilon t\xi,(\sigma_\omega)_y(t))=z_2(y+\epsilon t \xi)$ for every $t \in (-\infty,-R')$ and $t \in (R',\infty)$, respectively, for all $y\in \Sigma$.
  
  Therefore,
  \begin{align}
      \frac{1}{\epsilon} \int_A W(x,u_{\epsilon,\omega}(x)) \, dx &= \frac{1}{\epsilon} \int_\Sigma \bigg(\int_{A^\xi_y} W(y+t\xi,u_{\epsilon,\omega}(y+t\xi)) \, dt\bigg)dy \\
      &\leq \int_\Sigma \bigg(\int_{\mathbb{R}} W(y+ \epsilon t\xi,u_{\epsilon,\omega}(y+\epsilon t\xi)) \, dt\bigg)dy \\
      \label{e:limsuppol13}
      &= \int_\Sigma \bigg(\int_{(-R',R')} W(y+ \epsilon t\xi,\varphi_1(y,\epsilon t\xi,(\sigma_\omega)_y(t))) \, dt\bigg)dy ,
  \end{align}
  where in the last equality we used \ref{H1}. 
By performing the change of variables induced by the increasing function $\sigma_y$ (see Proposition \ref{p:changevar}), we write  
  \begin{align}
    & \int_\Sigma\bigg(\int_{(-R',R')} W(y+ \epsilon t\xi,\varphi_1(y,\epsilon t\xi,(\sigma_\omega)_y(t))) \, dt\bigg)dy \\
    \label{e:limsuppol12}
    &=\int_\Sigma\bigg( \int_{(z_1(y),z_2(y))} W(y+ \epsilon (\sigma_\omega)_y^{-1}(t)\xi,\varphi_1(y,\epsilon (\sigma_\omega)_y^{-1}(t)\xi,t)) \, d(\dot{\sigma}_\omega)_y^{-1}(t) \bigg)dy.
  \end{align}
 In addition, for every $\epsilon < 1$ the integrand in \eqref{e:limsuppol12} is dominated by 
  \[
\sup_{y \in \Sigma}\sup_{x \in B_{R'}(0)} \sup_{t \in (-M,M)} W(y+x,t) < \infty,
  \]
  where $M >0$ is such that $[z_1(y+x),z_2(y+x)] \subset [-M,M]$ for every $y \in \Sigma$ and $x \in B_{ R'}(0)$.
 This together with \eqref{e:cenfam123456} allow us to apply again the limsup version of Fatou's lemma and write
  \[
  \begin{split}
      &\limsup_{\epsilon \to 0^+} \int_\Sigma \bigg(\int_{(z_1(y),z_{2}(y))} W(y+ \epsilon (\sigma_\omega)_y^{-1}(t)\xi,\varphi_1(y,\epsilon (\sigma_\omega)_y^{-1}(t)\xi,t)) \, d(\dot{\sigma}_\omega)_y^{-1}(t)\bigg)dy \\
      &= \int_\Sigma \bigg(\int_{(z_1(y),z_{2}(y)) } W(y,\varphi_1(y,0,t)) \, d(\dot{\sigma}_\omega)_y^{-1}(t)\bigg)dy, \\
      &= \int_\Sigma \bigg(\int_{(z_1(y),z_{2}(y))} W(y,t) \, d(\dot{\sigma}_\omega)_y^{-1}(t)\bigg)dy,
  \end{split}
  \]
where in the last inequality we used \ref{X3}. Summarizing, we have obtained
   \begin{equation}
      \label{e:limsuppol14}
      \limsup_{\epsilon \to 0^+} \frac{1}{\epsilon} \int_A W(x,u_{\epsilon,\omega}(x)) \, dx \leq \int_\Sigma \bigg(\int_{(z_1(y),z_{2}(y))} W(y,t) \, d(\dot{\sigma}_\omega)_y^{-1}(t)\bigg)dy.
  \end{equation}

Putting together \eqref{e:functional}, \eqref{e:limsuppol9}, and \eqref{e:limsuppol14}, we have
  \begin{align}
  \nonumber
  \limsup_{\epsilon \to 0^+} F_\epsilon(u_{\epsilon,\omega};A) &\leq \frac{a^2_\omega}{4} \int_{\Sigma} \bigg(\int_{\mathbb{R} \times \mathbb{R}} J^\xi(t' -t) |\gamma_y(t)-\gamma_y(t')|^2 \, dt'dt\bigg) dy \\
  \label{e:limsuppol15}
  &+\int_\Sigma \bigg(\int_{(z_1(y),z_{2}(y))} W(y,t) \, d(\dot{\sigma}_\omega)_y^{-1}(t)\bigg)dy.
  \end{align}

Next, we study the last term of the above inequality as $\omega \to 0^+$. By using that $(\sigma_\omega)^{-1}_y(t)  = \gamma^{-1}_y(\varphi_\omega(y,t))$ for $t \in (z_1(y),z_2(y))$, an integration by parts in time shows that 
\begin{align*}
    &\int_\Sigma \bigg(\int_{(z_1(y),z_{2}(y))} W(y,t) \, d\dot{\sigma}_y^{-1}(t)\bigg)dy \\
    &= a_\omega\int_\Sigma \bigg(\int_{\varphi_\omega(y,(z_1(y),z_2(y)))} W(y,\psi_\omega(y,t)) \, d\dot{\gamma}_y^{-1}(t)\bigg)dy.
\end{align*}
Therefore, by \eqref{e:z1z2phi}, \eqref{e:claimlimsup}, and the fact that $W \in L^1(\mathbb{R} \times \Sigma ; \dot{\gamma}_y^{-1} \otimes \mathcal{H}^{n-1})$ (here we are using the assumption $F_0(u) < \infty$), we apply Lebesgue's dominated convergence theorem, and we infer (recall that $a_\omega \searrow 1$ and $b_\omega \to 0$ uniformly in $y \in \Sigma$)
\begin{align}
\nonumber
    \lim_{\omega \to 0^+}\int_\Sigma \bigg(\int_{(z_1(y),z_{2}(y))} W(y,t) \, d\dot{\sigma}_y^{-1}(t)\bigg)dy &= \int_\Sigma \bigg(\int_{(z_1(y),z_{2}(y))} W(y,t) \, d\dot{\gamma}_y^{-1}(t)\bigg)dy \\
    \label{e:limsuppol29}
    &= \int_\Sigma \bigg(\int_{\mathbb{R}} W(y,\gamma_y(t)) \, dt\bigg)dy.
\end{align}
\\

   \noindent\textbf{Step 5: Construction of a recovery sequence near $\Sigma$}.
In this last part of the proof we construct the desired sequence. Notice that, for every fixed $\omega >0$, by \ref{X2}, for $(y,t) \in \Sigma \times [-1,1]$ we have
\[
\begin{split}
|\varphi_1(y,t\xi,(\sigma_\omega)_y(\epsilon^{-1}t)) -z_1(y+t\xi)| &=|\varphi_1(y,t\xi,(\sigma_\omega)_y(\epsilon^{-1}t)) -\varphi_1(y,t\xi,z_1(y))|\\
&\leq  \lfloor \varphi_1(y,t\xi,\cdot)  \rfloor_{C^{0,1}([z_1(y),z_2(y)])} |(\sigma_\omega)_y(\epsilon^{-1}t)-z_1(y)|,
\end{split}
\]
and, analogously, 
\[
\begin{split}
|\varphi_1(y,t\xi,(\sigma_\omega)_y(\epsilon^{-1}t)) -z_2(y+t\xi)| \leq  \lfloor \varphi_1(y,t\xi,\cdot)  \rfloor_{C^{0,1}([z_1(y),z_2(y)])} |(\sigma_\omega)_y(\epsilon^{-1}t)-z_2(y)|,
\end{split}
\]
Thanks to \eqref{e:cenfam12345}, for every fixed $\omega>0$, there holds $|(\sigma_\omega)_y(\epsilon^{-1}t)-z_1(y)| \to 0$ and $|(\sigma_\omega)_y(\epsilon^{-1}t)-z_2(y)| \to 0$ as $\epsilon \to 0^+$ locally uniformly for $(y,t) \in \Sigma \times (0,1)$ and $(y,t) \in \Sigma \times (-1,0)$, respectively. Now fix two compact sets $K^+ \subset \Sigma \times (0,1)$ and $K^- \subset \Sigma \times (-1,0)$, and consider a sequence $\omega_n \searrow 0$. For every fixed $n$ we already know that 
\begin{align}
\label{e:uniconvsup1}
    &|\varphi_1(y,t\xi,(\sigma_{\omega_n})_y(\epsilon^{-1}t)) -z_1(y+t\xi)| \to 0, \ \ \text{ uniformly on $K^-$ as $\epsilon \to 0^+$}\\
    \label{e:uniconvsup1.1}
    &|\varphi_1(y,t\xi,(\sigma_{\omega_n})_y(\epsilon^{-1}t)) -z_2(y+t\xi)| \to 0, \ \ \text{ uniformly on $K^+$ as $\epsilon \to 0^+$}.
\end{align}
In particular, by \eqref{e:uniconvsup1}--\eqref{e:uniconvsup1.1} there exists a sequence $\epsilon_n \searrow 0$ such that
\begin{align}
\label{e:uniconvsup2}
    &|\varphi_1(y,t\xi,(\sigma_{\omega_n})_y(\epsilon^{-1}t)) -z_1(y+t\xi)| \leq \frac{1}{n}, \ \ \text{ for $(y,t)\in K^-$ and $0<\epsilon \leq \epsilon_n$} \\
    \label{e:uniconvsup2.1}
    &|\varphi_1(y,t\xi,(\sigma_{\omega_n})_y(\epsilon^{-1}t)) -z_2(y+t\xi)| \leq \frac{1}{n}, \ \ \text{ for $(y,t)\in K^+$ and $0<\epsilon \leq \epsilon_n$}.
\end{align}
Thus, defining $u_{\epsilon}(y+t\xi):= \varphi_1(y,t\xi,(\sigma_{\omega_n})_y(\epsilon^{-1}t))$ for $\epsilon_{n+1} < \epsilon \leq \epsilon_n$ and every $n \in \mathbb{N}$, then 
\begin{align}
\label{e:uniconvsup3}
    &u_{\epsilon} \to z_1 \ \ \text{ uniformly on $K^-$ as $\epsilon \to 0^+$} \\
    \label{e:uniconvsup3.1}
    &u_{\epsilon} \to z_2 \ \ \text{ uniformly on $K^+$ as $\epsilon \to 0^+$}.
\end{align}
Eventually, by considering sequences of compact sets $K^+_m\nearrow \Sigma \times (0,1)$ and $K^-_m \nearrow \Sigma \times (-1,0)$ we can can exploit \eqref{e:uniconvsup3}--\eqref{e:uniconvsup3.1} and again a diagonal argument, to find a not relabelled subsequence $(u_\epsilon)$ such that 
\begin{align}
\label{e:uniconvsup4}
    &u_{\epsilon} \to z_1 \ \ \text{ locally uniformly on $\Sigma \times (-1,0)$ as $\epsilon \to 0^+$} \\
    \label{e:uniconvsup4.1}
    &u_{\epsilon} \to z_2 \ \ \text{ locally uniformly on $\Sigma \times (0,1)$ as $\epsilon \to 0^+$}.
\end{align}
 To conclude, we observe that, since $a_\omega \to 1$ as $\omega \to 0^+$ by construction, then \eqref{e:limsuppol15} and \eqref{e:limsuppol29} yield also \eqref{e:gli1111}.
\end{proof}

 Our next result ensures that the surface tension $\sigma$ introduced in \eqref{e:surftension} is an upper semi-continuous function.
\begin{proposition}
\label{p:surftension}
    The function $\sigma$ defined in \eqref{e:surftension} is upper semi-continuous. 
\end{proposition}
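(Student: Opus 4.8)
The plan is to show that $\sigma$ is upper semicontinuous by exhibiting, for every point $(x_0,\xi_0)\in\mathbb{R}^m\times\mathbb{S}^{m-1}$ and every $\delta>0$, a single competitor profile that is \emph{almost optimal} at $(x_0,\xi_0)$ and remains admissible and nearly as cheap at all nearby $(x,\xi)$. Fix $(x_0,\xi_0)$ and pick $\gamma\in X_{z_1(x_0)}^{z_2(x_0)}$ with $F^{\xi_0}_{x_0}(\gamma)\le\sigma(x_0,\xi_0)+\delta$. By a truncation/mollification argument we may assume $\gamma$ is bounded, increasing (replacing $\gamma$ by its monotone rearrangement does not increase either term, as in Proposition \ref{t:eximin}), and equals $z_i(x_0)$ outside a compact interval $[-R,R]$. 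The natural way to transplant $\gamma$ to the well pair $(z_1(x),z_2(x))$ is the affine-in-$t$ rescaling used repeatedly in the paper, e.g. in \eqref{e:h3def}/the proof of Theorem \ref{p:lipselx}: set
\[
\gamma_x(t):=\Big(\tfrac{\gamma(t)-z_1(x_0)}{z_2(x_0)-z_1(x_0)}\Big)z_2(x)+\Big(1-\tfrac{\gamma(t)-z_1(x_0)}{z_2(x_0)-z_1(x_0)}\Big)z_1(x),
\]
so that $\gamma_x\in X_{z_1(x)}^{z_2(x)}$, and $\gamma_x\to\gamma$ together with $\dot\gamma_x^{-1}\rightharpoonup\dot\gamma^{-1}$ as $x\to x_0$ (using continuity of $z_1,z_2$ from \ref{H1}).

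Next I would estimate $F^{\xi}_{x}(\gamma_x)$ and show $\limsup_{(x,\xi)\to(x_0,\xi_0)}F^{\xi}_{x}(\gamma_x)\le F^{\xi_0}_{x_0}(\gamma)$, which immediately gives $\limsup\sigma(x,\xi)\le\sigma(x_0,\xi_0)+\delta$ and hence, by arbitrariness of $\delta$, upper semicontinuity. For the potential term, $\int_{\mathbb{R}}W(x,\gamma_x(t))\,dt$ is handled via the change of variables \eqref{e:changevar}: it equals $\int_{z_1(x)}^{z_2(x)}W(x,s)\,d\dot\gamma_x^{-1}(s)$, and since the transplant maps $[z_1(x_0),z_2(x_0)]$ affinely onto $[z_1(x),z_2(x)]$ with a dilation factor $\to 1$, a combination of the weak-$*$ convergence of $\dot\gamma_x^{-1}$, the local uniform convergence $W(x,\cdot)\to W(x_0,\cdot)$ (continuity of $W$), and a dominating bound coming from the boundedness of $W$ on the relevant compact rectangle gives convergence of this term. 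For the nonlocal term, I would first deal with the $\xi$-dependence of $J^\xi$: by Fatou/dominated convergence and \ref{K2} (or \ref{K1}), $J^{\xi}(t-t')|\gamma_x(t)-\gamma_x(t')|^2$ is dominated, uniformly for $\xi$ near $\xi_0$, by $C\,|\gamma(t)-\gamma(t')|^2\cdot\sup_{|\zeta-\xi_0|<\varepsilon}J^{\zeta}(t-t')$; since $\int_{\mathbb{R}}\big(\sup_\zeta J^\zeta\big)(h)|h|\,dh<\infty$ and $|\gamma(t)-\gamma(t')|^2/|t-t'|$ is integrable against $|h|\,dh$ (monotonicity of $\gamma$), dominated convergence applies once one checks $J^{\xi}(t-t')\to J^{\xi_0}(t-t')$ for a.e. fixed $t-t'$, which follows from \eqref{e:kerneljxi1} and continuity of $\zeta\mapsto\int_{\zeta^\perp}J$ under the integrability hypothesis.

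Alternatively—and this is cleaner—one can avoid the direct $\xi$-limit by going back to the ambient formulation: $F^{\xi}_{x}(\gamma)$ coincides (up to the slicing identities \eqref{e:kerneljxi1}, as in Remark \ref{r:bormeas} and the computations in the proof of Proposition \ref{t:recpol}, Step 3) with the one-dimensional energy of the function $u(y+t\xi):=\gamma_x(t)$ restricted to a slab, whose nonlocal part is $\tfrac14\int\!\!\int J(z-z')|u(z)-u(z')|^2$; rotating $\xi$ then just precomposes $u$ with a rotation, and continuity in $\xi$ becomes continuity of the map $\xi\mapsto\int J\big(R_\xi h\big)g(h)\,dh$ for the fixed $L^1(|h|\,dh)$ density $g$ built from $\gamma$. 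I would carry out whichever of these is shorter. The main obstacle I anticipate is making the nonlocal term's convergence genuinely uniform in $\xi$ near $\xi_0$: one needs a single integrable majorant valid for all nearby directions, which forces either the globally integrable hypothesis \ref{K1}/\ref{K2} (available here) or, for $J\in\mathcal L_m(\eta,\lambda,\rho)$, a careful use of the decay-at-infinity bound \eqref{e:classm2} together with the local control \eqref{e:classlm1} and the monotone-profile estimate $|\gamma(t)-\gamma(t')|^2\le(z_2(x_0)-z_1(x_0))|\gamma(t)-\gamma(t')|$ to absorb the origin singularity; this is exactly the kind of estimate already performed in Lemma \ref{l:decaykernel} and the proof of Proposition \ref{t:recpol}, so it should go through, but it is the step requiring the most care.
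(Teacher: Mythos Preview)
Your approach is essentially the paper's: pick a (nearly) optimal increasing profile $\gamma_0$ at $(x_0,\xi_0)$, transplant it to the wells at $x$ via the affine map \eqref{e:h3def} (these are precisely the transition maps $\varphi_\rho$ of Lemma~\ref{lem:H3-trans}), and show $F^{\xi}_{x}(\gamma_x)\to F^{\xi_0}_{x_0}(\gamma_0)$, treating the potential term via \ref{X5} and the argument of Proposition~\ref{p:contdep}.

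The one place where the paper's execution differs from yours, and where your sketch has a soft spot, is the $\xi$-dependence of the nonlocal term. You propose a uniform majorant $\sup_{|\zeta-\xi_0|<\varepsilon}J^\zeta$, but integrability of this pointwise supremum is not obvious from \ref{K2} alone. The paper avoids this by a two-term splitting: write the error as
\[
\int |J^{\xi_n}(h)|h|-J^{\xi_0}(h)|h||\cdot\frac{|\gamma_n(t+h)-\gamma_n(t)|^2}{|h|}\,dt\,dh
\;+\;\int J^{\xi_0}(h)\,\big||\gamma_n(\cdot+h)-\gamma_n|^2-|\gamma_0(\cdot+h)-\gamma_0|^2\big|\,dt\,dh.
\]
The first piece vanishes because $J^{\xi_n}(t)|t|\to J^{\xi_0}(t)|t|$ \emph{strongly in $L^1(\mathbb{R})$} (continuity of the $L^1$-norm under rotations, using $\int J(h)|h|\,dh<\infty$) while $\int_{\mathbb{R}}|\gamma_n(t+h)-\gamma_n(t)|^2/|h|\,dt\le \lfloor\varphi_{\rho_n}\rfloor_{C^{0,1/2}}^2\,|z_2(x_0)-z_1(x_0)|$ is bounded uniformly in $h$ and $n$ by monotonicity of $\gamma_0$. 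The second piece goes to zero by dominated convergence with the \emph{fixed} majorant $J^{\xi_0}(h)|h|\cdot C\,|\gamma_0(t+h)-\gamma_0(t)|/|h|$. This decomposition is exactly the ``cleaner'' route you gesture at in your alternative, and it is what you should implement; no uniform-in-$\xi$ majorant is needed.
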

\begin{proof}
     From Proposition \ref{t:eximin} we know that there exists an increasing minimizer $\gamma_0$ of \eqref{e:surftension} such that $\sigma(x_0,\xi_0)=F_{x_0}^{\xi_0}(\gamma_0)$. Since we want to prove the upper semi-continuity of $\sigma$ at $(x_0,\xi_0)$, we may assume with no loss of generality that $\sigma(x_0,\xi_0)< \infty$.

     Consider sequences $\xi_n \to \xi_0$, $\rho_n \to 0$, and $(x_n) \subset \overline{B}_1(0)$, and define $\gamma_n \colon \mathbb{R} \to \mathbb{R}$ as $\gamma_n(t):= \varphi_{\rho_n}(x_0,x_n,\gamma_0(t))$, where for every $n\in \mathbb{N}$, the function $\varphi_{\rho_n} \colon \mathbb{R}^m \times \mathbb{R}^m \times \mathbb{R} \to \mathbb{R}$ is the transition map given in \ref{X1}--\ref{X4} (in this case we have set $K:= \overline{B}_1(0)$). In particular, we immediately deduce $\gamma_n \in X_{z_1(x_0 + \rho_n x_n)}^{z_2(x_0 + \rho_n x_n)}$.
    
    We first notice the pointwise convergence
    \begin{equation}
    \label{e:uppsem112}
        |\gamma_n(t+h) -\gamma_n(t)|^2 \to |\gamma_0(t+h) -\gamma_0(t)|^2, \ \ \text{for $\mathcal{L}^{2}$-a.e. $(t,h) \in \mathbb{R}\times \mathbb{R}$},
    \end{equation}
    and the uniform bound with respect to the $h$-variable of the following quantity
    \begin{align}
         \nonumber
         \int_{\mathbb{R}}\frac{|\gamma_n(t+h) -\gamma_n(t)|^2}{|h|}\, dt &\leq \sup_n \lfloor  \varphi_{\rho_n}(x_0,x_n,\cdot) \EEE\rfloor^2_{C^{0,1/2}} \int_\mathbb{R} \frac{|\gamma_0(t+h) -\gamma_0(t)|}{|h|} \, dt \\
         \label{e:uppsem113}
         &\leq \sup_n \lfloor  \varphi_{\rho_n}(x_0,x_n,\cdot) \EEE \rfloor^2_{C^{0,1/2}} |z_1(x_0),z_2(x_0)|.     
    \end{align}
    We further notice that, since $\int_{ \mathbb{R}^m \EEE} J(h) |h| \, dx < \infty$ by assumption, then thanks to the continuity of the $L^1$-norm with respect to the action of the group of rotations of $ \mathbb{R}^m \EEE$ we infer
    \begin{equation}
    \label{e:strongl1}
    \lim_{n \to \infty}\int_{\mathbb{R}}|J^{\xi_n}(t)|t| - J^{\xi_0}(t)|t|| \, dt =0.
    \end{equation}
    We thus estimate
    \begin{align}
    \label{e:line1}
        &\frac{1}{4}\int_{\mathbb{R} \times \mathbb{R}} |J^{\xi_n}(h)(\gamma_n(t+h)-\gamma_n(t))^2-J^{\xi_0}(h)(\gamma_0(t+h)-\gamma_0(t))^2| \, dtdh \\
        \label{e:line2}
        &\leq \frac{1}{4}\int_{\mathbb{R} \times \mathbb{R}} |J^{\xi_n}(h)|h| - J^{\xi_0}(h)|h||  \frac{(\gamma_n(t+h)-\gamma_n(t))^2}{|h|} \, dtdh \\
        \label{e:line3}
        &+ \frac{1}{4} \int_{\mathbb{R} \times \mathbb{R}} |J^{\xi_0}(h)  |(\gamma_n(t+h)-\gamma_n(t))^2-(\gamma_0(t+h)-\gamma_0(t))^2| \, dtdh.
    \end{align}
By combining \eqref{e:uppsem113}--\eqref{e:strongl1} we immediately infer that the integral \eqref{e:line2} goes to $0$ as $m \to \infty$. By arguing as in \eqref{e:uppsem113}, the integrand in \eqref{e:line3} is dominated by the function $f \in L^1(\mathbb{R}^2)$ defined as
\begin{align*}
f(t,h):= \sup_n (\lfloor  \varphi_{\rho_n}(x_0,x_n,\cdot) \EEE \rfloor^2_{C^{0,1/2}}+1)  &\frac{|\gamma_0(t+h) -\gamma_0(t)|}{|h|} J^{\xi_0}(h)|h|.
\end{align*}
Hence, we can apply Lebesgue's dominated convergence theorem in combination with \eqref{e:uppsem112} to conclude that \eqref{e:line3} goes to $0$ as $n\to \infty$.

    In addition, we exploit \ref{X5} and argue as in the proof of Proposition \ref{p:contdep} to infer 
    \begin{align*}
    \lim_{n \to \infty}\int_{\mathbb{R}} W(x_n,\gamma_n(t)) \, dt =\int_{\mathbb{R}} W(x_0,\gamma_0(t)) \, dt. 
    \end{align*}
     Therefore we have proved the convergence
    \begin{equation}
        \lim_{n\to +\infty} F^{\xi_n}_{0,x_n}(\gamma_n) = F^{\xi_0}_{0,x_0}(\gamma_0).
    \end{equation}
    But this means that
    \begin{align*}
    \limsup_{n\to +\infty}\sigma(x_n,\xi_n) \leq \lim_{n\to \infty} F^{\xi_n}_{0,x_n}(\gamma_n)  =F_{0,x_0}^{\xi_0}(\gamma_0) = \sigma(x_0,\xi_0).
    \end{align*}
    The arbitrariness of the sequences $(x_n),(\xi_n),(\rho_n)$ allows us to infer that $\sigma$ is upper semi-continuous at $(x_0,\xi_0)$.
\end{proof}

We are now in position to conclude the proof of the $\Gamma$-limsup upper-bound.

\begin{theorem}[Recovery sequence]
\label{t:recgen}
    Let $\Omega \subset  \mathbb{R}^m \EEE$ be open, let $J \colon  \mathbb{R}^m \EEE \to (0,\infty)$ be an even function, and let $W \colon  \mathbb{R}^m \EEE \times \mathbb{R} \to [0,\infty)$ satisfy \ref{H1}--\ref{H4}. Let further $\alpha \in (1/2,1]$ be given by \ref{H1}. Assume that either one of the following conditions is satisfied:
    \begin{enumerate}[(i)]
       \item  $Q_x(t):= \partial_t W(x,t)+t$ has a continuous inverse for every $x \in \Omega$ and $t \in [z_1(x),z_2(x)]$, and $J$ satisfies \ref{K1} with $\|J\|_{L^1( \mathbb{R}^m \EEE)}=1$
       \item $J \in \mathcal{L}_m(\eta,\lambda,\rho)$ for some $\eta \in (0,\alpha^2)$ and $\lambda,\rho \in (0,1)$.
   \end{enumerate}
     Then, for every $u \in CP(\Omega;\mathbf{z}(x))$  there exists a sequence $(u_\epsilon) \subset L^1(\Omega)$ such that $u_\epsilon \to u$ in $L^1(\Omega)$ and
    \begin{equation}
        \label{e:gli11111}
        \limsup_{\epsilon \to 0^+} F_{\epsilon}(u_\epsilon;\Omega) \leq F_0(u).
    \end{equation}
\end{theorem}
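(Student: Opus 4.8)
The plan is to deduce Theorem~\ref{t:recgen} from Proposition~\ref{t:recpol} via a density and diagonalization argument, following the scheme of \cite[Theorem~5.2]{alb-bel2} but taking care of the $x$-dependence of the wells. Fix $u \in CP(\Omega;\mathbf{z}(x))$ with $F_0(u)<\infty$, so that $u = z_1 \mathbbm{1}_E + z_2 \mathbbm{1}_{\Omega\setminus E}$ for some finite perimeter set $E\subset\Omega$. First I would approximate $E$ in the sense of finite perimeter by a sequence of polyhedral sets $(E_k)$ with $E_k \to E$ in $L^1$ and $\mathcal{H}^{n-1}(\partial^* E_k) \to \mathcal{H}^{n-1}(\partial^* E)$, with $\partial E_k$ transversal to $\partial\Omega$; this is a standard polyhedral approximation of sets of finite perimeter. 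Set $u_k := z_1 \mathbbm{1}_{E_k} + z_2 \mathbbm{1}_{\Omega\setminus E_k} \in CP(\Omega;\mathbf{z}(x))$, which is polyhedral in the sense of the definition. Since $z_1,z_2$ are continuous, hence bounded on $\overline\Omega$, we get $u_k \to u$ in $L^1(\Omega)$.

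The second step is to pass the upper bound for $F_0$ along the approximation. Here the key point is that $F_0(u_k) = \int_{J_{u_k}} \sigma(x,\nu_{u_k})\,d\mathcal{H}^{n-1} = \int_{\partial^* E_k \cap \Omega} \sigma(x,\nu_{E_k})\,d\mathcal{H}^{n-1}$, and one wants $\limsup_k F_0(u_k) \le F_0(u)$. This is exactly where Proposition~\ref{p:surftension}, i.e. the upper semicontinuity of $\sigma$, enters: combining the convergence $\mathcal{H}^{n-1}\restr \partial^* E_k \rightharpoonup \mathcal{H}^{n-1}\restr \partial^* E$ (as measures, together with weak convergence of the normals, which follows from the convergence of the perimeters and Reshetnyak-type arguments) with the upper semicontinuity of $(x,\nu)\mapsto \sigma(x,\nu)$, one obtains $\limsup_k \int_{\partial^* E_k\cap\Omega} \sigma(x,\nu_{E_k})\,d\mathcal{H}^{n-1} \le \int_{\partial^* E\cap\Omega}\sigma(x,\nu_E)\,d\mathcal{H}^{n-1} = F_0(u)$. (Alternatively, one can run the polyhedral approximation so that it already produces $F_0(u_k)\to F_0(u)$, exploiting that the polyhedral $E_k$ can be chosen with normals taking finitely many values close to those of $E$ on most of $\partial^* E$; either route works.)

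The third step is the diagonal argument. By Proposition~\ref{t:recpol}, for each fixed $k$ there is a sequence $(u_\epsilon^k)_\epsilon \subset L^1(\Omega)$ with $u_\epsilon^k \to u_k$ in $L^1(\Omega)$ as $\epsilon\to0^+$ and $\limsup_{\epsilon\to0^+} F_\epsilon(u_\epsilon^k;\Omega) \le F_0(u_k)$. Then I would choose, for each $k$, a threshold $\epsilon_k>0$ (decreasing to $0$) such that for $0<\epsilon\le\epsilon_k$ one has $\|u_\epsilon^k - u_k\|_{L^1(\Omega)} \le 1/k$ and $F_\epsilon(u_\epsilon^k;\Omega) \le F_0(u_k) + 1/k$, and define $u_\epsilon := u_\epsilon^k$ for $\epsilon_{k+1} < \epsilon \le \epsilon_k$. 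Then $\|u_\epsilon - u\|_{L^1} \le \|u_\epsilon^k - u_k\|_{L^1} + \|u_k - u\|_{L^1} \to 0$, and $\limsup_{\epsilon\to0^+} F_\epsilon(u_\epsilon;\Omega) \le \limsup_k (F_0(u_k)+1/k) = \limsup_k F_0(u_k) \le F_0(u)$, which is precisely \eqref{e:gli11111}. The whole argument is identical under hypotheses (i) and (ii), since Proposition~\ref{t:recpol} already covers both.

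I expect the main obstacle to be the second step — ensuring that the surface energy does not jump up in the limit along the polyhedral approximation. The subtlety is that $\sigma(x,\nu)$ depends on $x$ through the wells $z_1(x),z_2(x)$ and through $W(x,\cdot)$, so one cannot simply invoke lower semicontinuity of anisotropic perimeters; one needs the upper semicontinuity from Proposition~\ref{p:surftension} together with a careful measure-theoretic convergence statement for $(\mathcal{H}^{n-1}\restr\partial^* E_k, \nu_{E_k})$. Everything else (the polyhedral density, the continuity of $z_i$ giving $L^1$-convergence, the diagonal extraction) is routine.
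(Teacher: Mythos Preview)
Your proposal is correct and follows essentially the same approach as the paper: polyhedral approximation of the underlying finite-perimeter set $E$ (the paper cites \cite[Theorem~1.24]{giusti}), application of Proposition~\ref{t:recpol} to each polyhedral $u_k$, and a diagonal argument, with the key step $\limsup_k F_0(u_k)\le F_0(u)$ handled exactly as you describe, via Proposition~\ref{p:surftension} combined with Reshetnyak's upper-semicontinuity theorem (the paper cites \cite[Appendix]{modica}). Your identification of the second step as the only nontrivial point matches the paper's emphasis.
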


\begin{proof}
    Since $u \in CP(\Omega;\mathbf{z}(x))$ we know that there exists a set $E$ of finite perimeter in $\Omega$ such that $u=z_1$ on $E$ and $u=z_2$ on $\Omega \setminus E$. By the approximation result in \cite[Theorem 1.24]{giusti} we find a sequence $(E_k)$ of polyhedral sets, such that 
    \begin{align}
    \label{e:recseq3}
    &\mathbbm{1}_{E_k} \to \mathbbm{1}_E, \ \ \text{ in $L^1$ \ as $k \to \infty$}, \\
    \label{e:recseq4}
    &|D\mathbbm{1}_{E_k}|(\Omega) \to |D\mathbbm{1}_{E}|(\Omega), \ \ \text{ as $k \to \infty$}.
    \end{align}
    Define $v_k \colon \Omega \to \mathbb{R}$ as $v_k(x):= z_1(x) \mathbbm{1}_{E_k}(x)+ z_2(x) \mathbbm{1}_{\Omega \setminus E_k}(x)$. Then, $v_k \in CP(\Omega;\mathbf{z}(x))$ is a polyhedral function for every $k\in \mathbb{N}$. Thus, owing to Proposition \ref{t:recpol} we find a sequence $(v_{k,\epsilon})_\epsilon \subset L^1(\Omega)$ such that
    \begin{align}
    \label{e:recseq1}
    &v_{k,\epsilon} \to v_k, \  \ \text{ in $L^1(\Omega)$, \ as $\epsilon \to 0^+$\ for every $k\in \mathbb{N}$},\\
    \label{e:recseq2}
    &\limsup_{\epsilon \to 0^+} F_\epsilon(v_{k,\epsilon};\Omega) \leq F_0(v_k), \ \ \text{ for every $k\in \mathbb{N}$}.
    \end{align}
    Hence, thanks to a diagonal argument, to conclude the proof of the theorem, it remains only to show that $\limsup_{k \to \infty} F_0(v_k) \leq F_0(u)$. To this purpose, we notice that for every $u \in CP(\Omega;\mathbf{z}(x))$ we have $F_0(u)=\tilde{F}_0(E)$ where
    \[
    \tilde{F}_0(E):= \int_{\Omega} \sigma\left(x,\frac{D\mathbbm{1}_E}{|D\mathbbm{1}_E|}(x)\right) \, d|D\mathbbm{1}_E|(x), \ \ \text{ for $E:= \{u = z_1\}$}.
    \]
     As a result, it is enough to show that $\limsup_{k \to \infty} \tilde{F}_0(E_k) \leq \tilde{F}_0(E)$. In view of \eqref{e:recseq3}--\eqref{e:recseq4} and the upper semi-continuity of $\sigma$ given by Proposition \ref{p:surftension}, this latter inequality follows from a result due to Reshetnyak (see \cite[Appendix]{modica}).
\end{proof}

We conclude this section with the proof of Lemma \ref{l:decaykernel}.
\begin{proof}[Proof of Lemma \ref{l:decaykernel}]
By symmetrizing we may assume with no loss of generality that $-A=A$. Given two sets $B,B' \subset \mathbb{R}^m$ we denote by $B+B' := \{x \in \mathbb{R}^m \ | \ x= z+z', \ z \in B,\ z' \in B'\}$. By changing variables we have 
\begin{align*}
    \int_{\epsilon^{-1}(A \times A)} J(x'-x)  \epsilon^{\beta} |x-x'|^{1+\beta} \,dx'dx \leq \int_{\epsilon^{-1}A  \times \epsilon^{-1}(A+A) } J(h)  \epsilon^{\beta} |h|^{1+\beta} \,dxdh. 
\end{align*}

 Now,
\begin{align*}
    &\epsilon^{m} \int_{\epsilon^{-1}A  \times \epsilon^{-1}(A+A) } J(h)  \epsilon^{\beta} |h|^{1+\beta} \,dxdh 
    \leq \mathcal{H}^{m}(A ) \int_{\epsilon^{-1}(A+A) \cap \{|h| \leq N\}} J(h) \epsilon^\beta |h|^{1+\beta} \, dh \\
    &\qquad+\mathcal{H}^{m}(A ) \int_{\epsilon^{-1}(A+A) \cap \{|h| > N\}} J(h) \epsilon^\beta |h|^{1+\beta} \, dh 
    \leq \mathcal{H}^{m}(A ) \int_{ \{|h| \leq N\}} J(h) \epsilon^\beta |h|^{1+\beta} \, dh \\
    &\qquad+  \mathcal{H}^{m}(A) \text{diam}(A+A)^{\beta} \int_{\{|h|>N\}} J(h)  |h| \, dh,
\end{align*}
where we used that, from the simmetry assumption on $A$, we have $A+A \subset \overline{B}_{\text{diam}(A+A)}(0)$. Since $J \in \mathcal{L}(\sigma,\lambda,\rho)$ we have $\int_{\{|h| \leq N\}} J(h)|h|^{1+\beta} <\infty$ and $\int_{\{|h| > N\}} J(h)|h|<\infty$. Taking the limsup on both sides of the above chain of inequalities, we conclude that 
\[
\begin{split}
\limsup_{\epsilon \to 0^+}\,&\epsilon^{m} \int_{\epsilon^{-1}A  \times \epsilon^{-1}(A+A) } J(h)  \epsilon^{\beta} |h|^{1+\beta} \,dxdh \\
&\leq \mathcal{H}^{m}(A) \text{diam}(A+A)^{\beta} \int_{\{|h|>N\}} J(h)  |h| \, dh \to 0 \text{ as $N \to \infty$}.
\end{split}
\]
Hence, \eqref{e:decaykernel1} follows from the arbitrariness of $N>0$.
\end{proof}

\section{The $\Gamma$-liminf inequality}
\label{s:gammalimif}

In this section we establish a lower bound for the asymptotic behavior of our sequence of functionals. 
Along the whole section we fix $\xi \in  \mathbb{S}^{m-1} \EEE$ and an orthonormal basis $\{\xi_1,\dotsc,\xi_{n-1}\}$ of $\xi^\bot$. In the following we will make use of the notation $C_\rho(0)$, $Q_\rho(x_0)$, and $V_\xi$, introduced in Definition \ref{def:W-restr}. We denote by $M$ any integer such that \eqref{e:boundwells123} holds true.
In order to simplify the notation, we further set 
\[
S^\epsilon_\rho:= \{y+t\xi \ | \ y \in C_\rho(0), \ t\in (-\rho/2\epsilon,\rho/2\epsilon) \}.
\] 
 Given a function $u \colon  \mathbb{R}^m \EEE \to \mathbb{R}$ and an $(n-1)$-dimensional cube $C_\rho(0)$ oriented by $\xi$, we define $u^{yz} \colon \mathbb{R} \to \mathbb{R}$ as 
\begin{equation}
\begin{split}
u^{yz}(t) &:= u(y+tz), \ \ \text{ for every } y \in \xi^\bot, \ z \in V_\xi,
\end{split}
\end{equation}
and we denote by $\overline{u} \colon  \mathbb{R}^m \EEE \to \mathbb{R}$ the $C_\rho(0)$-periodic extension of $u$, namely,
\begin{equation}
\overline{u}(y+t\xi) = u\left(y- \sum_{i=1}^{n-1} m_i \xi_i +t \xi\right), \ \ \text{ for every $(y,t) \in \xi^\bot \times \mathbb{R}$ and $m \in \rho \mathbb{Z}^{n-1}$}.
\end{equation}
In the proof of the liminf inequality, we will often need to modify $C_\rho(0)$-periodic extensions on a scale $\epsilon$. We will argue according to the following construction.
\begin{definition}
    Given a family $(u_\epsilon)_\epsilon$ such that $u_\epsilon \colon S_\rho^\epsilon \to \mathbb{R}$, we define $\mathring{u}_{\epsilon,\xi} \colon  \mathbb{R}^m \EEE \to \mathbb{R}$ to be its $C_\rho(0)$-periodic modification at scale $\epsilon$ and in direction $\xi \in  \mathbb{S}^{m-1} \EEE$  as
\begin{equation}
\label{e:modification1}
    \mathring{u}_{\epsilon,\xi}(x) :=
    \begin{cases}
        \overline{u}_\epsilon(x), &\text{ if }  x=y+t\xi \in S^\xi_\rho \EEE \\
        \overline{z}_1(x_0+\frac{y}{t}) &\text{ if }  x = y+t\xi, \ (y,t) \in \xi^\bot \times (-\infty,-\rho/2\epsilon) \EEE \\
        \overline{z}_2(x_0+\frac{y}{t}) &\text{ if }  x = y+t\xi, \ (y,t) \in \xi^\bot \times (\rho/2\epsilon,\infty) \EEE.
    \end{cases}
\end{equation}
\end{definition}
By construction $\mathring{u}_{\epsilon,\xi}$ is $C_\rho(0)$-periodic. For simplicity of notation, we have omitted the dependence of the above sequence from $\rho$.

We begin by stating a technical result, concerning a double liminf inequality. Its proof, relying on a directional blow-up argument, is postponed to Section \ref{s:dirblowup}.

To a family $(\psi_\epsilon)_{\epsilon >0}$ of $C^1$-diffeomorphisms of $\xi^\bot$ we associate the kernels $$J_{\psi_\epsilon}(y+t\xi):= J(\psi_\epsilon(y)+t\xi) |\det(\nabla \psi_\epsilon(y))|$$ for $(y,t) \in \xi \times \mathbb{R}$, and we define
\begin{equation}
\label{e:defjpsi}
F_{\psi_\epsilon,x_0}(u;B):= \frac{1}{4} \int_{ \mathbb{R}^m \EEE \times B} J_{\psi_\epsilon}(h)|u(x+h)-u(x)|^2 \, dhdx + \int_{B } W_{x_0}^\epsilon(x,u(x)) \, dx,
\end{equation}
for every $u \colon  \mathbb{R}^m \EEE \to \mathbb{R}$ measurable and every $B \subset  \mathbb{R}^m \EEE$ Borel measurable.  Our double liminf inequality reads as follows.

\begin{proposition}
\label{c:est_gamminf}
    Let $(\psi_\epsilon)_{\epsilon >0}$ be a family of $C^1$-diffeomorphisms of $\xi^\bot$. Let $x_0 \in \Omega$, and let $(u_\epsilon)_\epsilon$ be a family of measurable functions $u_\epsilon \colon \Omega \to [-M,M]$. Assume that
    \begin{equation}
    \label{e:l1conv2}
        u_\epsilon \to u, \ \ \text{ in $L^1(\Omega)$ as $\epsilon \to 0^+$}.
    \end{equation}
    Then, by letting $v_\epsilon \colon S_\rho^\epsilon \to [-M,M]$ be defined as $v_\epsilon(y+t\xi):=u_\epsilon(x_0+ y+\epsilon t \xi )$, there exist $\rho_{x_0}>0$ and a set of full measure $I_{x_0}\subset (0,\rho_{x_0})$ such that, for every infinitesimal sequence $(\rho_\ell) \subset I_{x_0}$ there holds 
     \begin{align}
         \label{e:est_gammainf7}
         \liminf_{\rho_\ell \to 0}\EEE \liminf_{\epsilon \to 0^+}F_{\psi_\epsilon,x_0}(\mathring{v}_{\epsilon,\xi}; &S_{\rho_\ell}^\epsilon) \rho_\ell^{1-n} \geq \inf_{\gamma \in X_{z_1(x_0)}^{z_2(x_0)}} F^\xi_{x_0}(\gamma) \\
         &- C\|\hat{J}\|_{L^1}  \limsup_{\rho_\ell\to 0} \EEE  \mint_{\partial Q^-_{\rho_\ell}(x_0)}|u(x)-z_1(x_0)| \, d\mathcal{H}^{n-1} \\
&- C  \|\hat{J}\|_{L^1}  \limsup_{\rho_\ell\to 0^+} \EEE  \mint_{\partial Q^+_{\rho_\ell}(x_0)}|u(x)-z_2(x_0)| \, d\mathcal{H}^{n-1},
     \end{align}
     where $F_{\psi_\epsilon,x_0}$ is the functional defined in \eqref{e:defjpsi}, the maps $\mathring{v}_{\epsilon,\xi}$ are as in \eqref{e:modification1},  $C:=C(M,Z,\lfloor z_1 \rfloor_{C^{0,\alpha}},\lfloor z_2 \rfloor_{C^{0,\alpha}})>0$, we set  $\partial Q^\pm_\rho(x_0):= \partial Q_\rho(x_0) \cap \{x \ |\ \pm(x-x_0) \cdot \xi >0 \}$, and $F^\xi_{x_0}$ is given by \eqref{e:functionalxi148}.
\end{proposition}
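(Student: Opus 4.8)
The plan is to perform a directional blow-up of the energy $F_{\psi_\epsilon,x_0}(\mathring v_{\epsilon,\xi};S_{\rho_\ell}^\epsilon)$ around $x_0$ and to compare the rescaled energy with the one-dimensional functional $F^\xi_{x_0}$. The inequality reads as a "double liminf": first $\epsilon\to 0^+$ (the blow-up in the direction $\xi$ only), then $\rho_\ell\to 0$ (shrinking the cube). The three negative boundary terms are the unavoidable price paid for the fact that the $C_{\rho_\ell}(0)$-periodic modification $\mathring v_{\epsilon,\xi}$ need not match $z_1(x_0)$, $z_2(x_0)$ at the two ends, and will be produced by controlling the locality defect $\Lambda_\epsilon$ between $S_{\rho_\ell}^\epsilon$ and the half-spaces above and below it through Proposition \ref{p:deficit} and the convergence of $\epsilon$-traces (Proposition \ref{p:etrace_convergence}).

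\textbf{Main steps.} First I would fix $x_0\in\Omega$, choose $\rho_{x_0}>0$ small enough that on $\overline{Q_{\rho_{x_0}}(x_0)}$ the wells are well separated (so that \ref{H2}--\ref{H5} hold with a uniform $\delta$), and identify the full-measure set $I_{x_0}$ as those $\rho$ for which: (a) $\mathcal{H}^{n-1}(\partial Q_\rho(x_0)\cap J_u)=0$, (b) $u_\epsilon^{yz}\to u^{yz}$ in $L^1$ for a.e.\ slice, and (c) the $\epsilon$-traces of $u_\epsilon$ on $\partial Q^\pm_\rho(x_0)$ converge to the corresponding traces of $u$; all of these hold for a.e.\ $\rho$ by Fubini and Proposition \ref{p:etrace_convergence}. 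Second, for a fixed $\rho_\ell$, I would split $F_{\psi_\epsilon,x_0}(\mathring v_{\epsilon,\xi};S_{\rho_\ell}^\epsilon)$ into (i) the interaction energy internal to $S_{\rho_\ell}^\epsilon$, and (ii) the interaction energy between $S_{\rho_\ell}^\epsilon$ and its complement, the latter controlled from below (after subtracting) by $-\Lambda_{\epsilon}(\mathring v_{\epsilon,\xi};S^\epsilon_{\rho_\ell},(\mathbb R^m)\setminus S^\epsilon_{\rho_\ell})$. Applying Proposition \ref{p:deficit} with $\Sigma=\partial Q^\pm_{\rho_\ell}(x_0)$ (appropriately rescaled in the $\xi$-direction), the defect is asymptotically bounded by $\tfrac12\|\hat J\|_{L^1}\int_{\partial Q^\pm}|u-z_i(x_0)|\,d\mathcal H^{n-1}$ up to the Hölder error coming from replacing $z_i$ by $z_i(x_0)$, which is $O(\rho_\ell^\alpha)$ and hence vanishes as $\rho_\ell\to 0$; this yields exactly the two negative boundary terms (the extra $C$ absorbs the $z_i$-oscillation). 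Third, for the internal part, I would rescale by $\rho_\ell^{1-n}$, recognize that in the limit $\epsilon\to 0^+$ only interactions aligned with $\xi$ survive (because $J_{\psi_\epsilon}$ concentrates on the $\xi$-axis after the partial rescaling — this is where the structure of $F^\xi_{x_0}$ and the definition of $J^\xi$ in \eqref{e:kerneljxi1} enter), while the potential term $\int W^\epsilon_{x_0}$ converges to $\int_{C_{\rho_\ell}(0)}\big(\int_{\mathbb R}W(x_0,\cdot)\big)$ by the continuity of $W$ and \ref{H1}--\ref{H3}; a slicing/Fatou argument over $y\in C_{\rho_\ell}(0)$ then gives
\[
\liminf_{\epsilon\to 0^+}F_{\psi_\epsilon,x_0}(\mathring v_{\epsilon,\xi};S^\epsilon_{\rho_\ell})\rho_\ell^{1-n}\ \geq\ \mint_{C_{\rho_\ell}(0)}F^\xi_{x_0}\big(\text{slice}\big)\,dy\ \geq\ \inf_{\gamma\in X^{z_2(x_0)}_{z_1(x_0)}}F^\xi_{x_0}(\gamma),
\]
using that each slice of $\mathring v_{\epsilon,\xi}$ is, by construction of the modification, admissible in $X^{z_2(x_0)}_{z_1(x_0)}$ up to the boundary discrepancy already accounted for. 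Finally, taking $\liminf_{\rho_\ell\to 0}$ and collecting the three error terms gives \eqref{e:est_gammainf7}.

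\textbf{Main obstacle.} The delicate point is making the first passage $\epsilon\to0^+$ rigorous: the partial rescaling (by $\epsilon$ only in the $\xi$-direction) turns $F_{\psi_\epsilon,x_0}$ into a genuinely non-local, $m$-dimensional energy whose interactions orthogonal to $\xi$ do not vanish at fixed $\rho_\ell$, and one must show they become negligible after dividing by $\rho_\ell^{1-n}$ and sending $\epsilon\to0$ — equivalently, that the energy decouples into one-dimensional fibers. This is precisely the content of the auxiliary oscillation bounds (Propositions \ref{p:oscbound1}, \ref{p:oscbound2}) and the calibration lower bound (Proposition \ref{p:calibration}) referenced in Section \ref{s:dirblowup}; I would invoke those to reduce the orthogonal contributions to an error controlled by the defect/oscillation of the periodic modification, which vanishes because $\mathring v_{\epsilon,\xi}$ is frozen to $z_i(x_0)$ outside $S^\epsilon_{\rho_\ell}$ and varies $\beta$-Hölder-continuously inside via Theorem \ref{p:lipselx}. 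Keeping track of the uniformity of all error constants in $\rho_\ell$ (so that the outer $\liminf_{\rho_\ell\to0}$ can be taken) is the bookkeeping heart of the argument.
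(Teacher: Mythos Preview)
Your overall architecture (double liminf, boundary errors via $\epsilon$-traces/defect, full-measure choice of $\rho$) is correct and matches the paper's skeleton. The genuine gap is in your ``third step''.

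\textbf{The slicing step does not work as written.} You claim that after the partial rescaling ``only interactions aligned with $\xi$ survive'' and that a slicing/Fatou argument over $y\in C_{\rho_\ell}(0)$ yields $\mint_{C_{\rho_\ell}(0)}F^\xi_{x_0}(\text{slice})\,dy$. This is false: slicing $\mathring v_{\epsilon,\xi}$ in the $\xi$-direction alone does \emph{not} produce the one-dimensional kernel $J^\xi$. The kernel $J^\xi(t)=\int_{\xi^\bot}J(y+t\xi)\,dy$ arises only after integrating out the orthogonal variables, but the competitor depends on those variables too, so the non-local term does not factor. The concentration heuristic (``$J_{\psi_\epsilon}$ concentrates on the $\xi$-axis'') is wrong for the liminf; no such concentration occurs at fixed $\rho_\ell$.

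\textbf{What the paper actually does.} The mechanism you are missing is the \emph{asymptotic calibration} built from the conjugate functional. Proposition~\ref{p:decoperator} decomposes the non-local term as $\int_{V_\xi}\int_{C_\rho(0)}\textbf{K}^z(\overline u^{yz})\,dy\,dz$, i.e.\ along \emph{all} oblique lines $z\in V_\xi$, not just $\xi$. The potential is then simultaneously decomposed: using the operator $H$ from the conjugate-functional theory (Definition~\ref{d:calipotentials}) one constructs potentials $P^z_\rho$ with $\int_{V_\xi}P^z_\rho\,dz\le\mathcal W^{\rho\xi}\le W^\epsilon_{x_0}$ and with equality on the support of $\dot\gamma_{y,\rho}^{-1}$ (Theorem~\ref{t:charmin}). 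This gives the calibration lower bound $F_{\epsilon,x_0}(\mathring v_{\epsilon,\xi};S^\epsilon_\rho)\ge\int_{V_\xi}\int_{C_\rho(0)}G^{yz}_{\rho,\epsilon}(\mathring v^{yz}_{\epsilon,\xi})\,dy\,dz$ (Proposition~\ref{p:calibration}). Propositions~\ref{p:oscbound1}--\ref{p:oscbound2} then replace $G^{yz}_{\rho,\epsilon}(\mathring v^{yz}_{\epsilon,\xi})$ by $G^{yz}_\rho(\hat v^{yz}_{\epsilon,z})$ at the cost of the boundary-integral errors (this is where the $\partial Q^\pm$ terms originate---at the one-dimensional level, not from an $m$-dimensional defect split). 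Since each $\hat v^{yz}_{\epsilon,z}\in X^{b_\rho(y)}_{a_\rho(y)}$, one minimizes to $G^{yz}_\rho(\gamma_{y,\rho})$, and the key identity \eqref{e:lowerbound} collapses the $z$-integral back to $\int_{C_\rho(0)}\mathcal F^{\rho\xi}_y(\gamma_{y,\rho})\,dy$. Finally Proposition~\ref{p:contdep} sends this to $\inf F^\xi_{x_0}$ as $\rho\to 0$. The passage to general $\psi_\epsilon$ is Remark~\ref{r:kernelinvariance}: the construction is invariant under reparametrizations of $\xi^\bot$.

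\textbf{One more correction.} You invoke Theorem~\ref{p:lipselx} (H\"older selection of optimal profiles) to control orthogonal variations. That result is used \emph{only} in the $\Gamma$-limsup (recovery sequence); it plays no role whatsoever in the liminf argument.
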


\begin{remark}
\label{r:est_gamminf}
    It is worth noticing that Proposition \ref{c:est_gamminf} does not depend on the chosen completion  $ \{\xi_1,\dotsc,\xi_{n-1}\} \EEE$ of $\{\xi\}$ to an orthonormal basis of $ \mathbb{R}^m \EEE$. This is due to the fact that Lemma \ref{l:est_gammainf}, on which the proposition is based, enjoys the same property. 
\end{remark}

We are now in a position to state and prove the first main result of this section, namely the identification of a lower bound in the case in which the admissible kernels are integrable.

\begin{theorem}[$\Gamma$-liminf with integrable kernel]
\label{t:gammaliminf1}
    Let $\Omega \subset  \mathbb{R}^m \EEE$ be open, let $J \colon  \mathbb{R}^m \EEE \to (0,\infty)$ be an even function fulfilling \eqref{K1}, and let $W \colon  \mathbb{R}^m \EEE \times \mathbb{R} \to [0,\infty)$ satisfy \ref{H1}--\ref{H5}. Then, for every sequence $(u_{\epsilon})\subset L^1(\Omega)$ such that $u_\epsilon \to u$ in $L^1(\Omega)$, there holds
    \begin{equation}
        \label{e:gli1000}
        \liminf_{\epsilon \to 0^+} F_{\epsilon}(u_\epsilon;\Omega) \geq F_0(u).
    \end{equation}
\end{theorem}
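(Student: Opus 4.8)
\textbf{Proof strategy for Theorem \ref{t:gammaliminf1}.}
The plan is to reduce the $\Gamma$-liminf inequality to the double liminf inequality of Proposition \ref{c:est_gamminf}, localized around points of the jump set $J_u$, via a blow-up/localization argument on the measures $\mu_\epsilon:=F_\epsilon(u_\epsilon;\cdot\,)$. First, I would dispose of the trivial cases: if $\liminf_\epsilon F_\epsilon(u_\epsilon;\Omega)=+\infty$ there is nothing to prove, so assume it is finite along a (not relabelled) subsequence realizing the liminf. By Proposition \ref{p:compactness}, up to a further subsequence $u_\epsilon\to u$ in $L^1(\Omega)$ with $u\in CP(\Omega;\mathbf z(x))$; in particular $u=z_1\mathbbm 1_E+z_2\mathbbm 1_{\Omega\setminus E}$ for a finite-perimeter set $E$, and $J_u=\partial^*E\cap\Omega$ up to $\mathcal H^{n-1}$-null sets, with $\nu_u=\pm\nu_E$ and $[u](x)=z_1(x)-z_2(x)$. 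Since $W\ge 0$ and $(F_\epsilon(u_\epsilon;\cdot))_\epsilon$ are nonnegative Radon measures with equibounded masses, up to subsequences $F_\epsilon(u_\epsilon;\cdot)\rightharpoonup\mu$ weakly* for some finite nonnegative Radon measure $\mu$ on $\Omega$; it then suffices to prove $\frac{d\mu}{d\mathcal H^{n-1}\restr J_u}(x_0)\ge\sigma(x_0,\nu_u(x_0))$ for $\mathcal H^{n-1}$-a.e.\ $x_0\in J_u$, since then $\mu(\Omega)\ge\mu(J_u)\ge\int_{J_u}\sigma(x,\nu_u(x))\,d\mathcal H^{n-1}=F_0(u)$.

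Second, I would fix a point $x_0\in J_u$ at which: (a) the Besicovitch derivative $\theta(x_0):=\lim_{\rho\to0}\mu(Q_\rho(x_0))/\rho^{n-1}$ exists and is finite; (b) $x_0$ is a point of approximate continuity of $\nu_E$ with $\nu_E(x_0)=\xi$, so that $\mathbbm 1_E\to\mathbbm 1_{H_\xi}$ in $L^1_{loc}$ after blow-up, where $H_\xi=\{y\cdot\xi<0\}$; (c) the blow-up of $u$ around $x_0$ is $z_1(x_0)\mathbbm 1_{H_\xi}+z_2(x_0)\mathbbm 1_{H_\xi^c}$ in $L^1_{loc}$, using the $\alpha$-Hölder continuity of $z_1,z_2$ to replace $z_i(x)$ by $z_i(x_0)$; (d) the averaged boundary traces $\mathbin{\mint_{\partial Q^\pm_\rho(x_0)}}|u(x)-z_{i}(x_0)|\,d\mathcal H^{n-1}\to0$ as $\rho\to0$ for an appropriate choice of cube orientation — this holds for $\mathcal H^{n-1}$-a.e.\ orientation-compatible $x_0$ by the blow-up structure of finite-perimeter sets together with a Fubini/Vitali argument over slices, choosing the cube $Q_\rho(x_0)$ with one pair of faces orthogonal to $\xi$. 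Almost every $x_0\in J_u$ satisfies (a)–(d). Choosing $\rho$ in the full-measure set $I_{x_0}\subset(0,\rho_{x_0})$ supplied by Proposition \ref{c:est_gamminf} and such that $\mu(\partial Q_\rho(x_0))=0$, I have $\theta(x_0)=\lim_{\rho_\ell\to0}\lim_{\epsilon\to0}F_\epsilon(u_\epsilon;Q_{\rho_\ell}(x_0))\rho_\ell^{1-n}$ along suitable sequences.

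Third, I would match the left-hand side of Proposition \ref{c:est_gamminf} with $F_\epsilon(u_\epsilon;Q_{\rho}(x_0))$. Setting $v_\epsilon(y+t\xi):=u_\epsilon(x_0+y+\epsilon t\xi)$ on $S_\rho^\epsilon$, one has $F_{\mathrm{id},x_0}(\mathring v_{\epsilon,\xi};S_\rho^\epsilon)=F_\epsilon(u_\epsilon;Q_\rho(x_0))+$ (lower order interaction terms): the anisotropic rescaling by $\epsilon$ only in the $\xi$-direction converts $F_\epsilon(u_\epsilon;\cdot)$ restricted to $Q_\rho(x_0)$ into $F_{\mathrm{id},x_0}(\mathring v_{\epsilon,\xi};\cdot)$ up to boundary-interaction errors that are controlled by the locality defect $\Lambda_\epsilon$; the periodic modification $\mathring v_{\epsilon,\xi}$ replaces $u_\epsilon$ outside $S_\rho^\epsilon$ by the flat profiles $z_1,z_2$, and the extra interaction energy thereby created between $S_\rho^\epsilon$ and its complement is exactly what the two negative ``trace" terms in \eqref{e:est_gammainf7} account for, via Proposition \ref{p:deficit} and the convergence of $\epsilon$-traces (Proposition \ref{p:etrace_convergence}, using that $u_\epsilon\to u$ in $L^1$ and choosing the cube so that the faces are generic Lipschitz level sets). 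Taking $\psi_\epsilon=\mathrm{id}$ in Proposition \ref{c:est_gamminf}, together with (d), yields
\[
\theta(x_0)\ \ge\ \inf_{\gamma\in X_{z_1(x_0)}^{z_2(x_0)}}F^\xi_{x_0}(\gamma)\ =\ \sigma(x_0,\xi)\ =\ \sigma(x_0,\nu_u(x_0)),
\]
which is the desired pointwise lower bound; integrating over $J_u$ closes the argument.

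\textbf{Main obstacle.} The delicate point is step three: carefully showing that the difference between $F_\epsilon(u_\epsilon;Q_\rho(x_0))$ and $F_{\mathrm{id},x_0}(\mathring v_{\epsilon,\xi};S_\rho^\epsilon)\rho^{1-n}$ — coming from boundary interactions across $\partial Q_\rho(x_0)$ and from the periodic-in-$\xi^\bot$ modification — is, after taking $\liminf_\epsilon$ and then $\liminf_{\rho\to0}$, absorbed into the two negative trace terms of \eqref{e:est_gammainf7} and ultimately vanishes by (d). This requires combining the decay of the locality defect (Section \ref{s:locdef}), the convergence of $\epsilon$-traces for $\mathcal H^{n-1}$-a.e.\ slicing cube, and the fact that $(F_\epsilon(u_\epsilon;\cdot))_\epsilon$ does not charge $\partial Q_\rho(x_0)$ for a.e.\ $\rho$, so that the blow-up density $\theta(x_0)$ is genuinely captured. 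The $\alpha$-Hölder continuity of the wells enters here as well, ensuring that replacing $W(x,\cdot)$ by $W(x_0,\cdot)$ and $z_i(x)$ by $z_i(x_0)$ over a cube of size $\rho$ produces errors $o(1)$ as $\rho\to0$ uniformly in $\epsilon$.
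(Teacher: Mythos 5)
Your overall architecture is sound and relies on the same key ingredients as the paper (the double liminf inequality of Proposition \ref{c:est_gamminf}, the locality-defect/$\epsilon$-trace machinery of Section \ref{s:locdef}, and a selection of good radii for which the averaged boundary traces of $u$ converge to $z_1(x_0)$, $z_2(x_0)$), but your localization device is different: you differentiate a weak* limit measure \`a la Fonseca--M\"uller and prove a pointwise lower bound for the Radon--Nikodym density on $J_u$, whereas the paper avoids the blow-up of measures altogether and instead uses the Morse covering theorem to cover $J_u$ by finitely many cubes $Q_{\rho_m}(x_m)$ on which the trace deviations are at most $\tau\rho_m^{n-1}$, then sums the double liminf inequality over the cover and lets $\tau\to0$. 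Both routes work; the covering argument trades the measure-theoretic differentiation for a combinatorial selection, and in particular sidesteps the issue I raise next.

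Two concrete points need repair. First, $A\mapsto F_\epsilon(u_\epsilon;A)$ is \emph{not} a Radon measure: the double integral over $A\times A$ makes it superadditive but not additive (this failure of additivity is exactly the locality defect $\Lambda_\epsilon$). To run your weak* compactness step you must replace it by the genuine measures
\begin{equation*}
\mu_\epsilon(A):=\frac{1}{4\epsilon}\int_A\Big(\int_\Omega J_\epsilon(x'-x)|u_\epsilon(x')-u_\epsilon(x)|^2\,dx'\Big)dx+\frac{1}{\epsilon}\int_A W(x,u_\epsilon(x))\,dx,
\end{equation*}
which satisfy $\mu_\epsilon(\Omega)=F_\epsilon(u_\epsilon;\Omega)$ and $\mu_\epsilon(A)\ge F_\epsilon(u_\epsilon;A)$; the Besicovitch argument then goes through with cubes oriented by $\nu_u(x_0)$ chosen so that $\mu(\partial Q_\rho(x_0))=0$. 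Second, your choice $\psi_\epsilon=\mathrm{id}$ in Proposition \ref{c:est_gamminf} is incorrect: the change of variables $x=x_0+y+\epsilon t\xi$ rescales only the $\xi$-component, so $J_\epsilon(x-x')=\epsilon^{-n}J\big((y-y')/\epsilon+(t-t')\xi\big)$, and matching this with the kernel $J_{\psi_\epsilon}$ in \eqref{e:defjpsi} forces $\psi_\epsilon(y)=\epsilon^{-1}y$ (whence $|\det\nabla\psi_\epsilon|=\epsilon^{1-n}$, which is precisely what balances the Jacobian $\epsilon^2$ of $dx\,dx'$ against the prefactor $1/4\epsilon$). This is exactly the choice made in the paper's proof, and it is the reason Proposition \ref{c:est_gamminf} is formulated for a family of diffeomorphisms in the first place; with $\psi_\epsilon=\mathrm{id}$ the identity $F_{\psi_\epsilon,x_0}(\mathring v_{\epsilon,\xi};S_\rho^\epsilon)=F_\epsilon(u_\epsilon;Q_\rho(x_0))+(\text{cross terms})$ that your third step rests on simply fails. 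Both slips are fixable without changing your strategy.
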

\begin{proof}
We may assume with no loss of generality that $ \liminf_{\epsilon \to 0^+} F_{\epsilon}(u_\epsilon;\Omega) < \infty$. 

By virtue of the compactness result we know that $u \in CP(\Omega;\mathbf{z}(x))$ and in particular that $\mathcal{H}^{n-1}(J_u) < \infty$. 
Since projections on convex subsets of $ \mathbb{R}^m \EEE$ are $1$-Lipschitz maps and since the constant $M$ has been chosen satisfying \eqref{e:boundwells123}, we deduce that $F_\epsilon(u^M_\epsilon;\Omega) \leq F_\epsilon(u_\epsilon;\Omega)$ for every $\epsilon>0$, where $u^M_\epsilon$ is the truncation of $u_\epsilon$ between $-M$ and $M$. Moreover, property \eqref{e:boundwells123} together with $u(x) \in \{z_1(x),z_2(x)\}$ for a.e. $x$, imply that also the truncated sequence satisfies $u^M_\epsilon \to u$ in $L^1(\Omega)$. Therefore, without loss of generality, we assume that $u_\epsilon \colon \Omega \to [-M,M]$.

  Now, let $x_0 \in J_u$, and denote by $\nu_u(x_0)$ a unit vector orthogonal to the tangent space of $J_u$ at $x_0$. Let $\{e_1,\dotsc,e_{n-1},\nu_u(x_0)\}$ be an orthonormal basis of $ \mathbb{R}^m \EEE$, and let $Q_\rho(x_0)$ be the hypercube parallel to the given basis. By Proposition \ref{p:etrace_convergence}, the $\epsilon_\ell$-traces of $u_{\epsilon_\ell}$ relative to $\Omega$ converge on $\partial Q_{\rho}(x_0)$ to $u$ for a.e. $\rho$ (sufficiently small depending on $x_0$), for every infinitesimal sequence $(\epsilon_\ell)$. Therefore, by using that the $\epsilon_\ell$-traces convergence relative to $A'$ implies the $\epsilon_\ell$-traces convergence relative to $A$ whenever $A \subset A'$, and by the arbitrariness of $\epsilon_\ell$, we infer the following 
\begin{enumerate}
    \item[(i)] for every $x_0 \in J_u$ the $\epsilon$-traces of $u_{\epsilon}$ relative to $Q_{\rho}(x_0)$ converge on $\partial Q_\rho(x_0)$ to $u$ for a.e.  $\rho \in I_{x_0}$ \EEE such that $Q_\rho(x_0) \subset \Omega$, 
\end{enumerate}
 where $I_{x_0}$ denotes the set of $\rho >0$ for which Proposition \ref{c:est_gamminf} is satisfied. \EEE In addition, since $x_0 \in J_u$, there exists a subsequence $\rho_\ell \searrow 0$ (depending on $x_0$) such that
\begin{align}
\label{e:jumpconv1}
&\lim_{\ell \to \infty} \frac{1}{(t\rho_\ell)^{n-1}} \int_{\partial Q^+_{t\rho_\ell}(x_0)} |u(x)-z_2(x_0)| \, d\mathcal{H}^{n-1}=0, \ \ \text{ for a.e. $t \in (0,1)$} \\
\label{e:jumpconv2}
&\lim_{\ell \to \infty} \frac{1}{(t\rho_\ell)^{n-1}} \int_{\partial Q^-_{t\rho_\ell}(x_0)} |u(x)-z_1(x_0)| \, d\mathcal{H}^{n-1}=0, \ \ \text{ for a.e. $t \in (0,1)$},
\end{align}
where we recall that $\partial Q^\pm_{\rho}(x_0):= \partial Q_\rho(x_0) \cap \{x \in  \mathbb{R}^m \EEE \ | \ \pm (x-x_0) \cdot \nu_u(x_0) >0\}$.  By exploiting the fact that $(\rho_\ell)$ is a countable set, we can find $t \in (0,1)$ such that, setting with abuse of notation $\rho_\ell :=t\rho_\ell$, the following condition holds:\EEE
\begin{enumerate}
    \item[(ii)] for every $x_0 \in J_u$ there exists a sequence  $(\rho_\ell) \subset I_{x_0}$ \EEE with $\rho_\ell \searrow 0$ such that the $\epsilon$-traces of $u_{\epsilon}$ relative to $Q_{\rho_\ell}(x_0)$ converge on $\partial Q_{\rho_\ell}(x_0)$ to $u$ and 
    \begin{align}
\label{e:jumpconv1.1}
&\lim_{\ell \to \infty} \frac{1}{(\rho_\ell)^{n-1}} \int_{\partial Q^+_{\rho_\ell}(x_0)} |u(x)-z_2(x_0)| \, d\mathcal{H}^{n-1}=0, \ \ \text{ for a.e. $t \in (0,1)$} \\
\label{e:jumpconv2.1}
&\lim_{\ell \to \infty} \frac{1}{(\rho_\ell)^{n-1}} \int_{\partial Q^-_{\rho_\ell}(x_0)} |u(x)-z_1(x_0)| \, d\mathcal{H}^{n-1}=0, \ \ \text{ for a.e. $t \in (0,1)$},
\end{align}
for every $\ell$ such that $Q_{\rho_\ell}(x_0) \subset \Omega$.
\end{enumerate}

Eventually, by Proposition \ref{c:est_gamminf}, properties \eqref{e:jumpconv1.1}--\eqref{e:jumpconv2.1} entail: 
\begin{enumerate}
    \item[(iii)] let $x_0 \in J_u$; then the subsequence $(\rho_\ell)$ associated to $x_0$ by condition (ii) also satisfies
    \begin{equation}
        \label{e:liminfen456}
        \liminf_{\rho_\ell \to 0}\liminf_{\epsilon \to 0^+} F_{\psi_\epsilon,x_0}(\mathring{v}_{\epsilon,\xi_0}; S_{\rho_\ell}^\epsilon) \rho_\ell^{1-n}  \geq \sigma(x_0,\xi_0), 
    \end{equation}
    where $\xi_0 = \nu_u(x_0)$, $\psi_\epsilon(y)= \epsilon^{-1}y$, $S_{\rho_\ell}^\epsilon=C_{\rho_\ell} \times (-\frac{\rho_\ell}{2\epsilon},\frac{\rho_\ell}{2\epsilon})$ where $C_{\rho_\ell}$ is any $(n-1)$-dimensional cube of size $\rho_\ell$ centered at $0$ and oriented by $\xi_0$, $v_\epsilon(y+t\xi_0) := u_\epsilon(x_\ell +y+\epsilon t \xi_0)$ for $y \in \xi_0^\bot$, and $\mathring{v}_{\epsilon,\xi_0}$ is given by \eqref{e:modification1}.
\end{enumerate}

In view of (ii) and (iii), given $\tau >0$, the fact that for every $x_0 \in J_u$ the corresponding sequence $(\rho_\ell)$ is infinitesimal allows us to make use of the Morse covering theorem \cite{fonleo} and to infer the existence of finitely many points $(x_m) \subset J_u$ and strictly positive radii $(\rho_m)$ such that (we assume $u^+(x_m)= z_2(x_m)$ and $u^-(x_m)=z_1(x_m)$ but nothing would have changed with the opposite choice, since $\sigma(x,\xi)=\sigma(x,-\xi)$):
   \begin{align}
   \label{e:gli1}
   & \ \ \ \ \ \text{the $\epsilon$-traces of $u_{\epsilon}$ relative to $Q_{\rho_m}(x_m)$ converge on $\partial Q_{\rho_m}(x_m)$ to $u$}\\
   \label{e:jumpconv1.1.1}
& \ \ \ \ \ \ \ \ \ \ \ \ \ \ \ \ \ \ \ \ \ \ \int_{\partial Q^+_{\rho_m}(x_m)} |u(x)-z_2(x_m)| \, d\mathcal{H}^{n-1} \leq \tau\rho_m^{n-1}\\
\label{e:jumpconv2.1.1}
&\ \ \ \ \ \ \ \ \ \ \ \ \ \ \ \ \ \ \ \ \ \ \int_{\partial Q^-_{\rho_m}(x_m)} |u(x)-z_1(x_m)| \, d\mathcal{H}^{n-1}\leq \tau\rho_m^{n-1} \\
   \label{e:gli2}
   & \ \ \ \ \ \ \ \ \ \ \ \ \ \ \ \ \ \ \ \ \ \ \ \ \  \sum_{m} \sigma(x_m,\xi_m) \rho_m^{n-1} \geq F_0(u) -\tau,\\
   \label{e:gli3}
   & \ \ \ \ \ \ \ \ \ \ \ \ \ \ \ \ \ \ \ \ \ \ \ \ \ \ \ \ \ \sum_\ell \rho_m^{n-1} \leq \mathcal{H}^{n-1}(J_u) + \tau\\
   \label{e:gli4}
   & \ \ \ \ \ \ \ \ \ \ \ \ \liminf_{\epsilon \to 0^+} F_{\psi_\epsilon,x_0}(\mathring{v}_{\epsilon,\xi_m}; S_{\rho_m}^\epsilon)  \geq \sigma(x_m,\xi_m) \rho_m^{n-1} - \rho_{m}^{n-1} \tau 
   \end{align}
   where $\xi_m:=\nu_u(x_m)$, $C_{\rho_m}$ is oriented accordingly to $\xi_m$, $v_\epsilon(y+t\xi_m) := u_\epsilon(x_m +y+\epsilon t \xi_m)$ for $y \in \xi_m^\bot$ (we omit the dependence of $v_\epsilon$ from $m$ in the notation).

 We hence infer 
   \begin{align}
  \nonumber
       &\liminf_{\epsilon \to 0^+} F_\epsilon(u_\epsilon;\Omega) \geq \sum_{m} \liminf_{\epsilon \to 0^+} F_\epsilon(u_\epsilon;Q_{\rho_m}(x_m)) \\
       \nonumber
       &=\sum_{m} \liminf_{\epsilon \to 0^+} \frac{1}{4}\int_{S_{\rho_m}^\epsilon \times S_{\rho_m}^\epsilon} J_{\psi_\epsilon}(x-x')|v_\epsilon(x)-v_\epsilon(x')|^2 \, dxdx' + \int_{S_{\rho_m}^\epsilon} W_{x_m}^\epsilon(x,v_\epsilon(x)) \, dx \\
       \label{e:defest1.1.6}
       &\geq \sum_{m} \liminf_{\epsilon \to 0^+} F_{\psi_\epsilon,x_m}(\mathring{v}_{\epsilon,\xi_m}; S_{\rho_m}^\epsilon)\\
       \nonumber
       &-\sum_m\limsup_{\epsilon \to 0^+} \int_{( \mathbb{R}^m \EEE \setminus S_{\rho_m}^\epsilon) \times S_{\rho_m}^\epsilon} J_{\psi_\epsilon}(x'-x)|\mathring{v}_{\epsilon,\xi_m}(x')-\mathring{v}_{\epsilon,\xi_m}(x)|^2 \, dx'dx.
   \end{align}
   Notice that, by performing the inverse change of variables considered in the above equality, we can rewrite the last term by means of the defect functional, cf. \eqref{e:deficit}, as
   \begin{align}
   \nonumber
   \int_{( \mathbb{R}^m \EEE \setminus S_{\rho_m}^\epsilon) \times S_{\rho_m}^\epsilon} &J_{\psi_\epsilon}(x'-x)|\mathring{v}_{\epsilon,\xi_m}(x')-\mathring{v}_{\epsilon,\xi_m}(x)|^2 \, dx'dx\\
   \label{e:defecto0}
   &= \Lambda_\epsilon(\tilde{u}_\epsilon ; \mathbb{R}^m \EEE \setminus Q_{\rho_m}(x_m),Q_{\rho_m}(x_m)) ,
   \end{align}
where $\tilde{u}_\epsilon \colon  \mathbb{R}^m \EEE \to [-M,M]$ is defined as
\[
\tilde{u}_\epsilon(x):=
\begin{cases}
    \overline{u}_\epsilon(x) &\text{ if } x = t+t\xi_m\EEE, \ (y,t) \in \xi_m^\bot \times (-\frac{\rho_m}{2},\frac{\rho_m}{2})\\
    \overline{z}_1(x_0+\epsilon \frac{y}{t}) &\text{ if }x=y+t\xi_m, \ (y,t)\in \xi_m^\bot \times (-\infty,-\frac{\rho_m}{2})\\
     \overline{z}_2(x_0+\epsilon \frac{y}{t}) &\text{ if }x=y+t\xi_m, \ (y,t) \in \xi_m^\bot \times (\frac{\rho_m}{2},\infty).
\end{cases}
\]

 We claim that the defect functional in \eqref{e:defecto0} goes to $0$ as $\epsilon \to 0^+$. To show this, fix $m$ and consider the orthonormal basis $\{e_1,\dotsc,e_{n-1},\xi_m\}$. Denote by $\Xi$ the family of all coordinate $(n-1)$-planes $V$ with respect to $\{e_1,\dotsc,e_{n-1},\xi_m\}$. We split $ \mathbb{R}^m \EEE\setminus Q_{\rho_m}(x_m)$ as the disjoint union of two sets $A_1$ and $A_2$, defined as
 \[
 \begin{split}
 A_1 &:= \bigcup_{V \in \Xi} \pi_{V}^{-1}(\pi_{V}(Q_{\rho_m}(x_m))) \setminus Q_{\rho_m}(x_m) \\
 A_2 &:= ( \mathbb{R}^m \EEE \setminus Q_{\rho_m}(x_m)) \setminus A_1.
 \end{split}
 \]
We further denote $\pi_{V}^{-1}(\pi_{V}(Q_{\rho_m}(x_m))) \setminus Q_{\rho_m}(x_m)$ by $A^V_1$. Let $V \in \Xi$, and suppose for example that $V \neq \xi_m^\bot$. We show that $\Lambda_\epsilon(\tilde{u}_\epsilon;A^V_1,Q_{\rho_m}(x_m)) \to 0$. To this purpose, by letting $e_V$ denote the element of $\{e_1,\dotsc,e_{n-1}\}$ which is orthogonal to $V$, we further split the set $A^V_1$ as the disjoint union of two sets $B^V_1$ and $B^V_2$ defined as
\[
\begin{split}
    B^V_1 &:= (Q_{\rho_m}(x_m) - \rho_m e_V) \cup (Q_{\rho_m}(x_m) + \rho_m e_V) \\
    B^V_2 &:= A^V_1 \setminus B^V_1.
\end{split}
\]

 In view of \eqref{e:gli1} we can make use of Proposition \ref{p:deficit} in combination with the $C_{\rho_m}$-periodicity of $\tilde{u}_\epsilon$ to infer
 \begin{align*}
     \limsup_{\epsilon \to 0^+} \Lambda_\epsilon(\tilde{u}_\epsilon;B^V_1,Q_{\rho_m}(x_m)) &\leq \|\hat{J}\|_{L^1} \int_{\partial Q_{\rho_m}(x_m) \cap (V+x_m+\rho_m e_V/2)} |u(x)-u(x-\rho_m e_v)| \, d\mathcal{H}^{n-1}\\
     &= \|\hat{J}\|_{L^1} \int_{\partial Q^+_{\rho_m}(x_m) \cap (V+x_m+\rho_m e_V/2)} |u(x)-u(x-\rho_m e_v)| \, d\mathcal{H}^{n-1} \\
     &+ \|\hat{J}\|_{L^1} \int_{\partial Q^-_{\rho_m}(x_m) \cap (V+x_m+\rho_m e_V/2)} |u(x)-u(x-\rho_m e_v)| \, d\mathcal{H}^{n-1}.
 \end{align*}
Therefore, by adding and subtracting $z_2(x_0)$ and $z_1(x_0)$ in the above integrals, we can apply conditions \eqref{e:jumpconv1.1.1} and \eqref{e:jumpconv2.1.1} to infer
\begin{equation}
\label{e:defest1.1.1}
    \limsup_{\epsilon \to 0^+} \Lambda_\epsilon(\tilde{u}_\epsilon;B^V_1,Q_{\rho_m}(x_m)) \leq 4\|\hat{J}\|_{L^1} \tau \rho_m^{n-1}.
\end{equation}
In order to estimate the defect on $B^V_2$ we first notice that 
\[
\Lambda_\epsilon(\tilde{u}_\epsilon; B^V_2,Q_{\rho_m}(x_m)) = \Lambda_\epsilon(\tilde{u}_\epsilon \mathbbm{1}_{Q_{\rho_m}(x_m) \cup B^V_2}; A^V_1,Q_{\rho_m}(x_m)).
\]
By setting 
\[
 \Sigma := [(V +x_m - \rho_m e_V) \cap A_1^V] \cup [(V + x_m+ \rho_m e_V) \cap A_1^V] \EEE, 
\]
we have by construction $\tilde{u}_\epsilon \mathbbm{1}_{Q_{\rho_m}(x_m) \cup B^V_2}= 0$ in a neighborhood of $\Sigma$ for every $\epsilon$. Hence, by Proposition \ref{r:defuniform} the $\epsilon$-traces of $\tilde{u}_\epsilon \mathbbm{1}_{Q_{\rho_m}(x_m) \cup B^V_2}$ relative to $A^V_1$ and $Q_{\rho_m}(x_m)$ converge on $\Sigma$ to $0$. Proposition \ref{p:deficit} allows thus to infer 
\begin{equation}
\label{e:defest1.1.2}
\lim_{\epsilon \to 0^+}\Lambda_\epsilon(\tilde{u}_\epsilon ; B^V_2,Q_{\rho_m}(x_m)) = 0.
\end{equation}

The case $V = \xi_m^\bot$ does not present further difficulties and can be treated analogously. In particular we obtain the validity of \eqref{e:defest1.1.1} and \eqref{e:defest1.1.2} also in this case. Therefore, we can finally infer 
\begin{equation}
\label{e:defest1.1.3}
    \limsup_{\epsilon \to 0^+} \Lambda_\epsilon(\tilde{u}_\epsilon; A_1,Q_{\rho_m}(x_m)) \leq c \|\hat{J}\|_{L^1} \tau \rho_m^{n-1},
\end{equation}
for some dimensional constant $c>0$.

From elementary geometric considerations, if we denote by $\Sigma$ the $(n-2)$-dimensional skeleton of $Q_{\rho_m}(x_m)$, we see that $\Sigma = \partial Q_{\rho_m}(x_m) \cap A_2$ and that $\mathcal{H}^{n-1}(\Sigma)=0$. Therefore by virtue of Propositions \ref{p:etrace_convergence} and \ref{p:deficit}  it is not difficult to argue similarly as above to deduce that 
\begin{equation}
\label{e:defest1.1.4}
 \limsup_{\epsilon \to 0^+} \Lambda_\epsilon(\tilde{u}_\epsilon; A_2,Q_{\rho_m}(x_m))=0.
\end{equation}

By putting together \eqref{e:defest1.1.3}--\eqref{e:defest1.1.4} and recalling that $ \mathbb{R}^m \EEE \setminus Q_{\rho_m}(x_m) = A_1 \cup A_2$, we eventually infer that for every $m$ there holds
\begin{equation}
\label{e:defest1.1.5}
    \limsup_{\epsilon \to 0^+} \Lambda_\epsilon(\tilde{u}_\epsilon;  \mathbb{R}^m \EEE \setminus Q_{\rho_m}(x_m),Q_{\rho_m}(x_m)) \leq c \|\hat{J}\|_{L^1} \tau \rho_m^{n-1},
\end{equation}
for some dimensional constant $c>0$. We now insert condition \eqref{e:defest1.1.5} in \eqref{e:defest1.1.6} and by recalling \eqref{e:gli2}--\eqref{e:gli4} we continue the estimate as
\begin{align*}
    \liminf_{\epsilon \to 0^+} F_\epsilon(u_\epsilon;\Omega) &\geq \sum_{m} \liminf_{\epsilon \to 0^+} F_{\psi_\epsilon,x_m}(\mathring{v}_{\epsilon,\xi_m}; S_{\rho_m}^\epsilon) -c\|\hat{J}\|_{L^1}\tau\sum_m \rho_m^{n-1} \\
    &\geq F_0(u) -c\mathcal{H}^{n-1}(J_u)\|\hat{J}\|_{L^1}\tau,
\end{align*}
for some dimensional constant $c>0$. Eventually, the arbitrariness of $\tau$ yields \eqref{e:gli1000}.
\end{proof}

We are now in a position to prove the $\Gamma$-liminf inequality for general kernels.

\begin{theorem}[$\Gamma$-liminf with general kernel]
\label{t:gammaliminf2}
   Let $\Omega \subset  \mathbb{R}^m \EEE$ be open, let $J \colon  \mathbb{R}^m \EEE \to (0,\infty)$ be an even function satisfying \ref{K3} and let $W \colon  \mathbb{R}^m \EEE \times \mathbb{R} \to [0,\infty)$ satisfy \ref{H1}--\ref{H5}. Then, for every sequence $(u_\epsilon)\subset L^1(\Omega)$ such that $u_\epsilon \to u$ in $L^1(\Omega)$, we have
    \begin{equation}
        \label{e:gli1.1000}
        \liminf_{\epsilon \to 0^+} F_{\epsilon}(u_\epsilon;\Omega) \geq F_0(u).
    \end{equation}
\end{theorem}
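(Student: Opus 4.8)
The strategy is a standard truncation argument reducing the case of a general kernel $J$ satisfying only \ref{K3} to the integrable case already settled in Theorem \ref{t:gammaliminf1}. For every positive integer $N$ set $J^N(h):= J(h)\wedge N$ and denote by $F^N_\epsilon$ the functional obtained from $F_\epsilon$ by replacing the kernel $J$ by $J^N$. Since $J^N\le J$ pointwise, we have $F^N_\epsilon(u_\epsilon;\Omega)\le F_\epsilon(u_\epsilon;\Omega)$ for every $\epsilon>0$ and every admissible $u_\epsilon$; moreover $J^N$ is bounded and, because $J$ satisfies \ref{K3}, $J^N$ also satisfies \eqref{K1}, so $J^N$ is an admissible kernel for Theorem \ref{t:gammaliminf1}. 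The only point requiring care is that $J^N$ must also be even, which it is since $J$ is even and truncation preserves evenness.

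First I would dispose of the trivial case: if $\liminf_{\epsilon\to 0^+}F_\epsilon(u_\epsilon;\Omega)=+\infty$ there is nothing to prove, so assume the liminf is finite. Then, up to passing to a (not relabelled) subsequence along which $F_\epsilon(u_\epsilon;\Omega)$ converges to this finite liminf and remains bounded, the compactness Proposition \ref{p:compactness} (which only needs \ref{K3}, \ref{H1}--\ref{H2}, \ref{H5}) guarantees $u\in CP(\Omega;\mathbf{z}(x))$, so that $F_0(u)$ is given by the surface integral and is in particular the quantity we must bound from below. Applying Theorem \ref{t:gammaliminf1} to the sequence $(u_\epsilon)$ with the integrable kernel $J^N$ yields
\begin{equation}
\label{e:truncbound}
\liminf_{\epsilon\to 0^+}F_\epsilon(u_\epsilon;\Omega)\ge \liminf_{\epsilon\to 0^+}F^N_\epsilon(u_\epsilon;\Omega)\ge F^N_0(u),
\end{equation}
where $F^N_0$ is the limiting functional associated with $J^N$, that is, the surface integral of the surface tension $\sigma^N(x,\nu_u(x))$ defined by the optimal profile problem \eqref{e:surftension} with $J$ replaced by $J^N$ (equivalently, with $J^\xi$ replaced by $(J^N)^\xi$). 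Since $u\in CP(\Omega;\mathbf{z}(x))$ and the approximation does not affect the potential part, $F^N_0(u)=\int_{J_u}\sigma^N(x,\nu_u(x))\,d\mathcal{H}^{n-1}(x)$.

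The remaining step is to let $N\to\infty$ in \eqref{e:truncbound} and to show $\lim_{N\to\infty}F^N_0(u)=F_0(u)$, for which it suffices to prove $\sigma^N(x,\xi)\nearrow \sigma(x,\xi)$ for every fixed $(x,\xi)$ and then invoke monotone convergence over $J_u$ (the monotonicity $\sigma^N\le\sigma^{N+1}\le\sigma$ is immediate from $J^N\le J^{N+1}\le J$ and the definition of the one-dimensional functionals $F^\xi_x$, since the non-local term is monotone in the kernel). For the convergence $\sigma^N\to\sigma$ I would fix $(x,\xi)$, take a competitor $\gamma\in X_{z_1(x)}^{z_2(x)}$ with $F^\xi_x(\gamma)\le\sigma(x,\xi)+\delta$ and note that $(J^N)^\xi\nearrow J^\xi$ pointwise by monotone convergence in the defining integral \eqref{e:kerneljxi1}; then, again by monotone convergence applied to the non-local double integral (the integrand $(J^N)^\xi(t'-t)|\gamma(t')-\gamma(t)|^2$ increases to $J^\xi(t'-t)|\gamma(t')-\gamma(t)|^2$), one gets $F^{\xi,N}_x(\gamma)\nearrow F^\xi_x(\gamma)\le\sigma(x,\xi)+\delta$, hence $\limsup_N\sigma^N(x,\xi)\le\sigma(x,\xi)+\delta$; the reverse inequality $\sigma^N\le\sigma$ is trivial, and $\delta$ is arbitrary. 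Combining, $F^N_0(u)\nearrow F_0(u)$ and \eqref{e:truncbound} gives $\liminf_{\epsilon\to 0^+}F_\epsilon(u_\epsilon;\Omega)\ge F_0(u)$, which is \eqref{e:gli1.1000}. The main (though modest) obstacle is the passage $\sigma^N\to\sigma$: one must make sure that the infimum defining $\sigma$ is genuinely approximated by the truncated problems, which as indicated follows from monotone convergence on a near-optimal competitor; note one does not even need existence of minimizers for $J$ here, only for the truncated integrable kernels where Proposition \ref{t:eximin} applies, though in fact Remark \ref{r:eximin} guarantees existence under \ref{K3} plus the linear growth which follows from \ref{H5}.
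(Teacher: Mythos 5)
Your overall architecture --- truncate the kernel at level $N$, apply Theorem \ref{t:gammaliminf1} to the integrable kernel $J^N=J\wedge N$, obtain $\liminf_\epsilon F_\epsilon(u_\epsilon;\Omega)\ge F^N_0(u)$, and pass to the limit $N\to\infty$ by monotone convergence --- is exactly the paper's. The gap is in the one step you yourself identify as the crux, namely the convergence $\sigma^N\to\sigma$: you prove the wrong inequality. From the chain $\liminf_\epsilon F_\epsilon(u_\epsilon;\Omega)\ge \int_{J_u}\sigma^N(x,\nu_u)\,d\mathcal H^{n-1}$ one concludes \eqref{e:gli1.1000} only if $\sup_N\sigma^N(x,\xi)\ \ge\ \sigma(x,\xi)$, i.e.\ one needs a \emph{lower} bound on the truncated surface tensions. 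Your argument (monotone convergence of $F^{\xi,N}_x(\gamma)$ along a near-optimal competitor $\gamma$ for $\sigma$) yields $\sigma^N(x,\xi)\le F^{\xi,N}_x(\gamma)\le F^\xi_x(\gamma)\le\sigma(x,\xi)+\delta$, which is an \emph{upper} bound; and the inequality you then call ``the reverse inequality $\sigma^N\le\sigma$'' points in the same direction, not the reverse. Neither statement gives what is needed. The required inequality is of the form $\sup_N\inf_\gamma F^{\xi,N}_x(\gamma)\ge\inf_\gamma\sup_N F^{\xi,N}_x(\gamma)$, i.e.\ a sup--inf interchange, which is precisely the nontrivial direction and cannot be obtained by testing with a single competitor.

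The correct argument (this is what the paper does) uses the minimizers of the truncated problems, whose existence you mention but do not exploit. For each $N$ take an increasing minimizer $\gamma^N\in X_{z_1(x)}^{z_2(x)}$ of $F^{\xi,N}_x$, normalized so that $\gamma^N\ge (z_1(x)+z_2(x))/2$ a.e.\ on $(0,\infty)$ and $<(z_1(x)+z_2(x))/2$ a.e.\ on $(-\infty,0)$. Since $|D\gamma^N|(\mathbb R)=|z_2(x)-z_1(x)|$ uniformly in $N$, a subsequence converges a.e.\ to an increasing $\gamma^0$, which belongs to $X_{z_1(x)}^{z_2(x)}$ because $\sup_N\int W(x,\gamma^N)\,dt<\infty$, $W(x,\cdot)>0$ strictly between the wells, and the sign normalization pins down the asymptotic values. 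Fatou's lemma then gives $\sup_N\sigma^N(x,\xi)=\sup_N F^{\xi,N}_x(\gamma^N)\ge F^\xi_x(\gamma^0)\ge\sigma(x,\xi)$, which is the missing inequality (and, combined with the trivial bound $\sigma^N\le\sigma$, also shows that $\gamma^0$ is a minimizer for the untruncated problem). With this step repaired, your monotone-convergence conclusion over $J_u$ is fine.
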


\begin{proof}
As in the proof of Theorem \ref{t:gammaliminf1} we may assume with no loss of generality that $u_\epsilon \colon \Omega \to [-M,M]$.

For $N \in \mathbb{N}$ define the kernel $J^N \colon  \mathbb{R}^m \EEE \to [0,\infty)$ as $J \wedge N$, and let $F^N_\epsilon$ be the energy \eqref{e:functional} defined by means of the kernel $J^N$. In addition, by letting $\sigma^N$ be the surface tension \eqref{e:surftension} relative to the kernel $J^N$, we consider the functional $F_0^N$ as in \eqref{e:limit} with $\sigma$ replaced by $\sigma^N$. Since $J^N$ satisfies the hypothesis of theorem \ref{t:gammaliminf1}, we already know that, if $(u_\epsilon) \subset L^1(\Omega)$ is such that $u_\epsilon \to u$ in $L^1(\Omega)$ then $\liminf_{\epsilon} F_\epsilon(u_\epsilon;\Omega)\geq \liminf_{\epsilon} F^N_\epsilon(u_\epsilon;\Omega) \geq F^N_0(u)$. Therefore, if we show that 
    \begin{equation}
        \label{e:supeq}
        \sup_{N \in \mathbb{N}} \sigma^N(x,\xi) = \sigma(x,\xi), \ \ \text{for every $(x,\xi) \in  \mathbb{R}^m \EEE \times  \mathbb{S}^{m-1} \EEE$}
    \end{equation}
    we would infer the validity of the theorem by Beppo-Levi's convergence theorem.
   
   In order to show \eqref{e:supeq}, recall that
    \[
    \begin{split}
    \sigma^N(x,\xi) &= \inf_{\gamma \in X_{z_1(x)}^{z_2(x)}} \frac{1}{4}\int_{\mathbb{R}\times \mathbb{R}} J^{N\xi}(t-t') |\gamma(t)-\gamma(t')|^2 \, dtdt' + \int_\mathbb{R} W(x,\gamma(t)) \, dt \\
    &= \frac{1}{4}\int_{\mathbb{R}\times \mathbb{R}} J^{N\xi}(t-t') |\gamma^N(t)-\gamma^N(t')|^2 \, dtdt' + \int_\mathbb{R} W(x,\gamma^N(t)) \, dt,
    \end{split}
    \]
    for some $\gamma^N \in X_{z_1(x)}^{z_2(x)}$ which is increasing and such that $\gamma^N(t) \geq (z_1(x)+z_{2}(x))/2$ for a.e. $t \geq 0$ while $\gamma^N(t) < (z_1(x)+z_{2}(x))/2$ for a.e. $t < 0$. Since we already know that $\gamma^N \in BV(\mathbb{R})$ and $|D\gamma^N|= |z_{2}(x)-z_1(x)|$ for every $N$, we deduce that there exists a subsequence $N_m$ such that 
    \[
    \gamma^{N_m}(t) \to \gamma^0(t), \ \ \text{ for a.e. $t$} \ \ \text{ and } \ \ D\gamma^{N_m} \rightharpoonup D\gamma^0, \ \ \text{ weakly* as measures}.
    \]
     Furthermore, since $W(x,t) >0$ for  $t \in (z_1(x),z_2(x))$, by arguing as in Proposition \ref{t:eximin}, we deduce that $\gamma^0$ is an increasing function in $X_{z_1(x)}^{z_2(x)}$. By Fatou's lemma we find
    \[
    \begin{split}
     &\sup_{N \in \mathbb{N}}\frac{1}{4}\int_{\mathbb{R}\times \mathbb{R}} J^{N\xi}(t-t') |\gamma^{N}(t)-\gamma^{N}(t')|^2 \, dtdt' + \int_\mathbb{R} W(x,\gamma^{N}(t)) \, dt\\
   &\geq \liminf_{m \to \infty}\frac{1}{4}\int_{\mathbb{R}\times \mathbb{R}} J^{N_m\xi}(t-t') |\gamma^{N_m}(t)-\gamma^{N_m}(t')|^2 \, dtdt' + \int_\mathbb{R} W(x,\gamma^{N_m}(t)) \, dt\\
    &\ \ \ \ \ \ \ \ \ \ \ \geq \frac{1}{4}\int_{\mathbb{R}\times \mathbb{R}} J^{\xi}(t-t') |\gamma^0(t)-\gamma^0(t')|^2 \, dtdt' + \int_\mathbb{R} W(x,\gamma^0(t)) \, dt.
    \end{split}
    \]
    On the other hand, since every $\gamma \in X_{z_1(x)}^{z_{2}(x)}$ is admissible in the minimization problem for every $N \in \mathbb{N}$, we also have
    \begin{align*}
    &\frac{1}{4}\int_{\mathbb{R}\times \mathbb{R}} J^{\xi}(t-t') |\gamma(t)-\gamma(t')|^2 \, dtdt' + \int_\mathbb{R} W(x,\gamma(t)) \, dt \\
   &= \sup_{N \in \mathbb{N}}\frac{1}{4}\int_{\mathbb{R}\times \mathbb{R}} J^{N\xi}(t-t') |\gamma(t)-\gamma(t')|^2 \, dtdt' + \int_\mathbb{R} W(x,\gamma(t)) \, dt\\
    &\geq \sup_{N \in \mathbb{N}}\frac{1}{4}\int_{\mathbb{R}\times \mathbb{R}} J^{N\xi}(t-t') |\gamma^{N}(t)-\gamma^{N}(t')|^2 \, dtdt' + \int_\mathbb{R} W(x,\gamma^{N}(t)) \, dt.
    \end{align*}
    Hence, we have proved that 
    \begin{equation}
    \label{e:supeq1}
    F^\xi_{x}(\gamma^0)= \inf_{\gamma \in X_{z_1(x)}^{z_{2}(x)}} F^\xi_{x}(\gamma) = \sigma(x,\xi), \ \ \text{ for every $(x,\xi) \in  \mathbb{R}^m \EEE \times  \mathbb{S}^{m-1} \EEE$}.
    \end{equation}
    In addition, since $J^{N\xi} \leq  J^{(N+1)\xi}$, then we have as well $\sigma^N\leq \sigma^{N+1}$ for every $N \in \mathbb{N}$. In particular $\sup_N \sigma_N(x,\xi) = \lim_N \sigma_N(x,\xi)$ for every $(x,\xi) \in  \mathbb{R}^m \EEE \times  \mathbb{S}^{m-1} \EEE$. Moreover, being $\gamma^0$ admissible in the definition of $\sigma^N$ for every $N$, we also obtain
     \begin{align*}
        &\sigma^N(x,\xi)=\frac{1}{4}\int_{\mathbb{R}\times \mathbb{R}} J^{N\xi}(t-t') |\gamma^{N}(t)-\gamma^{N}(t')|^2 \, dtdt' + \int_\mathbb{R} W(x,\gamma^{N}(t)) \, dt\\
         &\leq  \frac{1}{4}\int_{\mathbb{R}\times \mathbb{R}} J^{N\xi}(t-t') |\gamma^{0}(t)-\gamma^{0}(t')|^2 \, dtdt' + \int_\mathbb{R} W(x,\gamma^{0}(t)) \, dt\leq  F^\xi_{x}(\gamma^0).
    \end{align*}
    By combining the above estimates, we infer
    \begin{align*}
       F^\xi_{x}(\gamma^0) \geq \sup_{N \in \mathbb{N}} \sigma^N(x,\xi) = \lim_{N \to \infty} \sigma^N(x,\xi) \geq F^\xi_{x}(\gamma^0), \ \ \text{ for $(x,\xi) \in  \mathbb{R}^m \EEE \times  \mathbb{S}^{m-1} \EEE$},
    \end{align*}
    which together with \eqref{e:supeq1} yields 
    \begin{equation}
        \sup_{N \in \mathbb{N}} \sigma^N(x,\xi)=\sigma(x,\xi), \ \ \text{ for every $(x,\xi) \in  \mathbb{R}^m \EEE \times  \mathbb{S}^{m-1} \EEE$}.
    \end{equation}
    This concludes the proof of the theorem.
\end{proof}

    \section{ Directional blow-up of the energy at a point \EEE}
    \label{s:dirblowup}
Throughout this section we fix $\xi \in  \mathbb{S}^{m-1} \EEE$ and an orthonormal basis $\{\xi_1,\dotsc,\xi_{n-1}\}$ of $\xi^\bot$. We further assume that the kernel $J$ satisfies \eqref{K1}, and we denote by $M$ any integer satisfying \eqref{e:boundwells123}.
For $\epsilon >0$ we consider the functional $F_{\epsilon,x_0}$ defined as
\[
F_{\epsilon,x_0}(u; B) := \frac{1}{4}\int_{ \mathbb{R}^m \EEE \times B} J(h) |u(x+h) -u(x)|^2 \, dhdx + \int_{B} W_{x_0}^\epsilon(x, u(x)) \, dx, 
\]
for every Borel set $B \subset \Omega$ and every measurable function $u \colon  \mathbb{R}^m \EEE \to \mathbb{R}$, 
where 
\begin{equation}
\label{e:wx0epsilon}
W_{x_0}^\epsilon(y+s\xi,t):=W(x_0+y +\epsilon s\xi, t), \ \ \text{ for every }(y,s,t) \in \xi^\bot \times \mathbb{R} \times \mathbb{R}.
\end{equation}


 \subsection{The family of non-local operators} 
 \label{sub:8.1}
 We consider the one-dimensional kernel $J^\xi \colon \mathbb{R} \to \mathbb{R}$ defined in \eqref{e:kerneljxi1} and we observe that we can directly verify from \eqref{K1} that \EEE
\begin{equation}
\label{e:intjxi}
\int_\mathbb{R} J^\xi(t)(1+|t|) \, dt < \infty .
\end{equation}

 We first construct a family of one-dimensional kernels depending on $z \in V_\xi$ in the following way. Let $K^z \colon \mathbb{R} \to [0,\infty)$ be defined as 
\begin{equation}
\label{e:kernz}
K^z(s):= J(sz)|s|^{n-1}, \ \  \text{ for every } z \in V_\xi,
\end{equation}
and notice that
\begin{equation}
\label{e:eqkj}
   \int_{V_\xi} K^z(s) \, dz =J^\xi(s), \ \ \text{ for every } s \in \mathbb{R}.
\end{equation}
Therefore, using \eqref{e:intjxi} we have also
\begin{equation}
\label{e:aeintk}
    \int_\mathbb{R} K^z(s)(1+|s|) \, ds < \infty, \ \ \text{ for $\mathcal{H}^{n-1}$-a.e. $z \in V_\xi$}.
\end{equation}

We start with a preliminary result which tells us that the non local operator in the energy $F_{\epsilon,x_0}$ can be decomposed by means of the one-dimensional and non-local operators associated to the kernels $(K^z)$.

 \begin{proposition}
 \label{p:decoperator}
     Let $\rho >0$ and let $u \colon \{y+t\xi \ | \ (y,t) \in C_\rho(0) \times\mathbb{R}\} \to \mathbb{R}$ be a measurable function such that 
     \[
    \int_{ \mathbb{R}^m \EEE}\int_{S_\rho^\epsilon} J(h) |u(x+h) -u(x)|^2 \, dxdh < \infty,
     \]
     and let $\overline{u} \colon  \mathbb{R}^m \EEE \to \mathbb{R}$ denote its $C_\rho(0)$-periodic extension.
     Then the following identity holds true for every $\epsilon >0$ 
     \begin{align*}
     \int_{ \mathbb{R}^m \EEE}&\bigg(\int_{S_\rho^\epsilon} J(h) |u(x+h) -u(x)|^2 \, dx\bigg)dh,\\
     &= \int_{V_\xi}\bigg( \int_{C_\rho(0)} \bigg(\int_{\mathbb{R} \times (-\frac{\rho}{2\epsilon},\frac{\rho}{2\epsilon})}K^z(s)|\overline{u}^{yz}(t+s) - \overline{u}^{yz}(t)|^2 \,dsdt \bigg) dy \bigg)dz.
     \end{align*}
 \end{proposition}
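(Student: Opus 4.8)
The plan is to perform the change of variables that unfolds the $m$-dimensional nonlocal term into a fibered integral over lines in the direction $\xi$, parametrized by their footpoint $y\in C_\rho(0)$ and by the ``transverse slope'' $z\in V_\xi$. First I would write every increment vector $h\in\mathbb{R}^m$ in the form $h = s z$ with $s\in\mathbb{R}$ and $z\in V_\xi$ (recall $V_\xi=\{w+\xi : w\in\xi^\bot\}$ is the affine hyperplane at height $1$ in the $\xi$-direction), which is the standard ``cone coordinates'' decomposition: the map $(s,z)\mapsto sz$ from $\mathbb{R}\times V_\xi$ onto $\mathbb{R}^m$ has Jacobian $|s|^{m-1}=|s|^{n-1}$ (here $n=m$), so that for any nonnegative measurable $g$,
\[
\int_{\mathbb{R}^m} g(h)\,dh = \int_{V_\xi}\int_{\mathbb{R}} g(sz)\,|s|^{n-1}\,ds\,dz.
\]
Applying this with $g(h) = J(h)\,|u(x+h)-u(x)|^2$ for fixed $x$ and recalling the definition $K^z(s):=J(sz)|s|^{n-1}$ from \eqref{e:kernz}, the inner $h$-integral becomes $\int_{V_\xi}\int_{\mathbb{R}} K^z(s)\,|u(x+sz)-u(x)|^2\,ds\,dz$. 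Fubini–Tonelli (all integrands are nonnegative, and the full integral is finite by hypothesis) then lets me exchange the $x$-integral over $S_\rho^\epsilon$ with the $z$-integral over $V_\xi$.

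Next I would slice the remaining $x$-integral over $S_\rho^\epsilon = \{y+t\xi : y\in C_\rho(0),\ t\in(-\tfrac{\rho}{2\epsilon},\tfrac{\rho}{2\epsilon})\}$ along the $\xi$-direction: writing $x = y+t\xi$ with $y\in C_\rho(0)$, $t\in(-\tfrac{\rho}{2\epsilon},\tfrac{\rho}{2\epsilon})$, and noting that $dx = dy\,dt$ under this identification, the term $|u(x+sz)-u(x)|^2$ becomes $|u(y+t\xi+sz)-u(y+t\xi)|^2$. Now the key point is that $z\in V_\xi$ means $z = w+\xi$ for some $w\in\xi^\bot$, so $sz = sw + s\xi$, and hence $y+t\xi+sz = (y+sw) + (t+s)\xi$. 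For $\overline u$ the $C_\rho(0)$-periodic extension, the footpoint $y+sw$ can be reduced modulo $\rho\mathbb{Z}^{n-1}$ back into $C_\rho(0)$ without changing the value; more precisely, using the notation $\overline u^{\,yz}(t):=\overline u(y+tz)$ from the start of Section \ref{s:gammalimif}, one has $\overline u(y+t\xi+sz) = \overline u\big(y+(t+s)z\big) = \overline u^{\,yz}(t+s)$ and $\overline u(y+t\xi)=\overline u(y+tz)|_{\text{evaluated correctly}}$ — here one must be slightly careful and check that $\overline u(y+t\xi) = \overline u^{\,yz}(t)$, which holds precisely because $z=w+\xi$ has $\xi$-component equal to $1$, so the line $r\mapsto y+rz$ passes through $y+t\xi$ exactly at parameter $r=t$ up to the periodic identification in the $\xi^\bot$-directions. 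Substituting these identities converts the slice integral into
\[
\int_{C_\rho(0)}\int_{\mathbb{R}\times(-\frac{\rho}{2\epsilon},\frac{\rho}{2\epsilon})} K^z(s)\,\big|\overline u^{\,yz}(t+s)-\overline u^{\,yz}(t)\big|^2\,ds\,dt\,dy,
\]
and reassembling with the outer $\int_{V_\xi}\,dz$ gives exactly the claimed identity.

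The step I expect to be the main obstacle is the bookkeeping around the periodic extension: one must verify carefully that replacing $u$ by $\overline u$ and reducing footpoints modulo the lattice $\rho\mathbb{Z}^{n-1}$ is compatible with the slicing, i.e. that $\overline u(y + t\xi + sz)$ really equals $\overline u^{\,yz}(t+s)$ for $\mathcal H^{n-1}$-a.e.\ $z$ and a.e.\ $(y,t)$, and that no measure-zero set of ``bad'' directions $z$ (those for which $K^z$ fails \eqref{e:aeintk}) spoils the Fubini exchange. Since \eqref{e:aeintk} guarantees integrability of $K^z$ for $\mathcal H^{n-1}$-a.e.\ $z\in V_\xi$, and the global integral is finite by assumption, Tonelli's theorem applies to the nonnegative integrand throughout and the exchange of all integration orders is justified; the periodicity identity is then a pointwise computation valid for every $z=w+\xi\in V_\xi$. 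Everything else is a routine change of variables, so the proposition follows.

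\medskip\noindent\emph{Remark on notation:} in the statement $n=m$ throughout (the paper uses $\mathbb{R}^m$ for the ambient space but writes $n-1$ for hyperplane dimensions), so $|s|^{n-1}=|s|^{m-1}$ is the correct Jacobian factor for the polar-type decomposition $h=sz$.
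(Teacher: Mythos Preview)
Your overall strategy---cone coordinates $h=sz$ with Jacobian $|s|^{n-1}$, then slicing along $\xi$, then invoking the $C_\rho(0)$-periodicity---is exactly the paper's approach. The cone change of variables and the Fubini justification are fine.

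There is, however, a genuine gap in your slicing step. The pointwise identities you write down,
\[
\overline u(y+t\xi+sz)=\overline u^{\,yz}(t+s)\qquad\text{and}\qquad \overline u(y+t\xi)=\overline u^{\,yz}(t),
\]
are \emph{false}. Writing $z=w+\xi$ with $w\in\xi^\bot$, one has $y+t\xi+sz=(y+sw)+(t+s)\xi$ while $y+(t+s)z=(y+(t+s)w)+(t+s)\xi$; these differ by $tw$ in the $\xi^\bot$ component, and $tw$ is \emph{not} in the lattice $\rho\mathbb{Z}^{n-1}$ for generic $t$, so periodicity does not rescue the equality pointwise. Your parenthetical ``up to the periodic identification'' is doing work that periodicity cannot actually do at the level of function values.

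The fix---and this is precisely what the paper does---is to perform the slicing not with the orthogonal coordinates $x=y+t\xi$, but with the \emph{sheared} coordinates $x=y'+t z$, $y'\in\xi^\bot$, $t\in(-\tfrac{\rho}{2\epsilon},\tfrac{\rho}{2\epsilon})$. This map has Jacobian $1$ (since $z\cdot\xi=1$), and then $\overline u(x)=\overline u^{\,y'z}(t)$ and $\overline u(x+sz)=\overline u^{\,y'z}(t+s)$ hold \emph{trivially}. The price is that the $y'$-domain becomes the shifted cube $C_\rho(0)-tw$ rather than $C_\rho(0)$; \emph{this} is where periodicity enters, at the level of the integral $\int_{C_\rho(0)-tw}(\cdots)\,dy'=\int_{C_\rho(0)}(\cdots)\,dy'$ for each fixed $t$, since the integrand is $\rho$-periodic in $y'$. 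After that replacement the formula follows.
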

\begin{proof}
    We apply the change of variables $h=sz$ with $z \in V_\xi$ and $s \in \mathbb{R}$ to obtain
    \begin{align}
    \nonumber
        \int_{ \mathbb{R}^m \EEE}\bigg(\int_{S_\rho^\epsilon} J(h) &|\overline{u}(x+h) -\overline{u}(x)|^2 \, dx\bigg)dh\\
       \label{e:decoperator}
        &= \int_{V_\xi \times \mathbb{R}} \bigg( \int_{S_\rho^\epsilon} |\overline{u}(x+sz)-\overline{u}(x)|^2 \, dx\bigg)  |s|^{n-1}J(sz)\,dzds.
    \end{align}
Now fix $(z,s) \in V_\xi \times \mathbb{R}$. By writing $z= \overline{y}+\xi$ with $\tilde{y} \in V$ we can apply the change of variables induced by the map $\psi \colon \xi^\bot \times (-\rho/2\epsilon,\rho/2\epsilon) \to \xi^\bot \times (-\rho/2\epsilon,\rho/2\epsilon)$ defined as $\psi(y,t):= y+tz$, and by observing that the determinant of its jacobian is constantly equal to $1$, we infer
\begin{align*}
\int_{S_\rho^\epsilon} |\overline{u}(x+sz)-\overline{u}(x)|^2 \, dx &= \int_{-\rho/2\epsilon}^{\rho/2\epsilon} \bigg(\int_{C_\rho(0) - t\overline{y}} |\overline{u}(y+tz+sz)-\overline{u}(y+tz)|^2\, dy  \bigg)dt \\
&=\int_{-\rho/2\epsilon}^{\rho/2\epsilon} \bigg(\int_{C_\rho(0)} |\overline{u}(y+tz+sz)-\overline{u}(y+tz)|^2\, dy  \bigg)dt \\
&= \int_{C_\rho(0)} \bigg(\int_{-\rho/2\epsilon}^{\rho/2\epsilon} |\overline{u}^{yz}(t+s) - \overline{u}^{yz}(t)|^2 \,dt \bigg) dy,
\end{align*}
where in the last but one equality we used the $C_\rho(0)$-periodicity of $\overline{u}$. By recalling the definition of $K^z$ \eqref{e:kernz} we can insert the last equality in \eqref{e:decoperator} and conclude the proof.  
\end{proof}

 \subsection{The family of potentials}
 \label{sub:8.2}
 By letting $\mathcal{W}_{x_0}^{\rho\xi}$ and $a_{x_0}(y,\rho),b_{x_0}(y,\rho)$ be defined as in \eqref{e:infpotential} and \eqref{e:defaibi}, respectively, we further simplify the notation by dropping the dependence on $x_0$, namely, we set 
\[
\mathcal{W}^{\rho\xi}:= \mathcal{W}_{x_0}^{\rho\xi} \ \ \text{ and } \ \ a_\rho(y):=a_{x_0}(y,\rho) \ \ \text{ and } \ \ b_\rho(y):= b_{x_0}(y,\rho).
\]

For every sufficiently small $\rho>0$ and every measurable function $\gamma \colon \mathbb{R} \to \mathbb{R}$ we consider the one dimensional functional $\mathcal{F}^{\rho\xi}_{y}$ defined as
\begin{equation}
\label{e:gammay}
\mathcal{F}^{\rho\xi}_{y}(\gamma) := \frac{1}{4}\int_{\mathbb{R} \times \mathbb{R}}J^\xi(s) |\gamma(t+s)-\gamma(t)|^2 \, dtds + \int_\mathbb{R} \mathcal{W}^{\rho\xi}(y,\gamma(t)) \, dt, \ \ y \in \xi^\bot.
\end{equation}

  By applying Proposition \ref{t:eximin} we recall that for every $y \in \xi^\bot$ there exists an increasing function $\gamma_{y,\rho} \in X^{b_\rho(y)}_{a_\rho(y)}$ which is a minimizer of \eqref{e:gammay}. We are now in position to define the family of potentials. \EEE
\begin{definition}
\label{d:calipotentials}
    For $z \in V_\xi$, let $H^{z}_{\rho}$ be the operator given in \eqref{e:hoperator} relative to the kernel $K^z$ and wells $\{b_\rho(y),a_\rho(y)\}$. We define the potential $P^z_{\rho} \colon \xi^\bot \times \mathbb{R} \to [0,\infty)$ as 
    \[
    P^z_{\rho}(y,t):= 
    \begin{cases}
    (H^{z}_{\rho}\gamma_{y,\rho}^{-1})(t), &\text{ if } t \in (a_\rho(y),b_\rho(y)) \text{ and } y \in C_\rho(0) \\
    0, &\text{ if } t \in \mathbb{R} \setminus (a_\rho(y),b_\rho(y)) \text{ and } y \in C_\rho(0),
    \end{cases}
    \]
    and we complete its definition for $(y,t) \in (V \setminus C_\rho(0)) \times \mathbb{R}$ by taking its $C_\rho(0)$-periodic extension. 
\end{definition}

\begin{remark}
    \label{r:positivepotential}
    It is important to observe that, by virtue of our assumption $J >0$ and by virtue of Proposition \ref{p:unbounded}, the function $P_\rho^z(y,\cdot)$ is continuous and strictly positive on $(a_\rho(y),b_\rho(y))$.
\end{remark}

\begin{definition}
\label{d:onedimcalibration}
  For every measurable function $\gamma \colon \mathbb{R} \to \mathbb{R}$ consider the following non-local and one-dimensional operators 
\begin{align}
\textbf{K}^z(\gamma)&:= \frac{1}{4}\int_{\mathbb{R} \times \mathbb{R}} K^z(s) |\gamma(t+s) -\gamma(t)|^2 \, dsdt, \\
\textbf{K}^z_{\rho,\epsilon}(\gamma)&:= \frac{1}{4}\int_{\mathbb{R} \times (-\frac{\rho}{2\epsilon},\frac{\rho}{2\epsilon})} K^z(s) |\gamma(t+s) -\gamma(t)|^2 \, dsdt
\end{align}.
  For $(y,z) \in \xi^\bot \times V_\xi$ we define the following one-dimensional functionals 
\begin{align*}
&G^{yz}_{\rho}(\gamma) := \textbf{K}^z(\gamma) + \int_{\mathbb{R}} P^z_{\rho}(y,\gamma(t)) \, dt\\
&G^{yz}_{\rho,\epsilon}(\gamma) := \textbf{K}^z_{\rho,\epsilon}(\gamma) + \int_{(-\frac{\rho}{2\epsilon},\frac{\rho}{2\epsilon})} P^z_{\rho}(y,\gamma(t)) \, dt,
\end{align*}
    for every measurable function $\gamma \colon \mathbb{R} \to \mathbb{R}$. \EEE
\end{definition}

 The next proposition provides a lower-bound for the family of our original energies. \EEE

\begin{proposition}
\label{p:calibration}
    Let $u \colon\{y+t\xi \ | \ (y,t) \in C_\rho(0) \times\mathbb{R}\}\to \mathbb{R}$ be a measurable function and let $\overline{u} \colon  \mathbb{R}^m \EEE \to \mathbb{R}$ denote its $C_\rho(0)$-periodic extension. Then, for every $\rho >0$ small enough (depending on $x_0$) and for every $\epsilon >0$, there holds 
    \begin{equation}
    \label{e:calibration4}
        F_{\epsilon,x_0}(u;S_\rho^\epsilon) \geq \int_{V_\xi} \bigg( \int_{C_\rho(0)} G^{yz}_{\rho,\epsilon}(\overline{u}^{yz}) \, dy\bigg)dz.
    \end{equation}
\end{proposition}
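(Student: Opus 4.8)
The plan is to combine the decomposition of the non-local operator from Proposition \ref{p:decoperator} with a pointwise (in $(y,z)$) lower bound for the resulting one-dimensional energies, the latter being furnished by the conjugate-functional machinery of Subsection \ref{subs:conj1d} applied to the kernels $K^z$ and the potentials $P^z_\rho$. First I would split
\[
F_{\epsilon,x_0}(u;S_\rho^\epsilon) = \frac{1}{4}\int_{\mathbb{R}^m}\int_{S_\rho^\epsilon} J(h)|\overline u(x+h)-\overline u(x)|^2\,dx\,dh + \int_{S_\rho^\epsilon} W^\epsilon_{x_0}(x,u(x))\,dx,
\]
where, for the interaction term, I may freely replace $u$ by its $C_\rho(0)$-periodic extension $\overline u$ since the two agree on $S_\rho^\epsilon$. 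Applying Proposition \ref{p:decoperator} rewrites the interaction term exactly as $\int_{V_\xi}\int_{C_\rho(0)} \mathbf{K}^z_{\rho,\epsilon}(\overline u^{yz})\,dy\,dz$. For the potential term I would foliate $S_\rho^\epsilon$ by the lines $t\mapsto y+t\xi$ with $y\in C_\rho(0)$, using Fubini and the definition $W^\epsilon_{x_0}(y+s\xi,t)=W(x_0+y+\epsilon s\xi,t)$, to get
\[
\int_{S_\rho^\epsilon} W^\epsilon_{x_0}(x,u(x))\,dx = \int_{C_\rho(0)}\Big(\int_{-\rho/2\epsilon}^{\rho/2\epsilon} W(x_0+y+\epsilon s\xi,\overline u^{y\xi}(s))\,ds\Big)dy \ge \int_{C_\rho(0)}\int_{-\rho/2\epsilon}^{\rho/2\epsilon}\mathcal W^{\rho\xi}(y,\overline u^{y\xi}(s))\,ds\,dy,
\]
using the infimum definition \eqref{e:infpotential} of $\mathcal{W}^{\rho\xi}$; then by \eqref{e:eqkj}, $\int_{V_\xi}K^z\,dz = J^\xi$, I can write $\int_{C_\rho(0)}\int\mathcal W^{\rho\xi}(y,\overline u^{y\xi})\,ds\,dy = \int_{V_\xi}\big(\|K^z\|_{L^1}\big)^{-1}\cdots$ — more precisely I must distribute the potential across the $z$-fibers with a partition-of-unity weight so that the total integrates back correctly. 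A cleaner route is to observe that $\int_{V_\xi}\frac{K^z}{J^\xi}\,dz \equiv 1$ pointwise in $s$ (where $J^\xi\neq0$), insert this factor of $1$ in front of the potential, and Fubini to land on $\int_{V_\xi}\int_{C_\rho(0)}\big(\int \tfrac{K^z(\cdot)}{J^\xi(\cdot)}\mathcal W^{\rho\xi}(y,\overline u^{y\xi})\big)$ — but since $\mathcal W^{\rho\xi}$ does not depend on the integration variable $s$ in the weight, this distributes the mass correctly only after also accounting for the $t$-integral. The честный way, and the one I would write, is simply: for each $z$, $\int_{\mathbb{R}} P^z_\rho(y,\gamma(t))\,dt \le$ (the corresponding slice of the $W$-term), which requires the key inequality $W(x_0+y,t)\ge \mathcal W^{\rho\xi}(y,t)\ge $ [something summable in $z$ to $\mathcal W^{\rho\xi}(y,t)$]; here I exploit that $P^z_\rho(y,\cdot)=H^z_\rho\gamma_{y,\rho}^{-1}$ satisfies $P^z_\rho(y,\cdot)\le \mathcal{W}^{\rho\xi}(y,\cdot)$ by the characterization Proposition \ref{t:charmin} (first inequality $W\ge H\sigma$), so that $\int_{V_\xi}P^z_\rho(y,\gamma(t))\,dz \le \mathcal{W}^{\rho\xi}(y,\gamma(t))$ times the appropriate normalization — wait, this double-counts.

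\textbf{The correct distribution.} Let me restate: the sought inequality $\sum_z$-(one-dim energy) $\le$ (full energy) needs $\int_{V_\xi}\mathbf{K}^z_{\rho,\epsilon}(\gamma)\,dz = \frac14\int\int_{\mathbb R\times(-\rho/2\epsilon,\rho/2\epsilon)}(\int_{V_\xi}K^z(s)\,dz)|\gamma(t+s)-\gamma(t)|^2 = \frac14\int\int J^\xi(s)|\cdots|^2$, which by Proposition \ref{p:decoperator} is precisely the interaction term — good, the interaction term distributes with equality. For the potential, I need $\int_{V_\xi}\big(\int P^z_\rho(y,\gamma(t))\,dt\big)\,dz \le \int \mathcal W^{\rho\xi}(y,\gamma(t))\,dt \le$ slice of $W$-term. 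So it suffices that $\int_{V_\xi}P^z_\rho(y,\tau)\,dz\le \mathcal W^{\rho\xi}(y,\tau)$ for every $\tau\in[a_\rho(y),b_\rho(y)]$. Now by Proposition \ref{t:charmin} applied with kernel $K^z$, wells $a_\rho(y)<b_\rho(y)$, and the optimal profile $\gamma_{y,\rho}$ minimizing $\mathcal F^{\rho\xi}_y$ in \eqref{e:gammay} — here is the subtlety: $\gamma_{y,\rho}$ minimizes the functional with kernel $J^\xi$, not $K^z$. So $\gamma_{y,\rho}^{-1}$ minimizes $F^\circ$ for the kernel $J^\xi$, giving $\mathcal W^{\rho\xi}(y,\cdot)\ge H^{J^\xi}_\rho\gamma_{y,\rho}^{-1}$ with equality on $\mathrm{supp}\,\dot\gamma_{y,\rho}^{-1}$. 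The operator $H$ is linear in the kernel in the sense that $K\mapsto -\dot K = -\dot{(J*f)}$ and $H^{J^\xi}v = \int_{V_\xi}H^{K^z}v\,dz$ because $J^\xi = \int_{V_\xi}K^z\,dz$ and $H$ is built by convolving $f$ with the kernel then differentiating and double-integrating — all linear operations. Hence $\int_{V_\xi}P^z_\rho(y,\cdot)\,dz = \int_{V_\xi}H^{K^z}_\rho\gamma_{y,\rho}^{-1}\,dz = H^{J^\xi}_\rho\gamma_{y,\rho}^{-1}\le \mathcal W^{\rho\xi}(y,\cdot)$, which is exactly what is needed. This is the crux, and I expect verifying the linearity/additivity of $H$ in its kernel argument — together with checking that $a_\rho,b_\rho$ replacing the fixed wells $\pm1$ changes nothing — to be the main obstacle; it is essentially bookkeeping via the explicit formula \eqref{e:hoperator}, but one must be careful that the change of variables defining $\gamma^{-1}$ and the domain $(a_\rho(y),b_\rho(y))$ are handled uniformly in $y\in C_\rho(0)$, which is legitimate for $\rho$ small depending on $x_0$ by the discussion around \eqref{e:defaibi}.

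\textbf{Assembling the bound.} Putting the two pieces together: for $\rho$ small enough (so that $a_\rho(y)<b_\rho(y)$ and the wells are well-separated uniformly on $C_\rho(0)$, and so that $u$ restricted to each line lies in the relevant range or the bound is trivially $+\infty$),
\[
F_{\epsilon,x_0}(u;S_\rho^\epsilon) = \int_{V_\xi}\int_{C_\rho(0)}\mathbf K^z_{\rho,\epsilon}(\overline u^{yz})\,dy\,dz + \int_{C_\rho(0)}\int_{-\rho/2\epsilon}^{\rho/2\epsilon} W(x_0+y+\epsilon s\xi,\overline u^{y\xi}(s))\,ds\,dy,
\]
and the second summand is $\ge \int_{C_\rho(0)}\int\mathcal W^{\rho\xi}(y,\overline u^{y\xi}(s))\,ds\,dy\ge \int_{V_\xi}\int_{C_\rho(0)}\int_{-\rho/2\epsilon}^{\rho/2\epsilon}P^z_\rho(y,\overline u^{yz}(t))\,dt\,dy\,dz$, where in the last step I use $\int_{V_\xi}P^z_\rho(y,\tau)\,dz\le\mathcal W^{\rho\xi}(y,\tau)$ established above together with Fubini; note one must also reconcile the slicing variable, writing $\overline u^{y\xi}(s)$ in terms of $\overline u^{yz}(t)$ via the change $y+t z = y + t\xi + t(z-\xi)$ and the $C_\rho(0)$-periodicity of $\overline u$, exactly as in the proof of Proposition \ref{p:decoperator}. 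Adding, the right-hand side is $\int_{V_\xi}\int_{C_\rho(0)}\big(\mathbf K^z_{\rho,\epsilon}(\overline u^{yz}) + \int_{-\rho/2\epsilon}^{\rho/2\epsilon}P^z_\rho(y,\overline u^{yz}(t))\,dt\big)\,dy\,dz = \int_{V_\xi}\int_{C_\rho(0)}G^{yz}_{\rho,\epsilon}(\overline u^{yz})\,dy\,dz$, which is \eqref{e:calibration4}. If any line-energy is infinite the inequality is vacuous, and the hypothesis $\int_{\mathbb R^m}\int_{S_\rho^\epsilon}J|\overline u(x+h)-\overline u(x)|^2<\infty$ guarantees the decomposition of Proposition \ref{p:decoperator} is valid (finite) for a.e. $(y,z)$, so Fubini applies throughout.
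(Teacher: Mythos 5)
Your proposal is correct and follows essentially the same route as the paper: equality for the interaction term via Proposition \ref{p:decoperator}, and for the potential term the additivity of $H$ in its kernel argument ($\int_{V_\xi}H^{z}_{\rho}\,dz = H_{\rho,\xi}$) combined with Proposition \ref{t:charmin} applied to the $J^\xi$-minimizer $\gamma_{y,\rho}$ --- exactly \eqref{e:opidentity}--\eqref{e:calibration2} in the paper --- followed by the shear change of variables and periodicity. The partition-of-unity digression in the middle is correctly abandoned, and the subtlety you flag (that $\gamma_{y,\rho}$ minimizes the $J^\xi$-functional, not the $K^z$-one, so only the integrated bound $\int_{V_\xi}P^z_\rho(y,\cdot)\,dz\le\mathcal{W}^{\rho\xi}(y,\cdot)$ holds) is resolved exactly as in the paper.
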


\begin{proof}
    In view of Proposition \ref{p:decoperator}, it remains to prove that
    \begin{equation}
    \label{e:calibration1}
\int_{S^\epsilon_\rho} W_{x_0}^\epsilon(x,u(x)) \, dx \geq \int_{V_\xi \times C_\rho(0)} \bigg( \int_{(-\frac{\rho}{2\epsilon},\frac{\rho}{2\epsilon})} P^z_{\rho}(y,\overline{u}^{yz}(t)) \, dt \bigg) dydz.
    \end{equation}
To this purpose, we notice that the operator $H_{\rho,\xi}: Y \to C_0([a_\rho(y),b_\rho(y)])^+$ given in \eqref{e:hoperator} relative to the kernel $J^\xi$ and wells $\{b_\rho(y),a_\rho(y)\}$ satisfies by \eqref{e:eqkj} the equality
\begin{equation}
\label{e:opidentity}
\int_{V_\xi} H^{z}_{\rho}\, dz = H_{\rho,\xi}, \ \ \text{ for every  $\rho>0$}.
\end{equation}
Therefore, by Theorem \ref{t:charmin} we infer
\begin{align}
    \label{e:calibration2}
    &\mathcal{W}^{\rho\xi}(y,t) \geq \int_{V_\xi} P^z_{\rho}(y,t) \, dz, \ \ \text{ for } y \in C_\rho(0) \text{ and } t \in \mathbb{R} \\
    \label{e:calibration3}
    &\mathcal{W}^{\rho\xi}(y,t) = \int_{V_\xi} P^z_{\rho}(y,t) \, dz, \ \ \text{ for } y \in C_\rho(0) \text{ and } t \in \text{supp} \, \dot{\gamma}_{y,\rho}^{-1}.
\end{align}
Notice that, by the very definition of $W^\epsilon_{x_0}$ (see \eqref{e:wx0epsilon}), for $y \in C_\rho(0)$, $s \in (-\rho/2\epsilon,\rho/2\epsilon)$, and $t \in \mathbb{R}$, there holds 
\[
W_{x_0}^\epsilon(y+ s\xi,t)\geq \mathcal{W}^{\rho\xi}(y,t).
\]
Therefore, by denoting $\pi_\xi \colon  \mathbb{R}^m \EEE \to \xi^\bot$ the orthogonal projection,  \eqref{e:calibration2} yields
\begin{align*}
    &\int_{S^\epsilon_\rho} W_{x_0}^\epsilon(x,u(x)) \, dx =\int_{S^\epsilon_\rho} W_{x_0}^\epsilon(y+t\xi,u(y+t\xi)) \, dx
    \geq \int_{S^\epsilon_\rho} \mathcal{W}^{\rho\xi}(y,u(y+t\xi)) \, dydt\\ 
    &\quad\geq \int_{S^\epsilon_\rho} \bigg( \int_{V_\xi} P^z_{\rho}(y,u(y+t\xi)) \, dz\bigg) dtdy = \int_{V_\xi}\bigg(\int_{S^\epsilon_\rho} P^z_{\rho}(\pi_\xi(x),\overline{u}(x)) \, dx\bigg) dz\\ 
    &\quad= \int_{V_\xi}\bigg(\int_{(C_\rho(0)-t\pi_\xi(z)) \times (-\frac{\rho}{2\epsilon},\frac{\rho}{2\epsilon})} P^z_{\rho}(\pi_\xi(y+tz),\overline{u}(y+tz)) \, dydt\bigg) dz \\
    &\quad= \int_{V_\xi \times C_\rho(0)} \bigg( \int_{(-\frac{\rho}{2\epsilon},\frac{\rho}{2\epsilon})} P^z_{\rho}(y,\overline{u}^{yz}(t)) \, dt \bigg) dydz,
\end{align*}
where in the second-to-last equality we used that the Jacobian of the map $\psi\colon \xi^\bot \times (-\rho/2\epsilon,\rho/2\epsilon) \to \xi^\bot \times (-\rho/2\epsilon,\rho/2\epsilon)$ defined as $\psi(y,t):=y+tz$ has determinant constantly equal to $1$, whereas in the last equality we used the $C_\rho(0)$-periodicity of both $P^z_{\rho}$ and $\overline{u}$. 
This concludes the proof. 
\end{proof}

Now we explicitly compute the value of the the integral on the right hand-side of \eqref{e:calibration4} when computed on optimal profiles $\gamma_{\rho,y}$:
\begin{align*}
\int_{C_\rho(0)}G^{yz}_{\rho}(\gamma_{y,\rho}) \, dy &=  \int_{C_\rho(0)} \bigg(\mathbf{K}^z(\gamma_{y,\rho}) +  \int_{\mathbb{R}} P^z_{\rho}(y,\gamma_{y,\rho}(t))  \,dt\bigg)dy \\
&= \int_{C_\rho(0)} \bigg(\mathbf{K}^z(\gamma_{y,\rho}) +  \int_{a_\rho(y)}^{b_\rho(y)} P^z_{\rho}(y,s)  \,d\dot{\gamma}^{-1}_{y,\rho}(s)\bigg)dy,
\end{align*}
hence, by taking the integral with respect to $z$ and recalling \eqref{e:opidentity} in combination with Theorem \ref{t:charmin}, we finally have
\begin{equation}
\label{e:lowerbound}
   \int_{V_\xi}\bigg(\int_{C_\rho(0)} G^{yz}_{\rho}(\gamma_{y,\rho}) \, dy\bigg)dz =  \int_{C_\rho(0)} \mathcal{F}^{\rho\xi}_y(\gamma_{y,\rho}) \, dy.
\end{equation}

\begin{remark}
\label{r:kernelinvariance}
 Consider $\psi \colon \xi^\bot \to \xi^\bot$ to be any $C^1$-diffeomorphism. Then, by defining $J_\psi \colon  \mathbb{R}^m \EEE \to (0,\infty)$ as $J(y +t\xi):=J(\psi(y)+t\xi) |\det(\nabla \psi(y))|$ and the corresponding kernel $K^z_\psi$ exactly as in \eqref{e:kernz}, we have
\begin{equation}
\label{e:kernelinvariance}
\int_{V_\xi} K^z_\psi(s)\, dz = \int_{V_\xi} K^z(s)\, dz= J^\xi(s), \ \ \text{ for every }s \in \mathbb{R}.
\end{equation}
Furthermore, if we denote by $(G^{yz}_{\rho,\psi})$ the family constructed with the kernel $J_\psi$, thanks to \eqref{e:opidentity}, equality \eqref{e:kernelinvariance} shows that \eqref{e:lowerbound} still remains valid, namely,
\begin{equation}
\label{e:kernelinvariance1.472}
\int_{V_\xi}\bigg(\int_{C_\rho(0)} G^{yz}_{\rho,\psi}(\gamma_{y,\rho}) \, dy\bigg)dz =  \int_{C_\rho(0)} \mathcal{F}^{y\xi}_\rho(\gamma_{y,\rho}) \, dy. \EEE
\end{equation}
\end{remark}

\subsection{The double liminf inequality}
\label{sub:doubleliminf}

 We begin by introducing suitable further modifications of  $C_\rho(0)$-periodic extensions depending on $z \in V_\xi$ and $\epsilon >0$.  \EEE

\begin{definition}
Given a family $(u_\epsilon)_\epsilon$ such that $u_\epsilon \colon S_\rho^\epsilon \to \mathbb{R}$, we define $\hat{u}_{\epsilon,z} \colon  \mathbb{R}^m \EEE \to \mathbb{R}$ to be the $C_\rho(0)$-periodic modification at scale $\epsilon$ and in the direction $z \in V_\xi$ as
\begin{equation}
\label{e:modification}
    \hat{u}_{\epsilon,z}(x) :=
    \begin{cases}
        \overline{u}_\epsilon(x), &\text{ if }  x=y+tz \in S_\rho^\xi \EEE  \\
        \overline{a}_\rho(y) \EEE &\text{ if }   x = y+tz, \ (y,t) \in \xi^\bot \times (-\infty,-\rho/2\epsilon) \EEE \\
        \overline{b}_\rho(y) \EEE &\text{ if }  x = y+tz, \ (y,t) \in \xi^\bot \times (\rho/2\epsilon,\infty) \EEE.
    \end{cases}
\end{equation}
\end{definition}
By construction $\hat{u}_{\epsilon,z}$ is $C_\rho(0)$-periodic. Notice that we do not explicitly write in the notation the dependence of $\hat{u}_{\epsilon,z}$ from $\rho$.

 We recall the definition of oscillation of a function \EEE.

\begin{definition}[Oscillation]
    Given a measurable function $v \colon C_\rho(0) \to \mathbb{R}$ we define the oscillation of $v$ in $C_\rho(0)$ as
    \begin{equation}
        \emph{osc}(v;C_\rho(0)) := \esup_{y \in C_\rho(0)} v(y) - \einf_{y \in C_\rho(0)} v(y).
    \end{equation}
\end{definition}

The next result shows that the difference $\textbf{K}^z(\hat{u}^{yz}_{\epsilon,z})-\textbf{K}^z(\mathring{u}^{yz}_{\epsilon,\xi})$  can be controlled asymptotically as $\epsilon \to 0^+$.

\begin{proposition}
\label{p:oscbound1}
    Let $x_0 \in \Omega$, let $(u_\epsilon)_\epsilon$ be a family of measurable functions $u_\epsilon \colon S_\rho^\epsilon \to [-M,M]$. Then, for every $\rho >0$ small enough (depending on $x_0$), and for $\mathcal{H}^{n-1}$-a.e. \EEE $z \in V_\xi$,  there holds
    \begin{equation}
    \label{e:oscbound}
    \begin{split}
\limsup_{\epsilon \to 0+}  \mint_{C_\rho(0)} |\textbf{\emph{K}}^z(\hat{u}^{yz}_{\epsilon,z})-\textbf{\emph{K}}^z(\mathring{u}^{yz}_{\epsilon,\xi})| \, dy &\leq C  \rho^{\alpha} \EEE( \|\hat{K}^z\|_{L^1} + \rho^{\alpha} \EEE \|K^z\|_{L^1}),
\end{split}
    \end{equation}
     where $C:= C(M,Z,\lfloor z_1\rfloor_{C^{0,\alpha}},\lfloor z_2\rfloor_{C^{0,\alpha}})>0$ \EEE.
\end{proposition}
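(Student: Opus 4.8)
The plan is to estimate the difference of the two one-dimensional Dirichlet-type energies $\textbf{K}^z(\hat{u}^{yz}_{\epsilon,z})$ and $\textbf{K}^z(\mathring{u}^{yz}_{\epsilon,\xi})$ by tracking exactly where the two modified periodic extensions differ. Recalling \eqref{e:modification1} and \eqref{e:modification}, the functions $\mathring{u}_{\epsilon,\xi}$ and $\hat{u}_{\epsilon,z}$ coincide with $\overline{u}_\epsilon$ on the slab $S_\rho^\epsilon$; they differ only in the two ``tails'' $\{t<-\rho/2\epsilon\}$ and $\{t>\rho/2\epsilon\}$, where $\mathring{u}_{\epsilon,\xi}$ takes the values $\overline{z}_i(x_0+y/t)$ while $\hat{u}_{\epsilon,z}$ takes the constant-in-$t$ values $\overline{a}_\rho(y),\overline{b}_\rho(y)$. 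The first step is therefore to write, for each fixed $y$ and each slice direction $z$, the algebraic identity
\[
|\gamma(t+s)-\gamma(t)|^2 - |\gamma'(t+s)-\gamma'(t)|^2 = \big(\gamma(t+s)-\gamma(t)+\gamma'(t+s)-\gamma'(t)\big)\big(\gamma(t+s)-\gamma(t)-\gamma'(t+s)+\gamma'(t)\big),
\]
with $\gamma=\hat{u}^{yz}_{\epsilon,z}$ and $\gamma'=\mathring{u}^{yz}_{\epsilon,\xi}$, so that the integrand of $\textbf{K}^z(\gamma)-\textbf{K}^z(\gamma')$ is bounded by $4M$ times $|\gamma(t+s)-\gamma'(t+s)| + |\gamma(t)-\gamma'(t)|$, and the latter is supported in the tails.

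The second step is to integrate this bound in $(t,s)$ against $K^z(s)$ and then average in $y$ over $C_\rho(0)$. Because $|\gamma-\gamma'|$ is supported in $\{|t|>\rho/2\epsilon\}$ while the ``other endpoint'' ranges over all of $\mathbb{R}$, the contribution splits into two types: (a) both $t$ and $t+s$ lie in the same tail — here $|\gamma(t)-\gamma'(t)| = |\overline{z}_i(x_0+y/t)-\overline{a}_\rho(y)|$ (or the $b$-analogue), which by the $\alpha$-Hölder continuity of the wells (cf. \ref{H1}) and the definition \eqref{e:defaibi} of $a_\rho,b_\rho$ is $O(\rho^\alpha)$ uniformly in $y$ and $t$, giving a term $\lesssim \rho^\alpha\|K^z\|_{L^1}$ after also using that one variable is free; (b) one of $t,t+s$ is in a tail and the other inside the slab — here one must pay a factor $|s|\gtrsim$ (distance between $t$ and the slab) to leave the slab, and it is precisely this mechanism that produces the weighted kernel $\hat{K}^z$, with the remaining mismatch still being $O(\rho^\alpha)$. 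Combining (a) and (b) yields the claimed bound $C\rho^\alpha(\|\hat{K}^z\|_{L^1}+\rho^\alpha\|K^z\|_{L^1})$. The restriction to $\mathcal{H}^{n-1}$-a.e.\ $z$ is forced by \eqref{e:aeintk}, which guarantees $\int_\mathbb{R} K^z(s)(1+|s|)\,ds<\infty$ — equivalently $\|K^z\|_{L^1},\|\hat{K}^z\|_{L^1}<\infty$ — only for almost every $z$.

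The third step is the passage to the $\limsup$ in $\epsilon$: since all estimates above are uniform in $\epsilon$ (the tails recede to infinity but the kernel mass in the relevant regions is controlled by $\|\hat{K}^z\|_{L^1}$ via the change of variables that defines $\hat K^z$, exactly as in the definition of $\hat J$ in Section \ref{s:locdef}), the bound survives the $\limsup$, and one may in fact show the $\limsup$ of the ``genuinely $\epsilon$-dependent'' part is zero, leaving only the static $\rho^\alpha$ losses. I expect the main obstacle to be the careful bookkeeping in case (b): one has to argue that the portion of the $(t,s)$-integral where exactly one variable sits in a tail contributes through the auxiliary kernel $\hat K^z(h)=\int_0^1 K^z(h/r)|h/r|\,r^{-1}\,dr$ rather than through $K^z$ itself — this is the one place where the $|s|$-moment of the kernel is genuinely used, and getting the constant to depend only on $M$, $Z$, and the Hölder seminorms of $z_1,z_2$ (and not on $\epsilon$ or $\rho$) requires being precise about which change of variables is applied. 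Everything else — the algebraic splitting, the Hölder estimate on $|z_i-a_\rho|$, the use of \eqref{e:aeintk} — is routine.
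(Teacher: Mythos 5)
Your overall architecture matches the paper's proof: the identity $a^2-b^2=(a+b)(a-b)$, a case split according to where the two points $t,t+s$ sit relative to the slab $(-\rho/2\epsilon,\rho/2\epsilon)$, an $O(\rho^\alpha)$ oscillation bound for $|\overline a_\rho(y)-\overline z_1(\cdot)|$ and $|\overline b_\rho(y)-\overline z_2(\cdot)|$, the locality defect / auxiliary kernel $\hat K^z$ for the tail--slab interactions, and a reverse Fatou lemma in $y$ at the end. However, your treatment of case (a) — both points in the same tail — contains a genuine gap. There you bound the integrand by $4M\big(|\gamma(t+s)-\gamma'(t+s)|+|\gamma(t)-\gamma'(t)|\big)$ and use that each summand is $O(\rho^\alpha)$ \emph{uniformly in $t$}. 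But the $t$-integration then runs over the unbounded tail $\{|t|>\rho/2\epsilon\}$, so after integrating $K^z(s)$ in $s$ you are left with $C\rho^\alpha\|K^z\|_{L^1}\cdot\mathcal{L}^1(\text{tail})=+\infty$; "one variable is free" is exactly the problem, not the solution. The fix is to \emph{not} linearize in this regime: on the same tail one has $a=\hat u^{yz}(t+s)-\hat u^{yz}(t)=0$, so $|a^2-b^2|=|b|^2=\big|\overline z_i\big(x_0+\tfrac{y}{t+s}+\pi_\xi(z)\big)-\overline z_i\big(x_0+\tfrac{y}{t}+\pi_\xi(z)\big)\big|^2\lesssim \lfloor z_i\rfloor_{C^{0,\alpha}}^2|y|^{2\alpha}\big|\tfrac1t-\tfrac1{t+s}\big|^{2\alpha}$, and it is the \emph{square} of the Hölder increment that makes the tail integral converge, via $\int_{|t|\ge 1}t^{-2\alpha}\,dt<\infty$, which uses $\alpha>\tfrac12$ in an essential way. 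This is also where the second term $\rho^{2\alpha}\|K^z\|_{L^1}$ in \eqref{e:oscbound} comes from; your claimed $\rho^\alpha\|K^z\|_{L^1}$ for this case would not even match the statement.

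Two secondary points. First, your case enumeration omits the interaction between the two opposite tails, $(-\infty,-\rho/2\epsilon)\times(\rho/2\epsilon,\infty)$; this term is harmless ($\int_{s\ge \rho/\epsilon}K^z(s)\,s\,ds\to 0$ by the first-moment integrability, i.e.\ the defect estimate with $\Sigma$ a single point), but it must be listed. Second, the restriction to $\mathcal{H}^{n-1}$-a.e.\ $z$ is not only about \eqref{e:aeintk}: one also needs $\pi_\xi(z)$ to avoid the half-lattice $\{\sum_i m_i\xi_i/2:\ m\in\rho\mathbb{Z}^{n-1}\}$, so that for $\epsilon$ small all points $y/t+\pi_\xi(z)$ fall into a single periodicity cell of $\overline z_i$ — otherwise the Hölder estimate on the \emph{periodic extension} $\overline z_i$ (which jumps across cell boundaries) is unavailable. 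Finally, since the bounds on the tail--slab terms are only asymptotic (they are $\limsup$ bounds on locality defects), you should record a uniform-in-$y$ domination and invoke the reverse Fatou lemma explicitly rather than asserting that the estimates are "uniform in $\epsilon$".
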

\begin{proof}
 In order to simplify the notation we assume $x_0=0$. By using the identity $a^2-b^2 = (a-b)(a+b)$ ($a,b \in \mathbb{R}$) we obtain the following pointwise estimate for $y \in C_\rho(0)$ \EEE : 
    \begin{align*}
     &|\textbf{K}^z(\hat{u}^{yz}_{\epsilon,z})-\textbf{K}^z(\mathring{u}^{yz}_{\epsilon,\xi})| \\
    &\leq c \int_{(-\infty,- \frac{\rho}{2\epsilon}) \times(- \frac{\rho}{2\epsilon},\frac{\rho}{2\epsilon})}  K^z(t'-t) |\overline{a}_\rho(y) - \overline{z}_{1}(\frac{y}{t'}+\pi_\xi(z))|\,dt'dt\\
    &+ c\int_{(\frac{\rho}{2\epsilon},\infty) \times (- \frac{\rho}{2\epsilon},\frac{\rho}{2\epsilon})}  K^z(t'-t) |\overline{b}_\rho(y) - \overline{z}_{2}(\frac{y}{t'}+\pi_\xi(z))|\,dt'dt\\
&+\int_{( \frac{\rho}{2\epsilon},\infty) \times( \frac{\rho}{2\epsilon},\infty)}  K^z(t'-t) |\overline{z}_{2}(\frac{y}{t'}+\pi_\xi(z)) - \overline{z}_{2}(\frac{y}{t}+\pi_\xi(z))|^2\,dt'dt\\
&+\int_{(-\infty, -\frac{\rho}{2\epsilon}) \times(-\infty, -\frac{\rho}{2\epsilon})}  K^z(t'-t) |\overline{z}_{1}(\frac{y}{t'}+\pi_\xi(z)) - \overline{z}_{1}(\frac{y}{t}+\pi_\xi(z))|^2\,dt'dt\\
    &+c\int_{(-\infty,-\frac{\rho}{2\epsilon})\times ( \frac{\rho}{2\epsilon},\infty)}  K^z(t'-t)\,dtdt'
    \end{align*}
    where the constant $c$ depends on $M$ and $Z$ ($\rho \ll 1$), and where $\pi_\xi \colon  \mathbb{R}^m \EEE \to \xi^\bot$ denotes the orthogonal projection. Now we estimate separately for every $y \in C_\rho(0)$
\begin{align}
\nonumber
    |\overline{b}_\rho(y) - \overline{z}_{2}(\frac{y}{t'}+\pi_\xi(z))| &\leq |\overline{b}_\rho(y) - \overline{b}_\rho(0)|+|\overline{z}_2(0)-\overline{b}_\rho(0)| \\
    \label{e:oscbound1}
    &+|\overline{z}_2(\frac{y}{t'}+\pi_\xi(z)) - \overline{z}_2(0)|\\
    \nonumber
    & \leq \text{osc}(b_\rho,C_\rho(0)) +2\,\text{osc}(z_2,Q_\rho)\\
    &\leq 3\,\text{osc}(z_2,Q_\rho) \leq 3 n^{\frac{\alpha}{2}} \lfloor z_2\rfloor_{C^{0,\alpha}} \rho^{\alpha}   
\end{align}
and analogously for every $y \in C_\rho(0)$
\begin{equation}
\label{e:oscbound2}
    |\overline{a}_\rho(y) - \overline{z}_{1}(\frac{y}{t'}+\pi_\xi(z))| \leq 3 n^{\frac{\alpha}{2}} \lfloor z_1\rfloor_{C^{0,\alpha}} \rho^{\alpha} .  
\end{equation}
    Additionally, for every $z \in V_\xi$ for which $\pi_\xi(z) \notin \{\sum_{i=1}^{n-1} m_i \xi_i/2 \ | \ m \in \rho\mathbb{Z}^{n-1}  \} $ (hence for $\mathcal{H}^{n-1}$-a.e. $z \in V_\xi$) there exists a small enough threshold $0 < \epsilon_z <1 $ and there exists $m \in \rho \mathbb{Z}^{n-1}$ such that $y+\pi_\xi(z)$ belongs to $C_\rho(0) + \sum_{i=1}^{n-1}m_i\xi_i$ whenever $|y| < \epsilon_z$ (loosely speaking they belong to the same copy of $C_\rho(0)$ obtained by translating of $\sum_i m_i \xi_i$). This means that, for $\mathcal{H}^{n-1}$-a.e. $z \in V_\xi$ we find $0 <\epsilon_z <1$ such that for every $0 < \epsilon <\epsilon_z\rho/2$, $y \in C_\rho(0)$, and $t,t' \in (\frac{\rho}{2\epsilon},\infty)$, we can estimate \EEE
\begin{align}
\nonumber
    |\overline{z}_{2}(\frac{y}{t'}+\pi_\xi(z)) - \overline{z}_{2}(\frac{y}{t}+\pi_\xi(z))| &= |z_{2}(\frac{y}{t'}+\pi_\xi(z)) - z_{2}(\frac{y}{t}+ \pi_\xi(z))| \\
    \label{e:oscbound3}
    &\leq  \lfloor z_2\rfloor_{C^{0,\alpha}} |y|^{\alpha}  \bigg| \frac{1}{t}-\frac{1}{t'}\bigg|^{\alpha}.
\end{align}
Analogously, for every $0 < \epsilon <\epsilon_z\rho/2$, $y \in C_\rho(0)$, and $t,t' \in (\frac{\rho}{2\epsilon},\infty)$, we have
\begin{equation}
\label{e:oscbound4}
     |\overline{z}_{1}(\frac{y}{t'}+\pi_\xi(z)) - \overline{z}_{1}(\frac{y}{t}+\pi_\xi(z))| \leq  \lfloor z_1\rfloor_{C^{0,\alpha}} |y|^{\alpha}  \bigg| \frac{1}{t}-\frac{1}{t'}\bigg|^{\alpha}. 
\end{equation}

By recalling the definition of the defect functional \eqref{e:deficit}, we can thus write for every $y \in C_\rho(0)$, for $\mathcal{H}^{n-1}$-a.e. $z \in V_\xi$, and for every $0 < \epsilon < \epsilon_z \rho/2$ 
\begin{align*}
|\textbf{K}^z(\hat{u}^{yz}_{\epsilon,z})-\textbf{K}^z(\mathring{u}^{yz}_{\epsilon,\xi})| &\leq C'\,\rho^{\alpha} \, \Lambda_\epsilon(\mathbbm{1}_{(-\frac{\rho}{2},\frac{\rho}{2})},(-\infty,-\rho/2),(-\rho/2,\rho/2)) \\
    &+ C' \, \rho^{\alpha} \, \Lambda_\epsilon(\mathbbm{1}_{(-\frac{\rho}{2},\frac{\rho}{2})},(\rho/2,\infty),(-\rho/2,\rho/2)) \\
    &+ C'\|K^z\|_{L^1(\mathbb{R})}\rho^{2\alpha} \int_{\{|t| \geq 1\}} \frac{dt}{t^{2\alpha}}\\ 
    &+C' \Lambda_\epsilon(\mathbbm{1}_{(\mathbb{R} \setminus (-\frac{\rho}{2},\frac{\rho}{2}))},(-\infty,-\rho/2),(\rho/2,\infty)),
\end{align*}
where $C':= C'(M,Z,\lfloor z_1\rfloor_{C^{0,\alpha}},\lfloor z_2\rfloor_{C^{0,\alpha}})>0$.
In particular, the above chain of inequalities tells us that the family of integrands in the left-hand side of \eqref{e:oscbound} is uniformly dominated with respect to $y$ for every $\epsilon$ small enough (depending on $z \in V_\xi$). Moreover, from Proposition \ref{p:deficit} we have
   \begin{align}
   \label{e:deficit1}
    &\limsup_{\epsilon} \Lambda_\epsilon(\mathbbm{1}_{(-\frac{\rho}{2},\frac{\rho}{2})},(\rho/2,\infty),(-\rho/2,\rho/2)) \leq \frac{1}{2}\|\hat{K}^z\|_{L^1(\mathbb{R})} \\
    \label{e:deficit2}
    &\limsup_{\epsilon}\Lambda_\epsilon(\mathbbm{1}_{(-\frac{\rho}{2},\frac{\rho}{2})},(-\infty,-\rho/2),(-\rho/2,\rho/2)) \leq \frac{1}{2}\|\hat{K}^z\|_{L^1(\mathbb{R})}\\
    \label{e:deficit3}
    &\limsup_{\epsilon}\Lambda_\epsilon(\mathbbm{1}_{(\mathbb{R} \setminus (-\frac{\rho}{2},\frac{\rho}{2}))},(-\infty,-\rho/2),(\rho/2,\infty))=0,
    \end{align}
    where we have set $\Sigma=\{\frac{\rho}{2}\}$, $\Sigma=\{-\frac{\rho}{2}\}$, and $\Sigma=\{0\}$ in \eqref{e:deficit1}, \eqref{e:deficit2}, and \eqref{e:deficit3}, respectively. 

We are thus in position to apply the limsup version of Fatou's lemma in the left-hand side of \eqref{e:oscbound} and to infer the validity of the proposition.
\end{proof}

We further show that for sequences $(u_\epsilon)$ converging in $L^1$ the difference $\textbf{K}^z(\mathring{u}^{yz}_{\epsilon,\xi})-\textbf{K}^z_{\rho,\epsilon}(\mathring{u}^{yz}_{\epsilon,\xi})$ can be controlled asymptotically as $\epsilon \to 0^+$.

\begin{proposition}
\label{p:oscbound2}
    Let $x_0 \in \Omega$, let $(u_\epsilon)_\epsilon$ be a family of measurable functions $u_\epsilon \colon \Omega \to [-M,M]$, and let $v_{\epsilon} \colon S_\rho^{\ep} \to [-M,M]$ be defined as $v_\epsilon(y+t\xi):=u_\epsilon(x_0+ y+\epsilon t \xi )$. Assume that
    \begin{equation}
    \label{e:l1conv149}
        u_\epsilon \to u, \ \ \text{ in $L^1(\Omega)$ as $\epsilon \to 0^+$}.
    \end{equation}
    Then, for $\mathcal{H}^1$-a.e. \EEE sufficiently small $\rho >0$ (depending on $x_0$), and for $\mathcal{H}^{n-1}$-a.e. \EEE $z \in V_\xi$, there holds
    \begin{equation}
    \label{e:oscbound1234}
    \begin{split}
\limsup_{\epsilon \to 0+}  \mint_{C_\rho(0)} |\textbf{\emph{K}}^z(\mathring{v}^{yz}_{\epsilon,\xi})-\textbf{\emph{K}}_{\rho,\epsilon}^z(\mathring{v}^{yz}_{\epsilon,\xi})| \, dy &\leq C \|\hat{K}^z\|_{L^1} \mint_{\partial Q^-_\rho(x_0)}|u(x)-z_1(x_0)| \, d\mathcal{H}^{n-1} \\
&+ C \|\hat{K}^z\|_{L^1} \mint_{\partial Q^+_\rho(x_0)}|u(x)-z_2(x_0)| \, d\mathcal{H}^{n-1} \\
&+C  \rho^{2\alpha} \EEE(\|K^z\|_{L^1}+  \|\hat{K}^z\|_{L^1} \EEE ), 
\end{split}
    \end{equation}
    where $\partial Q^\pm_\rho(x_0):= \partial Q_\rho(x_0) \cap \{x \ |\ \pm(x-x_0) \cdot \xi >0 \}$ and $C:=C(M,Z,\lfloor z_1\rfloor_{C^{0,\alpha}},\lfloor z_2\rfloor_{C^{0,\alpha}})>0$. 
\end{proposition}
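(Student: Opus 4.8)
The quantity to estimate is the ``error'' between the finite-window operator $\textbf{K}^z_{\rho,\epsilon}$ and the infinite-window operator $\textbf{K}^z$ evaluated on the periodic-modification $\mathring{v}^{yz}_{\epsilon,\xi}$. The plan is to expand the difference $\textbf{K}^z(\mathring{v}^{yz}_{\epsilon,\xi})-\textbf{K}^z_{\rho,\epsilon}(\mathring{v}^{yz}_{\epsilon,\xi})$ into those interaction pairs $(t,t')$ that the truncated window misses, namely pairs with at least one variable outside $(-\tfrac{\rho}{2\epsilon},\tfrac{\rho}{2\epsilon})$, and to rewrite each of these, after undoing the $\epsilon$-dilation, as a \emph{locality defect} $\Lambda_\epsilon$ between two sets divided by an $(n-2)$- or $(n-1)$-dimensional hyperplane. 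One then applies Proposition \ref{p:deficit} (the decay estimate for the defect functional in terms of $\|\hat J\|_{L^1}$ and the $\epsilon$-traces), combining it with the $\epsilon$-traces convergence guaranteed by Proposition \ref{p:etrace_convergence} (for $\mathcal{H}^1$-a.e.\ $\rho$) to see that the relevant traces are exactly $z_1(x_0)$ on $\partial Q^-_\rho(x_0)$ and $z_2(x_0)$ on $\partial Q^+_\rho(x_0)$, up to an error controlled by $\mint_{\partial Q^\pm_\rho}|u-z_i(x_0)|$. Wherever $\mathring v_{\epsilon,\xi}$ is forced to equal $\overline z_1$ or $\overline z_2$ (outside the window $S^\epsilon_\rho$) one replaces it by the constants $z_1(x_0),z_2(x_0)$, and the resulting mismatch is bounded by $\mathrm{osc}(z_i;Q_\rho)\lesssim \lfloor z_i\rfloor_{C^{0,\alpha}}\rho^\alpha$, exactly as was done in Proposition \ref{p:oscbound1}; the ``tail'' interaction of the frozen constants across the far plane contributes the $\rho^{2\alpha}(\|K^z\|_{L^1}+\|\hat K^z\|_{L^1})$ term via $\int_{|t|\ge 1} t^{-2\alpha}\,dt$.

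\textbf{Key steps, in order.} First I would write, using $a^2-b^2=(a-b)(a+b)$ and $|\mathring v_{\epsilon,\xi}|\le M$, the pointwise bound
\[
|\textbf{K}^z(\mathring v^{yz}_{\epsilon,\xi})-\textbf{K}^z_{\rho,\epsilon}(\mathring v^{yz}_{\epsilon,\xi})|\le c\!\!\int_{\mathbb{R}^2\setminus(-\frac{\rho}{2\epsilon},\frac{\rho}{2\epsilon})^2}\!\!K^z(t'-t)\,\big|\mathring v^{yz}_{\epsilon,\xi}(t')-\mathring v^{yz}_{\epsilon,\xi}(t)\big|\,dt'dt,
\]
and split the domain of integration into (a) $t'\notin(-\frac{\rho}{2\epsilon},\frac{\rho}{2\epsilon})$, $t\in(-\frac{\rho}{2\epsilon},\frac{\rho}{2\epsilon})$ (and the symmetric case), (b) both $t,t'$ on the same side outside the window, (c) $t,t'$ on opposite sides. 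Second, for region (a) I would use that on the far side $\mathring v^{yz}_{\epsilon,\xi}$ equals $\overline z_1$ resp.\ $\overline z_2$, add and subtract $z_1(x_0)$ resp.\ $z_2(x_0)$, estimate $|\overline z_i(\cdot)-z_i(x_0)|\le \mathrm{osc}(z_i;Q_\rho)$, and recognize the remaining term as $\Lambda_\epsilon$ of the (appropriately indicator-weighted) function between half-lines, whose trace on $\partial Q^\pm_\rho(x_0)$ is $u$; this is where the $\mint_{\partial Q^\pm_\rho}|u-z_i(x_0)|$ terms enter, via Propositions \ref{p:deficit} and \ref{p:etrace_convergence}, exactly after undoing the $\epsilon$-rescaling $v_\epsilon(y+t\xi)=u_\epsilon(x_0+y+\epsilon t\xi)$. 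Third, region (b) reproduces the oscillation estimates \eqref{e:oscbound3}--\eqref{e:oscbound4} of Proposition \ref{p:oscbound1} (restricted now to the frozen constants), giving the $\rho^{2\alpha}\|K^z\|_{L^1}$ contribution. Fourth, region (c) is a defect across the plane $\{0\}$ separating the two far half-lines, hence $\to 0$ by \eqref{e:deficit3}-type reasoning, or produces another $\|\hat K^z\|_{L^1}\rho^{2\alpha}$ term after the add/subtract step. Finally, I would note that all integrands are uniformly dominated in $y\in C_\rho(0)$ for $\epsilon$ small (depending on $z$), so the limsup-version of Fatou's lemma passes the estimate under $\mint_{C_\rho(0)}$, and I pick the full-measure set of $\rho$ on which Proposition \ref{p:etrace_convergence} applies to $\partial Q_\rho(x_0)$.

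\textbf{Main obstacle.} The delicate point is the bookkeeping in undoing the $\epsilon$-dilation so that the one-dimensional windowed integrals genuinely match the $m$-dimensional defect functional $\Lambda_\epsilon(\cdot;A,A')$ with $A,A'$ divided by the correct piece of $\partial Q_\rho(x_0)$: one has to keep track of the $C_\rho(0)$-periodicity of $\mathring v_{\epsilon,\xi}$, of the Jacobian-$1$ change of variables $\psi(y,t)=y+tz$ used throughout Section \ref{s:dirblowup}, and of the fact that for $\mathcal{H}^{n-1}$-a.e.\ $z$ the translated copy $y+\pi_\xi(z)$ stays in a fixed cell of the periodic tiling for $|y|$ small — this is precisely the role of the threshold $\epsilon_z$ appearing in Proposition \ref{p:oscbound1}, and it must be invoked again here to make the trace identification legitimate. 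Modulo this technical alignment, every individual estimate is either a verbatim repetition of what appears in the proof of Proposition \ref{p:oscbound1} or a direct citation of Propositions \ref{p:deficit} and \ref{p:etrace_convergence}, so I do not expect any genuinely new analytic difficulty beyond careful matching of domains.
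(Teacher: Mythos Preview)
Your overall strategy matches the paper's: write $\textbf{K}^z(\mathring v)-\textbf{K}^z_{\rho,\epsilon}(\mathring v)$ as an integral over the complementary region, split into the three domain types (one variable inside and one out; both out on the same side; both out on opposite sides), undo the $\epsilon$-dilation, and bound each piece by a one-dimensional locality defect for the kernel $K^z$ via Propositions~\ref{p:etrace_convergence} and~\ref{p:deficit}, picking the full-measure set of $\rho$ on which the traces converge.

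Two corrections, one of which matters. First, there is no $a^2-b^2$ here: unlike Proposition~\ref{p:oscbound1} (two \emph{different} functions over the \emph{same} domain), the present difference is the \emph{same} squared integrand restricted to complementary domains, so it equals $\tfrac14\int_{\mathbb{R}\times I^c}K^z(t'-t)|\mathring v(t')-\mathring v(t)|^2\,dt'dt$ with $I=(-\rho/2\epsilon,\rho/2\epsilon)$---already nonnegative and already squared. Keeping the square is essential in region~(b): the tail control $\int_{|t|\ge1}t^{-2\alpha}\,dt<\infty$ requires $2\alpha>1$, whereas dropping to first power would need $\alpha>1$ and the bound becomes vacuous for $\alpha\in(1/2,1]$. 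Second, for region~(a) the paper does not invoke the $\epsilon_z$ threshold at all: it first integrates in $y\in C_\rho(0)$ and uses $C_\rho(0)$-periodicity to eliminate the $\pi_\xi(z)$-shift, then reduces to a purely one-dimensional defect in $t$. It also inserts an intermediate split through the limit $u$, writing $|\overline u_\epsilon-\overline z_1|^2\le 3|\overline u_\epsilon-\overline u|^2+3|\overline u-z_1(x_0)|^2+3|\overline z_1-z_1(x_0)|^2$; the first summand has vanishing defect (its $L^1$-limit is zero, hence zero trace), the second is $\epsilon$-independent so its trace is its boundary value and yields the $\mint_{\partial Q^\pm_\rho}|u-z_i(x_0)|$ term, and the third gives the $\rho^{2\alpha}\|\hat K^z\|$ contribution. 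Your more direct route (trace of $|u_\epsilon-z_i(x_0)|^2$ equals $|u-z_i(x_0)|^2$ for a.e.\ $\rho$, by $L^1$-convergence plus uniform boundedness) is also valid and slightly shorter.
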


\begin{proof}
In order to simplify the notation, we assume that $x_0=0$ and $\{y+t\xi \ | \ y \in C_\rho(0), \ t \in (-1/2,1/2)\} \subset \Omega$ for every $\rho >0$ small enough.
     
     We notice that, from the very definition of $\mathring{v}_\epsilon$, we have for every $y \in C_\rho(0)$
    \begin{align*}
    \textbf{K}^z(\mathring{v}^{yz}_{\epsilon,\xi})-\textbf{K}^z_{\rho,\epsilon}(\mathring{v}^{yz}_{\epsilon,\xi}) &= 2\int_{\mathbb{R} \times (\mathbb{R} \setminus (-\frac{\rho}{2\epsilon},\frac{\rho}{2\epsilon}))} K^z(t'-t) |\mathring{v}^{yz}_{\epsilon,\xi}(t') - \mathring{v}^{yz}_{\epsilon,\xi}(t)|^2 \, dt'dt.
    \end{align*}
    We split the integration domain as
    \begin{align*}
    \mathbb{R} \times (\mathbb{R} \setminus (-\rho/2\epsilon,\rho/2\epsilon)) = (\mathbb{R} \setminus (-\rho/2\epsilon,\rho/2\epsilon))^2
    &\cup (-\rho/2\epsilon,\rho/2\epsilon) \times (-\infty,-\rho/2\epsilon,) \\
    &\cup (-\rho/2\epsilon,\rho/2\epsilon) \times (\rho/2\epsilon,\infty).
    \end{align*}
    Arguing exactly as in the proof of Proposition \ref{p:oscbound1}, we infer the existence of a constant $C':= C'(M,\lfloor z_1\rfloor_{C^{0,\alpha}},\lfloor z_2\rfloor_{C^{0,\alpha}})>0$ such that for $\mathcal{H}^{n-1}$-a.e. $z \in V_\xi$
    \begin{equation}
    \label{e:defest1118}
       \limsup_{\epsilon \to 0+}  \mint_{C_\rho(0)} \bigg(\int_{(\mathbb{R} \setminus (-\frac{\rho}{2\epsilon},\frac{\rho}{2\epsilon}))^2} K^z(t'-t) |\mathring{v}^{yz}_{\epsilon,\xi}(t') - \mathring{v}^{yz}_{\epsilon,\xi}(t)|^2 \, dt'dt\bigg) dy \leq C' \rho^{2\alpha}\|K^z\|_{L^1}.
    \end{equation}
     For the remaining term, by performing the change of variables $s=\epsilon^{-1}t$ and $s'=\epsilon^{-1}t'$ and by using the definition of $\mathring{v}_\epsilon$ we write for every $y \in C_\rho(0)$
    \begin{align*}
        &\int_{(-\frac{\rho}{2\epsilon},\frac{\rho}{2\epsilon}) \times (-\infty,-\frac{\rho}{2\epsilon})} K^z(s'-s) |\mathring{v}^{yz}_{\epsilon,\xi}(s') - \mathring{v}^{yz}_{\epsilon,\xi}(s)|^2 \, ds'ds \\
        &\leq \frac{3}{\epsilon}\int_{(-\frac{\rho}{2},\frac{\rho}{2}) \times (-\infty,-\frac{\rho}{2})} K^z_\epsilon(t'-t) |\overline{u}_\epsilon(y +\frac{t'}{\epsilon}\pi_\xi(z)+t'\xi) - \overline{u}(y +\frac{t'}{\epsilon}\pi_\xi(z) +t'\xi)|^2 \, dt'dt \\
        &+ \frac{3}{\epsilon}\int_{(-\frac{\rho}{2},\frac{\rho}{2}) \times (-\infty,-\frac{\rho}{2})} K^z_\epsilon(t'-t) |\overline{u}(y+\frac{t'}{\epsilon}\pi_\xi(z) +t'\xi)-z_1(0)|^2 \, dt'dt \\
        &+ \frac{3}{\epsilon}\int_{(-\frac{\rho}{2},\frac{\rho}{2}) \times (-\infty,-\frac{\rho}{2})} K^z_\epsilon(t'-t) |\overline{z}_1( \frac{\epsilon y}{t}+\pi_\xi(z)) - z_1(0)|^2 \, dt'dt. 
    \end{align*}
    By taking the integral with respect to $y$ and by exploiting the $C_\rho(0)$-periodicity we have
    \begin{align*}
        &\int_{C_\rho(0)} \bigg(\int_{(-\frac{\rho}{2\epsilon},\frac{\rho}{2\epsilon}) \times (-\infty,-\frac{\rho}{2\epsilon})} K^z(t'-t) |\mathring{v}^{yz}_{\epsilon,\xi}(t') - \mathring{v}^{yz}_{\epsilon,\xi}(t)|^2 \, dt'dt\bigg) dy \\
        &\leq \int_{C_\rho(0)}\bigg(\frac{3}{\epsilon}\int_{(-\frac{\rho}{2},\frac{\rho}{2}) \times (-\infty,-\frac{\rho}{2})} K^z_\epsilon(t'-t) |\overline{u}_\epsilon(y +t'\xi) - \overline{u}(y +t'\xi)|^2 \, dt'dt\bigg)dy \\
        &+ \int_{C_\rho(0)} \bigg(\frac{3}{\epsilon}\int_{(-\frac{\rho}{2},\frac{\rho}{2}) \times (-\infty,-\frac{\rho}{2})} K^z_\epsilon(t'-t) |\overline{u}(y +t'\xi)-z_1(0)|^2  \, dt'dt\bigg)dy \\
        &+ \int_{C_\rho(0)} \bigg(\frac{3}{\epsilon}\int_{(-\frac{\rho}{2},\frac{\rho}{2}) \times (-\infty,-\frac{\rho}{2})} K^z_\epsilon(t'-t) |\overline{z}_1(\frac{\epsilon y}{t}+\pi_\xi(z)) -z_1(0)|^2  \, dt'dt\bigg)dy 
  \end{align*}
    From \eqref{e:l1conv149}, we deduce that, by letting $f_\epsilon \colon (-\frac{1}{2},\frac{1}{2}) \to \mathbb{R}$ be defined as
    \[
    f_\epsilon(t'):=\int_{C_\rho(0)} |\overline{u}_\epsilon(y +t'\xi) - \overline{u}(y +t'\xi)|^2\, dy,
    \]
    then $f_\epsilon \to 0$ in $L^1$ as $\epsilon \to 0^+$. Therefore, by Proposition \ref{p:etrace_convergence} we infer that for a.e. (sufficiently small) $\rho>0$ the $\epsilon_\ell$-traces of $f_{\epsilon_\ell}$ relative to $(-\frac{1}{2},\frac{1}{2})$ converge on $\{-\frac{\rho}{2}\} \cup \{\frac{\rho}{2}\}$ to $0$. By using that $(-\frac{\rho}{2},\frac{\rho}{2}) \subset (-\frac{1}{2},\frac{1}{2})$, from the monotonicity of the $\epsilon$-traces convergence with respect to set inclusion, we deduce also that the $\epsilon_\ell$-traces of $f_{\epsilon_\ell}$ relative to $(-\frac{\rho}{2},\frac{\rho}{2})$ converge on $\{-\frac{\rho}{2}\} \cup \{\frac{\rho}{2}\}$ to $0$, whenever $(\epsilon_\ell)$ is infinitesimal. We are thus in position to apply Proposition \ref{p:deficit} and infer 
    \begin{equation}
    \label{e:defest1111}
        \limsup_{\ell} \Lambda_\ell\left(f_{\epsilon_\ell} \mathbbm{1}_{(-\frac{\rho}{2},\frac{\rho}{2})}; (-\frac{\rho}{2},\frac{\rho}{2}),(-\infty,-\frac{\rho}{2})\right) =0, \ \ \text{ for a.e. $\rho>0$}
    \end{equation}
    Finally, by \eqref{e:defest1111}, for almost every $\rho>0$ we write
    \begin{align*}
        &\limsup_{\ell} \int_{C_\rho(0)}\bigg(\frac{3}{\epsilon_\ell}\int_{(-\frac{\rho}{2},\frac{\rho}{2}) \times (-\infty,-\frac{\rho}{2})} K^z_{\epsilon_\ell}(t'-t) |\overline{u}_{\epsilon_\ell}(y +t'\xi) - \overline{u}(y +t'\xi)|^2 \, dt'dt\bigg)dy \\
        &= \limsup_{\ell} \frac{3}{\epsilon_\ell}\int_{(-\frac{\rho}{2},\frac{\rho}{2}) \times (-\infty,-\frac{\rho}{2})}K^z_{\epsilon_\ell}(t'-t) |f_{\epsilon_\ell}\mathbbm{1}_{(-\frac{\rho}{2},\frac{\rho}{2})}(t') - f_{\epsilon_\ell}\mathbbm{1}_{(-\frac{\rho}{2},\frac{\rho}{2})}(t)|dt'dt=0.
    \end{align*}
    Hence, in view of the arbitrariness of the subsequence $(\epsilon_\ell)$ we infer
    \begin{equation}
        \label{e:defest1112}
        \limsup_{\epsilon} \int_{C_\rho(0)}\bigg(\frac{3}{\epsilon}\int_{(-\frac{\rho}{2},\frac{\rho}{2}) \times (-\infty,-\frac{\rho}{2})} K^z_{\epsilon}(t'-t) |\overline{u}_{\epsilon}(y +t'\xi) - \overline{u}(y +t'\xi)|^2 \, dt'dt\bigg)dy=0.
    \end{equation}

    With the very same technique, Propositions \ref{r:defuniform} and \ref{p:deficit} entail that for every $\rho >0$ there holds
     \begin{align}
      \nonumber
          &\limsup_{\epsilon \to 0^+}\int_{C_\rho(0)} \bigg(\frac{3}{\epsilon}\int_{(-\frac{\rho}{2},\frac{\rho}{2}) \times (-\infty,-\frac{\rho}{2})} K^z_\epsilon(t'-t) |\overline{u}(y+t'\xi)-z_1(0)|^2  \, dt'dt\bigg)dy \\
          \label{e:defest1113}
          & \leq C'' \|\hat{K}^z\|_{L^1} \int_{C_\rho(0)}|u(y-\frac{\rho}{2}\xi)-z_1(0)| \, dy,
          \end{align}
and by using that $\int_{C_\rho(0)} |\overline{z}_1(\frac{\epsilon y}{t} + \pi_\xi(z)) - z_1(0)|^2\, dy =: f_\epsilon(t) \to \rho^{n-1}|\overline{z}_1(\pi_\xi(z))-z_1(0)|^2$ uniformly in $t \in (-\infty,-\rho/2)$ as $\epsilon \to 0^+$, again the very same technique gives
\begin{align}
\label{e:defest1115}
   \limsup_{\epsilon \to 0^+} \mint_{C_\rho(0)} \bigg(&\frac{3}{\epsilon}\int_{(-\frac{\rho}{2},\frac{\rho}{2}) \times (-\infty,-\frac{\rho}{2})} K^z_\epsilon(t'-t) |\overline{z}_1(\frac{\epsilon y}{t}+\pi_\xi(z)) -z_1(0)|^2  \, dt'dt\bigg)dy \\
   \nonumber
   &\leq C'' \|\hat{K}^z\|_{L^1}|\overline{z}_1(\pi_\xi(z)) - z_1(0)|^2\\
   \nonumber
   &\leq C'' \|\hat{K}^z\|_{L^1} \lfloor z_1 \rfloor_{C^{0,\alpha}}^2 \rho^{2\alpha}.  
\end{align}
\EEE
By combining \eqref{e:defest1112}--\eqref{e:defest1115}, we infer that for a.e. $\rho >0$ (sufficiently small)
\begin{align}
    \nonumber
    &\limsup_{\epsilon \to 0^+}\mint_{C_\rho(0)} \bigg(\int_{(-\frac{\rho}{2\epsilon},\frac{\rho}{2\epsilon}) \times (-\infty,-\frac{\rho}{2\epsilon})} K^z(t'-t) |\mathring{v}^{yz}_{\epsilon,\xi}(t') - \mathring{v}^{yz}_{\epsilon,\xi}(t)|^2 \, dt'dt\bigg) dy \\
    \label{e:defest1116}
    &\leq  C \|\hat{K}^z\|_{L^1} \bigg(\mint_{C_\rho(0)}|u(y-\frac{\rho}{2}\xi)-z_1(0)| \, dy + \rho^{2\alpha} \EEE \bigg).
\end{align}
Analogously we obtain that for a.e. $\rho>0$ (sufficiently small)
           \begin{align}
    \nonumber
    &\limsup_{\epsilon \to 0^+}\mint_{C_\rho(0)} \bigg(\int_{(\frac{\rho}{2\epsilon},\frac{\rho}{2\epsilon}) \times (\frac{\rho}{2\epsilon},\infty)} K^z(t'-t) |\mathring{v}^{yz}_{\epsilon,\xi}(t') - \mathring{v}^{yz}_{\epsilon,\xi}(t)|^2 \, dt'dt\bigg) dy \\
    \label{e:defest1117}
    &\leq  C \|\hat{K}^z\|_{L^1} \bigg(\mint_{C_\rho(0)}|u(y+\frac{\rho}{2}\xi)-z_2(0)| \, dy + \rho^{2\alpha} \EEE\bigg).
\end{align}
To conclude, properties \eqref{e:defest1118} \eqref{e:defest1116}--\eqref{e:defest1117} yield \eqref{e:oscbound1234}.
\end{proof} 

In the next lemma, we provide a fundamental estimate for the $\Gamma$-$\liminf$ inequality.

\begin{lemma}[The double liminf inequality]
\label{l:est_gammainf}
     Let $x_0 \in \Omega$, let $(u_\epsilon)_\epsilon$ be a family of measurable functions $u_\epsilon \colon \Omega \to [-M,M]$. Assume that
    \begin{equation}
    \label{e:l1conv}
        u_\epsilon \to u, \ \ \text{ in $L^1(\Omega)$ as $\epsilon \to 0^+$}.
    \end{equation}
    Then, by letting $v_\epsilon \colon S_\rho^\epsilon \to [-M,M]$ be defined as $v_\epsilon(y+t\xi):=u_\epsilon(x_0+ y+\epsilon t \xi )$, and  by letting $I_{x_0}$ be the set of $\rho>0$ satisfying Proposition \ref{p:oscbound2}, we have for every infinitesimal sequence $(\rho_\ell) \subset I_{x_0}$ \EEE
     \begin{align}
         \label{e:est_gammainf}
          \liminf_{\rho_\ell \to 0} \EEE \liminf_{\epsilon \to 0^+}F_{\epsilon,x_0}(\mathring{v}_{\epsilon,\xi}; &S_{\rho_\ell}^\epsilon) \rho_\ell^{1-n} \geq \inf_{\gamma \in X_{z_1(x_0)}^{z_2(x_0)}} F^\xi_{x_0}(\gamma) \\
         &- C\|\hat{J}\|_{L^1} \limsup_{\rho_\ell \to 0} \EEE  \mint_{\partial Q^-_{\rho_\ell}(x_0)}|u(x)-z_1(x_0)| \, d\mathcal{H}^{n-1} \\
&- C  \|\hat{J}\|_{L^1}  \limsup_{\rho_\ell\to 0} \EEE  \mint_{\partial Q^+_{\rho_\ell}(x_0)}|u(x)-z_2(x_0)| \, d\mathcal{H}^{n-1},
     \end{align}
    where  $C:=C(M,Z,\lfloor z_1 \rfloor_{C^{0,\alpha}},\lfloor z_2 \rfloor_{C^{0,\alpha}})>0$ \EEE, $\partial Q_\rho(x_0)^\pm$ are defined in Proposition \ref{p:oscbound2}, and $F^\xi_{x_0}$ is given by \eqref{e:functionalxi148}.
\end{lemma}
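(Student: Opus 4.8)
The idea is to chain together the lower-bound estimates developed in the previous subsections, namely Proposition \ref{p:calibration} (the calibration lower bound), Proposition \ref{p:oscbound1} (comparison between $\textbf{K}^z(\hat u^{yz}_{\epsilon,z})$ and $\textbf{K}^z(\mathring v^{yz}_{\epsilon,\xi})$), and Proposition \ref{p:oscbound2} (comparison between $\textbf{K}^z(\mathring v^{yz}_{\epsilon,\xi})$ and $\textbf{K}^z_{\rho,\epsilon}(\mathring v^{yz}_{\epsilon,\xi})$), together with the explicit computation \eqref{e:lowerbound} that identifies the value of the calibration on optimal profiles with $\int_{C_\rho(0)}\mathcal{F}^{\rho\xi}_y(\gamma_{y,\rho})\,dy$. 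First I would fix $\rho>0$ small (depending on $x_0$) and $\epsilon>0$, and apply Proposition \ref{p:calibration} with the $C_\rho(0)$-periodic extension of $\mathring{v}_{\epsilon,\xi}$ to get
\[
F_{\epsilon,x_0}(\mathring v_{\epsilon,\xi};S_\rho^\epsilon)\ \geq\ \int_{V_\xi}\bigg(\int_{C_\rho(0)} G^{yz}_{\rho,\epsilon}(\mathring v^{yz}_{\epsilon,\xi})\,dy\bigg)dz.
\]
Then, by the minimality of $\mathring v$ is \emph{not} used; instead I would replace $G^{yz}_{\rho,\epsilon}$ by $G^{yz}_\rho$ evaluated on the \emph{same} competitor at a cost controlled by the term $\textbf{K}^z(\mathring v^{yz}_{\epsilon,\xi})-\textbf{K}^z_{\rho,\epsilon}(\mathring v^{yz}_{\epsilon,\xi})\ge 0$ (this difference is nonnegative, so dropping it only decreases the right-hand side, but I also need it to reinsert $\hat v$; see below) and by the difference between the potential integrals restricted to $(-\rho/2\epsilon,\rho/2\epsilon)$ and to all of $\mathbb{R}$, which vanishes since $P^z_\rho(y,\cdot)$ is supported in $(a_\rho(y),b_\rho(y))$ and the modification takes values in the wells outside $S_\rho^\epsilon$.

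The core of the argument is then: by the minimality of the optimal profiles $\gamma_{y,\rho}$ for $\mathcal{F}^{\rho\xi}_y$, together with Theorem \ref{t:charmin} (which says that $\gamma_{y,\rho}^{-1}$ minimizes the conjugate functional and hence, after disintegration over $z$, each $\gamma_{y,\rho}$ minimizes the averaged functional $\int_{V_\xi}G^{yz}_\rho\,dz$), one has
\[
\int_{V_\xi}\bigg(\int_{C_\rho(0)} G^{yz}_\rho(\mathring v^{yz}_{\epsilon,\xi})\,dy\bigg)dz\ \geq\ \int_{V_\xi}\bigg(\int_{C_\rho(0)} G^{yz}_\rho(\gamma_{y,\rho})\,dy\bigg)dz\ =\ \int_{C_\rho(0)}\mathcal{F}^{\rho\xi}_y(\gamma_{y,\rho})\,dy,
\]
where the last equality is \eqref{e:lowerbound}. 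However this step is too crude because $G^{yz}_\rho$ involves the full non-local integral $\textbf{K}^z$ over all of $\mathbb{R}$, while our competitor only controls the part over $S_\rho^\epsilon$; so I would instead work with $G^{yz}_{\rho,\epsilon}$ directly and pass to $\hat v_{\epsilon,z}$, the modification in the direction $z$, using Proposition \ref{p:oscbound1} to absorb $\textbf{K}^z(\hat u^{yz}_{\epsilon,z})-\textbf{K}^z(\mathring u^{yz}_{\epsilon,\xi})$ into an $O(\rho^\alpha)$ error, and Proposition \ref{p:oscbound2} to absorb $\textbf{K}^z(\mathring v^{yz}_{\epsilon,\xi})-\textbf{K}^z_{\rho,\epsilon}(\mathring v^{yz}_{\epsilon,\xi})$ into the boundary terms $\mint_{\partial Q^\pm_\rho(x_0)}|u-z_i(x_0)|$ plus an $O(\rho^{2\alpha})$ error. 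Since $\hat v_{\epsilon,z}$ agrees with $a_\rho(y),b_\rho(y)$ outside $S_\rho^\epsilon$ and is therefore admissible for the conjugate minimization (its inverse lies in $Y$), minimality of $\gamma_{y,\rho}$ for $G^{yz}_\rho$ gives $G^{yz}_\rho(\hat v^{yz}_{\epsilon,z})\ge G^{yz}_\rho(\gamma_{y,\rho})$ for a.e.\ $y$, and integrating over $y$ and $z$ and using \eqref{e:lowerbound} yields the main lower bound $\int_{C_\rho(0)}\mathcal{F}^{\rho\xi}_y(\gamma_{y,\rho})\,dy$ up to the quantified errors. Taking $\liminf_{\epsilon\to0^+}$ (which is where Propositions \ref{p:oscbound1} and \ref{p:oscbound2} apply, as their conclusions are $\limsup_\epsilon$ statements) and dividing by $\rho^{n-1}$, one gets
\[
\liminf_{\epsilon\to0^+}F_{\epsilon,x_0}(\mathring v_{\epsilon,\xi};S_\rho^\epsilon)\rho^{1-n}\ \geq\ \mint_{C_\rho(0)}\mathcal{F}^{\rho\xi}_y(\gamma_{y,\rho})\,dy\ -\ C\|\hat J\|_{L^1}\Big(\mint_{\partial Q^-_\rho(x_0)}|u-z_1(x_0)|+\mint_{\partial Q^+_\rho(x_0)}|u-z_2(x_0)|\Big)\ -\ C\rho^\alpha.
\]

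Finally I would pass to the limit $\rho_\ell\to0$ along a sequence in $I_{x_0}$. Here I use that $\mathcal{F}^{\rho\xi}_y\to F^\xi_{x_0}$ in a suitable sense as $\rho\to0$, more precisely that $\inf\mathcal{F}^{\rho\xi}_y=\mathcal{F}^{\rho\xi}_y(\gamma_{y,\rho})$ converges (uniformly in $y\in C_\rho(0)$, which shrinks to $\{0\}$ as well) to $\sigma(x_0,\xi)=\inf_{\gamma\in X_{z_1(x_0)}^{z_2(x_0)}}F^\xi_{x_0}(\gamma)$; this is the content of the continuous-dependence machinery of Proposition \ref{p:contdep} applied to the restricted potentials $\mathcal{W}^{\rho\xi}$, which satisfy hypotheses (1)--(7) by Lemma \ref{lem:trans-rest}, together with \eqref{eq:well-r-loc-un}. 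Since $\lim_{\ell}\mint_{C_{\rho_\ell}(0)}\mathcal{F}^{\rho_\ell\xi}_y(\gamma_{y,\rho_\ell})\,dy = \sigma(x_0,\xi)$, and the error term $C\rho_\ell^\alpha\to0$, taking $\liminf_{\rho_\ell\to0}$ of both sides and recognizing $\limsup_{\rho_\ell\to0}$ of the boundary averages on the right produces exactly \eqref{e:est_gammainf}. The main obstacle I anticipate is the bookkeeping in the first part: one must carefully track the passage from $F_{\epsilon,x_0}(\mathring v_{\epsilon,\xi};S_\rho^\epsilon)$ through the calibration to the conjugate functional while switching between the two periodic modifications $\mathring v_{\epsilon,\xi}$ and $\hat v_{\epsilon,z}$, ensuring at each switch that the competitor remains admissible for the relevant minimization (values in the wells outside $S_\rho^\epsilon$) and that the error terms are precisely those controlled by Propositions \ref{p:oscbound1} and \ref{p:oscbound2}; the limit $\rho\to0$ itself is comparatively routine given the continuous-dependence results already established.
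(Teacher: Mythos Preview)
Your proposal is correct and follows the same route as the paper's proof: the calibration lower bound (Proposition \ref{p:calibration}), the switch from $\mathring v_{\epsilon,\xi}$ to $\hat v_{\epsilon,z}$ at a cost controlled by Propositions \ref{p:oscbound1}--\ref{p:oscbound2}, minimality of $\gamma_{y,\rho}$ for each $G^{yz}_\rho$, identity \eqref{e:lowerbound}, and then the limit $\rho\to0$ via Proposition \ref{p:contdep} combined with Lemma \ref{lem:trans-rest}. One small point to sharpen in your write-up: $\gamma_{y,\rho}$ minimizes $G^{yz}_\rho$ over all of $X_{a_\rho(y)}^{b_\rho(y)}$ for \emph{each} $z$ separately (not just on $z$-average) because the potential $P^z_\rho(y,\cdot)$ is \emph{defined} as $H^z_\rho\gamma_{y,\rho}^{-1}$, so Theorem \ref{t:charmin} applies pointwise in $z$; and $\hat v^{yz}_{\epsilon,z}$ is admissible simply by virtue of its boundary values $a_\rho(y),b_\rho(y)$---no inverse of $\hat v$ (which need not be monotone) enters.
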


\begin{proof}
In order to simplify the notation we write $\rho$ in place of $\rho_\ell$. Without loss of generality, we may assume that for every $\rho >0$ small enough we have $\liminf_{\epsilon \to 0^+}F_{\epsilon,x_0}(\mathring{v}_{\epsilon,\xi}; S_\rho^\epsilon) < \infty$. 
In what follows, we simplify the notation by assuming $x_0=0$.

We claim that 
    \begin{equation}
    \label{e:est_gammainf1}
    \begin{split}
   \liminf_{\rho \to 0}\liminf_{\epsilon \to 0^+} F_{\epsilon,0}(\mathring{v}_{\epsilon,\xi};S_\rho^\epsilon) &\rho^{1-n} \geq \liminf_{\rho \to 0}  \mint_{C_\rho(0)} \mathcal{F}^{\rho\xi}_y(\gamma_{y,\rho}) \, dy\\
   &- C\|\hat{J}\|_{L^1}\limsup_{\rho \to 0}\mint_{\partial Q^-_\rho}|u(x)-z_1(0)| \, d\mathcal{H}^{n-1}\\
    &- C\|\hat{J}\|_{L^1}\limsup_{\rho \to 0} \mint_{\partial Q^+_\rho}|u(x)-z_2(0)| \, d\mathcal{H}^{n-1}
        \end{split}
    \end{equation} 
     where $\gamma_{y,\rho}$ denotes any increasing minimizer of $\mathcal{F}^{\rho\xi}_y$ in the class $X^{b_\rho(y)}_{a_\rho(y)}$ provided by Proposition \ref{t:eximin}. 
    
    To prove the claim, we first notice that, since $\mathring{v}_{\epsilon,\xi}$ is $C_\rho(0)$-periodic, then \eqref{e:calibration4} entails that for every $\rho >0$ small enough, and for every $\epsilon >0$ there holds
    \begin{equation}
    \label{e:est_gamma12345}
     F_{\epsilon,0}(\mathring{v}_{\epsilon,\xi};S_\rho^\xi) \geq \int_{V_\xi} \bigg( \int_{C_\rho(0)} G^{yz}_{\rho,\epsilon}(\mathring{v}_{\epsilon,\xi}^{yz}) \, dy\bigg)dz.
    \end{equation}
    By \eqref{e:oscbound} and \eqref{e:oscbound1234}, we have for  $\mathcal{H}^1$-a.e. \EEE sufficiently small $\rho >0$ and for $\mathcal{H}^{n-1}$-a.e. \EEE $z \in V_\xi$
    \begin{align*}
    &\liminf_{\epsilon \to 0^+} \int_{C_\rho(0)} G^{yz}_{\rho,\epsilon}(\mathring{v}_{\epsilon,\xi}^{yz}) \, dy \geq \liminf_{\epsilon \to 0^+} \int_{C_\rho(0)} G^{yz}_{\rho}(\hat{v}_{\epsilon, z}^{yz}) \, dy-  C\rho^{\alpha+n-1}(\|\hat{K}^z\|_{L^1(\mathbb{R})} + \rho^{\alpha} \|K^z\|_{L^1(\mathbb{R})})\\
    &\qquad- c\|\hat{K}^z\|_{L^1(\mathbb{R})}\int_{\partial Q^-_\rho}|u(x)-z_1(0)| \, d\mathcal{H}^{n-1}
    - c\|\hat{K}^z\|_{L^1(\mathbb{R})}\int_{\partial Q^+_\rho}|u(x)-z_2(0)| \, d\mathcal{H}^{n-1},
    \end{align*}
    where we also used that for every $z \in V_\xi$ and $y \in V$ 
    \begin{align*}
    \hat{v}_{\epsilon, z}^{yz}(t)&=\overline{a}_\rho(y) \ \ \text{ for } t \in (-\infty,-\rho/2\epsilon)\\
    \hat{v}_{\epsilon, z}^{yz}(t)&=\overline{b}_\rho(y) \ \ \text{ for } t \in (\rho/2\epsilon,\infty)
    \end{align*}
    and that for every $z \in V_\xi$ and $y \in V$
    \[
    P^z_\rho(y,\overline{b}_\rho(y))=P^z_\rho(y,\overline{a}_\rho(y))=0.
    \]
   By noticing that $t \mapsto \hat{v}_{\epsilon, z}^{yz}(t)$ is admissible in the minimization of the functional $G^{yz}_\rho$ among the class $X^{b_\rho(y)}_{a_\rho(y)}$ for every $z \in V_\xi$ and $y \in C_\rho(0)$, we find 
   \begin{align*}
    &\liminf_{\epsilon \to 0^+} \int_{C_\rho(0)} G^{yz}_{\rho,\epsilon}(\mathring{v}_{\epsilon,\xi}^{yz}) \, dy \geq \int_{C_\rho(0)} G^{yz}_{\rho}(\gamma_{y,\rho})  \, dy 
    -  C \rho^{\alpha+n-1} \EEE(\|\hat{K}^z\|_{L^1(\mathbb{R})} +  \rho^{\alpha} \EEE \|K^z\|_{L^1(\mathbb{R})})\\
    &\qquad- C\|\hat{K}^z\|_{L^1(\mathbb{R})}\int_{\partial Q^-_\rho}|u(x)-z_1(0)| \, d\mathcal{H}^{n-1}\\
    &- C\|\hat{K}^z\|_{L^1(\mathbb{R})}\int_{\partial Q^+_\rho}|u(x)-z_2(0)| \, d\mathcal{H}^{n-1}.
   \end{align*}
  Hence by taking the integral with respect to $z \in V_\xi$ on both sides of the previous inequality and using Fatou's Lemma together with \eqref{e:lowerbound} we obtain for every $\rho >0$ sufficiently small 
   \begin{align*}
   \liminf_{\epsilon \to 0^+} \int_{V_\xi} \bigg( \mint_{C_\rho(0)} &G^{yz}_{\rho,\epsilon}(\mathring{v}_{\epsilon,\xi}^{yz}) \, dy\bigg)dz \geq \mint_{C_\rho(0)} \mathcal{F}^{\rho\xi}_{y}(\gamma_{y,\rho}) \, dy\\ 
    &-  C \rho^{\alpha} \EEE\bigg(\int_{V_\xi}\|\hat{K}^z\|_{L^1(\mathbb{R})} \, dz +  \rho^{\alpha} \EEE \int_{V_\xi}\|K^z\|_{L^1(\mathbb{R})} \, dz\bigg)\\
    &- C\bigg(\int_{V_\xi}\|\hat{K}^z\|_{L^1(\mathbb{R})}\, dz\bigg)\mint_{\partial Q^-_\rho}|u(x)-z_1(0)| \, d\mathcal{H}^{n-1}\\
    &- C\bigg(\int_{V_\xi}\|\hat{K}^z\|_{L^1(\mathbb{R})}\, dz\bigg)\mint_{\partial Q^+_\rho}|u(x)-z_2(0)| \, d\mathcal{H}^{n-1}.
   \end{align*}
   Now we first notice that $\int_{V_\xi}\|\hat{K}^z\|_{L^1(\mathbb{R})} \, dz = \int_{ \mathbb{R}^m \EEE} J(h)|h\cdot \xi| \, dh$ and $\int_{V_\xi}\|K^z\|_{L^1(\mathbb{R})} \, dz = \int_{ \mathbb{R}^m \EEE} J(h) \, dh$.   Hence, \eqref{e:est_gammainf1} follows by taking the liminf as $\rho \to 0$ on both sides of the above inequality in combination with \eqref{e:est_gamma12345}.
   
The proposition thus follows if we show that
\begin{equation}
\label{e:est_gammainf2}
\liminf_{\rho \to 0}  \mint_{C_\rho(0)} \mathcal{F}^{\rho\xi}_y(\gamma_{y,\rho}) \, dy \geq  \inf_{\gamma \in X_{z_2(0)}^{z_1(0)}} F^\xi_{0}(\gamma).
\end{equation}
A dilation in $y$ by a factor $\rho$ gives 
\begin{align}
\label{e:est_gammainf2.5}
    \mint_{C_\rho(0)} \mathcal{F}^{\rho\xi}_y(\gamma_{y,\rho}) \, dy
    &= \int_{C_1(0)}\bigg( \int_{\mathbb{R} \times \mathbb{R}} J^\xi(h)|\gamma_{\rho y,\rho}(t+h) -\gamma_{\rho y,\rho}(t)|^2 \,dtdh\bigg) dy\\
    \nonumber
    &+ \int_{C_1(0)} \bigg( \int_{\mathbb{R}} \mathcal{W}^{\rho\xi}(\rho y,\gamma_{\rho y,\rho}(t)) \,dt\bigg) dy.
\end{align}

By virtue of \ref{Y1}--\ref{Y4} we are in position to apply Proposition \ref{p:contdep} with $a_\rho,b_\rho,W_\rho,W_0,\gamma_\rho$ replaced by $a_\rho(\rho y),b_\rho(\rho y),\mathcal{W}^{\rho\xi}_y(\rho y,\cdot),W(0,\cdot),\gamma_{\rho y,\rho}$, respectively, and infer for every $y \in C_\rho(0)$ that
\begin{equation}
\label{e:est_gammainf3}
\lim_{\rho \to 0} \int_{\mathbb{R} \times \mathbb{R}} J^\xi(h)|\gamma_{\rho y,\rho}(t+h) -\gamma_{\rho y,\rho}(t)|^2 \,dtdh
    +  \int_{\mathbb{R}} \mathcal{W}^{\rho \xi}(\rho y,\gamma_{\rho y,\rho}(t)) \,dt  = F^\xi_{0}(\gamma_0), 
\end{equation}
where $\gamma_0$ is any increasing minimizer of $F^\xi_{0}$ in the class $X_{z_1(0)}^{z_2(0)}$ (recall that $a_\rho(\rho y) \to z_1(0)$ and $b_\rho(\rho y) \to z_2(0)$ for every $y \in C_1(0)$ ). Therefore, we conclude the proof by applying Fatou's lemma in \eqref{e:est_gammainf2.5}. 
\end{proof}
We are finally in a position to prove Proposition \ref{c:est_gamminf}.
\begin{proof}[Proof of Proposition \ref{c:est_gamminf}]
The statement follows by by letting $I_{x_0}$ be the set of $\rho>0$ satisfying Proposition \ref{p:oscbound2}, putting together Lemma \ref{l:est_gammainf} and Remark \ref{r:kernelinvariance}, as well as by observing that the result of Lemma \ref{l:est_gammainf} is invariant under reparametrizations of the kernel $J$ in the $y$-variable. 
\end{proof}

\section*{Acknowledgements} 
This research has been supported by the Austrian Science Fund (FWF) through grants \href{https://doi.org/10.55776/F65}{10.55776/F65}, \href{https://doi.org/10.55776/Y1292}{10.55776/Y1292}, \href{https://doi.org/10.55776/P35359}{10.55776/P35359}, \href{https://doi.org/10.55776/F100800}{10.55776/F100800} as well as by the OeAD-WTZ project CZ04/2019 (M\v{S}MT\v{C}R 8J19AT013). For open access purposes, the author has applied a CCBY public copyright license to any author accepted manuscript version arising from this submission.

\section*{Data Availability Statement}
The authors hereby confirm that data sharing is
not applicable to this article, for no datasets were generated or analyzed during the current study.

\section*{Competing interests}
The authors declare no competing interests.

\end{document}